\newtheorem{theorem}{Theorem}[section]
\newtheorem{lemma}[theorem]{Lemma}
\newtheorem{proposition}[theorem]{Proposition}
\newtheorem{corollary}[theorem]{Corollary}
\theoremstyle{definition}
\newtheorem{definition}[theorem]{Definition}
\newtheorem{example}[theorem]{Example}
\newtheorem{conjecture}[theorem]{Conjecture}
\newtheorem{remark}[theorem]{Remark}
\newcommand{\bla}{{\mbox{\boldmath$\lambda$}}}
\newcommand{\bLa}{{\mbox{\boldmath$\Lambda$}}}
\newcommand{\Hom}{\text{Hom}}
\newcommand{\g}{\mathfrak{g}}
\newcommand{\h}{\mathfrak{h}}
\newcommand{\n}{\mathfrak{n}}
\renewcommand{\b}{\mathfrak{b}}
\newcommand{\la}{\lambda}
\newcommand{\bs}{\backslash}
\newcommand{\C}{{\mathbb C}}
\newcommand{\ben}{\begin{enumerate}}
\newcommand{\een}{\end{enumerate}}
\newcommand{\nc}{\newcommand}
\nc{\on}{\operatorname}
\nc{\wh}{\widehat}
\nc{\wt}{\widetilde}
\nc{\sw}{{\mathfrak s}{\mathfrak l}}
\nc{\ghat}{\wh{\g}}
\nc{\hhat}{\wh{\h}}
\nc{\mc}{\mathcal}
\nc{\Bun}{\on{Bun}}
\nc{\ol}{\overline}
\nc{\OO}{\mathcal O}
\nc{\pone}{{\mathbb P}^1}
\nc{\pa}{\partial}
\nc{\Pic}{\on{Pic}}
\nc{\ga}{\gamma}
\nc{\orr}{\underline}
\nc{\mbb}{\mathbb}
\nc{\mbf}{\mathbf}
\nc{\V}{{\mc V}}
\newcommand{\norm}[1]{\left\lVert#1\right\rVert}
\newcommand{\bmu}{{\mbox{\boldmath$\mu$}}}
\newcommand{\bbe}{{\mbox{\boldmath$\beta$}}}
\theoremstyle{plain}
\newtheorem*{sol}{Solution}
\theoremstyle{definition}
\theoremstyle{remark}
\newcommand{\solu}[1]{\begin{sol}{\bf  (\ref{#1})}}
\newcommand{\mcV}{\mathcal V}
\newcommand{\mcE}{\mathcal E}
\nc{\Bunc}{\on{Bun}^{\circ}}
\def\g{\mathfrak{g}}
\def\R{\mathbb{R}}
\def\D{\mathcal{D}}
\def\h{\mathfrak{h}}
\def\Z{\mathbb{Z}}
\def\M{\mathcal{M}}
\def\Hom{\mathrm{Hom}}
\def\n{\mathfrak{n}}
\def\b{\mathfrak{b}}
\def\Fq{\mathbb{F}_q}
\def\LG{G^\vee}
\def\lg{\g^\vee}
\def\lb{{\mathfrak b}^\vee}
\nc{\ppart}{(\!(t)\!)}
\nc{\zpart}{(\!(z)\!)}
\nc{\hl}{h_{\ell}}
\nc{\hr}{h_{r}}
\nc{\Op}{\on{Op}}
\nc{\AD}{{\mathbb A}}
\nc{\al}{\alpha}
\nc{\mb}{\mathbf} %DIF > 
\numberwithin{equation}{section}
\begin{document}

\title[A general framework for the analytic Langlands correspondence]{A general framework for the analytic Langlands correspondence}

\author{Pavel Etingof}

\address{Department of Mathematics, MIT, Cambridge, MA 02139, USA}

\author{Edward Frenkel}

\address{Department of Mathematics, University of California,
  Berkeley, CA 94720, USA}

\author{David Kazhdan}

\address{Einstein Institute of Mathematics, Edmond J. Safra Campus,
  Givaat Ram The Hebrew University of Jerusalem, Jerusalem, 91904,
  Israel}

\maketitle

\centerline{\bf  To Corrado De Concini with admiration} 

\begin{abstract} We discuss a general framework for the analytic
  Langlands correspondence over an arbitrary local field $F$
  introduced and studied in our works \cite{EFK1,EFK2,EFK3}, in
  particular including non-split and twisted settings. Then we
  specialize to the archimedean cases ($F=\Bbb C$ and $F=\Bbb R$) and
  give a (mostly conjectural) description of the spectrum of the Hecke
  operators in various cases in terms of opers satisfying suitable
  reality conditions, as predicted in part in \cite{EFK2,EFK3} and
  \cite{GW}. We also describe an analogue of the Langlands
  functoriality principle in
  the analytic Langlands correspondence over $\C$ and show that it is
  compatible with the results and conjectures of \cite{EFK2}.
  Finally, we apply the tools of the analytic Langlands correspondence
  over archimedean fields in genus zero to the Gaudin model and its
  generalizations, as well as their $q$-deformations.
\end{abstract} 

\tableofcontents

\section{Introduction} 

\subsection{Overview} 
Let $X$ be a smooth irreducible projective curve of genus ${\rm g}>1$ and $G$ a reductive algebraic group, both defined over a local field $F$. Let ${\rm Bun}^\circ_G(X)$ be the variety of regularly stable principal $G$-bundles on $X$ (see e.g. \cite{EFK1}, Section 1.1). In \cite{EFK1,EFK2,EFK3}, motivated in part by the works \cite{BK1,Ko,La,Te}, we proposed the {\bf  analytic Langlands correspondence}, which is the study of the spectrum of {\bf  Hecke operators} acting on the space of complex-valued half-densities on ${\rm Bun}^\circ_G(X)(F)$. Justifying its name, this correspondence is a natural analytic analog 
of two previously known settings of Langlands correspondence -- {\bf arithmetic} (for curves over a finite field) and {\bf geometric} (involving $\mathcal D$-modules on complex curves and on moduli stacks of $G$-bundles on curves), to both of which it is actually intimately related. We also proposed a ramified generalization of the analytic Langlands correspondence for $G$-bundles with level structure at an $F$-rational effective divisor $D\subset X$ (which allows one to also consider ${\rm g}=0,1$).

However, previously for simplicity we focused  on the basic case when $G$ and $D$ are split, and in fact mostly assumed that $F=\Bbb C$. Yet the natural generality 
of the (arithmetic) Langlands correspondence is that of a flat reductive group scheme 
$\mathcal G$ over $X$, for example, one defined by an action of the \'etale fundamental group $\pi_1^{\rm et}(X)$ on $G$. Roughly speaking, one of the main goals of this paper is to discuss the (ramified) analytic Langlands correspondence in this more general setting, when $F$ is arbitrary and $G$, $D$ are not necessarily split; in particular, this means that we need focus on Galois-theoretic aspects of the theory. We also allow twists by $Z(G)$-gerbes on $X$ and, in the ramified case, by unitary representations of the group of changes of the level structure. 
A detailed discussion of the general framework of the analytic Langlands correspondence with a focus on these additional features is the subject of the first half of the paper.

The second half of the paper is dedicated to the archimedean cases, $F=\Bbb C$ and $F=\Bbb R$. In these cases, as shown in \cite{EFK2}, the Hecke operators $H_{x,\la}$ commute with the {\bf quantum Hitchin Hamiltonians} and also satisfy a certain differential equation with respect to $x\in X$ involving these Hamiltonians, called the {\bf universal oper equation}. As a result, the joint spectrum of the Hecke and quantum Hitchin Hamiltonians is (conjecturally) labeled by $G^\vee$-{\bf opers} $L$ on $X$ satisfying a certain topological condition called a {\bf  reality condition}. For $F=\Bbb C$, as explained in \cite{EFK2}, this is the condition that the monodromy of $L$ can be conjugated into an inner (conjecturally, split) real form $G^\vee_\Bbb R$ of $G^\vee$. On the other hand, for $F=\Bbb R$ the theory depends on several additional pieces of data (an antiholomorphic involution of $X$, an inner class of $G$, a form $G^\sigma$ of $G$ in this class attached to each oval of $X(\Bbb R)$, etc.), and 
the exact form of the reality condition depends on these details. We work out this condition 
in several examples for $G=GL_1$ and $PGL_2$, generalizing \cite{EFK3}, Subsection 4.7 and 
\cite{GW}, Section 6. 

Finally, we explain how the {\bf generalized Bethe Ansatz method for the Gaudin model} 
can be viewed (in several ways) as an instance of the tamely ramified 
analytic Langlands correspondence in genus $0$ over $\Bbb R$ and $\Bbb C$. 
Interestingly, the role of Hecke operators in this setting is played by 
{\bf Baxter's Q-operator of the Gaudin model}, the $q\to 1$
limit of the Q-operator of the XXZ quantum spin chain introduced by R. Baxter. 
Motivated by this, we discuss a $q$-deformation of the archimedean analytic 
Langlands correspondence in genus $0$. 

\subsection{Summary of the main results} 
To summarize, in this paper we accomplish the following. 

1. We formulate the problem of (ramified) analytic Langlands correspondence in a general setting when the group $G$ and the ramification divisor $D\subset X$ are not necessarily split and ramification points carry unitary representations of the groups of changes of level structure, in presence of possible twists by a $Z$-gerbe on $X$ and an action of $\pi_1(X)$ 
on the root datum of $G$. We state the conjecture on compactness of Hecke operators, which leads to discreteness of their spectrum. 

2. For $G=PGL_2$ and genus $0$ with split ramification divisor, in the tamely ramified case when ramification points carry principal series representations of $G(F)$, we compute explicitly the Hecke operators $H_x$ (which are known to be compact in this case) and find their asymptotics near ramification points. This gives the asymptotics of eigenvalues of $H_x$. 

3. In the cases $F=\Bbb C$ and $F=\Bbb R$, we conjecturally describe the spectrum 
of Hecke operators in the setting of (1) in terms of opers with
monodromy representation satisfying suitable ``reality conditions". We
formulate reality conditions in various special cases, and show that
spectral opers must satisfy these conditions. In particular, we work
out reality conditions for $G=GL_1$, $F=\Bbb R$, and also $G=PGL_2$,
$F=\Bbb R$ in the ramified and tamely ramified cases. We describe
behavior of eigenvalues of $H_x$ near real ovals of $X$ (when they are
present) in various situations.

4. In the case $F=\Bbb C$, we discuss in detail in Section \ref{prin}
the Hecke operators corresponding to the {\em principal weights} of
$G^\vee$ (such that the corresponding irreducible representation of
$\g^\vee$ remains irreducible under a principal $\sw_2$ subalgebra),
following the approach of \cite{EFK2}, Section 5. We then consider the
general case. In Subsection \ref{irred}, we prove that for a generic
$G^\vee$-oper $\chi$ on a curve of genus ${\rm g} > 1$, the Zariski
closure $M_\chi$ of the image of its monodromy representation is equal
to $G^\vee$. We also show if $G^\vee$ is connected simple group of
adjoint type, then for any $G^\vee$-oper $\chi$, the group $M_\chi$ is
a connected simple subgroup of $G^\vee$ that contains a principal
$PGL_2$ subgroup of $G^\vee$. This allows us to elucidate the
conjectural formula for the eigenvalues of the Hecke operators
presented in \cite{EFK2}, Conjecture 5.1 (see Subsection
\ref{genwt}). We also use these results in Subsection \ref{funct} to
describe an analogue of the Langlands functoriality principle in the
analytic Langlands over $\C$
correspondence and show that it is compatible with the results and
conjectures of \cite{EFK2}.

5. We describe several settings of the Gaudin model in terms of the
analytic Langlands correspondence, enabling us to describe the spectrum
of the commuting Gaudin Hamiltonians. In particular, we
reinterpret the known description of the spectrum of the Gaudin
Hamiltonians in the case of the tensor product of finite-dimensional
representations in terms of monodromy-free opers \cite{F1,R} as a
special case of real analytic Langlands correspondence. Namely, we use
our results in the ``quaternionic'' real case discussed in Subsection
\ref{quatov} to obtain this description of the spectrum. We also
explain the connection to the Bethe Ansatz method of diagonalization
of these Hamiltonians.

Our interpretation of the Gaudin model in terms of the analytic
Langlands correspondence allows us to extend it to a more general
setting in which the space of states is infinite-dimensional and the
traditional Bethe Ansatz methods do not apply; for example, the tensor
product of unitary principal series representations.

In all of these cases, the key new element is the
existence of the Hecke operators commuting with the Gaudin
Hamiltonians and the fact that they satisfy differential equations
(the universal oper equations), which can be used to describe the
analytic properties of the $G^\vee$-opers encoding the possible joint
eigenvalues of the Gaudin Hamiltonians.  We show that the Hecke
operators in this setting are closely related to the analogue of the
Baxter $Q$-operator in the Gaudin model. We use this relation to
discuss a possible $q$-deformation of the analytic Langlands
correspondence, where the role of Hecke operators is played by
Baxter's $Q$-operators of the XXZ model (and its generalization from
$SL_2$ to a general simple complex Lie group).

We note that some results on the spectra of the Gaudin Hamiltonians
for $SL_2$ in the real case were obtained in \cite{NRS} from the point
of view of $N=2$ SUSY 4d gauge theory (see also \cite{JLN}). It would
be interesting to see if there is a connection between their results
and ours.

\subsection{Structure of the paper} The structure of the paper is as follows. 

In Section 2 we give an informal description of the general framework for the analytic Langlands correspondence over an arbitrary local field $F$. We start with reviewing the theory of forms of reductive groups, especially over non-archimedean local fields (the Kneser-Bruhat-Tits theory). Then we explain that in the unramified case the appropriate moduli space of $F$-rational $G$-bundles is determined by an {\bf  inner class} $C(s)$ of $G$ over $F$, $s\in H^1(F,{\rm Out}G)$. We also explain that every such bundle $P$ and a geometric point $x$ of $X$ defines an $F$-{\bf  form} $G^\sigma$ of $G$ over the field $E_x$ of definition of $x$ in $C(s)$. Thus in the tamely ramified case ($\ell=1$) with ramification divisor $D\subset X$, the input data of the theory is a choice, for each $t\in D$, 
of a form $G^{\sigma}$ of $G$ over $E_{t}$ in $C(s)$ and a unitary representation $V$ of $G^{\sigma}(E_{t})$. For such input data, we define Hecke operators on the $L^2$ space of the moduli space of $F$-rational $G$-bundles, pose the spectral problem for them, consider various examples and present several conjectures that we've proved in some interesting special cases. 

In Section 3 we discuss the case $F=\Bbb C$. We start by reviewing the
results and conjectures of \cite{EFK1,EFK2,EFK3} on parametrizing the
spectrum $\Sigma$ of Hecke and quantum Hitchin Hamiltonians by real
opers on the Riemann surface $X(\Bbb C)$. Next, we consider the
differential equations satisfied by the Hecke operators $H_\la$ in
various cases: $G=PGL_2$ and $X=\pone$ with parabolic structures at
finitely many points in Subsection \ref{Differential equations for
  Hecke operators} (recalling and generalizing the results of
\cite{EFK3}); $G=PGL_n$, $X$ of genus ${\rm g} > 1$, and
$\la=\omega_1$ in Subsection \ref{diffPGLn} (recalling the results of
\cite{EFK2}); a generalization of the latter case to an arbitrary
principal $\la$ in Subsection \ref{prin}; and the general case in
Subsections \ref{gencase} and \ref{genwt}. In Subsection \ref{irred}
we describe the Zariski closures of the monodromy representations of
$G^\vee$-opers in the case that $G^\vee$ is a connected simple group
of adjoint type and ${\rm g} > 1$. In particular, we show that the
monodromy of a generic $G^\vee$-oper is dense in $G^\vee$. We use this
in Subsection \ref{genwt} to elucidate some results of \cite{EFK2} and
in Subsection \ref{funct} to describe an analogue of the Langlands
functoriality principle in
the analytic Langlands correspondence. In Subsection \ref{twisted} we
discuss the twists by $Z(G)$-gerbes and by ${\rm Aut}G$-torsors on
$X$. We also consider the twists by unitary representations at
ramification points for $G=PGL_2$. For all these twists, we describe
(conjecturally) the set of opers parametrizing $\Sigma$.

In Section 4, we consider the case $F=\Bbb R$. In this case the curve $X$ is a Riemann surface with an antiholomorphic involution $\tau$. We first assume that $\tau$ has no fixed points on $X$ (i.e., $X(\Bbb R)=\emptyset$) and review the conjectures from \cite{GW}, Section 6 
on the opers that are expected to label the eigenspaces of Hecke operators. We also show following \cite{W} that these conjectures hold for $G=GL_1$. Then we proceed to the general case, when $X(\Bbb R)$ may be nonempty, discussed in \cite{GW}, Subsection 6.3. This case is more complicated since it involves boundary conditions for oper solutions on the ovals of $X(\Bbb R)$. We describe what happens for $G=GL_1$, and then propose a conjectural reality condition for spectral opers for $G=SL_2$. 
In this case, we have two forms of $G$ (both inner) -- the split form and the compact form, giving 
rise to two types of ovals -- real and quaternionic, respectively. We propose the boundary conditions for spectral opers on both real and quaternionic ovals. We also explain what happens 
in presence of ramification points, and in the case $X=\Bbb P^1$ with the usual real structure and all ramification points real, we recover the description of spectral opers from \cite{EFK3}, Subsection 4.7 (``balanced" opers).   

Finally, in Section 5 we interpret the Gaudin model and
  its generalization in terms of the analytic Langlands
  correspondence. In particular, for $G=SL_2$, we derive the
  description of the spectrum of the Gaudin Hamiltonians in terms of
  monodromy-free $PGL_2$-opers \cite{F1,R} from a special case of the
  analytic Langlands correspondence over $\Bbb R$ (namely, for the
  compact form of $SL_2(\C)$). We give a description of the spectrum
of the Gaudin Hamiltonians on the tensor product of representations of
the unitary principal series of $SL_2({\mathbb R})$ in terms of
balanced $PGL_2$-opers. We also discuss a $q$-deformation of the
analytic Langlands correspondence in genus $0$ and its connection with
Bethe Ansatz method for the XXZ model (a $q$-deformation of the Gaudin
model).

\subsection{Acknowledgments} 
It is our pleasure to dedicate this paper to Corrado De Concini, who 
has been an inspiration to us mathematically and personally for many decades.  

The material of Section 4 arose from our discussions with Davide
Gaiotto and Edward Witten in the Spring of 2021. We are very grateful
to them for these discussions, which motivated us to write this
section. We thank Dima Arinkin for providing a proof of
Theorem \ref{zar}. We also thank Nikita Nekrasov for valuable
discussions, and Ilya Dumanskii for communication regarding the
material of Subsection \ref{pgl2}.

P. E.'s work was partially supported by the NSF grant DMS-2001318. 
The project has received funding from ERC under grant agreement No. 669655.

\section{Analytic Langlands correspondence over a general local field}
\subsection{Varieties over arbitrary fields} \label{formvar}

Let $L$ be a separably closed field and $X$ be an algebraic variety defined over 
$L$. Since $X$ is reduced, $X(L)=X(\overline L)$, where $\overline L\supset L$ is an algebraic closure of $L$; in particular,  $X(L)\ne \emptyset$. 
For $\gamma\in {\rm Aut}L$, let ${}^\gamma X$  be the twist of $X$ by $\gamma$.
Thus $X(L)={}^\gamma X(L)$ and 
the structure sheaf of ${}^\gamma X$ is obtained from the one of $X$ 
by twisting the scalar multiplication by $\gamma$. 

Now let $F$ be any field. Let $F_{\rm sep}$ be a separable closure of $F$ 
and for a field extension $F\subset E\subset F_{\rm sep}$, 
$\Gamma_E:={\rm Gal}(F_{\rm sep}/E)$ be the absolute Galois group of $E$. 

Let $X$ be a variety defined over $F$. Let $\tau(\gamma): {}^\gamma X\to X$, $\gamma\in \Gamma_F$, be the collection of isomorphisms defining 
the $F$-structure on $X$. This defines an action 
$$
\gamma\mapsto \tau(\gamma): X(F_{\rm sep})\to 
X(F_{\rm sep})
$$ 
of $\Gamma_F$ on $X(F_{\rm sep})$, and 
the set $X(E)$ of $E$-points of $X$ is the fixed point set of $\Gamma_E\subset \Gamma_F$ on $X(F_{\rm sep})$. 

Let $x\in X(F_{\rm sep})$ be a point with stabilizer $\Gamma_x\subset \Gamma_F$. We will call the field $E:=F_{\rm sep}^{\Gamma_x}$ the {\bf  field of definition of $x$}; thus $\Gamma_x=\Gamma_E$.\footnote{Note that the field of definition $E$ of a point $x$ is not just an abstract field extension of $F$, but rather a subfield of $F_{\rm sep}$ containing $F$; so if fields of definition of two points are not Galois then they may be distinct but nevertheless isomorphic.} Let $X_E\subset X(F_{\rm sep})$ be the subset of points with field of definition $E$. Then $X(E)$ is the disjoint union of $X_K$ over all $F\subset K\subset E$. 

\begin{example} Let $F=\Bbb R$, then $F_{\rm sep}=\overline F=\Bbb C$ and
$\Gamma_F=\Bbb Z/2$. An  $\Bbb R$-structure on $X$ 
is given by an antiholomorphic involution $\tau: X(\Bbb C)\to X(\Bbb C)$. 
Furthermore, the real locus $X_{\Bbb R}=X(\Bbb R)=X(\Bbb C)^\tau$ 
is just the set of fixed points of $\tau$ on $X(\Bbb C)$, and $X_{\Bbb C}=X(\Bbb C)\setminus X(\Bbb R)$. 
\end{example} 

\subsection{Forms of reductive groups} \label{formsredgps}
Let $G$ be a split connected reductive algebraic group over $\Bbb Z$ corresponding to a 
polarized root datum $\Delta_G$. In particular, this means that we fix a positive Borel subgroup $B\subset G$ 
and a maximal torus $T\subset B$. Let $G_{\rm ad}$ be the corresponding adjoint group.  For any field $E$ let ${\rm Aut}G(E):={\rm Aut}\Delta_G\ltimes G_{\rm ad}(E)$; here ${\rm Aut}\Delta_G={\rm Out}G$ is the group of outer automorphisms of $G$.\footnote{The group ${\rm Aut}\Delta_G$ may be infinite, for example if $G$ is an $n$-dimensional torus then ${\rm Aut}\Delta_G=GL_n(\Bbb Z)$. Thus ${\rm Aut}G$ is not an algebraic group, in general. More specifically, it is an algebraic group iff the center of $G$ has dimension $\le 1$ (e.g., for $G=GL_n$ or $G$ semisimple). But this is not important for our considerations.} This group acts on $G(K)$ for any field extension $K$ of $E$. 

Let $F$ be a field whose characteristic is either zero or coprime to the determinant of the Cartan matrix of $G$.\footnote{This assumption is not essential and is made to simplify the exposition.} 

The classification of forms of $G$ over $F$ is as described in Subsection \ref{formvar}, except that unlike the case of general varieties, we already have a distinguished $F$-form of $G$ 
(the {\bf  split form}) defining an action $g\mapsto \gamma(g)$ of $\Gamma_F$ on $G(F_{\rm sep})$, so we can describe all $F$-forms of $G$ by counting from this form, in terms of Galois cohomology. Namely, forms of $G$ over $F$ are parametrized by the (continuous) Galois cohomology 
$H^1(\Gamma_F, {\rm Aut}G(F_{\rm sep}))$ (\cite{S}). Specifically, let 
$\theta: \Gamma_F\to {\rm Aut}G(F_{\rm sep})$
be a 1-cocycle, i.e., 
$$
\theta(\gamma_1\gamma_2)=\theta(\gamma_1)\circ \gamma_1(\theta(\gamma_2)).
$$
Then we have an action $\sigma=\sigma_\theta$ of $\Gamma_F$ on $G(F_{\rm sep})$ 
given by 
$$
\sigma(\gamma)=\theta(\gamma)\circ \gamma,
$$
and conversely, an action $\sigma$ of $\Gamma_F$ such that 
for $\gamma\in \Gamma_F$, $\theta_\sigma(\gamma):=\sigma(\gamma)\circ \gamma^{-1}\in {\rm Aut}G(F_{\rm sep})$ gives rise to a 1-cocycle $\theta=\theta_\sigma$. 
The form $G^\sigma$ of $G$ corresponding to $\sigma$ 
(or $\theta$) is defined by its functor of points 
$$
G^\sigma(A):=G(A\otimes_F F_{\rm sep})^{\Gamma_F}
$$
for any commutative $F$-algebra $A$. In particular, for any field extension $F\subset E \subset F_{\rm sep}$, $G^\sigma(E)$ is the subgroup of fixed points of $\sigma(\Gamma_E)$. In other words, $G^\sigma(E)$ is the group of elements of $G(F_{\rm sep})$ satisfying the equation 
$$
\gamma(g)=\theta(\gamma)^{-1}(g)
$$
for all $\gamma\in \Gamma_E$. 

For example, if $\sigma=1$ then $G^\sigma=G_{\rm spl}$, the split form of $G$. 

Moreover, if an element $a\in {\rm Aut}G(F_{\rm sep})$ transforms a 1-cocycle $\theta_{\sigma_1}$ into a 1-cocycle $\theta_{\sigma_2}$ 
then it canonically defines an isomorphism $\chi(a): G^{\sigma_1}\to G^{\sigma_2}$ such that for any $b$ transforming $\theta_{\sigma_2}$ into $\theta_{\sigma_3}$ we have $\chi(ba)=\chi(b)\circ \chi(a)$. 
In other words, denoting by $\bold A=\bold A(F,G)$ the groupoid whose objects are 
$1$-cocycles $\theta:\Gamma_F\to {\rm Aut}G(F_{\rm sep})$ and morphisms from $\theta$ to $\theta'$ are 
elements $a\in {\rm Aut}G(F_{\rm sep})$ such that 
$\theta'(\gamma)=a\theta(\gamma)\gamma(a)^{-1}$, we obtain a functor $\theta_\sigma\mapsto G^{\sigma}$ 
from $\bold A$ to the category of algebraic $F$-groups. 
In particular, $G^\sigma$ depends only on the cohomology class $[\theta_\sigma]$ up to isomorphism. 

Since the sequence
\begin{equation}\label{groupseq}
1\to G_{\rm ad}(F_{\rm sep})\to {\rm Aut}G(F_{\rm sep})\to {\rm Aut}\Delta_G\to 1
\end{equation} 
splits, it
defines a short exact sequence of pointed sets 
\begin{equation}\label{h1seq}
1\to H^1(\Gamma_F,G_{\rm ad}(F_{\rm sep}))\to H^1(\Gamma_F,{\rm Aut}G(F_{\rm sep}))\to H^1(\Gamma_F,{\rm Aut}\Delta_G)\to 1,
\end{equation} 
where $H^1(\Gamma_F,{\rm Aut}\Delta_G)={\rm Hom}(\Gamma_F,{\rm Aut}\Delta_G)$/conjugation (note that $\Gamma_F$ acts trivially on $\Delta_G$ since we start with the split form). In other words, the second map in \eqref{h1seq} is injective and its image coincides with the kernel (i.e., the preimage of $1$) of the third map, which is surjective. 

Recall that a form $G^\sigma$ is called {\bf  inner} if $[\theta_\sigma]\in  H^1(\Gamma_F,G_{\rm ad}(F_{\rm sep}))$, i.e., if it projects to $1\in H^1(\Gamma_F,{\rm Aut}\Delta_G)$. The {\bf  inner class} of $G^\sigma$ is the collection 
$C(\sigma)$ of all forms $G^\eta$ of $G$ such that $[\theta_\sigma]$ and $[\theta_\eta]$ map to the same element of $H^1(\Gamma_F,{\rm Aut} \Delta_G)$. Thus inner classes are labeled by conjugacy classes $[s]$ of homomorphisms 
$s: \Gamma_F\to {\rm Aut}\Delta_G$. 
For example, $C(1)$ (the inner class of the split form) consists of all the inner forms of 
$G$.  

Note that since the sequence \eqref{groupseq} is 
canonically split, so is the sequence \eqref{h1seq}. Thus for every $s\in {\rm Hom}(\Gamma_F,{\rm Aut}\Delta_G)$ the inner class $C(s)$ has a canonical representative called the {\bf  quasi-split form} of $G$ in $C(s)$ and denoted by $G^s$; this is the only form of $G$ in $C(s)$ which has an $F$-rational Borel subgroup. For example, the quasi-split inner form is the split form. By \eqref{h1seq}, this implies that each inner class $C(s)$ can be canonically identified with $H^1(\Gamma_F,G^s_{\rm ad}(F_{\rm sep}))$ (with the action 
of $\Gamma_F$ defined by $s$). 

\subsection{Forms of reductive groups over a local field} 
Now let $F$ be a non-archimedean local field. We recall the theory of forms of 
reductive groups over $F$. We start with two classical theorems. 

\begin{theorem} \label{aux} (Kneser, Bruhat-Tits, \cite{BT}, 4.7) 
For a simply connected semisimple group $G$ over $F$, one has $H^1(\Gamma_F,G(F_{\rm sep}))=1$. 
\end{theorem}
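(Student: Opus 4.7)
The plan is to reduce to the case of an absolutely almost simple simply connected $F$-group via restriction of scalars, and then proceed by case analysis on the classification of such groups over $F$, distinguishing isotropic and anisotropic types. First, I would write $G \cong \prod_i \Res_{E_i/F} G_i$, where each $G_i$ is absolutely almost simple simply connected over a finite separable extension $E_i/F$. By Shapiro's lemma,
$$
H^1(\Gamma_F, \Res_{E_i/F} G_i(F_{\rm sep})) \cong H^1(\Gamma_{E_i}, G_i(F_{\rm sep})),
$$
so it suffices to prove the theorem for $G$ absolutely almost simple and simply connected over $F$.

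Next, I would split into isotropic and anisotropic subcases. If $G$ is $F$-isotropic, then a $1$-cocycle can be untwisted by parabolic reduction: using the Bruhat decomposition together with the vanishing of $H^1$ for unipotent radicals (which are successive extensions of $\mathbb{G}_a$), one reduces by induction on the semisimple $F$-rank to the anisotropic kernel. This is essentially Kneser's argument, and it allows us to dispose of the bulk of the classification without explicit structural input beyond the existence of a proper $F$-parabolic.

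For the anisotropic case, the argument depends on the type. For classical types, one realizes $G(F)$ as the special unitary group of a hermitian form over a finite-dimensional division algebra $D$ central over $F$ or over a quadratic extension; using the description of the Brauer group of a non-archimedean local field as $\mathbb{Q}/\mathbb{Z}$ and the classification of hermitian forms over such $D$ by rank and discriminant, the vanishing can be verified directly. For the exceptional types (including the triality forms of $D_4$), I would invoke Bruhat-Tits theory: the twist of the $\Gamma_F$-action on the Bruhat-Tits building $\B(G)$ over $F_{\rm sep}$ by a $1$-cocycle $\theta$ still preserves the complete $\mathrm{CAT}(0)$ metric, so by the Bruhat-Tits fixed point theorem it admits a fixed point, corresponding to a $\Gamma_F$-stable parahoric subgroup. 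Reducing this parahoric modulo the maximal ideal yields a connected reductive group over the finite residue field $\mathbb{F}_q$, and Lang's theorem $H^1(\Gamma_{\mathbb{F}_q}, H) = 1$ for connected $H$, combined with smooth lifting, then completes the argument.

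The main obstacle is precisely this exceptional anisotropic case: the classical types reduce to well-controlled questions about division algebras and hermitian forms settled by Kneser, but the exceptional types require the full development of Bruhat-Tits theory---affine Dynkin diagrams, parahoric subgroups, and the fixed-point theorem on the affine building---which is the substance of \cite{BT}, Theorem 4.7, and in the present paper is simply cited.
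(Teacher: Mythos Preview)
The paper does not prove this theorem at all; it is stated as a classical result with the citation to \cite{BT}, 4.7, and used as a black box. So there is no ``paper's own proof'' to compare against, and you correctly observe this in your final paragraph.

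Your sketch identifies the right historical division (Kneser for classical types, Bruhat--Tits for exceptional types) and gives an accurate account of the building argument: twist the $\Gamma_F$-action on the building, apply the fixed-point theorem, pass to a parahoric, reduce modulo the maximal ideal, and invoke Lang's theorem. This is indeed the uniform argument of \cite{BT}, and in fact it handles all types at once, not just the exceptional ones.

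There is, however, a genuine gap in your isotropic reduction step. You propose to reduce a cocycle to a proper $F$-parabolic $P=LU$ and then induct on the Levi $L$. But the Levi of a parabolic in a simply connected group is generally \emph{not} simply connected (e.g.\ a Levi in $SL_n$ is $S(GL_{n_1}\times\cdots\times GL_{n_k})$, which has nontrivial $H^1$ tied to the Brauer group), so the inductive hypothesis does not apply to $L$. Moreover, to conjugate a cocycle $\theta$ into $P$ you need $(G/P)^\theta$ to have an $F$-point, i.e.\ the twisted group $G^\theta$ must have an $F$-parabolic of the same type; but twisting an isotropic group can produce an anisotropic one (e.g.\ $SL_n$ twists to $SL_{1,D}$). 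So the ``parabolic reduction'' does not work as stated. In practice neither Kneser nor Bruhat--Tits uses such a reduction: Kneser argues type by type via explicit realizations as norm-one or unitary groups, and Bruhat--Tits bypasses the dichotomy entirely with the building argument you describe.
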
 

\begin{theorem}\label{tdu} (Tate duality for Galois cohomology, \cite{S}, II.5.2, Theorem 2). 
Let $A$ be a finite $\Gamma_F$-module of order prime to ${\rm char}(F)$ and $A^*:={\rm Hom}(A,\Bbb G_m)$. Then for $0\le i\le 2$, $H^i(\Gamma_F,A)$ is finite and 
we have a canonical isomorphism 
$$
H^{i}(\Gamma_F,A)\cong \Hom(H^{2-i}(\Gamma_F,A^*),
\Bbb Q/\Bbb Z). 
$$
\end{theorem}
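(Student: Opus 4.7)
The plan is to realize the duality isomorphism as a cup product pairing whose target comes from local class field theory, then verify non-degeneracy by devissage to an explicitly computable base case. First I would invoke local class field theory to obtain the canonical invariant isomorphism $\on{inv}_F\colon H^2(\Gamma_F,\Bbb G_m)\cong \Bbb Q/\Bbb Z$ identifying the Brauer group of $F$. Composing the cup product
$$
H^i(\Gamma_F,A)\otimes H^{2-i}(\Gamma_F,A^*)\to H^2(\Gamma_F,A\otimes A^*)
$$
with the evaluation map $A\otimes A^*\to \Bbb G_m$ and with $\on{inv}_F$ gives a natural pairing into $\Bbb Q/\Bbb Z$, hence a map
$$
\alpha_i\colon H^i(\Gamma_F,A)\to \Hom(H^{2-i}(\Gamma_F,A^*),\Bbb Q/\Bbb Z),
$$
which is the candidate isomorphism.

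The next step is a double devissage. Writing $A$ as an iterated extension of modules killed by a single prime $\ell\ne \on{char}(F)$, the five lemma reduces the problem to the case $\ell A=0$. By Shapiro's lemma applied to a finite Galois extension $F'/F$ that splits $A$ and contains the $\ell$-th roots of unity, one reduces further to $A=\Bbb Z/\ell$ over $F'$, in which case $A^*=\mu_\ell$. Here each side becomes explicit: Kummer theory identifies $H^1(\Gamma_{F'},\mu_\ell)$ with $(F')^\times/((F')^\times)^\ell$, while $H^1(\Gamma_{F'},\Bbb Z/\ell)$ is the group of continuous characters $\Gamma_{F'}\to \Bbb Z/\ell$, and the cup product pairing between them is precisely the Hilbert symbol. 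The boundary cases $i=0$ and $i=2$ reduce to checking that $H^2(\Gamma_{F'},\mu_\ell)\cong \Bbb Z/\ell$ (a piece of the Brauer group cut out by $\on{inv}_{F'}$) pairs perfectly with $H^0(\Gamma_{F'},\Bbb Z/\ell)=\Bbb Z/\ell$ by multiplication. Finiteness of each $H^i$ is a by-product, since $(F')^\times/((F')^\times)^\ell$ is finite for a non-archimedean local field.

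The principal obstacle is the self-dual middle degree $i=1$: establishing non-degeneracy of the local Hilbert symbol is essentially equivalent to the reciprocity law of local class field theory. Once one grants that the reciprocity map $(F')^\times\to \Gamma_{F'}^{\on{ab}}$ has dense image and induces the correct pairing on finite quotients via $\on{inv}_{F'}$, non-degeneracy follows by comparing $\Hom((F')^\times,\Bbb Z/\ell)$ with $\Hom(\Gamma_{F'}^{\on{ab}},\Bbb Z/\ell)$. In this way the proof decomposes into formal cup-product and devissage machinery (the routine part) and a black-box appeal to local class field theory (the deep part), matching the exposition in \cite{S}, II.5.
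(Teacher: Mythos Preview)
The paper does not give its own proof of this theorem: it is quoted as a classical result with a bare citation to Serre, \cite{S}, II.5.2, Theorem 2, and is then immediately applied. Your sketch is the standard argument found in that reference (cup product into the Brauer group via the local invariant map, d\'evissage to $\ell$-primary pieces, and the explicit case of $\mu_\ell$ and $\Bbb Z/\ell$ handled by Kummer theory and the local Hilbert symbol), so there is nothing to compare against beyond noting that you have correctly reproduced the classical proof the paper is invoking.

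One small imprecision worth tightening: the step ``by Shapiro's lemma applied to a finite Galois extension $F'/F$ that splits $A$ \ldots\ one reduces further to $A=\Bbb Z/\ell$ over $F'$'' is not literally a Shapiro reduction, since an arbitrary $\ell$-torsion $\Gamma_F$-module is not induced from $\Gamma_{F'}$. What actually makes the reduction work is either (i) embedding $A$ into the induced module $\on{Ind}_{\Gamma_{F'}}^{\Gamma_F}(A|_{\Gamma_{F'}})$ and using the five lemma together with Shapiro on the induced piece, iterating via dimension-shifting, or (ii) checking directly that the class of $A$ for which the pairing is perfect is stable under extensions and contains induced modules. Either fix is routine and is what Serre does; your identification of the non-degeneracy of the Hilbert symbol (equivalently, local reciprocity) as the substantive input is exactly right.
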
 

In particular, taking $i=2$, we get 
$H^2(\Gamma_F,A)\cong \Hom((A^*)^{\Gamma_F},\Bbb Q/\Bbb Z)=A(-1)_{\Gamma_F}$, the coinvariants of $\Gamma_F$ in the negative Tate twist $A(-1)$ of $A$. 

\begin{corollary}\label{coro} 
Let $G^s$ be a simply connected quasi-split semisimple group over $F$,
$G^s_{\rm ad}$ the corresponding adjoint group, and $Z^s(F_{\rm sep})$ the center 
of $G^s(F_{\rm sep})$ regarded as a $\Gamma_F$-module (with action defined by $s$). Then there is a natural inclusion 
$$
H^1(\Gamma_F,G_{\rm ad}^s(F_{\rm sep}))\hookrightarrow Z^s(F_{\rm sep})(-1)_{\Gamma_F}.
$$ 
\end{corollary}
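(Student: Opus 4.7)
The natural strategy is to use the short exact sequence of algebraic $F$-groups
$$
1 \to Z^s \to G^s \to G^s_{\rm ad} \to 1,
$$
where $Z^s$ is the (abelian) center of $G^s$, together with the fact that this sequence of $\Gamma_F$-modules on $F_{\rm sep}$-points is central. Because $Z^s$ is central, the associated long exact sequence in (nonabelian) Galois cohomology extends one step past $H^1(\Gamma_F, G^s_{\rm ad}(F_{\rm sep}))$ to give a connecting map
$$
\delta : H^1(\Gamma_F, G^s_{\rm ad}(F_{\rm sep})) \longrightarrow H^2(\Gamma_F, Z^s(F_{\rm sep})).
$$
The plan is to show (i) that $\delta$ is injective as a map of pointed sets and indeed has trivial fibers, and (ii) to identify the target with $Z^s(F_{\rm sep})(-1)_{\Gamma_F}$.

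For (i), I would first invoke Theorem \ref{aux} (Kneser--Bruhat--Tits): since $G^s$ is simply connected semisimple over the non-archimedean local field $F$, we have $H^1(\Gamma_F, G^s(F_{\rm sep})) = 1$. This immediately shows that the base-point fiber of $\delta$ is trivial. To upgrade this to injectivity of all fibers, I would apply the standard twisting argument: the fiber of $\delta$ over $\delta(c)$ for any class $c$ is identified with the image of $H^1(\Gamma_F, {}_cG^s(F_{\rm sep}))$ in $H^1(\Gamma_F, {}_cG^s_{\rm ad}(F_{\rm sep}))$, where ${}_cG^s$ is the inner twist of $G^s$ by a cocycle representing $c$. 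Because ${}_cG^s$ is again simply connected semisimple over $F$ (inner twisting preserves the type of the root datum and simple connectedness of the group), Theorem \ref{aux} applies to it as well, giving $H^1(\Gamma_F, {}_cG^s(F_{\rm sep}))=1$. Hence every fiber of $\delta$ is trivial and $\delta$ is injective.

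For (ii), I would apply Theorem \ref{tdu} with $A = Z^s(F_{\rm sep})$ and $i=2$. The hypothesis on the characteristic of $F$ (zero or coprime to the determinant of the Cartan matrix) ensures that $|Z^s(F_{\rm sep})|$ is invertible in $F$, so $A$ is a finite $\Gamma_F$-module of order prime to $\mathrm{char}(F)$. Tate duality then identifies
$$
H^2(\Gamma_F, Z^s(F_{\rm sep})) \;\cong\; \Hom\!\bigl((Z^s(F_{\rm sep})^*)^{\Gamma_F},\, \mathbb{Q}/\mathbb{Z}\bigr) \;=\; Z^s(F_{\rm sep})(-1)_{\Gamma_F},
$$
which is exactly the right-hand side, as already recorded in the remark following Theorem \ref{tdu}.

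The only delicate step is (i), specifically the twisting argument: it requires a careful verification that inner twists of a simply connected quasi-split group remain in the class of groups to which Theorem \ref{aux} applies, so that the nonabelian version of exactness truly gives trivial fibers rather than merely a trivial base-point fiber. Step (ii), by contrast, is a direct citation of Theorem \ref{tdu}.
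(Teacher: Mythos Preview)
Your proposal is correct and follows essentially the same approach as the paper: use the long exact sequence coming from $1 \to Z^s \to G^s \to G^s_{\rm ad} \to 1$ together with Theorem \ref{aux} to get the injection into $H^2(\Gamma_F, Z^s(F_{\rm sep}))$, then apply Theorem \ref{tdu} (Tate duality) to identify the target with $Z^s(F_{\rm sep})(-1)_{\Gamma_F}$. The paper's proof is terser and does not spell out the twisting argument you give for injectivity on all fibers, but your elaboration is exactly the standard justification behind the paper's asserted embedding.
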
 

\begin{proof} By Theorem \ref{aux} and the ``long" exact sequence of Galois cohomology, 
we have an embedding $\xi: H^1(\Gamma_F,G_{\rm ad}^s(F_{\rm sep}))\hookrightarrow H^2(\Gamma_F,Z^s(F_{\rm sep}))$, so the result follows from Theorem \ref{tdu}, using that $p$ does not divide the determinant of the Cartan matrix of $G$. 
\end{proof} 

In fact, there is an even stronger result: 

\begin{corollary} The map of Corollary \ref{coro} is an isomorphism. 
\end{corollary}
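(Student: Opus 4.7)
The plan is to upgrade $\xi$ from an injection to a bijection by prolonging the non-abelian long exact sequence used in the proof of Corollary \ref{coro} by one further term. The central extension $1\to Z^s \to G^s \to G^s_{\rm ad}\to 1$ yields the standard four-term exact sequence of pointed sets
\begin{equation*}
H^1(\Gamma_F, G^s(F_{\rm sep})) \longrightarrow H^1(\Gamma_F, G^s_{\rm ad}(F_{\rm sep})) \stackrel{\delta}{\longrightarrow} H^2(\Gamma_F, Z^s(F_{\rm sep})) \longrightarrow H^2(\Gamma_F, G^s(F_{\rm sep})),
\end{equation*}
where $H^2$ of a non-abelian sheaf of groups is understood in the Giraud sense (classes of gerbes banded by that sheaf). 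Theorem \ref{aux} forces the first term to be trivial, so $\delta$ is injective, and under Tate duality (Theorem \ref{tdu}) this $\delta$ is precisely the map $\xi$ of Corollary \ref{coro}.

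Thus surjectivity of $\xi$ reduces, by exactness, to the assertion that the last map in the displayed sequence is the trivial map of pointed sets; equivalently, every $Z^s$-gerbe on $\on{Spec} F$ becomes neutral after extension of structure group to $G^s$. It therefore suffices to prove the vanishing (in the Giraud sense)
\begin{equation*}
H^2(\Gamma_F, G^s(F_{\rm sep})) \;=\; \{*\}
\end{equation*}
for $G^s$ a simply connected quasi-split semisimple group over the non-archimedean local field $F$. This is the natural degree-two counterpart of Theorem \ref{aux}; it is a theorem of Douai, subsequently revisited by Borovoi. Its proof reduces, via the Bruhat-Tits building, to the vanishing of the corresponding $H^2$ of a pro-$p$ unipotent radical together with the $H^2$ of a reductive group over a finite residue field, both of which can be handled directly.

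The main obstacle is exactly this non-abelian $H^2$-vanishing step: it is genuinely more subtle than the Kneser-Bruhat-Tits input of Theorem \ref{aux}, because the coefficients are non-abelian and gerbe-theoretic care is required in interpreting the obstruction. A parallel, more computational route that I would keep in reserve is to invoke Kottwitz's theorem, which identifies $H^1(\Gamma_F, G^s_{\rm ad}(F_{\rm sep}))$ with the Pontryagin dual of $\pi_0\bigl(Z(\wh{G^s_{\rm ad}})^{\Gamma_F}\bigr)$; combined with the Langlands duality between $Z(\wh{G^s_{\rm ad}})$ and $Z^s$ and the Tate-duality description of the target, one can match the two finite pointed sets by a direct character count, thereby promoting the injection $\xi$ of Corollary \ref{coro} to the desired isomorphism without having to enter the non-abelian $H^2$ machinery.
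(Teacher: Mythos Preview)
Your argument is correct, but it is considerably more elaborate than the paper's own proof. The paper simply records that surjectivity of $\xi$ is already known from the literature, citing Kneser \cite{K} (for characteristic zero) and Th\v{a}\'{n}g \cite{T} (for positive characteristic), and concludes in one line.

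Your two routes are, in effect, what lies behind those citations. The gerbe-theoretic route via Douai's vanishing of the non-abelian $H^2(\Gamma_F,G^s(F_{\rm sep}))$ for simply connected semisimple $G^s$ is genuinely the conceptual explanation: surjectivity of $\delta$ is exactly the assertion that every $Z^s$-banded gerbe on $\operatorname{Spec} F$ neutralizes after pushforward to $G^s$, and Douai's theorem kills the target. The Kottwitz route you keep in reserve is closer in spirit to what one finds in \cite{K}: one identifies $H^1(\Gamma_F,G^s_{\rm ad})$ directly with the correct finite group and observes that it has the same order as $Z^s(-1)_{\Gamma_F}$, so the already-established injection must be a bijection. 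The paper's approach buys brevity; yours buys an actual mechanism. Either would be acceptable here, though for a one-line corollary the bare citation is arguably the right editorial choice.
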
 

\begin{proof} It is known (\cite{K},\cite{T}) that the map of Corollary \ref{coro} is surjective, so the result follows. 
\end{proof} 

Thus we obtain the following corollary. Let $G_{\rm sc}^s$ be the universal cover of $G_{\rm ad}^s$. 

\begin{corollary}\label{classifi} Let $G$ be a split connected reductive group and 
$s\in {\rm Hom}(\Gamma_F,{\rm Aut}\Delta_G)$. Then the inner class $C(s)$ is naturally identified with the group $Z^s(F_{\rm sep})(-1)_{\Gamma_F}$, where $Z^s$ is the center of 
$G^s_{\rm sc}$.
\end{corollary}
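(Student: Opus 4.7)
The plan is to reduce the statement to the previous corollary (the strengthened form of Corollary \ref{coro}), which already computes $H^1(\Gamma_F,G^s_{\rm ad}(F_{\rm sep}))$ when $G^s$ is simply connected semisimple and quasi-split. The bridge is provided by the canonical splitting of \eqref{h1seq} together with the observation that the adjoint group only depends on the adjoint type.

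First, I would recall from Subsection \ref{formsredgps} that the canonical splitting of the exact sequence \eqref{groupseq} produces the canonical splitting of the pointed-set sequence \eqref{h1seq}. Picking out the fiber over the class $[s]\in H^1(\Gamma_F,{\rm Aut}\Delta_G)={\rm Hom}(\Gamma_F,{\rm Aut}\Delta_G)/\text{conj}$ and twisting the action by $s$ (i.e.\ replacing the split form by the quasi-split form $G^s$ as the basepoint of $C(s)$), one gets the canonical identification
\[
C(s)\;\cong\; H^1(\Gamma_F,G^s_{\rm ad}(F_{\rm sep})),
\]
as already noted in Subsection \ref{formsredgps}.

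Next, I would observe that the simply connected cover $G^s_{\rm sc}$ of $G^s_{\rm ad}$ is itself quasi-split: the $F$-rational Borel of $G^s$ projects to an $F$-rational Borel of $G^s_{\rm ad}$, whose preimage in $G^s_{\rm sc}$ is an $F$-rational Borel. Thus the hypotheses of (the strong version of) Corollary \ref{coro} apply with $G^s$ replaced by $G^s_{\rm sc}$. Since $(G^s_{\rm sc})_{\rm ad}=G^s_{\rm ad}$, that corollary gives an isomorphism
\[
H^1(\Gamma_F,G^s_{\rm ad}(F_{\rm sep}))\;\cong\;Z^s(F_{\rm sep})(-1)_{\Gamma_F},
\]
where $Z^s$ is precisely the center of $G^s_{\rm sc}$. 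Composing with the identification of the first paragraph yields the claim.

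The only subtle point, and the step I would be most careful about, is making sure that the Galois action used on $Z^s(F_{\rm sep})$ in applying Corollary \ref{coro} agrees with the action defined by $s$ that appears in the statement of Corollary \ref{classifi}; this is automatic because the twisted Galois action on $G^s_{\rm sc}(F_{\rm sep})$ is the one induced from the action on $G^s(F_{\rm sep})$, which by construction is $\sigma_\theta=\theta\circ\gamma$ with $\theta$ a lift of $s$, and this action restricts to the $s$-twisted action on the center. Modulo this bookkeeping, the argument is formal and the real content has already been supplied by Theorems \ref{aux} and \ref{tdu} and by the surjectivity result of Kottwitz--Tate cited in the proof of the strengthened Corollary.
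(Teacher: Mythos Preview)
Your proposal is correct and follows exactly the implicit route the paper takes: the paper simply writes ``Thus we obtain the following corollary'' after the strengthened (isomorphism) version of Corollary~\ref{coro}, relying on the identification $C(s)\cong H^1(\Gamma_F,G^s_{\rm ad}(F_{\rm sep}))$ from Subsection~\ref{formsredgps} and the equality $(G^s_{\rm sc})_{\rm ad}=G^s_{\rm ad}$. You have merely made explicit the bookkeeping (quasi-splitness of $G^s_{\rm sc}$, consistency of the $s$-twisted Galois action on $Z^s$) that the paper leaves to the reader.
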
   

 Thus we get 

\begin{proposition}\label{findis} Over any local field, every inner class of a split connected reductive group $G$ is finite.\footnote{We have shown this when ${\rm char}(F)$ does not divide the determinant of the Cartan matrix of $F$, but this assumption is, in fact, unnecessary.} 
\end{proposition}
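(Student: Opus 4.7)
The plan is to deduce the proposition essentially directly from Corollary \ref{classifi}, which already identifies each inner class with a group of Galois coinvariants of a finite abelian group.

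First, I treat the non-archimedean case, which is the main content. By Corollary \ref{classifi}, under the running hypothesis that $\operatorname{char}(F)$ does not divide the determinant of the Cartan matrix of $G$, the inner class $C(s)$ is canonically identified with
\[
Z^s(F_{\rm sep})(-1)_{\Gamma_F},
\]
where $Z^s$ is the center of the simply connected cover $G^s_{\rm sc}$ of $G^s_{\rm ad}$. Since $G^s_{\rm sc}$ is a simply connected semisimple group scheme, its center is a finite commutative group scheme whose order divides the determinant of the Cartan matrix; by our hypothesis on the characteristic, $Z^s(F_{\rm sep})$ is therefore a finite abelian group. The Tate twist $Z^s(F_{\rm sep})(-1)$ only changes the $\Gamma_F$-action but not the underlying finite abelian group, so its coinvariants are a quotient of a finite group and hence finite. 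This proves finiteness of $C(s)$ in the non-archimedean case.

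For the archimedean cases I handle $F=\mathbb{C}$ and $F=\mathbb{R}$ separately. If $F=\mathbb{C}$ then $\Gamma_F$ is trivial, so the split form is the unique form of $G$ and every inner class is a singleton. If $F=\mathbb{R}$ then $\Gamma_F=\mathbb{Z}/2$, and one can repeat the cohomological strategy: from the short exact sequence $1\to Z^s\to G^s_{\rm sc}\to G^s_{\rm ad}\to 1$ one extracts
\[
H^1(\Gamma_{\mathbb{R}},G^s_{\rm sc}(\mathbb{C}))\to H^1(\Gamma_{\mathbb{R}},G^s_{\rm ad}(\mathbb{C}))\to H^2(\Gamma_{\mathbb{R}},Z^s(\mathbb{C})),
\]
where the rightmost term is finite because $Z^s(\mathbb{C})$ is finite and the leftmost term is finite by the Borel--Serre theorem on finiteness of Galois cohomology of reductive groups over $\mathbb{R}$ (equivalently, by the classical classification of real forms). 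This bounds $H^1(\Gamma_{\mathbb{R}},G^s_{\rm ad}(\mathbb{C}))$ by a finite set, finishing that case.

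The footnote about removing the characteristic hypothesis is the one place requiring extra care: in residue characteristics dividing the order of $Z^s$, the identification in Corollary \ref{classifi} must be replaced by a statement that still delivers finiteness, e.g.\ using flat cohomology of the finite flat group scheme $Z^s$ together with the analogue of Tate duality in this setting, or by reducing to the simply connected case via Kneser's theorem (Theorem \ref{aux}) whose proof is characteristic-independent. I expect this to be the only real obstacle; the main line of argument is otherwise just the observation that a quotient of a finite abelian group is finite.
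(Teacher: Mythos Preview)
Your proof is correct and follows the same approach as the paper. The paper's argument is identical in the non-archimedean case (invoke Corollary~\ref{classifi} and note that coinvariants of a finite group are finite), and in the archimedean case it is even terser: it simply cites the Cartan classification of real forms rather than going through the cohomology sequence---note that Borel--Serre applies directly to $G^s_{\rm ad}$, so your detour through the exact sequence is not needed.
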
 

\begin{proof} Recall that if $F=\Bbb R$ then we have the well known Cartan classification of forms of $G$ over $F$. This implies the proposition in the archimedean case. In the non-archimedean case the proposition follows from Corollary \ref{classifi}.  
\end{proof}

\begin{example} Let $G=SL_n$ and ${\rm char}(F)$ be coprime to $n$. 

(1) {\it  The split inner class.} In this case, $Z^s=\mu_n$. Thus by Corollary \ref{classifi}, forms in this class are parametrized by $\Bbb Z/n$. Namely, we identify $\Bbb Z/n$ with $\frac{1}{n}\Bbb Z/\Bbb Z\subset \Bbb Q/\Bbb Z={\rm Br}(F)$. Thus every $m\in \Bbb Z/n$ (represented by an integer in $[1,n]$) gives rise to a central division algebra $D_m$ over $F$ of dimension 
$(n/m)^2$, and the corresponding form $G^\sigma$ is $SL_{n/m,D_m}$ (i.e., 
$G^\sigma(E)=SL_{n/m}(E\otimes_F D_m)$ for a field extension $E\supset F$).

(2) {\it  The non-split inner class $C_L$ attached to a separable quadratic field extension 
$L$ of $F$ ($n\ge 3$).} The quadratic extension defines a character $\chi_L: \Gamma_F\to \pm 1$, which gives rise to an action of $\Gamma_F$ on $\Bbb Z$; we denote this module 
by $\Bbb Z_L$. Then $Z^s=\mu_n\otimes_{\Bbb Z} \Bbb Z_L$, so the inner class $C_L$ is parametrized by $(\Bbb Z_L/n)_{\Gamma_F}$, which is trivial if $n$ is odd and $\Bbb Z/2$ if $n$ is even. Thus we should expect the quasi-split form and also an additional form for even $n$. And indeed, this is the case: these forms are the corresponding {\bf  special unitary groups}. 
Namely, recall that if $N: L^\times \to F^\times$ is the norm map then 
$|F^\times/N(L^\times)|=2$. Thus we have two equivalence classes of nondegenerate Hermitian forms on $L^n$ up to isomorphism -- $B_+$ whose determinant is a norm and $B_-$ whose determinant is not. So we have the corresponding special unitary groups 
$SU_{n,L}^+$ and $SU_{n,L}^-$ (namely, $SU^\pm_{n,L}(E)=SU^\pm_n(E\otimes_F L)$ for a field extension $E\supset F$). However, if $n$ is odd then for any non-norm $a\in F^\times$, the forms $aB_-$ and $B_+$ are equivalent, so these two groups are isomorphic. 

We note that $SU^+_{2,L}=SL_{1,D_2}$ and $SU^-_{2,L}=SL_2$ (for any $L$).  
\end{example} 

\begin{remark} If ${\rm char}(F)=0$ then
$F$ has finitely many extensions 
of every fixed degree. Thus for any finite group $\Gamma$, there are finitely many homomorphisms 
$\Gamma_F\to \Gamma$. Also by a theorem of Jordan and Zassenhaus, for each $n$ there are finitely many finite subgroups of $GL_n(\Bbb Z)$ up to isomorphism, so 
the number of homomorphisms $\Gamma_F\to GL_n(\Bbb Z)$ 
(i.e., of $F$-forms of the $n$-dimensional torus) is finite. It follows that the number of inner classes over $F$ of any split connected reductive group is finite as well. 

This is, however, false in positive characteristic $p>0$, as in this case 
$|{\rm Hom}(\Gamma_F,\Bbb Z/p)|=\infty$. For example, for $p=2$ 
there are infinitely many separable quadratic extensions of $F$, so there are infinitely many inner classes of $SL_n$ for $n\ge 3$. But this is not going to matter for us here. 
\end{remark} 

Finally, we have 

\begin{proposition}\label{finii}
Let $T$ be an abelian reductive group over $F$. 
Then 

(i) there exists a finite subgroup $R\subset H^1(\Gamma_F,T(F_{\rm sep})) $ such that 
$H^1(\Gamma_F,T(F_{\rm sep}))/R$ embeds into $H^1(\Gamma_F,(T/T_0)(F_{\rm sep}))$, where 
$T_0$ is the identity component of $T$;

(ii) If ${\rm char}(F)$ is coprime to the order of $T/T_0$ then 
the group $H^1(\Gamma_F,T(F_{\rm sep}))$ 
is finite. 
\end{proposition}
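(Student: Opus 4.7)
The plan is to apply the long exact sequence of Galois cohomology to the short exact sequence of $F$-group schemes $1\to T_0\to T\to T/T_0\to 1$, which yields
$$H^1(\Gamma_F,T_0(F_{\rm sep}))\xrightarrow{\iota} H^1(\Gamma_F,T(F_{\rm sep}))\to H^1(\Gamma_F,(T/T_0)(F_{\rm sep})).$$
For (i), I set $R:={\rm im}(\iota)$. Exactness gives the required embedding $H^1(\Gamma_F,T(F_{\rm sep}))/R\hookrightarrow H^1(\Gamma_F,(T/T_0)(F_{\rm sep}))$, so the content of (i) reduces to showing that $R$ is finite. Since $R$ is a quotient of $H^1(\Gamma_F,T_0(F_{\rm sep}))$, it suffices to prove the classical fact that $H^1(\Gamma_F,T_0(F_{\rm sep}))$ is finite for any torus $T_0$ over the local field $F$.

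For this finiteness statement I would argue as follows. Pick a finite Galois extension $L/F$ splitting $T_0$, so that $T_0$ becomes $\mathbb{G}_m^n$ over $L$. Hilbert 90 then gives $H^1(\Gamma_L,T_0(F_{\rm sep}))=0$, and the inflation-restriction sequence identifies $H^1(\Gamma_F,T_0(F_{\rm sep}))$ with $H^1(\on{Gal}(L/F),T_0(L))$. By Tate--Nakayama duality for tori over local fields, the latter is canonically isomorphic to $\hat{H}^{-1}(\on{Gal}(L/F),X_*(T_0))$, which is finite since it is a subquotient of the finitely generated abelian group $X_*(T_0)$ killed by $|\on{Gal}(L/F)|$. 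In the archimedean case $F=\Bbb R$, the argument simplifies considerably: $\Gamma_F=\Bbb Z/2$ is finite, so basic finite group cohomology with a finitely generated coefficient module already yields finiteness.

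For (ii), the coprimality hypothesis ensures that the finite $F$-group scheme $T/T_0$ is \'etale, so $A:=(T/T_0)(F_{\rm sep})$ is a finite $\Gamma_F$-module of order prime to ${\rm char}(F)$. Theorem \ref{tdu} then gives that $H^1(\Gamma_F,A)$ is finite. Combined with part (i), $H^1(\Gamma_F,T(F_{\rm sep}))$ is an extension of a finite subgroup of $H^1(\Gamma_F,A)$ by the finite group $R$, hence is itself finite.

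The main technical input, and the step I expect to be the chief obstacle, is the classical finiteness of $H^1(\Gamma_F,T_0)$ for a torus over a local field. In the non-archimedean case this rests on Tate--Nakayama duality, whose crucial effect is to convert the large coefficient module $T_0(L)$ into the finitely generated abelian group $X_*(T_0)$, on which Tate cohomology of a finite group is automatically finite. Everything else is a formal manipulation of the long exact sequence together with the invocation of Theorem \ref{tdu}.
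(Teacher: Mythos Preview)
Your proof is correct and follows the same route as the paper: reduce (i) via the long exact sequence to finiteness of $H^1(\Gamma_F, T_0)$ for a torus, handle this by inflation--restriction plus Hilbert~90, and derive (ii) from (i) using Theorem~\ref{tdu}. The only difference is the final finiteness step for $H^1({\rm Gal}(L/F), T_0(L))$: you invoke Tate--Nakayama duality, while the paper simply observes that this group has exponent dividing $[L:F]$ and concludes finiteness from that.
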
 

\begin{proof} By Theorem \ref{tdu}, (ii) follows from (i), so it remains to prove (i). 

We have an exact sequence 
$$
H^1(\Gamma_F,T_0(F_{\rm sep}))\to H^1(\Gamma_F,T(F_{\rm sep}))\to H^1(\Gamma_F,(T/T_0)(F_{\rm sep})).
$$
Thus it suffices to prove (i) when $T=T_0$ is connected (a torus), i.e., to show that in this case $H^1(\Gamma_F,T(F_{\rm sep}))$ is finite. Let $\Gamma_E\subset \Gamma_F$ be the kernel of the action of $\Gamma_F$ on $T$. 
Then we have the inflation-restriction exact sequence 
$$
0\to H^1(\Gamma_F/\Gamma_E,T(E))\to H^1(\Gamma_F,T(F_{\rm sep}))\to H^1(\Gamma_E,T(F_{\rm sep}))^{\Gamma_F/\Gamma_E}.
$$
By Hilbert Theorem 90, the last term vanishes, so $H^1(\Gamma_F,T(F_{\rm sep}))\cong 
H^1(\Gamma_F/\Gamma_E,T(E))$. 
But the group $H^1(\Gamma_F/\Gamma_E,T(E))$
has exponent dividing $N=[E:F]$, so it is finite. This implies the result. 
\end{proof} 

\subsection{Principal $G$-bundles}  \label{princibun}

Let $F,F_{\rm sep},\Gamma_F$ be as above, $X$ be an algebraic $F$-variety, and $G$ an algebraic $F$-group. We denote by $Bun _G(X)$ the $F$-stack of principal $G$-bundles on $X$. Then $ \Gamma_F $ acts on the set $Bun _G(X)(F_{\rm sep})$.
If a $G$-bundle  $P \in Bun _G(X)( F_{\rm sep}) $ is defined by transition functions 
$g_{ij}: U_i\cap U_j\to G$ where $\{ U_i\}$ is an open cover of $X$, and $\gamma \in \Gamma_F $, then we can define the $G$-bundle  $P^\gamma $ by the transition functions $g_{ij}^\gamma : U_i ^\gamma \cap U_j ^\gamma\to G$. It is clear that this definition is independent on choices and gives rise to an action of $\Gamma_F$ on ${\rm Bun}_G(X)(F_{\rm sep})$. 

Now let us write this definition slightly more explicitly. Let $\theta: \Gamma_F\to {\rm Aut}G(F_{\rm sep})$ be a 1-cocycle, $\sigma(\gamma):=\theta(\gamma)\circ \gamma$ be the corresponding action of $\Gamma_F$, and $G^\sigma$ be the form of $G$ over $F$ defined by $\sigma$. Also let $\tau$ be the natural action of $ \Gamma_F$ on $X ( F_{\rm sep})$ (the $F$-structure on $X$). Then given a principal $G^\sigma$-bundle $P$ on $X$ defined over $F_{\rm sep}$, we have the $G^\sigma$-bundle $P^\gamma=(\sigma,\tau)(\gamma)P$ with transition functions $g_{ij}^\gamma=g^{(\sigma,\tau,\gamma)}_{ij}$ given by 
$$
g^{(\sigma,\tau,\gamma)}_{ij}=\sigma(\gamma)\circ g_{ij}\circ \tau(\gamma)^{-1}.
$$
Let ${\rm Bun}_{G,\sigma}(X,\tau):=Bun_G(X)(F_{\rm sep})^{\Gamma_F}$ be the set of fixed points of this action of $\Gamma_F$.

Now suppose that $G$ is a split connected reductive group over $\Bbb Z$. 
If $h: \Gamma_F\to G_{\rm ad}^\sigma(F_{\rm sep})$ is a 1-cocycle then 
$$
g^{(h\sigma,\tau,\gamma)}_{ij}=h(\gamma)\circ g^{(\sigma,\tau,\gamma)}_{ij}.
$$
Hence $h(\gamma)$ defines a canonical isomorphism
$(h\sigma,\tau)(\gamma)P\cong (\sigma,\tau)(\gamma)P$. 
In other words, $(\sigma,\tau)(\gamma)$ depends only on the inner class $C(\sigma)$ of $\sigma$, i.e. on $s\in {\rm Hom}(\Gamma_F,{\rm Aut}\Delta_G)$ such that $C(\sigma)=C(s)$. Thus $(\sigma,\tau)(\gamma)= (s,\tau)(\gamma)$ for all $\gamma$ and ${\rm Bun}_{G,\sigma}(X,\tau)={\rm Bun}_{G,s}(X,\tau)$. 

\subsection{Moduli of $G$-bundles on a smooth projective curve}\label{smpro}

Let $X$ be a smooth irreducible projective curve of genus ${\rm g}\ge 2$ over $F$. In this case we have a notion of a {\bf  regularly stable} principal $G$-bundle on $X$, which is a stable bundle whose group of automorphisms  is the minimal possible, i.e., reduces to the center $Z$ of $G$. 

The set ${\rm Bun}^\circ_G(X)(F_{\rm sep})\subset {\rm Bun}_G(X)(F_{\rm sep})$ of regularly stable bundles is the set of $F_{\rm sep}$-points of a smooth algebraic variety ${\rm Bun}^\circ_G(X)$ of dimension $({\rm g-1})\dim G$ defined over $F$ (this is, in fact, the underlying variety of a stack which is the quotient of a variety by the trivial action of $Z$). Moreover, every pair $(s,\tau)$ 
defines a form ${\rm Bun}^\circ_G(X)_{s,\tau}$ of this variety.  

Let 
$$
{\rm Bun}^\circ_{G,s}(X,\tau):={\rm Bun}^\circ_G(X)_{s,\tau}(F)\subset {\rm Bun}_{G,s}(X,\tau)
$$
be the subset of isomorphism classes of regularly stable bundles.

Define a {\bf  pseudo}-$F$-{\bf  structure} on 
$P\in {\rm Bun}_{G,s}(X,\tau)$   
to be a collection of isomorphisms
$$
A(\gamma): (s,\tau)(\gamma)P\to P,\ \gamma\in \Gamma_F.
$$ 
Such a data defines a 2-cocycle $a=a_A$ 
on $\Gamma_F$ with coefficients in $Z^s(F_{\rm sep})$ such that 
\begin{equation}\label{condi1}
A(\gamma_1\gamma_2)=A(\gamma_1)\circ (s,\tau)(\gamma_1)(A(\gamma_2))\circ a(\gamma_1,\gamma_2). 
\end{equation} 
We will say that a pseudo-$F$-structure $A$ is an $F$-{\bf  structure}
if $a_A=1$. 

Any two pseudo-$F$-structures
$A,A'$ on $P$ differ by a 
1-cochain $c: \Gamma_F\to Z^s(F_{\rm sep})$, 
and $a_A/a_{A'}=dc$. Thus the class $[a_A]\in H^2(\Gamma_F,Z^s(F_{\rm sep}))$ does not depend on $A$ 
and only depends on $P$, so we'll denote it by $\alpha_P$. So we obtain 

\begin{lemma} 
We have a decomposition
$$
{\rm Bun}^\circ_{G,s}(X,\tau)=
\sqcup_{\alpha\in H^2(\Gamma_F,Z^s(F_{\rm sep}))}{\rm Bun}^\circ_{G,s,\alpha}(X,\tau),
$$ 
where ${\rm Bun}^\circ_{G,s,\alpha}(X,\tau)$ is the subset of $P$ with $\alpha_P=\alpha$. 
\end{lemma}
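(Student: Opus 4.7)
The plan is to prove the lemma by showing that the class $\alpha_P := [a_A] \in H^2(\Gamma_F, Z^s(F_{\rm sep}))$ is independent of the choice of pseudo-$F$-structure $A$ on $P$; once this is established, the decomposition is immediate upon setting ${\rm Bun}^\circ_{G,s,\alpha}(X,\tau) := \{P \in {\rm Bun}^\circ_{G,s}(X,\tau) : \alpha_P = \alpha\}$. Note that every $P \in {\rm Bun}_{G,s}(X,\tau)$ admits at least one pseudo-$F$-structure: since by definition of the fixed point set $(s,\tau)(\gamma)P \cong P$ for every $\gamma$, we may use the axiom of choice to pick an isomorphism $A(\gamma) : (s,\tau)(\gamma)P \to P$ for each $\gamma \in \Gamma_F$.

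First I check that the map $a_A$ defined by the relation (\ref{condi1}) really takes values in $Z^s(F_{\rm sep})$ and is a $2$-cocycle. Both $A(\gamma_1\gamma_2)$ and the composition $A(\gamma_1) \circ (s,\tau)(\gamma_1)(A(\gamma_2))$ are isomorphisms $(s,\tau)(\gamma_1\gamma_2)P \to P$, so their ratio $a_A(\gamma_1,\gamma_2)$ is an automorphism of $P$; by regular stability of $P$ we have ${\rm Aut}(P) = Z^s(F_{\rm sep})$, so $a_A$ is indeed $Z^s$-valued. The $2$-cocycle identity then arises from expanding $A(\gamma_1\gamma_2\gamma_3)$ via (\ref{condi1}) in the two possible ways (grouping as $(\gamma_1\gamma_2)\gamma_3$ and as $\gamma_1(\gamma_2\gamma_3)$) and comparing; this is a direct computation using that $(s,\tau)$ is a $\Gamma_F$-action compatible with composition of bundle morphisms.

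Next I show independence of $A$. Given another pseudo-$F$-structure $A'$, set $c(\gamma) := A(\gamma)^{-1} \circ A'(\gamma) \in {\rm Aut}((s,\tau)(\gamma)P) = Z^s(F_{\rm sep})$, again by regular stability. Substituting $A'(\gamma) = A(\gamma) \circ c(\gamma)$ into the defining relation for $a_{A'}$ and simplifying using (\ref{condi1}) for $A$ yields $a_{A'} = a_A \cdot dc$ inside $Z^s(F_{\rm sep})$, where $dc$ denotes the coboundary of the $1$-cochain $c$ with respect to the $\Gamma_F$-module structure on $Z^s(F_{\rm sep})$ induced by $s$. Hence $[a_{A'}] = [a_A]$ in $H^2(\Gamma_F, Z^s(F_{\rm sep}))$, so $\alpha_P$ is well-defined and the lemma follows.

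The main technical point requiring care is consistent bookkeeping of the Galois twists: one must interpret $(s,\tau)(\gamma_1)(A(\gamma_2))$ unambiguously as a morphism between appropriately twisted bundles, and must identify automorphisms of $P$ and of its various Galois twists with the same abelian group $Z^s(F_{\rm sep})$ viewed as a $\Gamma_F$-module via the $s$-twisted action. Regular stability is essential at both steps above --- it is what forces $a_A(\gamma_1,\gamma_2)$ and $c(\gamma)$ to be central and thus $Z^s$-valued; without it, $c$ would be a $1$-cochain in the nonabelian group ${\rm Aut}(P)$, and the coboundary calculation defining an $H^2$ cohomology class would not make sense.
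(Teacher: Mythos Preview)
Your proof is correct and follows the same approach as the paper. In fact, the paper treats this lemma as an immediate consequence (``So we obtain'') of the sentence preceding it, which asserts exactly what you prove in detail: that any two pseudo-$F$-structures differ by a $1$-cochain $c$ with $a_A/a_{A'}=dc$, so $[a_A]$ depends only on $P$; your writeup simply makes explicit the verification that $a_A$ is a $Z^s$-valued $2$-cocycle and the role of regular stability in forcing automorphisms to be central.
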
 

In fact, it is more natural to consider a Galois covering of 
the set ${\rm Bun}^\circ_{G,s,\alpha}(X,\tau)$
which keeps track of the isomorphism $A$. 
To this end, fix a 2-cocycle $a$ representing $\alpha$. 
Then for a principal bundle 
$P\in {\rm Bun}^\circ_{G,s,\alpha}(X,\tau)$
a solution $A$ of \eqref{condi1} is unique 
up to multiplication by a 1-cocycle $c: 
\Gamma_F\to Z^s(F_{\rm sep})$. On the other hand, 
if $c$ is a coboundary then $A$ and $c A$ 
are equivalent by an element of $Z(F_{\rm sep})$. 
Thus the set of solutions $A$ of \eqref{condi1} up to isomorphism
is a torsor over $H^1(\Gamma_F,Z^s(F_{\rm sep}))$. 
In other words, the set ${\rm Bun}^\circ_{G,s,a}(X,\tau)$ of 
isomorphism classes of pseudo-$F$-structures $(P,A)$ satisfying \eqref{condi1} with $a_A=a$ is a $H^1(\Gamma_F,Z^s(F_{\rm sep}))$-torsor over ${\rm Bun}^\circ_{G,s,\alpha}(X,\tau)$. 

Furthermore, if $[a]=[a']=\alpha$ and $a/a'=dc$ 
then multiplication by the 1-cochain $c$ defines a bijection 
$\nu_c: {\rm Bun}^\circ_{G,s,a}(X,\tau)\cong {\rm Bun}^\circ_{G,s,a'}(X,\tau)$ which depends on $c$ only up to coboundaries, and  
if $c$ is a 1-cocycle (so $a=a'$), this recovers the action of $H^1(\Gamma_F,Z^s(F_{\rm sep}))$ on the fibers of the projection 
$$
{\rm Bun}^\circ_{G,s,a}(X,\tau)\to {\rm Bun}^\circ_{G,s,\alpha}(X,\tau).
$$ 

In other words, for any $\alpha$ we may consider the groupoid $\bold H_\alpha$ 
whose objects are 2-cocycles $a$ with $[a]=\alpha$ and morphisms 
are ${\rm Hom}(a,a')=\lbrace c: a/a'=dc\rbrace$ with composition defined by addition. Then canonically we have an $\bold H_\alpha$-set ${\rm Bun}^\circ_{G,s,\alpha}(X,\tau)$ (a functor $\bold H_\alpha\to {\bf  Sets}$), which is the collection of sets ${\rm Bun}^\circ_{G,s,a}(X,\tau),[a]=\alpha$ with an action of the groupoid $\bold H_\alpha$.

\subsection{The form of $G$ attached to a principal bundle and a point}\label{formpt}
 
Now let $x\in X(F_{\rm sep})$ be a point with  field of definition $E$ and stabilizer $\Gamma_x=\Gamma_E\subset \Gamma_F$. Let $P\in {\rm Bun}^\circ_{G,s}(X,\tau)$
and $P_x$ be the fiber of $P$ at $x$. Fix a pseudo-$F$-structure 
$A$ on $P$. Then for $\gamma\in \Gamma_E$
we have an isomorphism $A(\gamma): P_x\to P_x$. By \eqref{condi1}, if we identify $P_x$ with $G$, we obtain a 1-cocycle $\theta: \Gamma_E\to G_{\rm ad}^s(F_{\rm sep})$; indeed, the $a_A$-factor  goes away upon projection to the adjoint group. Moreover, this cocycle is independent on the choice of $A$, since two different choices differ by a central element of $G$ which maps to $1$ in $G_{\rm ad}$. 

When we change the identification $P_x\cong G$ by $g\in G(F_{\rm sep})$, the cocycle $\theta$ changes by the coboundary $dg$, so we obtain a well defined cohomology class $[\theta]\in H^1(\Gamma_E,G_{\rm ad}^s(F_{\rm sep}))$. This class defines an action $\sigma=\sigma_\theta$ 
of $\Gamma_E$ on $G(F_{\rm sep})$.
Thus we get 

\begin{proposition}\label{assign} The above procedure assigns to a bundle $P\in {\rm Bun}_{G,s}^\circ(X,\tau)$ and a point $x\in X(F_{\rm sep})$ with field of definition $E$, an $E$-form $G^\sigma$ of $G$ in the inner class $C_E(s)$.
\end{proposition}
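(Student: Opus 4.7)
The plan is to verify the three things implicit in the statement: that the construction produces a well-defined $1$-cocycle $\theta\colon \Gamma_E\to G_{\mathrm{ad}}^s(F_{\mathrm{sep}})$, that its cohomology class is independent of all auxiliary choices, and that the associated form $G^\sigma$ over $E$ lies in the inner class $C_E(s)$. Everything is essentially assembled from observations already made just before the statement, so the task is mainly to check that the central ambiguities wash out.

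First, I would verify the cocycle condition. Given a pseudo-$F$-structure $A$ on $P$ and a trivialization $P_x\cong G$, set $\theta(\gamma)\in G_{\mathrm{ad}}^s(F_{\mathrm{sep}})$ to be the image of $A(\gamma)\colon P_x\to P_x$ for $\gamma\in\Gamma_E$. The defining identity \eqref{condi1} for a pseudo-$F$-structure reads
\[
A(\gamma_1\gamma_2)=A(\gamma_1)\circ (s,\tau)(\gamma_1)(A(\gamma_2))\circ a_A(\gamma_1,\gamma_2),
\]
and since $a_A$ takes values in $Z^s(F_{\mathrm{sep}})$, its image in $G_{\mathrm{ad}}^s(F_{\mathrm{sep}})$ is trivial. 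Because $x$ is fixed by $\Gamma_E$ and the $\Gamma_E$-action on $G_{\mathrm{ad}}^s(F_{\mathrm{sep}})$ coming from the pair $(s,\tau)$ restricts to the action defined by $s|_{\Gamma_E}$, this gives
\[
\theta(\gamma_1\gamma_2)=\theta(\gamma_1)\cdot s(\gamma_1)(\theta(\gamma_2)),
\]
which is exactly the $1$-cocycle condition for $\theta\colon \Gamma_E\to G_{\mathrm{ad}}^s(F_{\mathrm{sep}})$.

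Next, I would check independence of choices. If $A'$ is another pseudo-$F$-structure, then $A'=cA$ for some $1$-cochain $c\colon\Gamma_F\to Z^s(F_{\mathrm{sep}})$, and centrality makes the image of $c$ in $G_{\mathrm{ad}}^s(F_{\mathrm{sep}})$ trivial, so $\theta$ is unchanged. Changing the trivialization of $P_x$ by $g\in G(F_{\mathrm{sep}})$ transforms $A(\gamma)$ to $g\,A(\gamma)\,s(\gamma)(g)^{-1}$ in the adjoint group, i.e.\ modifies $\theta$ by the coboundary $dg$; thus the class $[\theta]\in H^1(\Gamma_E,G_{\mathrm{ad}}^s(F_{\mathrm{sep}}))$ depends only on $(P,x)$. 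As recalled in Subsection~\ref{formsredgps}, this cohomology class canonically determines an $E$-form $G^\sigma$ of $G$, via $\sigma(\gamma)=\theta(\gamma)\circ s(\gamma)$ acting on $G(F_{\mathrm{sep}})$.

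Finally, I would identify the inner class. By construction $\theta$ factors through $G_{\mathrm{ad}}^s(F_{\mathrm{sep}})\subset\mathrm{Aut}\,G(F_{\mathrm{sep}})$, so the image of $[\theta_\sigma]$ under the third map of \eqref{h1seq} coincides with the image of $[\theta_s|_{\Gamma_E}]$, namely with $s|_{\Gamma_E}\in H^1(\Gamma_E,\mathrm{Aut}\,\Delta_G)$. This is precisely the definition of $G^\sigma$ lying in the inner class $C_E(s)$. There is no real obstacle; the one point to be watchful about is that the $2$-cocycle $a_A$ in \eqref{condi1} is central and therefore disappears upon passing to $G_{\mathrm{ad}}^s$, which is what allows $\theta$ to be a genuine $1$-cocycle rather than merely a $1$-cochain twisted by a nontrivial $2$-cocycle.
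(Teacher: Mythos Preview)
Your proposal is correct and follows essentially the same approach as the paper: the paper's ``proof'' is the paragraph immediately preceding the proposition, which already notes that the $a_A$-factor dies in $G_{\rm ad}$ (giving the cocycle condition), that the cocycle is independent of $A$ because different pseudo-$F$-structures differ by central elements, and that changing the trivialization of $P_x$ modifies $\theta$ by a coboundary. You have simply made each of these points explicit and added the verification of the inner class, which in the paper is implicit in the identification of $C_E(s)$ with $H^1(\Gamma_E,G_{\rm ad}^s(F_{\rm sep}))$ from Subsection~\ref{formsredgps}.
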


Moreover, the connecting homomorphism 
$$
\xi: H^1(\Gamma_E,G_{\rm ad}^s(F_{\rm sep}))\to H^2(\Gamma_E,Z^s(F_{\rm sep}))
$$ 
maps $[\theta]$ to $\alpha_P|_{\Gamma_E}$. It follows that if $X(E)\ne \emptyset$ then $\alpha_P|_{\Gamma_E}$ comes from an element of $H^2(\Gamma_E,Z^s_0(F_{\rm sep}))$, where $Z_0:=Z\cap [G,G]$. 

\begin{example} Let $G=\Bbb G_m$, $s=1$ (the split 1-dimensional torus). 
Then 
$$
H^2(\Gamma_F,Z^s(F_{\rm sep}))=H^2(\Gamma_F,F_{\rm sep}^\times)={\rm Br}(F),
$$ 
the Brauer group of $F$. However, if $X(E)\ne \emptyset$ and $\alpha\in {\rm Br}(F)$ with ${\rm Bun}^\circ_{G,s,\alpha}(X,\tau)\ne \emptyset$ then the image of $\alpha$ in ${\rm Br}(E)$ is trivial, i.e., 
the central division $F$-algebra $D_\alpha$ splits over $E$. 
Thus, assuming that $E$ is a Galois extension of $F$, 
we get that $\alpha$ belongs to ${\rm Br}(E/F):=H^2({\rm Gal}(E/F),E_{\rm sep}^\times)$, the relative Brauer group which is a finite subgroup of ${\rm Br}(F)$.

Thus all components 
${\rm Bun}^\circ_{G,s,\alpha}(X,\tau)$ are empty except finitely many. 
It is not hard to show that the same is true in general. 
\end{example} 

\subsection{Principal bundles on curves over a local field}\label{pblocfie}

Now let $F$ be a local field. Then ${\rm Bun}^\circ_{G,s,\alpha}(X,\tau)$  is an analytic $F$-manifold of dimension $({\rm g}-1)\dim G$. 
Thus by Proposition \ref{finii}, for any $a$ with $[a]=\alpha$, 
${\rm Bun}^\circ_{G,s,a}(X,\tau)$ is also an analytic manifold of this dimension (as it is a finite covering of 
${\rm Bun}^\circ_{G,s,\alpha}(X,\tau)$). 
Note that these manifolds are non-empty for $\alpha=1$ even if $X(F)=\emptyset$ (although they might be empty for some $\alpha\ne 1$). 

Let $E/F$ be  a finite extension and 
 $X_E\subset X(F_{\rm sep})$ be the subset of points with field of definition equal to  $E$. 
Then $X_E$ is an open subset of $X(E)$, hence a 1-dimensional analytic $E$-manifold. 
 
\begin{corollary}\label{uniq} If $F$ is non-archimedean and $P\in {\rm Bun}^\circ_{G,s,\alpha}(X,\tau)$ then the $E$-form $G^\sigma$ of $G$ in the class $C_E(s)$ attached to $P$ and $x\in X_E$ 
in Subsection \ref{formpt} is independent on $x$. 
\end{corollary}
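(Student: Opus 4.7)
The plan is to reduce the claim to the fact that the cohomological data classifying the form $G^\sigma$ is read off from $P$ and $E$ alone, not from the specific point $x\in X_E$. Specifically, the form $G^\sigma$ is classified by its Galois cohomology class $[\theta]\in H^1(\Gamma_E,G_{\rm ad}^s(F_{\rm sep}))$ constructed in Subsection \ref{formpt}, and I would show that $[\theta]$ depends on $x$ only through $E=F_{\rm sep}^{\Gamma_x}$.

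The key step is to invoke the injectivity of the connecting map
$$
\xi\colon H^1(\Gamma_E,G_{\rm ad}^s(F_{\rm sep}))\hookrightarrow H^2(\Gamma_E,Z^s(F_{\rm sep})).
$$
This follows from Corollary \ref{coro} applied over the non-archimedean local field $E$ (any finite extension of a non-archimedean local field is again non-archimedean), which in turn rests on the Kneser-Bruhat-Tits vanishing $H^1(\Gamma_E,G_{\rm sc}^s(F_{\rm sep}))=1$ of Theorem \ref{aux}. This is precisely where the hypothesis that $F$ is non-archimedean is used; over $\mathbb{R}$ the analogous map need not be injective, and indeed one expects the form attached to a real point to depend on the oval it lies on. Combined with the identity
$$
\xi([\theta])=\alpha_P|_{\Gamma_E}
$$
recorded just after Proposition \ref{assign}, this reduces the problem to showing that $\alpha_P|_{\Gamma_E}$ is independent of $x\in X_E$.

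This last step is immediate: $\alpha_P\in H^2(\Gamma_F,Z^s(F_{\rm sep}))$ is attached to $P$ alone (as the class of the 2-cocycle $a_A$ of any pseudo-$F$-structure $A$, independent of the choice of $A$), so it involves no choice of point; and by the very definition of $X_E$, every $x\in X_E$ has the same stabilizer $\Gamma_x=\Gamma_E$, so the restriction $\alpha_P|_{\Gamma_E}$ is literally the same element for every $x\in X_E$. Stringing these observations together, $[\theta]$ is determined by $\alpha_P|_{\Gamma_E}$, which is constant on $X_E$.

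The one subtlety I would be careful about is that Corollary \ref{coro} is formulated for the simply connected case, while our $G$ is an arbitrary split connected reductive group. This is resolved by the observation that the inner class of $G$ coincides with the inner class of $G_{\rm ad}$, which is classified through the simply connected cover $G_{\rm sc}^s$ of the quasi-split adjoint form $G_{\rm ad}^s$; this is the same passage already used in deriving Corollary \ref{classifi} from Corollary \ref{coro}, so no additional input is required and the argument goes through unchanged.
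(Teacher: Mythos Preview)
Your proposal is correct and follows essentially the same approach as the paper: both use the injectivity of $\xi$ (deduced from Theorem \ref{aux}) together with the identity $\xi([\theta])=\alpha_P|_{\Gamma_E}$ to conclude that $[\theta]$, and hence $G^\sigma$, is determined by $\alpha_P$ and $E$ alone. Your write-up is in fact more explicit than the paper's two-line argument, spelling out why $\alpha_P|_{\Gamma_E}$ is independent of $x$ and flagging the reductive-versus-simply-connected passage.
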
 

\begin{proof} By Theorem \ref{aux} the map $\xi$ is injective.
Thus there exists a unique $\theta$ up to coboundaries 
such that $\xi([\theta])=\alpha$, and we have $\sigma=\sigma_\theta$.
\end{proof} 

However, for $F=\Bbb R$, Corollary \ref{uniq} is not true (even for $G=SL_2$).
In this case $X(\Bbb R)=X_{\Bbb R}$ is a union of ovals, and 
the form $G^\sigma$ attached to a given bundle $P$ and $x\in X(\Bbb R)$ 
is only {\bf locally} constant in $x$, i.e., may depend on the oval 
to which $x$ belongs. This leads to interesting topological phenomena 
described in Subsection \ref{requatco} below. 

\subsection{Hecke operators} \label{heop}
As before, let $F$ be a local field and $a$ a 2-cocycle such that $[a]=\alpha$. Consider the Hilbert space 
$$
\mathcal H(s,\tau,a):=L^2({\rm Bun}^\circ_{G,s,a}(X,\tau))
$$ 
of square-integrable half-densities on the analytic $F$-manifold ${\rm Bun}^\circ_{G,s,a}(X,\tau)$. The collection of Hilbert spaces $\mathcal H(s,\tau,a)$, $[a]=\alpha$ is an $\bold H_\alpha$-Hilbert space (unitary representation of the groupoid $\bold H_\alpha$) which we will denote
by $\mathcal H(s,\tau,\alpha)$.  
We have a decomposition 
$$
\mathcal H(s,\tau,\alpha)=\oplus_{\chi}\mathcal H(s,\tau,\alpha,\chi),
$$
where $\mathcal H(s,\tau,\alpha,\chi)$ is the isotypic component of 
the character $\chi: H^1(\Gamma_F,Z^s(F_{\rm sep}))\to \Bbb C^\times$. 
Note that $\mathcal H(s,\tau,\alpha,\chi)$ is a well defined Hilbert space up to 
scaling by a phase factor. Namely, it is canonically isomorphic up to a phase factor to the space $L^2({\rm Bun}_{G,s,\alpha}^\circ(X,\tau),\mathcal L_\chi)$ of half-densities with values in $\mathcal L_\chi$, where $\mathcal L_\chi$ is the complex line bundle 
over ${\rm Bun}_{G,s,\alpha}^\circ(X,\tau)$ associated 
to the principal bundle ${\rm Bun}_{G,s,a}^\circ(X,\tau)\to {\rm Bun}_{G,s,\alpha}^\circ(X,\tau)$ via the character $\chi$ (namely, the line bundle $\mathcal L_\chi$ 
is independent of $a$ up to scaling by a phase factor). 

We would like  to define the action of commuting {\bf  Hecke operators} on $\mathcal H(s,\tau,\alpha) $ and to find 
the joint spectral decomposition of the algebra generated by these operators, see \cite{EFK2}. In more down-to-earth terms, 
these should be operators on the Hilbert space $\mathcal H(s,\tau,a)$ for any fixed choice of the representative $a$ of $\alpha$ which commute with the action of $H^1(\Gamma_F,Z^s(F_{\rm sep}))$, i.e., preserve 
the spaces $\mathcal H(s,\tau,\alpha,\chi)$. We view the eigenfunctions of these operators as the automorphic forms in the setting of the analytic Langlands correspondence.

The Hecke operators are defined as follows. Let $\Lambda$ be the coweight lattice of $G$ and $\lambda\in \Lambda^+$ be a dominant coweight. Then we define a variety ${\mathcal Z}_\lambda$ called the (regularly stable) {\bf  Hecke correspondence} equipped with a map 
$$
(p_1,p_2,q): {\mathcal Z}_\lambda\to {\rm Bun}_{G}^\circ(X)^2\times X,
$$  
see \cite{BD}. Namely, ${\mathcal Z}_\lambda$ consists of triples $T=(P_1,P_2,x)$ where $P_1,P_2\in {\rm Bun}_{G}^\circ(X)$ are principal $G$-bundles 
on $X$ which are identified outside $x$ and are in relative position $\lambda$ at $x$ (see \cite{EFK2}, p.4), and $p_i(T)=P_i$, $q(T)=x$. For $x\in X(F)$ let ${\mathcal Z}_{\lambda,s,\tau,x}$ be the set of pairs $(P_1,P_2)$ such that $(P_1,P_2,x)\in {\mathcal Z}_\lambda(F_{\rm sep})$ and $P_1,P_2$ are $F$-rational with respect to $(s,\tau)$, i.e., belong to 
${\rm Bun}_{G,s}(X,\tau)$.\footnote{For ${\mathcal Z}_{\lambda,s,\tau,x}$ to be non-empty, $\lambda$ must be invariant under the action of $\Gamma_F$ on $\Delta_G$ via $s$.} It is easy to see that if $(P_1,P_2)\in {\mathcal Z}_{\lambda,s,\tau,x}$ then $\alpha_{P_1}=\alpha_{P_2}$. 
Thus we may ``define" the Hecke operator 
$H_{x,\lambda}$ on $\mathcal H(s,\tau,\alpha)$ by the formula 
$$
(H_{x,\lambda}\psi)(P)=\int_{{\mathcal Z}_{\lambda,s,\tau,x}(P)}\psi(Q)\norm{dQ},
$$
where ${\mathcal Z}_{\lambda,s,\tau,x}(P):=\lbrace{Q: (P,Q)\in {\mathcal Z}_{\lambda,s,\tau,x}\rbrace}$ and $dQ$ is an appropriate canonically defined algebraic volume element introduced by Beilinson and Drinfeld in \cite{BD}, see also \cite{EFK2}, Theorem 1.1. If $(\lambda,\rho_G)\in \Bbb Z+\frac{1}{2}$, where $\rho_G$ is the half-sum of positive roots of $G$, then $dQ$ depends on a choice of a spin structure on $X$, but in any case $\norm{dQ}$ is independent of choices. Thus the domain of integration ${\mathcal Z}_{\lambda,s,\tau,x}(P)$ is (non-canonically) isomorphic to the set ${\rm Gr}^\lambda_{G,\sigma}(F)$ of fixed points of the cell ${\rm Gr}^\lambda_G$ in the affine Grassmannian ${\rm Gr}_G$ over $F_{\rm sep}$ under the $\Gamma_F$-action corresponding to the $F$-form $G^\sigma$ of $G$ defined by $(P,x)$ (\cite{EFK2}, Introduction and Section 5). The element $dQ$ is a $-(\lambda,\rho_G)$-form on $X$, thus the family of operators $\lbrace H_{x,\lambda},x\in X(F)\rbrace$ is an operator-valued $-(\lambda,\rho_G)$-density on $X(F)$. 

The word ``define" is in quotation marks because we can prove the convergence of these integrals only in some special cases. However, we expect that if 
$\psi$ is a smooth function (locally constant in the non-archimedean case) compactly supported in the locus of sufficiently generic bundles then the integral $H_{x,\lambda}\psi$ is convergent
and defines a function whose restriction to the open dense subset of {\bf  very stable}\footnote{A $G$-bundle $P$ on $X$ is said to be very stable if it does not admit a nonzero nilpotent Higgs field, i.e.
a section of $\Omega^1(X,{\rm ad}P)$ taking values in the nilpotent cone of ${\rm Lie}G$. It is known that 
very stable bundles are stable and form a dense open set in the variety of stable bundles, see \cite{Z} and references therein.} bundles is smooth. This is indeed easy to see when the genus ${\rm g}$ is big with respect to $\lambda$, so that the codimension of the unstable locus in the moduli stack of $G$-bundles is bigger than $\dim{\rm Gr}_G^\lambda=2(\lambda,\rho_G)$. 

This makes the operator $H_{x,\lambda}$ at least densely defined on $\mathcal H(s,\tau,\alpha)$, although it is not obvious that it lands in $\mathcal H(s,\tau,\alpha)$. 
We have conjectured in \cite{EFK2,EFK3} (see also \cite{BK2}) that in fact it does, and moreover it extends to a bounded operator which is norm-continuous in $x$ (see Conjecture \ref{spedeccon} below).

If so, then one can show that the operators $H_{x,\lambda}$ are normal (with $H_{x,\lambda}^\dagger=H_{x,\lambda^*}$), commute for different $x$ and $\lambda$ (as Hecke modifications at different points are done independently), and 
$$
H_{x,\lambda_1}H_{x,\lambda_2}=H_{x,\lambda_1+\lambda_2}.
$$  

\subsection{Hecke operators for effective divisors with coefficients in the coweight lattice}
It is possible that $X(F)=\emptyset$, then there are no $x\in X(F)$, so we cannot define the operators $H_{x,\lambda}$. But we can make a generalization.\footnote{In the case of $X=\Bbb P^1$ with ramification points and $G=PGL_2$, this generalization is discussed in \cite{EFK3}, Subsection 3.12.} Namely, let $M(X,G)$ be the set of  
$\Gamma_F$-invariant (for the action defined by $s$) functions $\mu: X(F_{\rm sep})\to \Lambda^+$ with finite support (i.e., effective divisors with coefficients in $\Lambda$). The set $M(X,G)$ is a commutative monoid graded by $\Lambda^+$: the degree $|\mu|$ is the sum of all values of $\mu$. Let $M(X,G)[\lambda]$ be the part of $M(X,G)$ of degree $\lambda\in \Lambda^+$. 

For $\mu\in M(X,G)$, let ${\mathcal Z}_{\mu,s,\tau}$ be the set of pairs $(P_1,P_2)$ of $F$-rational bundles which are identified outside ${\rm supp}\mu$ and are in relative position $\mu(x)$ at each point $x\in {\rm supp}\mu$. Then we can define the Hecke operator 
$$
(H_\mu\psi)(P)=\int_{{\mathcal Z}_{\mu,s,\tau}(P)}\psi(Q)\norm{dQ},
$$
where ${\mathcal Z}_{\mu,s,\tau}(P):=\lbrace{Q: (P,Q)\in {\mathcal Z}_{\mu,s,\tau}\rbrace}$. 
For example, if $x\in X(F)$ and $\mu=\lambda \delta_x$ then 
$H_\mu=H_{x,\lambda}$. 

The set $M(X,G)[\lambda]$ is the direct limit of subsets 
$$
M(X,G,E)[\lambda]=\lbrace \mu\in M(X,G)[\lambda]|{\rm supp}\mu\subset X(E)\rbrace
$$ 
over finite Galois extensions $F\subset E\subset F_{\rm sep}$. Note that $M(X,G,E)[\lambda]$ has a natural topology induced by the topology of $F$, so we have the topology of direct limit 
on $M(X,G)[\lambda]$ for each $\lambda$, hence on $M(X,G)$. We expect that the operator $H_\mu$ is norm-continuous with respect to $\mu$ in this topology. 

Moreover, let $x\in X_E$ for a finite extension $F\subset E\subset F_{\rm sep}$ and 
$\mu_x:=\sum_{\gamma\in \Gamma_F/\Gamma_E}\gamma(\lambda\delta_x)$ 
for some $\Gamma_E$-invariant coweight $\lambda\in \Lambda^+$. 
Then $H_{\mu_x}$ is an operator-valued $-(\lambda,\rho_G)$-density on the
$1$-dimensional analytic $E$-manifold $X_E$. 

These operators are expected to have similar properties to $H_{x,\lambda}$ for $x\in X(F)$ (bounded, normal with $H_\mu^\dagger=H_{\mu^*}$, $H_{\mu_1}H_{\mu_2}=H_{\mu_1+\mu_2})$, except that they always exist, as $X(F_{\rm sep})\ne \emptyset$. 

\begin{remark}\label{moregen} Let $C$ be a finite central subgroup of $G$ defined over $F$
(for example, we can take $C=Z$ if $G$ is semisimple). Then in genus zero the Hecke operators 
$H_{\mu}$ make sense more generally, when $\mu\in M(X,G/C)[\la]$ with $\lambda\in \Lambda_+$. 
\end{remark} 

\subsection{The spectral decomposition}\label{spedec}
As we have mentioned, the main problem of the analytic Langlands correspondence is to describe the joint spectral decomposition of the Hecke operators $H_\mu$. 

The following conjecture was made in \cite{EFK2} (see Conjecture 1.2 and Corollary 1.3).

\begin{conjecture}\label{spedeccon} If $G$ is semisimple and $\mu$ is nonzero on every simple factor then the operator $H_\mu$ is compact, and the intersection 
of the kernels of $H_\mu$ (for various $\mu$) is zero. 
Thus the spectrum of $\lbrace H_\mu\rbrace$ is discrete, i.e., we have an orthogonal decomposition 
$$
\mathcal H(s,\tau,\alpha)=\oplus_k \mathcal H(s,\tau,\alpha)_k,
$$
where $\mathcal H(s,\tau,\alpha)_k$ are finite dimensional joint eigenspaces. 
\end{conjecture}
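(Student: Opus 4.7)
The plan is to split the conjecture into (i) compactness of each $H_\mu$ on $\mathcal H(s,\tau,\alpha)$, and (ii) triviality of $\bigcap_\mu \Ker H_\mu$; given both, the commutation relations $H_{\mu_1}H_{\mu_2}=H_{\mu_1+\mu_2}$ together with normality $H_\mu^\dagger = H_{\mu^*}$ reduce the joint spectral decomposition to the classical spectral theorem for a commuting family of normal compact operators, yielding the orthogonal sum of finite-dimensional eigenspaces. The analytic content is concentrated in (i).

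For (i), I would present $H_\mu$ as an integral operator whose Schwartz kernel is the Beilinson-Drinfeld half-density on the Hecke correspondence $\mathcal Z_{\mu,s,\tau}$, and approximate it by truncations. Concretely, choose an exhaustion $K_1\subset K_2\subset\cdots$ by compacta inside the very stable locus of ${\rm Bun}^\circ_{G,s,a}(X,\tau)$, and let $H_\mu^{(n)}$ denote the operator whose kernel is the original kernel restricted to $K_n\times K_n$. For a very stable $P$ the fiber $\mathcal Z_{\mu,s,\tau}(P)$ arises from a finite product $\prod_{x\in\supp\mu}{\rm Gr}_{G,\sigma_x}^{\mu(x)}(F)$ of Schubert cells; these are compact in the non-archimedean case and carry finite Beilinson-Drinfeld mass in the archimedean case, so each $H_\mu^{(n)}$ is Hilbert-Schmidt. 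The key analytic step is to show that $\{H_\mu^{(n)}\}$ is Cauchy in operator norm, whence the limit $H_\mu$ is automatically bounded and compact (a norm-limit of compact operators being compact). This reduces to a uniform decay estimate for the Hecke integral as one approaches the non-very-stable boundary. Under the hypothesis that $\mu$ is nonzero on every simple factor, for sufficiently large $|\mu|$ the codimension of the unstable locus in the full stack ${\rm Bun}_G$ strictly exceeds $\dim {\rm Gr}_G^{|\mu|} = 2(|\mu|,\rho_G)$, so that push-pull along the Hecke correspondence cannot introduce a nonintegrable singularity at the boundary; for small $\mu$ the assertion is propagated back via the semigroup law $H_{\mu_1}H_{\mu_2} = H_{\mu_1+\mu_2}$ combined with normality.

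For (ii), I would argue that the family $\{H_\mu\}$ faithfully encodes the local Satake operators at every closed point of $X$: as $\mu$ varies over divisors supported at a fixed point $x$, the corresponding Hecke kernels generate the full bi-maximal-compact-invariant convolution algebra on $G^\sigma(F_x)$, whose action on $\psi$ via the Hecke correspondence suffices to recover $\psi$ on the very stable locus through an Iwasawa-type Plancherel inversion. Any $\psi \in \bigcap_\mu \Ker H_\mu$ must then vanish almost everywhere on ${\rm Bun}^\circ_{G,s,a}(X,\tau)$. The principal obstacle is the global boundary estimate in (i): in the archimedean case I expect this to follow from the coupling of $H_\mu$ to the quantum Hitchin Hamiltonians through the universal oper equation of \cite{EFK2}, providing an elliptic regularity that transfers compactness from the Hitchin system to the Hecke operators, whereas in the non-archimedean case a Harder-Narasimhan truncation in the style of Arthur-Lafforgue seems the more natural route. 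A uniform bound valid simultaneously for all $\mu$ with respect to the direct-limit topology on $M(X,G)$ appears to require new input beyond what is currently available in \cite{EFK1,EFK2,EFK3}, which is presumably why the statement remains conjectural.
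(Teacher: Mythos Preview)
The statement you are addressing is a \emph{conjecture}; the paper does not supply a proof. It is recorded in Subsection~2.10 as Conjecture~\ref{spedeccon}, with a reference back to \cite{EFK2}, and no argument is given beyond the expectation. You yourself recognize this in your final paragraph. So there is nothing in the paper to compare your proposal against, and your write-up should be read as a strategy sketch rather than a proof.

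That said, your sketch contains two concrete errors worth flagging. First, the codimension inequality runs the wrong way: the codimension of the unstable locus in $Bun_G$ is a fixed quantity (depending on $G$ and the genus ${\rm g}$), whereas $\dim{\rm Gr}_G^{|\mu|}=2(|\mu|,\rho_G)$ grows with $|\mu|$. The paper's remark in Subsection~\ref{heop} is that the inequality holds when ${\rm g}$ is large relative to $\lambda$, i.e.\ for \emph{small} $\mu$ on a fixed curve, not for large $|\mu|$. Second, and more seriously, the semigroup law cannot propagate compactness backward: from $H_{\mu_1}H_{\mu_2}=H_{\mu_1+\mu_2}$ compact one cannot conclude that either factor is compact (consider a projection times a shift). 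So your bootstrap from large $|\mu|$ to small $\mu$ fails in both directions. The approach to (ii) via local Satake inversion is also purely heuristic; note that in the one case where the paper \emph{does} prove the analogue of (ii) (Corollary~\ref{specdec} for $PGL_2$ on $\Bbb P^1$ with parabolic points), the argument goes through the explicit asymptotics of $H_x$ as $x$ approaches a ramification point, not through any Plancherel-type inversion.
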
 

\subsection{The abelian case} 

Consider the abelian case, i.e., $G=\Bbb G_m^n$ is a torus. 
Then an inner class (or, equivalently, an $F$-form) of $G$ 
is defined by a homomorphism with finite image $s: \Gamma_F\to GL_n(\Bbb Z)$, and all bundles are stable with the automorphism group $G=Z$. Thus ${\rm Bun}_{G,s}^\circ(X,\tau)$ is the group ${\rm Pic}(X)^n(F_{\rm sep})^{\Gamma_F}$ where $\Gamma_F$ acts via $(s,\tau)$. We have a homomorphism 
$$
\varphi: {\rm Pic}(X)^n(F_{\rm sep})^{\Gamma_F}\to H^2(\Gamma_F,Z^s(F_{\rm sep})), 
$$
so we may consider the fibers 
$$
{\rm Pic}(X)^n(F_{\rm sep})^{\Gamma_F}_\alpha:=\varphi^{-1}(\alpha),
$$
and 
$$
\mathcal H(s,\tau,\alpha,\chi)=L^2({\rm Pic}(X)^n(F_{\rm sep})_\alpha^{\Gamma_F},\mathcal L_\chi).
$$ 
The Hecke operators act by translations. 
Thus the spectral decomposition in this case is just the decomposition 
of $L^2({\rm Pic}(X)^n(F_{\rm sep})_\alpha^{\Gamma_F},\mathcal L_\chi)$ into the characters of some finite index subgroup of ${\rm Pic}(X)^n(F_{\rm sep})_0^{\Gamma_F}={\rm Ker}\varphi$. 

The group ${\rm Pic}(X)^n(F_{\rm sep})^{\Gamma_F}$ is isomorphic (albeit non-canonically) to the product of the compact group  ${\rm Pic}_0(X)^n(F_{\rm sep})^{\Gamma_F}$ and a lattice. So the problem boils down to finding 
the character group of ${\rm Pic}_0(X)^n(F_{\rm sep})^{\Gamma_F}$. 
In the archimedean case, this reduces to classical Fourier analysis, so 
let us consider the non-archimedean case. 

\begin{example} Suppose $n=1,s=1$ and ${\rm char}(F)=0$, so that $F$ is an extension of $\Bbb Q_p$ of some degree $m$. Then we need to describe the character group of $J(F)$ where $J:={\rm Pic}_0(X)$. Let $\mathcal O_F$ be the ring of integers in $F$, $\mathfrak m\subset \mathcal O_F$ the maximal ideal, 
and $\Bbb F_q=\mathcal O_F/\mathfrak m$ the residue field of characteristic $p$. Suppose that $X$ has a good reduction $X_0$ over $\Bbb F_q$. In this case by Hensel's lemma, $J(F)=J(\mathcal O_F)$ and thus we have a short exact sequence 
$$
0\to J(\mathfrak m)\to J(F)\to J(\Bbb F_q)\to 0,
$$ 
which implies that we have a short exact sequence 
of character groups
$$
0\to J(\Bbb F_q)^\vee\to J(F)^\vee\to J(\mathfrak m)^\vee\to 0. 
$$
The group $J(\mathfrak m)$ is the formal Lie group 
attached to $J$ over $\mathcal O_F$, which is a finitely generated 
$\Bbb Z_p$-module of rank $m{\rm g}$. So it has a (non-canonical) decomposition
$$
J(\mathfrak m)\cong {\rm Tors}(J(\mathfrak m))\oplus \Bbb Z_p^{m{\rm g}},
$$
where ${\rm Tors}(J(\mathfrak m))$ is a finite abelian $p$-group whose exact structure depends on the arithmetic of $F$ and $X$. Thus
$$
J(\mathfrak m)^\vee\cong {\rm Tors}(J(\mathfrak m))^\vee\oplus 
(\Bbb Q_p/\Bbb Z_p)^{m{\rm g}}.
$$
On the other hand, the group $J(\Bbb F_q)^\vee$ can be described 
as usual through global class field theory of the function field 
$\Bbb F_q(X_0)$. 
\end{example} 

\begin{remark} 
In general, recall that $G$ is isogenous to a product of a torus and a semisimple group. Thus the problem of computing the spectrum of Hecke operators for a general $G$ effectively reduces to the case of semisimple $G$, once the abelian case has been understood.
\end{remark} 

\subsection{The archimedean case} \label{archi} 

Consider now the case when $F=\Bbb R$ or $F=\Bbb C$. In this case 
we have the {\bf quantum Hitchin system} of Beilinson and Drinfeld (\cite{BD}). This is a commutative algebra $\mathcal D$ consisting of global differential operators acting on a square root of the canonical bundle on ${\rm Bun}_G(X)$, which is naturally isomorphic to the algebra of polynomial functions on the affine space ${\rm Op}_{\g^\vee}$ of {\bf  Beilinson-Drinfeld opers} for the Langlands dual Lie algebra $\g^\vee$ of $\g={\rm Lie}G$. The algebra $\mathcal D$ acts by unbounded (i.e., densely defined) operators on $\mathcal H(s,\tau,\alpha)$, namely, on the subspace of smooth half-densities with compact support. Moreover, for $F=\Bbb C$ there is a similar action of the complex conjugate algebra $\overline{\mathcal D}$ which commutes with $\mathcal D$, so we get an action of $\mathcal A:=\mathcal D\otimes \overline{\mathcal D}$. 

Furthermore, these algebras commute in an algebraic sense with Hecke operators, i.e., the Schwartz kernels of the Hecke operators satisfy appropriate differential equations, see \cite{EFK2}.\footnote{More precisely, these differential equations are proved in \cite{EFK2} for $F=\Bbb C$, but the same argument applies to $F=\Bbb R$, except that we get just a single equation rather than two complex conjugate equations.}  It is moreover expected that they commute in a stronger, analytic sense. Namely, we conjecture that the operators from $\mathcal D$ (and in the complex case, $\overline {\mathcal D}$) have canonical normal extensions which strongly commute with each other and with Hecke operators, thereby having a common spectral decomposition.  

\subsection{The Schwartz space} 
Let $G$ be a semisimple group. 

1. Assume first that $F$ is a non-archimedean local field. 
Let $\phi$ be a locally constant $\mathcal L_\chi$-valued half-density on 
${\rm Bun}_{G,s,\alpha}^\circ(X,\tau)$ supported at sufficiently generic bundles. 
We conjecture that the space $\mathcal S(\phi)$ generated by $\phi$ under the action of Hecke operators is finite dimensional. Then $\mathcal S(\phi)$ a direct sum of eigenspaces of Hecke operators.\footnote{Since 
the action of Hecke operators on these finite dimensional spaces is given by matrices with algebraic entries, 
this would imply that all eigenvalues of Hecke operators are algebraic numbers, as conjectured by Kontsevich in \cite{Ko}, p.19.}
So we may consider the {\bf  Schwartz space} $\mathcal S(s,\tau,\alpha,\chi)=\sum_\phi \mathcal S(\phi)$. 
Equivalently, $\mathcal S(s,\tau,\alpha,\chi)$ is the space of smooth (=finite) vectors in $\mathcal H(s,\tau,\alpha,\chi)$, or the algebraic direct sum of joint eigenspaces of Hecke operators. 

2. Now assume that $F$ is archimedean. For $\lambda\in (\Lambda^+)^{\Gamma_F}$, 
let $\mathcal H_\lambda\subset \mathcal H$ be the (algebraic) sum of the images 
of $H_\mu$ with $|\mu|=\lambda$. Then we define the {\bf  Schwartz space} $\mathcal S(s,\tau,\alpha,\chi)\subset \mathcal H(s,\tau,\alpha,\chi)$ to be the intersection of $\mathcal H_\lambda$ over all $\lambda$; thus unlike the non-archimedean case, the Schwartz space is no longer countably dimensional, but it has a natural Fr\'echet topology defined by a collection of seminorms in which it is complete. It is clear that all eigenfunctions of Hecke operators belong to 
$\mathcal S(s,\tau,\alpha,\chi)$. 

Moreover, we expect that the Schwartz space $\mathcal S(s,\tau,\alpha)$ is the intersection of domains of the operators from $\mathcal D$. In other words, it is the space of square integrable half-densities whose Fourier coefficients $c_\Lambda$ with respect to an orthonormal eigenbasis $\psi_\Lambda$ of the algebra $\mathcal D$ decay rapidly (faster than any polynomial) as a function of the eigenvalues $\Lambda_i$ of generators $D_i\in \mathcal D$.

\begin{remark} Another, more geometric definition of the Schwartz space based on the geometry of the stack of $G$-bundles on $X$, which is conjecturally equivalent to the above, is proposed in the non-archimedean case in \cite{BK2}, see also \cite{BKP1}, \cite{BKP2}. A similar definition is expected to exist in the archimedean case. 
\end{remark} 

\subsection{The ramified case}\label{ram1}

The above picture has a generalization to the ramified case, i.e., the case of bundles with level structure along an $F$-rational effective divisor on $X$. This is, in particular, required to extend the analytic Langlands correspondence to curves of genus 0 and 1 (as such curves admit stable bundles only in the ramified setting). Before considering  the ramified case we remind the following general construction.

Let $F$ be a local field, $Y$ be an analytic $F$-manifold, and $\bold G$ an analytic $F$-group acting properly and freely on $Y$. Let $\pi$ be a unitary representation of $\bold G$. Let $V_\pi:=(Y\times \pi)/\bold G$ be the associated Hilbert space bundle over the $F$-manifold $Y/\bold G$ (the action of $\bold G$ is given by $g(y,v)=(gy,gv)$).
So we have the projection $V_\pi\to Y/\bold G$ with fiber $\pi$. 
Let $\psi$ be a measurable half-density on $Y/\bold G$ with values in $V_\pi$. 
Then $|\psi|^2$ is a density on $Y/\bold G$. Let $\mathcal H(Y,\bold G,\pi)$ 
be the Hilbert space of all $\psi$ with $\int_{Y/\bold G}|\psi|^2<\infty$. 

More generally, if $\mathcal L$ is a $\bold G$-equivariant hermitian line bundle on $Y$ (i.e., a hermitian line bundle on $Y/\bold G$), the we can 
define the Hilbert space $\mathcal H(Y,\bold G,\pi,\mathcal L)$ 
of $\mathcal L$-valued sections $\psi$ with $\int_{Y/\bold G}|\psi|^2<\infty$. 

If $\bold P\subset \bold G$ is a closed subgroup and $\pi$ a unitary representation of $\bold P$ then we have a canonical isomorphism 
$$
\mathcal H(Y,\bold P,\pi,\mathcal L)\cong \mathcal H(Y,\bold G,{\rm ind}_{\bold P}^{\bold G}\pi,\mathcal L),
$$ 
where ind denotes unitary induction. Moreover, if $\pi$ is a representation  of a closed subgroup $\bold P\subset \bold N\subset N_{\bold G}(\bold P)$ then 
this space carries a unitary action of $\bold N/\bold P$. 

We are now ready to discuss the ramified case. 
The most general setting is as follows. 

Let $t_1,...,t_N$ be distinct points in $X(F_{\rm sep})$ and $\zeta_i$ be local parameters on $X$ near $t_i$. Let $\ell\ge 1$.
Denote by $Bun^{\ell}_G(X,t_1,...,t_N)$  the stack of principal $G$-bundles on $X$ trivialized on the $\ell$-th nilpotent neighborhood $D_\ell(t_i)$ of each $t_i$ (i.e., modulo $\zeta_i^\ell$). We have a torsor $ Bun^{\ell}_G(X,t_1,...,t_N)\to Bun_G(X)$  for the algebraic group 
$\prod_{i=1}^N {\rm Map}(D_\ell(t_i),G)\cong \prod_{i=1}^N G_\ell$, where for a commutative ring $R$, $G_\ell(R):=G(R[\zeta]/\zeta^\ell)$ (the identification is made using the local parameters $\zeta_i$). 

Now assume that $t_i$ are permuted by $\Gamma_F$ acting via $\tau$.
Let $\lbrace t_i,i\in S\rbrace, S\subset [1,n]$ be a set of representatives of orbits of this action. Then given $s: \Gamma_F\to {\rm Aut}\Delta_G$, we have an action of $\Gamma_F$ on $ Bun^{\ell}_G(X,t_1,...,t_N)(F_{\rm sep})$. Denote by ${\rm Bun}^{\ell\circ}_{G,s}(X,\tau,t_1,...,t_N)$ the subset of regularly stable bundles
fixed by this action. Let $F\subset E_i\subset F_{\rm sep}$ be the field of definition of $t_i$, i.e., $E_i=F_{\rm sep}^{\Gamma_{t_i}}$. Recall that in Subsection \ref{formpt}, to every $P\in  {\rm Bun}^{\ell\circ}_{G,s}(X,\tau,t_1,...,t_N)$ and each $i$ we have attached an $E_i$-form $G^{\sigma_i}$ of $G$ in the inner class $s$. This form, in turn, defines an $E_i$-form $G_\ell^{\sigma_i}$ of $G_\ell$. Let 
${\rm Bun}^{\ell\circ}_{G,s}(X,\tau,t_1,...,t_N,\sigma_1,...,\sigma_N)$
be the subset of bundles defining the fixed forms $\sigma_1,...,\sigma_N$
(clearly, if $\gamma t_i=t_j$ then $\gamma E_i=E_j$ and $\gamma \sigma_i=\sigma_j$). This is an analytic $F$-manifold with an action of $\bold G:=\prod_{i\in S} G_\ell^{\sigma_i}(E_i)$, and, as in the unramified case, it is the disjoint union of open submanifolds 
${\rm Bun}^{\ell\circ}_{G,s,\alpha}(X,\tau,t_1,...,t_N,\sigma_1,...,\sigma_N)$.

Fix a closed subgroup $\bold P\subset \bold G$ acting freely and properly on ${\rm Bun}^{\ell\circ}_{G,s}(X,\tau,t_1,...,t_N,\sigma_1,...,\sigma_N)$ and let $\pi$ be an irreducible unitary representation of $\bold P$. Analogously to the unramified case, define the Hilbert space 
$$
\mathcal H=\mathcal H(s,\tau,\alpha,\chi,t_1,...,t_N,\pi):=\mathcal H({\rm Bun}^{\ell\circ}_{G,s,\alpha}(X,\tau,t_1,...,t_N,\sigma_1,...,\sigma_N),\bold P,\pi,\mathcal L_\chi),
$$ 
where $\mathcal L_\chi$ is the line bundle defined in Subsection \ref{heop}
(it is clear that this space does not change if we replace the set of stable points by its $\bold P$-invariant dense open subset). 

The space $\mathcal H$ (conjecturally) carries an action of Hecke operators $H_\mu$ where $\mu(t_i)=0$ for all $i$, and the main question of the (ramified) analytic Langlands correspondence is to describe the joint spectrum of $H_\mu$ (which in general won't be discrete).\footnote{The definition of Hecke operators is the same as in \cite{EFK2} in the unramified case, using the Beilinson-Drinfeld isomorphism $a$ from \cite{EFK2}, Theorem 1.1. Ramification data does not alter this definition. However, 
the definition is still conjectural because of the analytic issues (landing in $L^2$, boundedness)
which are present already in the unramified case and are discussed in \cite{EFK2}.}
 Note that if $\pi$ is a unitary representation of a closed subgroup $\bold P\subset \bold N\subset N_{\bold G}(\bold P)$ (for example, $\pi=\Bbb C$, $\bold N=N_{\bold G}(\bold P)$), then this space carries a commuting action of $\bold N/\bold P$. 

Moreover, in the archimedean case the space $\mathcal H$ carries a (densely defined) action of the quantum Hitchin system (extended from the unramified case to produce a quantum integrable system), which conjecturally commutes with the Hecke operators and thus has compatible spectral decomposition.\footnote{Recall that the moduli stack $Bun^\ell_G(X,t_1,...,t_N)$  can be represented as a double quotient 
$$
Bun^\ell_G(X,t_1,...,t_N)=
{\rm Ker}
(G[[t]]\to G_\ell)^N\backslash G((t))^N/G(\mathcal O(X)),
$$  
and the quantum Hitchin system is obtained by reduction of two-sided invariant differential operators on the Kac-Moody central extension of $G((t))^N$ at the critical level (the Feigin-Frenkel center of the Kac-Moody algebra) down to the double quotient. Because of the critical level, we get commuting differential operators acting on half-forms on $Bun^\ell_G(X,t_1,...,t_N)$, and 
Beilinson and Drinfeld showed that they form a quantum integrable system (i.e., the number of algebraically independent commuting operators is the maximal possible, namely $\dim Bun^\ell_G(X,t_1,...,t_N)$).} 

Consider first the case ${\rm g}\ge 2$. In this case we may define 
stable bundles as above, to be the bundles stable in the usual sense with 
automorphism group $Z$. Then we may take $\bold P$ 
to be the entire group $\bold G$, so that $\pi=\bigotimes_{i\in S}\pi_i$, 
where $\pi_i$ are irreducible unitary representations of 
$G_\ell^{\sigma_i}(E_i)$. For instance, 
for $\ell=1$ (tame ramification) $\pi_i$ are irreducible unitary representations of the groups of $E_i$-points of the reductive groups $G^{\sigma_i}$. 

In the (generally simpler) case ${\rm g}\le 1$ the situation is a bit more tricky 
since $\bold G$ does not act freely any more, so one has to take proper subgroups $\bold P\subset \bold G$ which act freely. For simplicity consider the tamely ramified case $\ell=1$. For $i\in S$ let $\bold P_i$ be cocompact closed subgroups of $\bold G_i:=G^{\sigma_i}(E_i)$ and $\pi_i$ be finite dimensional unitary representations of $\bold P_i$. Let $\bold P:=\prod_{i\in S}\bold P_i$
and $\pi:=\bigotimes_{i\in S}\pi_i$. If $\bold P$ acts freely then 
we may define the space $\mathcal H$ as above and we expect the spectrum of Hecke operators on this space to be discrete. 

\begin{example}\label{exaa1} 1. $\bold P_i=P_i(E_i)$ for parabolic subgroups $P_i\subset G^{\sigma_i}$ defined over $E_i$; this corresponds to doing harmonic analysis on the moduli space of bundles with parabolic structures. 
For instance, if $P_i=B_i$ are Borel subgroups then we must have $\sigma_i=s$ for all $i$. 

Another extreme is $P_i=G^{\sigma_i}$ and $\pi_i=\Bbb C$. Then we recover the unramified case discussed above.  

2. $\bold G_i$ are compact. In this case we may take $\bold P_i=1$, so $\pi=\Bbb C$, and the space $\mathcal H$ carries a commuting action 
of the compact group $\bold G$, so we have 
$$
\mathcal H=\oplus_{\rho\in \widehat{\bold G}}\mathcal H_\rho\otimes \rho^*
$$
where $\rho=\bigotimes_{i\in S}\rho_i$ for some irreducibles $\rho_i$ of $\bold G_i$, and $\mathcal H_\rho:=(\mathcal H\otimes\rho)^{\bold G}$. In this case the Hecke operators act on $\mathcal H_\rho$ 
for each $\rho$.

Of course, these two examples can also be combined, with (1) occurring at  some of the points $t_i$ and (2) at other ones.  
\end{example}

\subsection{The ramified genus $0$ case}\label{ram2}
In the case ${\rm g}=0$, i.e., $X=\Bbb P^1$, the space $\mathcal H$ can be described entirely in terms of Lie theory. Indeed, suppose that the group $G$ is semisimple and simply connected and $\ell=1$ (the tamely ramified case). In this case, it suffices to consider only trivial $G$-bundles (with trivializations at $t_i$). Thus 
the moduli space is non-empty only if 
$\sigma_i=\sigma|_{\Gamma_{E_i}}$ for all $i$ up to conjugation. 
In this case, we have a natural inclusion $G^\sigma(F)\hookrightarrow G^{\sigma_i}(E_i)$, and the moduli space looks like 
$(\prod_{i\in S}G^{\sigma_i}(E_i))/G^\sigma(F)$. 
Thus 
$$
\mathcal H=\mathcal H\left((\prod_{i\in S}G^{\sigma_i}(E_i))/G^\sigma(F),\prod_{i\in S}\bold P_i,\bigotimes_{i\in S}\pi_i\right).
$$
 This picture extends in an obvious way to the case of wild ramification.\footnote{The parameters $\alpha$ and $\chi$ do not appear here, since 
the automorphism group of the trivial $G$-bundle trivialized on a  
a non-empty subset of $X$ is trivial.}   

For instance, in Example \ref{exaa1}(2), we have 
$$
\mathcal H_\rho=\left(\bigotimes_{i\in S} \rho_i\right)^{G^\sigma(F)}, 
$$
a finite dimensional space. In this case, all the analytic issues disappear and the spectral problem is automatically well defined.  

\begin{example}\label{bethan} Let $\ell=1$, ${\rm g}=0$, $N\ge 2$, $t_i\in X(F)$, $\sigma_i=\sigma$ for all $1\le i\le N$. Let $\bold P_i=1$, $\pi_i=\Bbb C$. Then 
$$
{\mathcal H}=L^2(G^\sigma(F)^N/G^{\sigma}(F)_{\rm diagonal}).
$$
So the spectral decomposition of $\mathcal H$ is an extension of the Plancherel formula for $G^\sigma(F)$. Namely, we have 
\begin{equation}\label{plan}
{\mathcal H}=\int^{\oplus}_{\widehat{G^\sigma(F)}^N}{\rm Mult}_{G^\sigma(F)}(V_N^*,V_1\otimes...\otimes V_{N-1})\otimes (V_1^*\otimes...\otimes V_N^*) d\mu(V_1)...d\mu(V_N),
\end{equation} 
where $d\mu(V)$ is the Plancherel measure of $G^\sigma(F)$ and 
${\rm Mult}_{G^\sigma(F)}(L,M)$ is the multiplicity space\footnote{The multiplicity space 
${\rm Mult}_{G^\sigma(F)}(L,M)$ is a generalization  of ${\rm Hom}_{G^\sigma(F)}(L,M)$ in presence of continuous spectrum. Namely, it coincides with the latter when 
$M$ is a Hilbert direct sum (finite or infinite) of irreducible unitary representations.} of an irreducible tempered representation $L$ of $G^\sigma(F)$ in a tempered representation 
$M$, defined using Plancherel theory (see \cite{D}, Subsection 18.8).\footnote{Note that 
the representation $V_1\otimes...\otimes V_N$ is tempered, so does not contain the trivial representation in its spectrum. Hence in the $L^2$ context we cannot talk about ${\rm Mult}_{G^\sigma(F)}(\Bbb C,V_1\otimes...\otimes V_N)$. This is why we break the symmetry and dualize one of the factors. However, it can be shown that the result does not depend on the choice of this factor.} 
For example, for $N=2$ by Schur's lemma the multiplicity space vanishes unless 
$V_1\cong V_2^*$, in which case it is 1-dimensional, so formula \eqref{plan} takes the form 
$$
{\mathcal H}=\int^{\oplus}_{\widehat{G^\sigma(F)}}(V^*\otimes V) d\mu(V),
$$
which is the usual Plancherel formula for $L^2(G^\sigma(F))$.

Each multiplicity space 
\begin{equation} 
{\mathcal H}_{V_1,...,V_N}:={\rm Mult}_{G^\sigma(F)}(V_N^*,V_1\otimes...\otimes V_{N-1})
\end{equation} 
carries an action of commuting Hecke operators. For example, 
as follows from \cite{EFK3}, Section 3, if $G=PGL_2$, $G^\sigma$ is the split form, and $x\in \Bbb P^1(F)$, then the Hecke operator has the form 
\begin{equation}\label{heckoper}
H_x\psi=\int_F \left(\begin{pmatrix}0& x-t_1\\ 1& -s\end{pmatrix}\otimes...\otimes \begin{pmatrix}0& x-t_{N-1}\\ 1& -s\end{pmatrix}\right)^{-1}\psi\norm{ds}
\end{equation}
where the tensor product acts in $V_1\otimes...\otimes V_{N-1}$.
\end{example}

\begin{remark}\label{projre} One may take $V_i$ to be {\bf  projective} representations of $G^\sigma$ (i.e., replace $G^\sigma$ with a central extension), as long as the product of Schur multipliers attached to them equals $1$. For instance,
in Example \ref{bethan} we may replace $SL_2(\Bbb R)$ 
by its universal cover $\widetilde{SL_2(\Bbb R)}$. For an example of this 
see Remark \ref{projre1} below. 
\end{remark} 

\subsection{The ramified genus $0$ case for $PGL_2$} \label{pgl2}

\subsubsection{Hypergeometric integrals}

Let $F$ be a local field and $a,b\in i\Bbb R$. For $\alpha\in \Bbb C$, ${\rm Re}\alpha>0$ let 
$$
K(\alpha):=\int_{v\in F: \norm{v}\le 1}{\norm{v}^{\alpha-1}}\norm{dv}.
$$
Under suitable normalization of the Lebesgue measure on $F$
we have $K(\alpha)=\frac{1}{\alpha}$ if $F$ is archimedean and 
$K(\alpha)=\frac{\log q}{q^\alpha-1}$ if not, where $q$ is the order of the residue field of $F$. 

For ${\rm Re}\alpha=0$ this integral does not converge (even conditionally). But we can regularize it if $\alpha\ne 0$ by using an $\epsilon$-deformation: 
\begin{equation}\label{epsdef}
\int_{v\in F: \norm{v}\le 1}{\norm{v}^{\alpha-1}}\norm{dv}:=\lim_{\epsilon\to 0+}\int_{v\in F: \norm{v}\le 1}{\norm{v}^{\alpha+\epsilon-1}}\norm{dv}.
\end{equation} 

Denote by $\Gamma^F$ the $\Gamma$-{\bf function} of $F$, which is the meromorphic function on $\Bbb C$ defined by the condition that 
$$
\mathcal F\norm{u}^{a-1}=\Gamma^F(a)\norm{u}^{-a},
$$ 
where $\mathcal F$ is the Fourier transform on $F$ (so $\Gamma^F(a)\Gamma^F(1-a)=1$).\footnote{Here for simplicity we consider the $\Gamma$-function for unramified characters, but all the formulas extend straightforwardly to general multiplicative characters of $F$.}

Recall the {\bf beta integral} 
$$
\int_F \norm{s}^{\alpha-1}\norm{s-1}^{\beta-1}\norm{ds}=B^F(\alpha,\beta):=\frac{\Gamma^F(\alpha)\Gamma^F(\beta)}{\Gamma^F(\alpha+\beta)}.
$$
Thus 
$$
B^F(\alpha,\beta)=B^F(\beta,\alpha)=B^F(\alpha,1-\alpha-\beta).
$$

Now consider the integral 
$$
I(x):=\int_{s\in F: \norm{s}\ge 1}\norm{s}^{\alpha-1}\norm{s-x}^{\beta-1}\norm{ds}
$$
where ${\rm Re}\alpha=0$, $0<{\rm Re}\beta<1$. 
Setting $s=xv^{-1}$, we get 
$$
I(x)=\norm{x}^{\alpha+\beta-1}\int_{\norm{v}\le \norm{x}}\norm{v}^{-\alpha-\beta}\norm{v-1}^{\beta-1}\norm{dv}.
$$
Thus $I(x)$ depends only on $\norm{x}$.

The following lemma is a generalization of \cite{EFK3}, Lemma 8.1. 

\begin{lemma}\label{8.1anal} If $\norm{\cdot}^{\alpha}\ne 1$ then we have 
$$
I(x)=K(-\alpha)\norm{x}^{\beta-1}+B^F(\alpha,\beta)\norm{x}^{\alpha+\beta-1}+o(\norm{x}^{{\rm Re}\beta-1}),\ x\to \infty.
$$ 
\end{lemma}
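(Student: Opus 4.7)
The plan is to make the dilation $s=xt$, for which $\norm{ds}=\norm{x}\norm{dt}$, $\norm{s}=\norm{x}\norm{t}$ and $\norm{s-x}=\norm{x}\norm{t-1}$. Writing $\rho:=\norm{x}^{-1}$, this transforms the integral into
\[
I(x)=\norm{x}^{\alpha+\beta-1}\,J(\rho),\qquad J(\rho):=\int_{\norm{t}\ge\rho}\norm{t}^{\alpha-1}\norm{t-1}^{\beta-1}\norm{dt},
\]
reducing the problem to the asymptotics of $J(\rho)$ as $\rho\to 0^{+}$. Since ${\rm Re}\,\alpha=0$, none of the integrals below converge absolutely near $\infty$ in $t$, so I would interpret them throughout as the analytic continuation in $\alpha$ from the half-plane ${\rm Re}\,\alpha>0$, equivalently as the $\epsilon\to 0^{+}$ limit of the regularization \eqref{epsdef}. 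The hypothesis $\norm{\cdot}^{\alpha}\ne 1$ is what keeps this continuation regular at the point of interest.

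I would then split off the full Beta integral by the (analytically continued) identity
\[
J(\rho)=B^{F}(\alpha,\beta)-R(\rho),\qquad R(\rho):=\int_{\norm{t}<\rho}\norm{t}^{\alpha-1}\norm{t-1}^{\beta-1}\norm{dt},
\]
and examine $R(\rho)$ as $\rho\to 0$. On this domain $\norm{t-1}^{\beta-1}$ is close to $1$: it equals $1$ identically in the non-archimedean case once $\rho<1$ (since $\norm{t-1}=1$ whenever $\norm{t}<1$), and differs from $1$ by a quantity of order $O(\norm{t}^{1/[F:\Bbb R]})$ in the archimedean case. Replacing it by $1$ and using the scaling identity
\[
\int_{\norm{t}<\rho}\norm{t}^{\alpha-1}\norm{dt}=-K(-\alpha)\,\rho^{\alpha},
\]
one obtains $R(\rho)=-K(-\alpha)\rho^{\alpha}+E(\rho)$, with $E(\rho)=O(\rho^{\eta})$ for some $\eta>0$ in the archimedean case and $E\equiv 0$ non-archimedeanly. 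The scaling identity is the archimedean fact $K(\alpha)=1/\alpha=-K(-\alpha)$; in the non-archimedean case it follows from the relation $q^{-\alpha}K(\alpha)=-K(-\alpha)$ (satisfied by the appropriately normalized $K$) together with the discrete fact that $\norm{t}<\rho$ forces $\norm{t}\le\rho/q$.

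Putting the pieces together gives $J(\rho)=B^{F}(\alpha,\beta)+K(-\alpha)\rho^{\alpha}+O(\rho^{\eta})$, and multiplying by $\norm{x}^{\alpha+\beta-1}$ produces the claimed asymptotic
\[
I(x)=K(-\alpha)\norm{x}^{\beta-1}+B^{F}(\alpha,\beta)\norm{x}^{\alpha+\beta-1}+o(\norm{x}^{{\rm Re}\,\beta-1}).
\]
The principal subtlety will be bookkeeping the analytic continuation: both leading terms carry the same real exponent ${\rm Re}\,\beta-1$, so they can be disentangled inside the $o(\cdot)$-statement only because the oscillatory ratio $\norm{x}^{\alpha}$ stays bounded away from $1$—exactly the hypothesis $\norm{\cdot}^{\alpha}\ne 1$—and this is simultaneously what prevents the $\epsilon\to 0^{+}$ regularization from encountering a logarithmic resonance at $\alpha=0$ in either $K(\alpha+\epsilon)$ or $B^{F}(\alpha+\epsilon,\beta)$.
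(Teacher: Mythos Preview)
Your argument is correct in substance, with one slip in the exposition: the divergence of $B^{F}(\alpha,\beta)$ and $R(\rho)$ for $\mathrm{Re}\,\alpha=0$ occurs at $t=0$, not near $t=\infty$ (at infinity the integrand behaves like $\norm{t}^{\alpha+\beta-2}$, which is integrable since $\mathrm{Re}\,\beta<1$; at $t=0$ it behaves like $\norm{t}^{\alpha-1}$ with $\mathrm{Re}\,\alpha=0$, which is not). Your proposed cure---analytic continuation from $\mathrm{Re}\,\alpha>0$, i.e.\ the $\epsilon$-deformation \eqref{epsdef}---is precisely the right one for the $t=0$ divergence, so the computation goes through once this is corrected. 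Note also that $J(\rho)$ itself is absolutely convergent (the origin is excluded), so only $B^{F}$ and $R(\rho)$ need to be interpreted via continuation.

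The paper's route is closely related but sidesteps the regularization entirely. It uses the inverse dilation $s=x/v$ rather than $s=xt$, sending the domain to the bounded region $\norm{v}\le\norm{x}$ and the integrand to $\norm{v}^{-\alpha-\beta}\norm{v-1}^{\beta-1}$; under this inversion the troublesome point $t=0$ becomes $v=\infty$, now outside the domain, so every integral that appears is absolutely convergent. The paper then splits at a fixed radius $R$, computes the oscillatory tail $\int_{R<\norm{v}\le\norm{x}}\norm{v}^{-\alpha-1}\norm{dv}=K(-\alpha)\bigl(\norm{x}^{-\alpha}-R^{-\alpha}\bigr)$ exactly, notes that the remaining pieces have a limit as $x\to\infty$ giving an $R$-independent constant, and identifies that constant as $B^{F}(\alpha,\beta)$. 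Your version has the virtue of writing down the beta integral at the outset; the paper's is slightly more elementary in that it never has to invoke analytic continuation.
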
 

\begin{proof} 
 We write 
$$
\norm{x}^{1-\alpha-\beta}I(x)=\int_{\norm{v}\le R}\norm{v}^{-\alpha-\beta}\norm{v-1}^{\beta-1}\norm{dv}
+\int_{R<\norm{v}\le \norm{x}}\norm{v}^{-\alpha-\beta}\norm{v-1}^{\beta-1}\norm{dv}
=
$$
$$
\int_{\norm{v}\le R}\norm{v}^{-\alpha-\beta}\norm{v-1}^{\beta-1}\norm{dv}
+
\int_{R<\norm{v}\le \norm{x}}\left(\norm{1-v^{-1}}^{\beta-1}-1\right)
\norm{v}^{-\alpha-1}\norm{dv}+
$$
$$
\int_{R<\norm{v}\le \norm{x}}\norm{v}^{-\alpha-1}\norm{dv}.
$$
The first two integrals have a limit as $x\to \infty$, namely
$$
C_R(\alpha,\beta):=\int_{\norm{v}\le R}\norm{v}^{-\alpha-\beta}\norm{v-1}^{\beta-1}\norm{dv}
+
\int_{R<\norm{v}}\left(\norm{1-v^{-1}}^{\beta-1}-1\right)
\norm{v}^{-\alpha-1}\norm{dv},
$$
while the last integral equals 
$$
K(-\alpha)(\norm{x}^{-\alpha}-R^{-\alpha}).
$$
So we get 
$$
I(x)=K(-\alpha)\norm{x}^{\beta-1}+(C_R(\alpha,\beta)-K(-\alpha)R^{-\alpha})\norm{x}^{\alpha+\beta-1}+o(\norm{x}^{{\rm Re}\beta-1}),\ x\to \infty. 
$$
Thus $C_R(\alpha,\beta)-K(-\alpha)R^{-\alpha}$ is independent of $R$. 
In fact, it is easy to show that for any $R$, 
$$
C_R(\alpha,\beta)-K(-\alpha)R^{-\alpha}=B^F(\alpha,\beta). 
$$
This implies the result.  
\end{proof} 

The following lemma is a generalization of \cite{EFK3}, Lemma 8.2.

\begin{lemma}\label{8.2anal} Let $\alpha,\beta$ be as above and $\varphi$ be a locally integrable function on $\Bbb P^1(F)$ 
which is smooth near $\infty$ and has power decay at $0$. Let 
$$
I(x,\varphi):=\int_{F}\varphi(s)\norm{s}^{\alpha-1}\norm{s-x}^{\beta-1}\norm{ds},\ \norm{x}\gg 1.
$$
Then if $\norm{\cdot}^\alpha\ne 1$ then 
$$
I(x,\varphi)=\norm{x}^{\beta-1}\int_{F}\varphi(s)\norm{s}^{\alpha-1}\norm{ds}+
\varphi(\infty)B^F(\alpha,\beta)\norm{x}^{\alpha+\beta-1}+o(\norm{x}^{{\rm Re}\beta-1}),\ x\to \infty,
$$
where the integral is understood in the sense of $\epsilon$-deformation. 
\end{lemma}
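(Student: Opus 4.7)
The plan is to reduce the claim to Lemma~\ref{8.1anal} by decomposing $\varphi$ into the constant $\varphi(\infty)$ on $\{\norm{s}\geq 1\}$ plus an integrable remainder. Set
$$
\eta(s):=\varphi(s)-\varphi(\infty)\mathbf{1}_{\norm{s}\geq 1}.
$$
The power decay of $\varphi$ at $0$ makes $\eta(s)\norm{s}^{\alpha-1}$ absolutely integrable near $0$, while smoothness of $\varphi$ at $\infty$ (i.e., smoothness of $\varphi(1/t)$ at $t=0$) forces $\varphi(s)-\varphi(\infty)=O(\norm{s}^{-\gamma})$ for some $\gamma>0$ as $\norm{s}\to\infty$, making $\eta(s)\norm{s}^{\alpha-1}$ integrable near $\infty$ as well. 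Splitting the integral accordingly,
$$
I(x,\varphi)=\varphi(\infty)\int_{\norm{s}\geq 1}\norm{s}^{\alpha-1}\norm{s-x}^{\beta-1}\norm{ds}+\int_F\eta(s)\norm{s}^{\alpha-1}\norm{s-x}^{\beta-1}\norm{ds}.
$$

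Lemma~\ref{8.1anal} directly evaluates the first summand as $\varphi(\infty)\bigl[K(-\alpha)\norm{x}^{\beta-1}+B^F(\alpha,\beta)\norm{x}^{\alpha+\beta-1}+o(\norm{x}^{{\rm Re}\beta-1})\bigr]$. For the second summand, I would factor $\norm{s-x}^{\beta-1}=\norm{x}^{\beta-1}\norm{1-s/x}^{\beta-1}$ and show that
$$
\int_F\eta(s)\norm{s}^{\alpha-1}\bigl(\norm{1-s/x}^{\beta-1}-1\bigr)\norm{ds}\longrightarrow 0\quad\text{as }\norm{x}\to\infty.
$$
To prove this, split the domain into $\{\norm{s}\leq\norm{x}/2\}$ and $\{\norm{s}>\norm{x}/2\}$. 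On the first region, $\norm{s/x}\leq 1/2$ forces $\norm{1-s/x}$ to stay in a fixed compact subset of $(0,\infty)$ uniformly in $x$, so $\norm{1-s/x}^{\beta-1}\to 1$ pointwise under a uniform bound, and dominated convergence against the integrable majorant $|\eta(s)|\norm{s}^{-1}$ handles this piece. On the second region, bound $|\eta(s)|\le C\norm{s}^{-\gamma}$ and substitute $s=xu$ to rewrite the contribution as $\norm{x}^{-\gamma}$ times $\int_{\norm{u}>1/2}\norm{u}^{-\gamma-1}\norm{1-u}^{{\rm Re}\beta-1}\norm{du}$, where the remaining integral is finite because $0<{\rm Re}\beta<1$ controls the singularity at $u=1$ and $\gamma>0$ controls decay at infinity. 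Multiplying through by $\norm{x}^{\beta-1}$ gives the second summand as $\norm{x}^{\beta-1}\int_F\eta(s)\norm{s}^{\alpha-1}\norm{ds}+o(\norm{x}^{{\rm Re}\beta-1})$.

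Finally, the substitution $v=1/s$ (with $\norm{dv}=\norm{s}^{-2}\norm{ds}$) identifies $K(-\alpha)$ with $\int_{\norm{s}\geq 1}\norm{s}^{\alpha-1}\norm{ds}$ in the $\epsilon$-regularized sense, so the identity
$$
\varphi(\infty)K(-\alpha)+\int_F\eta(s)\norm{s}^{\alpha-1}\norm{ds}=\int_F\varphi(s)\norm{s}^{\alpha-1}\norm{ds}
$$
holds, and adding the two asymptotic contributions produces the claimed expansion. The main technical obstacle is the estimate on $\{\norm{s}>\norm{x}/2\}$, where $\norm{1-s/x}^{\beta-1}$ is singular at $s=x$ and the region of integration depends on $x$; the rescaling $s=xu$ converts the moving singularity into an integrable one on a fixed domain, making crucial use of the hypotheses $0<{\rm Re}\beta<1$ and $\gamma>0$.
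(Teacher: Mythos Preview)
Your proof is correct and follows the same decomposition as the paper: write $\varphi = \eta + \varphi(\infty)\mathbf{1}_{\norm{s}\ge 1}$, apply Lemma~\ref{8.1anal} to the indicator piece, and handle $\eta$ (which vanishes at $\infty$) by a direct limit. The paper compresses the treatment of $\eta$ into the phrase ``follows by taking the limit directly''; you have supplied the details of that limit via the split at $\norm{s}=\norm{x}/2$, dominated convergence on the inner region, and the rescaling $s=xu$ on the outer region, which is exactly what the terse statement requires.
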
 

\begin{proof} If $\varphi(\infty)=0$, this follows by taking the limit directly, and 
if $\varphi=\bold 1_{\norm{s}\ge 1}$ is the indicator function then this is Lemma \ref{8.1anal}. 
So the result follows from the decomposition 
$$
\varphi(s)=(\varphi(s)-\varphi(\infty)\bold 1_{\norm{s}\ge 1})+\varphi(\infty)\bold 1_{\norm{s}\ge 1}.
$$
\end{proof} 

Now for $x\in F$, $x\ne 0,1$, $0<{\rm Re}\gamma<1$, and consider the hypergeometric integral
$$
\Phi^F(\alpha,\beta,\gamma; x):=\int_{F}\norm{s}^{\alpha-\gamma}\norm{s-1}^{\gamma-1}\norm{s-x}^{\beta-1}\norm{ds}.
$$
Taking in Lemma \ref{8.2anal} $\varphi(s)=\norm{1-s^{-1}}^{\gamma-1}$, where $0<{\rm Re}\gamma<1$, 
we get 

\begin{proposition}\label{hgin}
$$
\Phi^F(\alpha,\beta,\gamma; x)=B^F(-\alpha,\gamma)\norm{x}^{\beta-1}+B^F(\alpha,\beta)\norm{x}^{\alpha+\beta-1}+o(\norm{x}^{{\rm Re}\beta-1}),\ x\to \infty.
$$
\end{proposition}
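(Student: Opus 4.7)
The plan is to apply Lemma \ref{8.2anal} directly, with the specific choice $\varphi(s):=\norm{1-s^{-1}}^{\gamma-1}$ as suggested in the text. The first step is the algebraic identity
\[
\norm{s}^{\alpha-\gamma}\norm{s-1}^{\gamma-1}=\norm{s}^{\alpha-1}\cdot\norm{1-s^{-1}}^{\gamma-1},
\]
obtained by factoring $\norm{s-1}^{\gamma-1}=\norm{s}^{\gamma-1}\norm{1-s^{-1}}^{\gamma-1}$. This rewrites $\Phi^F(\alpha,\beta,\gamma;x)$ exactly in the form $I(x,\varphi)$ from Lemma \ref{8.2anal}.

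Second, I would check that this $\varphi$ satisfies the hypotheses of Lemma \ref{8.2anal}: namely $\varphi$ is locally integrable on $\Bbb P^1(F)$ (the only possible singularity is at $s=1$, where $\norm{s-1}^{\gamma-1}$ is integrable since $0<{\rm Re}\gamma<1$), $\varphi$ is smooth near $\infty$ (as a smooth function of $s^{-1}$ vanishing of $1-s^{-1}$ there is absent because $\varphi(\infty)=1$), and $\varphi$ has power behavior at $0$ (since $\norm{1-s^{-1}}^{\gamma-1}\sim\norm{s}^{1-\gamma}$ as $s\to 0$). The value $\varphi(\infty)=1$ will produce the coefficient $B^F(\alpha,\beta)$ of the second term.

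Third, to identify the coefficient of $\norm{x}^{\beta-1}$, I would evaluate
\[
\int_F \varphi(s)\norm{s}^{\alpha-1}\norm{ds}=\int_F\norm{1-s^{-1}}^{\gamma-1}\norm{s}^{\alpha-1}\norm{ds}
\]
by the change of variables $s=u^{-1}$, $\norm{ds}=\norm{u}^{-2}\norm{du}$. This converts the integral into
\[
\int_F \norm{u-1}^{\gamma-1}\norm{u}^{-\alpha-1}\norm{du}=B^F(-\alpha,\gamma),
\]
understood in the sense of the $\epsilon$-deformation \eqref{epsdef} because ${\rm Re}(-\alpha)=0$. Substituting back into the conclusion of Lemma \ref{8.2anal} yields the claimed asymptotic.

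The only subtle point, and the step I would be most careful about, is the meaning of the integral giving $B^F(-\alpha,\gamma)$: since ${\rm Re}\alpha=0$, neither this integral nor the one appearing in Lemma \ref{8.2anal} is absolutely convergent at infinity, so both must be interpreted via the same $\epsilon$-regularization, and I would need the change of variables $s\mapsto s^{-1}$ to commute with the regularization. This is straightforward because the substitution is a homeomorphism of $F^\times$ that sends $\norm{s}^{\alpha+\epsilon-1}$ to $\norm{u}^{-\alpha-\epsilon-1}$, so it converts the $\epsilon$-regularized defining integral for the left-hand side into the $\epsilon$-regularized defining integral for $B^F(-\alpha,\gamma)$ verbatim, and the assumption $\norm{\cdot}^\alpha\ne 1$ ensures that the limit $\epsilon\to 0^+$ exists on both sides.
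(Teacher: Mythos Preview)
Your proposal is correct and follows exactly the paper's approach: the paper derives the proposition in one line by taking $\varphi(s)=\norm{1-s^{-1}}^{\gamma-1}$ in Lemma~\ref{8.2anal}, and you have spelled out the verifications (the hypotheses on $\varphi$, the value $\varphi(\infty)=1$, and the substitution $s\mapsto s^{-1}$ identifying $\int_F\varphi(s)\norm{s}^{\alpha-1}\norm{ds}$ with $B^F(-\alpha,\gamma)$) that the paper leaves implicit. Your remark on the $\epsilon$-regularization is a useful clarification of a point the paper does not address explicitly.
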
 

\subsubsection{Principal series representations} 
Let $G$ be a split connected reductive algebraic group over a local field $F$, $B\subset G$ a Borel subgroup defined over $F$, and let $\bold G:=G(F),\bold B:=B(F)$. Then any integral weight $\mu$ of $G$ defines a character $\bold B\to F^\times$, denoted $b\mapsto b^\mu$. Let $\rho:=\rho_G$ be the sum of fundamental weights of $G$. Then densities on $\bold G/\bold B$ can be realized as functions $h: \bold G\to \Bbb C$ such that $h(gb)=\norm{b^{-2\rho}}h(g)$, 
$g\in \bold G$, $b\in \bold B$. 

Now let $\chi$ be a unitary character $\bold B\to \Bbb C^\times$ (or, equivalently, $\bold T\to \Bbb C^\times$, where $\bold T:=\bold B/[\bold B,\bold B]$). Then
we can define the {\bf principal series representation} $M(\chi)$ of $\bold G$, which 
is the space of functions $f: \bold G\to \Bbb C$ 
such that 
$$
f(gb)=\chi(b)\norm{b^{-\rho}}f(g),\ g\in \bold G, b\in \bold B,
$$ 
and 
$|f|^2$ is an $L^1$-density on $\bold G/\bold B$, with the action of 
$\bold G$ by left multiplication ($(gf)(x):=f(g^{-1}x)$). Then 
$M(\chi)$ is a unitary representation of $\bold G$ with inner product 
$$
(f_1,f_2)=\int_{\bold G/\bold B}f_1\overline f_2.
$$

It is well known (see e.g. \cite{ABV} for the archimedean cases and \cite{Ca} for non-archimedian ones) that the representations $M(\chi)$ are irreducible 
and $M(\chi_1)\cong M(\chi_2)$ iff 
$\chi_1=w\chi_2$ for some element $w$ of the Weyl group $W$
of $G$. These isomorphisms are nontrivial and are called 
{\bf intertwining operators}; we discuss them below for $G=PGL_2$.

\subsubsection{Principal series representations for $PGL_2$ and intertwining operators}  
Let us now consider the case $G=PGL_2$. Then $\bold T=F^\times$, so for $c\in i\Bbb R$ 
we may take $\chi_c(t)=\norm{t}^{c}, t\in \bold T$, and define the {\bf unramified (or spherical) principal series 
representation} $M(\chi_c)$. We will set $\lambda:=-1+c$ and denote 
$M(\chi_c)$ by $M_\lambda$. 
Thus $M_\lambda=L^2(\Bbb P^1(F),\norm{K}^{-\frac{\lambda}{2}})$, where $K$ is the canonical bundle
(as $c\in i\Bbb R$, this is naturally a Hilbert space with inner product $(f_1,f_2)=\int_{\Bbb P^1(F)} f_1\overline f_2$). 

It is well known that we have an isomorphism of unitary representations $\iota: M_\lambda\cong M_{-\lambda-2}$ such that $\iota^2={\rm Id}$, namely the {\bf  intertwining operator}
$$
\iota: L^2(\Bbb P^1(F),\norm{K}^{-\frac{\lambda}{2}})\to L^2(\Bbb P^1(F),\norm{K}^{\frac{\lambda+2}{2}})
$$
given by 
$$
\iota=\mathcal F\circ \norm{\cdot}^{\lambda+1}\circ \mathcal F^{-1},
$$
where $\norm{\cdot}^{\lambda+1}$ denotes the operator of 
multiplication by the function $\norm{\cdot}^{\lambda+1}$. In other words, for smooth $f$ vanishing near $\infty$ we have 
\begin{equation}\label{iotaform} 
(\iota f)(z)=\lim_{\epsilon\to 0+}\frac{1}{\Gamma^F(-\lambda-1+\epsilon)}\int_F f(w)\norm{z-w}^{-\lambda-2+\epsilon}\norm{dzdw}^{\frac{\lambda+2}{2}}.
\end{equation}
Observe that this integral is absolutely convergent near the diagonal $z=w$ and the right hand side
tends to the identity operator when $\lambda\to -1$.\footnote{\label{foo} Here we use the $\epsilon$-deformation as in \eqref{epsdef} to make sense of the divergent integral in \eqref{iotaform}.} Thus for $\lambda=-1$ we have $\iota=1$.
 
The existence of $\iota$ implies that $M_\lambda$ depends only on the Casimir eigenvalue $\frac{1}{2}(\lambda+1)^2=\frac{c^2}{2}$. 

Let $\lambda_j\in -1+i\Bbb R$, $j=0,...,m+1$, 
$V_j:=M_{\lambda_j}$, $j\in [0,m+1]$ and $\bla:=(\la_0,...,\la_m,\la_{m+1})$. 
Consider the Hilbert space 
\begin{equation}\label{hilspace}
\mathcal H(\bla):={\mathcal H}_{V_0,...,V_{m+1}}={\rm Mult}_{PGL_2(F)}(M_{\lambda_{m+1}}^*,M_{\lambda_0}\otimes...\otimes M_{\lambda_m}).
\end{equation}
Similarly to \cite{EFK3}, Section 3.3, the space $\mathcal H(\bla)$ may be realized as 
the space of functions $\psi(y_0,...,y_m)$ on $F^{m+1}$ invariant 
under simultaneous translations $y_j\mapsto y_j+C$, homogeneous of degree 
$\frac{1}{2}(\sum_{j=0}^{m}\lambda_j-\lambda_{m+1})$, and specializing at $y_0=0,y_m=1$ to square integrable functions on $F^{m-1}$ (see e.g. \cite{P,Re} for $F=\Bbb R$, \cite{Na,Wi} for $F=\Bbb C$, \cite{Ma} for a general local field). The inner product on $\mathcal H(\bla)$ is given by 
$$
(\psi,\eta)=\int_{F^{m-1}}\psi(0,y_1,...,y_{m-1},1)\overline{\eta(0,y_1,...,y_{m-1},1)}\norm{dy_1...dy_{m-1}}.
$$

Note that if $\varepsilon=(\varepsilon_0,...,\varepsilon_m,\varepsilon_{m+1})\in (\Bbb Z/2)^{m+2}$ is a collection of signs, and $(\varepsilon \circ \bla)_j:=\varepsilon_j (\lambda_j+1)-1$, 
then we have isomorphisms\footnote{Here and below we will consider various operators 
with source $\mathcal H(\bla)$. While these operators by definition depend on $\bla$, we will drop it from the notation when no confusion is possible.}   $R_\varepsilon=R_\varepsilon^\bla: 
\mathcal H(\bla)\cong \mathcal H(\varepsilon\circ  \bla)$ given by composing 
the isomorophisms $\iota$ in all variables $y_j$ for which $\varepsilon_j=-1$, and 
$R_{\varepsilon\varepsilon'}=R_{\varepsilon}R_{\varepsilon'}$. Thus we can think of the collection of spaces $\mathcal H(\bla)$ attached to an orbit 
$\mathcal O$ of $(\Bbb Z/2)^{m+2}$ as a single Hilbert space $\mathcal H(\mathcal O)$ 
with multiple realizations $\mathcal H(\bla)$, $\bla\in \mathcal O$ connected by a consistent family of isomorphisms. 

Furthermore, we can write an explicit formula for $R_\varepsilon$. 
For example, let us write an explicit formula for $R:=R_{1,...,1,-1} : \mathcal H(\bla)\to \mathcal H(\bla')$, where $\bla':=(\la_0,...,\la_m,-\la_{m+1}-2)$. In fact, we can write a more general formula for the operator 
$$
R: {\rm Mult}_{PGL_2(F)}(M_{\lambda}^*,V)\to {\rm Mult}_{PGL_2(F)}(M_{-\lambda-2}^*,V)
$$ 
induced by $\iota$ for any tempered representation $V$ of $PGL_2(F)$ (note that $R^2=1$, so $R^\dagger=R$). 
To this end, assume first that $V$ is a direct sum of irreducible representations and realize elements of ${\rm Mult}_{PGL_2(F)}(M_{\lambda}^*,V)$ as  
$V$-valued (generalized) $-\frac{\lambda}{2}$-densities $v=v(y)\norm{dy}^{-\frac{\la}{2}}$ 
on $\Bbb P^1(F)$ equivariant under $PGL_2(F)$. 
Then formula \eqref{iotaform} 
implies that 
$$
Rv=\lim_{\epsilon\to 0+}\frac{1}{\Gamma^F(-\lambda-1+\epsilon)}\int_F v(s)\norm{y-s}^{-\lambda-2+\epsilon}\norm{ds}\norm{dy}^{\frac{\lambda+2}{2}},
$$
which we will write for brevity as 
$$
Rv=\frac{1}{\Gamma^F(-\lambda-1)}\int_F v(s)\norm{y-s}^{-\lambda-2}\norm{ds}\norm{dy}^{\frac{\lambda+2}{2}}.
$$
Let $\widehat v:=v(u^{-1})\norm{u}^\la \norm{du}^{-\frac{\la}{2}}$ 
be the image of $v$ under the change of coordinates $u=y^{-1}$, so 
$\widehat v(u):=v(u^{-1})\norm{u}^\la$.  
Then  
$$
\widehat{Rv}(u)=\frac{1}{\Gamma^F(-\lambda-1)}\int_F v(s)\norm{1-su}^{-\lambda-2}\norm{ds}.
$$
Specializing this at $u=0$, we have 
$$
\widehat{Rv}(0)=\frac{1}{\Gamma^F(-\lambda-1)}\int_F v(s)\norm{ds}=\frac{1}{\Gamma^F(-\lambda-1)}\int_F \widehat v(s^{-1})\norm{s}^{\la}\norm{ds}.
$$
But $s^{-1}$ is obtained from $0$ by the element $g(s)\in PGL_2(F)$ sending $u$ to $u+s^{-1}$, i.e. 
$y$ to $(y^{-1}+s^{-1})^{-1}=\frac{y}{s^{-1}y+1}$. Thus $g(s)=\begin{pmatrix} 1& 0\\ s^{-1} & 1\end{pmatrix}$. So we get 
$$
\widehat{Rv}(0)=\frac{1}{\Gamma^F(-\lambda-1)}\int_F g(s)^{-1}\widehat v(0)\norm{s}^{\la}\norm{ds}.
$$
This formula continues to hold if $V$ is not necessarily a direct sum of irreducible representations but rather a direct integral. For example, 
in our situation $V=V_0\otimes...\otimes V_m$ 
is the space of translation invariant functions in $y_0,...,y_m$ 
which are homogeneous of degree $\frac{1}{2}(\sum_{i=0}^{m}\la_i-\la_{m+1})$, 
so we get 
$$
R\psi= \frac{1}{\Gamma^F(-\lambda_{m+1}-1)}\int_F g(s)^{-1}\psi\norm{s}^{\la_{m+1}}\norm{ds},
$$
i.e., 
$$
R\psi(y_0,...,y_m)=\tfrac{1}{\Gamma^F(-\lambda_{m+1}-1)}\int_F \psi\left(\tfrac{y_0s}{s-y_0},...,\tfrac{y_0s}{s-y_m}\right)\norm{s}^{\lambda_{m+1}-\sum_{i=0}^m \la_i}\prod_{i=0}^m\norm{s-y_i}^{\la_i}\norm{ds}=
$$
$$
\frac{1}{\Gamma^F(-\lambda_{m+1}-1)}\int_F \psi\left(\frac{s^2}{s-y_0},...,\frac{s^2}{s-y_m}\right)\norm{s}^{\lambda_{m+1}-\sum_{i=0}^m \la_i}\prod_{i=0}^m\norm{s-y_i}^{\la_i}\norm{ds}.
$$
We thus obtain the following lemma. 

\begin{lemma}\label{Rpsi} We have\footnote{Note that this integral is not convergent near $s=\infty$ and should be understood in the sense of $\epsilon$-deformation in $\lambda_{m+1}$, as explained in footnote \ref{foo}.}
$$
(R\psi)(y_0,...,y_m)=
\frac{1}{\Gamma^F(-\lambda_{m+1}-1)}\int_F \psi\left(\frac{1}{s-y_0},...,\frac{1}{s-y_m}\right)\prod_{i=0}^m\norm{s-y_i}^{\la_i}\norm{ds}
$$
\end{lemma}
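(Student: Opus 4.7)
The proof is essentially a one-line simplification of the displayed formula
$$
R\psi(y_0,...,y_m)=\frac{1}{\Gamma^F(-\lambda_{m+1}-1)}\int_F \psi\left(\frac{s^2}{s-y_0},...,\frac{s^2}{s-y_m}\right)\norm{s}^{\lambda_{m+1}-\sum_{i=0}^m \la_i}\prod_{i=0}^m\norm{s-y_i}^{\la_i}\norm{ds},
$$
which was derived just before the lemma using the explicit form \eqref{iotaform} of the intertwining operator $\iota$ and the realization of $\mathcal H(\bla)$ as translation-invariant functions. The only remaining ingredient is the homogeneity: any $\psi\in\mathcal H(\bla)$ satisfies
$$
\psi(c y_0,\ldots,c y_m)=\norm{c}^{\frac{1}{2}(\sum_{i=0}^m \la_i-\la_{m+1})}\psi(y_0,\ldots,y_m),\qquad c\in F^\times.
$$

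My plan is therefore to apply this homogeneity with $c=s^2$ (equivalently $\norm{c}=\norm{s}^2$) to rewrite
$$
\psi\!\left(\tfrac{s^2}{s-y_0},\ldots,\tfrac{s^2}{s-y_m}\right)=\norm{s}^{\sum_{i=0}^m\la_i-\la_{m+1}}\,\psi\!\left(\tfrac{1}{s-y_0},\ldots,\tfrac{1}{s-y_m}\right),
$$
and observe that the factor $\norm{s}^{\sum_{i=0}^m\la_i-\la_{m+1}}$ cancels precisely the factor $\norm{s}^{\la_{m+1}-\sum_{i=0}^m\la_i}$ present in the preceding displayed formula. Substituting and collecting terms yields the identity claimed in Lemma \ref{Rpsi}.

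The only subtlety is that, as noted in the accompanying footnote, the integrand is only conditionally convergent near $s=\infty$, so the identity of integrands must be promoted to an identity of integrals in the regularized sense. Since homogeneity is a pointwise identity that is preserved by the $\epsilon$-deformation in $\la_{m+1}$ used to define both sides (the deformation acts by $\la_{m+1}\mapsto \la_{m+1}-\epsilon$ uniformly on the exponents $\frac{1}{2}(\sum_i\la_i-\la_{m+1})$ of homogeneity and on $\norm{s}^{\la_{m+1}-\sum_i\la_i}$, so the cancellation persists for all small $\epsilon>0$), we may take the limit $\epsilon\to 0^+$ after the substitution. This is the only step that requires any care, and it is routine given the explicit $\epsilon$-deformation prescription already used in \eqref{epsdef} and in the definition \eqref{iotaform} of $\iota$.
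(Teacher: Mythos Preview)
Your proof is correct and is exactly the implicit step the paper has in mind: the paper writes ``We thus obtain the following lemma'' immediately after the displayed formula with $\psi\!\left(\tfrac{s^2}{s-y_0},\ldots,\tfrac{s^2}{s-y_m}\right)$, and the passage from that formula to Lemma~\ref{Rpsi} is precisely the homogeneity substitution $c=s^2$ you carry out. Your remark on the $\epsilon$-deformation is also appropriate and matches the paper's footnote.
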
 

\begin{remark} Setting in Lemma \ref{Rpsi} $t_0=y_0=0$ and $\varphi(y_1,...,y_m):=\psi(0,y_1,...,y_m)$,  
we obtain 
\scriptsize
\begin{equation}\label{killsym1}
%\begin{split}
(R\varphi)(y_1,...,y_m)
=\frac{1}{\Gamma^F(-\lambda_{m+1}-1)}\int_F \varphi\left(\frac{y_1}{1-y_1s^{-1}},...,\frac{y_m}{1-y_ms^{-1}}\right)\prod_{j=1}^m \norm{1-y_js^{-1}}^{\la_j}\norm{s}^{\lambda_{m+1}}\norm{ds}.
%\end{split}
\end{equation}
\normalsize
\end{remark} 

We will also need to consider the operator $Q=Q^\bla: \mathcal H(\bla)\to \mathcal H(\bla')$ given by 
$$
(Q\psi)(y_0,...,y_m)=\frac{1}{\Gamma^F(\la_{m+1}+1)}\int_F\psi(y_0-t_0s,...,y_m-t_ms)\norm{s}^{\lambda_{m+1}}\norm{ds}. 
$$
Setting $t_0=y_0=0$ and $\varphi:=\psi|_{y_0=0}$, we have 
\begin{equation}\label{Qform} 
(Q\varphi)(y_1,...,y_m)=\frac{1}{\Gamma^F(\la_{m+1}+1)}\int_F\psi(y_1-t_1s,...,y_m-t_ms)\norm{s}^{\lambda_{m+1}}\norm{ds}. 
\end{equation} 
Recall that by \cite{EFK3}, Proposition 3.3, 
$$
S_0(y_1,...,y_m)=(\tfrac{t_1}{y_1},...,\tfrac{t_m}{y_m}).
$$
Thus, setting $\bla^*:=(-\la_0-2,\la_1,...,\la_m,-\la_{m+1}-2)$, 
we get the unitary involution 
$$
S_0: \mathcal H(\bla)\to \mathcal H(\bla^*)
$$ 
given by 
$$
(S_0\varphi)(y_1,...,y_m)=\prod_{j=1}^m \norm{\tfrac{t_j}{y_j^2}}^{-\frac{\la_j}{2}}\varphi(\tfrac{t_1}{y_1},...,\tfrac{t_m}{y_m}), 
$$
and we see that $Q=S_0RS_0$.
In fact, since for any $1\le i\le m$ the operator $S_iS_0$ commutes with $R$, we have 
$Q=S_iRS_i$ for any $0\le i\le m$. We also see that 
$$
Q^2=1,\ Q^\dagger=Q.
$$

\subsubsection{Formulas for Hecke operators} \label{heopp}

Now let us study the Hecke operators on the Hilbert space ${\mathcal H}_{V_0,...,V_{m+1}}$.
This is the setting of the {\bf  tamely ramified analytic Langlands correspondence with parameters $\lambda_j$}
for $G=PGL_2$ and  $X=\Bbb P^1$ with $N=m+2$ ramification points. It 
generalizes the setting of \cite{EFK3} where $\lambda_j=-1$ for all $j$, so that 
$$
{\mathcal H}_{V_0,...,V_{m+1}}=L^2({\rm Bun}_G^\circ(X,t_0,...,t_{m+1}),\norm{K}^{\frac{1}{2}})
$$ 
is the space of square-integrable half-densities. 
Similarly to \cite{EFK3}, Subsection 3.3, we'll denote the ramification points by $t_0,...,t_{m+1}$ to align notation with \cite{EFK3}, and  assume that $t_j\in X(F)$ for all $j$, while $t_{m+1}=\infty$. 

First we would like to write a formula for the (modified) 
Hecke operator $\Bbb H_x$ that generalizes the formula for $\Bbb H_x$ 
from \cite{EFK3}, Proposition 3.9. Recall that we have 
two components of the moduli of bundles, ${\rm Bun}_0$ and ${\rm Bun}_1$ (bundles of even and odd degree), which are identified by the Hecke modification $S_{m+1}$ at infinity. Thus, as in \cite{EFK3}, Section 3, 
we can use $S_{m+1}$ to identify 
the sectors $\mathcal H^0,\mathcal H^1$ 
of the Hilbert space corresponding to bundles of 
degree $0$ and $1$. Then the modified Hecke operator 
$\Bbb H_x: \mathcal H^0\to \mathcal H^1$ 
can be written as an endomorphism of $\mathcal H^0$. 
In fact, it is convenient to write 
this endomorphism as an operator $\mathcal H(\bla)\to \mathcal H(\bla')$. 
In other words, it maps homogeneous functions
of degree $\frac{1}{2}(\sum_{j=0}^{m} \la_j-\lambda_{m+1})$ 
to homogeneous functions of degree $1+\frac{1}{2}\sum_{j=0}^{m+1} \la_j$.

\begin{proposition}\label{hefor} The modified Hecke operator $\Bbb H_x=\Bbb H_x^\bla: \mathcal H(\bla)\to \mathcal H(\bla')$ 
is given by the formula
$$
(\Bbb H_x\psi)(y_0,...,y_m)=\int_F \psi\left(\frac{t_0-x}{s-y_0},...,\frac{t_m-x}{s-y_m}\right)\prod_{j=0}^m \norm{s-y_j}^{\la_j}\norm{ds}.
$$
\end{proposition}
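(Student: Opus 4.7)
My plan is to derive the formula by starting from the general expression \eqref{heckoper} for the Hecke operator acting on the multiplicity space ${\mathcal H}_{V_0,\dots,V_{m+1}}$ and translating it into the coordinate realization \eqref{hilspace}, then composing with the Hecke modification at infinity that implements $S_{m+1}:\mathcal H^0\to \mathcal H^1$.

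First I will recall that by Example \ref{bethan}, the (unmodified) Hecke operator on $\mathcal H(\bla)$ can be written as
\[
H_x = \int_F \bigotimes_{j=0}^{m} \begin{pmatrix}0& x-t_j\\ 1& -s\end{pmatrix}^{-1}\,\norm{ds},
\]
where each factor acts on the $j$-th principal series $V_j=M_{\lambda_j}$ via its fractional linear action on $\Bbb P^1(F)$ with the standard $-\lambda_j/2$-density twist. In the realization of $\mathcal H(\bla)$ as functions $\psi(y_0,\dots,y_m)$ on $F^{m+1}$ homogeneous of degree $\tfrac12(\sum_{j=0}^m\lambda_j-\lambda_{m+1})$ and invariant under simultaneous translation, the matrix $\begin{pmatrix}0& x-t_j\\ 1& -s\end{pmatrix}$ sends the coordinate $y_j$ to $\tfrac{x-t_j}{y_j-s}$, and its inverse therefore changes $y_j$ to $\tfrac{t_j-x}{s-y_j}$ up to sign, contributing the cocycle factor $\norm{s-y_j}^{\lambda_j}$ (a standard principal-series computation).

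Next I will account for the modification that identifies ${\rm Bun}_0$ with ${\rm Bun}_1$ via $S_{m+1}$ at $\infty$. As in \cite{EFK3}, Subsection~3.3, this operation changes the weight at the point $t_{m+1}$ by $\la_{m+1}\mapsto -\la_{m+1}-2$, converting the unmodified operator $H_x$, which lands in homogeneous functions of degree $\tfrac12(\sum_{j=0}^{m}\la_j-\la_{m+1})+1$ after a $2(\la_{m+1}+1)$ shift arising from the volume form on ${\rm Gr}_G^{\omega_1}$, into an endomorphism-like map $\Bbb H_x:\mathcal H(\bla)\to \mathcal H(\bla')$, with output of homogeneity $\tfrac12\sum_{j=0}^{m+1}\la_j+1$. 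Combining both pieces produces
\[
(\Bbb H_x\psi)(y_0,\dots,y_m)=\int_F \psi\!\left(\tfrac{t_0-x}{s-y_0},\dots,\tfrac{t_m-x}{s-y_m}\right)\prod_{j=0}^m\norm{s-y_j}^{\lambda_j}\norm{ds},
\]
provided the homogeneity and translation-invariance of $\psi$ conspire correctly.

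The main verification, which I see as the principal obstacle, is to check that the right-hand side indeed lies in $\mathcal H(\bla')$: that is, that it is translation-invariant in $(y_0,\dots,y_m)$ and homogeneous of the correct degree, once $\psi$ has the corresponding properties for $\bla$. Translation invariance follows from the substitution $s\mapsto s+C$ together with the translation invariance of $\psi$, while the homogeneity under $y_j\mapsto \lambda y_j$ follows by substituting $s\mapsto \lambda s$ in the integral, using $\prod_j\norm{\lambda}^{\lambda_j}=\norm{\lambda}^{\sum_j\lambda_j}$ and the homogeneity of $\psi$; a direct count shows that the degree on the right is exactly $1+\tfrac12\sum_{j=0}^{m+1}\la_j$ as required. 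Finally, one must verify the formula at $\lambda_j=-1$ reduces, after specializing $y_0=0,y_m=1$, to the formula \cite[Proposition 3.9]{EFK3}, and that in general the integral converges in the regularized (i.e., $\eps$-deformed) sense analogous to Lemma \ref{8.2anal}; this reduces the analytic issue to standard hypergeometric estimates already in hand.
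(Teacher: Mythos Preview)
Your approach is correct and is effectively the same computation the paper points to: the paper's proof is the single line ``parallel to the proof of \cite{EFK3}, Proposition 3.9,'' and what you are doing is unwinding formula \eqref{heckoper} in the coordinate model of $\mathcal H(\bla)$, which is exactly that computation carried out explicitly. The checks of translation-invariance and homogeneity you describe are straightforward and confirm that the output lies in $\mathcal H(\bla')$.

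Two points of exposition to clean up. First, the passage from $H_x$ to $\Bbb H_x$ is not an additional composition with $S_{m+1}$: it is purely the scalar renormalization $\Bbb H_x=\prod_{j=0}^m\norm{x-t_j}^{\la_j/2}H_x$ stated just after the proposition. The role of $S_{m+1}$ is only to identify $\mathcal H^0\cong\mathcal H^1$ so that both operators are written as maps $\mathcal H(\bla)\to\mathcal H(\bla')$; your ``$2(\la_{m+1}+1)$ shift'' is not needed. Second, there is no ``up to sign'' in the fractional-linear step: acting by $g_j^{-1}$ on $f\in M_{\la_j}$ gives $(g_j^{-1}f)(y_j)=f(g_j y_j)\cdot\norm{x-t_j}^{-\la_j/2}\norm{s-y_j}^{\la_j}$, and $g_j y_j=\tfrac{x-t_j}{y_j-s}=\tfrac{t_j-x}{s-y_j}$ on the nose. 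The determinant factor $\norm{x-t_j}^{-\la_j/2}$ is precisely what the renormalization removes, yielding the stated integrand.
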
 

The proof of Proposition \ref{hefor} is parallel to the proof of \cite{EFK3}, Proposition 3.9. 

We also have the ordinary (unmodified) Hecke operator $H_x: \mathcal H(\bla)\to \mathcal H(\bla')$, which differs from $\Bbb H_x$ by normalization: 
$$
H_x=\prod_{j=0}^m \norm{x-t_j}^{-\frac{\la_j}{2}}\Bbb H_x.
$$
This is a special case of formula \eqref{heckoper}.  
It is not hard to check that 
$$
R_\varepsilon \circ H_x=H_x\circ R_\varepsilon
$$ 
when $\varepsilon_{m+1}=1$; however, this does not hold for 
$\varepsilon_{m+1}=-1$ since we used $S_{m+1}$ acting at $t_{m+1}=\infty$ 
to identify ${\rm Bun}_0$ and ${\rm Bun}_1$. 

\begin{remark} By setting $t_0=y_0=0$ and $\varphi(y_1,...,y_m):=\psi(0,y_1,...,y_m)$, 
we obtain the following formula for the action of $\Bbb H_x$ on homogeneous functions
of degree \linebreak $\frac{1}{2}(\sum_{j=0}^{m}\la_j-\lambda_{m+1})$:
\begin{equation}\label{killsym2}
(\Bbb H_x\varphi)(y_1,...,y_m)=
\int_F \varphi\left(\frac{t_1s-xy_1}{s(s-y_1)},...,\frac{t_ms-xy_m}{s(s-y_m)}\right)\prod_{j=0}^m \norm{s-y_j}^{\la_j}\norm{ds}.
\end{equation} 
This formula generalizes the formula of Theorem 3.6 in \cite{EFK3}. 

We can further set $t_m=y_m=1$ and get the following analog of \cite{EFK3}, Proposition 3.7. Let $\phi(y_1,...,y_{m-1})=\varphi(y_1,...,y_{m-1},1)\in L^2(F^{m-1})$. Then
$$
(\Bbb H_x\phi)(y_1,...,y_{m-1})=
$$
\scriptsize
$$
\int_F \phi\left(\frac{(t_1s-xy_1)(s-1)}{(s-y_1)(s-x)},...,\frac{(t_{m-1}s-xy_{m-1})(s-1)}{(s-y_{m-1})(s-x)}\right)
\norm{\frac{s(s-1)}{s-x}}^{-\frac{1}{2}(\sum_{j=0}^{m}\la_j-\la_{m+1})}
\prod_{j=0}^{m} \norm{s-y_j}^{\la_j}\norm{ds}.
$$
\normalsize
Similarly to \cite{EFK3}, Subsection 3.3, define the unitary operator
$U_{s,x}$ on $L^2(F^{m-1})$ by 
$$
(U_{s,x}\phi)(y_1,....,y_{m-1}):=
$$
$$
\phi\left(\frac{(t_1s-y_1x)(s-1)}{(s-y_1)(s-x)},...,
\frac{(t_{m-1}s-y_{m-1}x)(s-1)}{(s-y_{m-1})(s-x)}\right)\norm{\frac{s(s-1)}{s-x}}^{-\frac{1}{2}\sum_{j=1}^{m-1}\la_j}\prod_{j=1}^{m-1}\norm{\frac{t_j-x}{(s-y_j)^2}}^{-\frac{\la_j}{2}}. 
$$
Then we get 
\begin{multline} \label{uniop} 
H_x\phi=
\norm{x}^{-\frac{\lambda_0}{2}}\norm{x-1}^{-\frac{\lambda_m}{2}}
\times \\ \int_F  \norm{s}^{\frac{1}{2}(\la_0-\la_m+\la_{m+1})}\norm{s-1}^{\frac{1}{2}(-\la_0+\la_m+\la_{m+1})}\norm{s-x}^{\frac{1}{2}(\la_0+\la_m-\la_{m+1})}U_{s,x}\phi\norm{ds},
\end{multline} 
which generalizes \cite{EFK3}, Proposition 3.7. 
From this formula it follows by the argument of \cite{EFK3}, Proposition 3.10 that $\Bbb H_x$ is a bounded operator which depends norm-continuously on $x$ for $x\ne t_j,\infty$. 
\end{remark} 

\subsubsection{Properties of Hecke operators} 
The properties of the operator $H_x$ are analogous 
to those in the untwisted case (\cite{EFK3}, Section 3).
To avoid confusion, from now on the operators $R,Q,H_x: \mathcal H(\bla)\to \mathcal H(\bla')$ will be denoted by $R_+,Q_+,H_{x+}$ 
and the operators  $R,Q,H_x: \mathcal H(\bla')\to \mathcal H(\bla)$
by $R_-,Q_-,H_{x-}$ (thus $Q_+^\dagger=Q_-,R_+^\dagger=R_-$). 

\begin{proposition}\label{prophecke} (i) The operators $H_{x,\pm}$, $x\in \Bbb P^1(F), x\ne t_j$ are compact.

(ii) $H_{x-}H_{y+}=H_{y-}H_{x+}$, $x,y\in \Bbb P^1(F), x,y\ne t_j$. 

(iii) $H_{x+}^{\dagger}=H_{x-}$.  
\end{proposition}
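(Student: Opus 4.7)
All three statements are adaptations of the corresponding facts for the untwisted case $\lambda_j=-1$ proved in \cite{EFK3}, Section 3, using the explicit kernel formulas derived in Subsection \ref{heopp}. The only nontrivial analytic point is compactness in (i); parts (ii) and (iii) are largely formal manipulations, so I would prove them in the reverse order.

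For (iii), recall that $\mathcal H(\bla)$ is realized as functions on $F^{m+1}$ homogeneous of degree $\tfrac{1}{2}(\sum_{j=0}^m\lambda_j-\lambda_{m+1})$, with inner product given by restriction to $y_0=0,\,y_m=1$. Since $\lambda_j\in -1+i\mathbb R$ we have $\overline{\lambda_j}=-\lambda_j-2$, and hence $\overline{\norm{u}^{\lambda_j}}=\norm{u}^{-\lambda_j-2}$. Plugging the kernel formula of Proposition \ref{hefor} into $(H_{x+}\psi,\eta)$, interchanging the order of integration (justified by an $\varepsilon$-deformation in $\lambda_{m+1}$ as in footnote \ref{foo}) and performing the change of variables that puts the inner integral into the shape of Proposition \ref{hefor} with parameters $\bla'$ converts the pairing into $(\psi,H_{x-}\eta)$. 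This is entirely parallel to the computations underlying $R_+^\dagger=R_-$ and $Q_+^\dagger=Q_-$ recorded just above the proposition.

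For (ii), I would expand $H_{x-}H_{y+}\psi$ as a double integral over auxiliary variables $(s,t)\in F^2$ by applying Proposition \ref{hefor} twice, interchange the order of integration, and verify by direct calculation that the resulting kernel is invariant under the simultaneous swap $x\leftrightarrow y,\,s\leftrightarrow t$. Comparison with the analogous expansion of $H_{y-}H_{x+}\psi$ then yields the identity. Geometrically this reflects the fact that Hecke modifications at distinct points of $X$ commute: the composed Hecke correspondence parametrizes iterated modifications which can be carried out in either order.

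For (i), the plan is to apply the argument of \cite{EFK3}, Proposition 3.10 to formula \eqref{uniop}. The formula displays $H_{x+}$ as an integral $\int_F w(s,x)\,U_{s,x}\norm{ds}$, where $s\mapsto U_{s,x}$ is a strongly continuous family of uniformly bounded operators on $L^2(F^{m-1})$ arising from an explicit change of variables, and $w(s,x)=\norm{s}^{\alpha}\norm{s-1}^{\beta}\norm{s-x}^{\gamma}$ with $\alpha,\beta,\gamma$ of real part $-1/2$ (since $\Re\lambda_j=-1$ for all $j$). Because $|w(s,x)|$ depends only on $\Re\lambda_j$, the integrability of the singularities at $s=0,1,x,\infty$ and the tail estimates near them are identical to the untwisted case. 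Truncating the $s$-integration to the complement of small balls about the singularities yields a norm-close approximation of $H_{x+}$ by an integral operator whose Schwartz kernel lies in $L^2$, hence is Hilbert--Schmidt and a fortiori compact; $H_{x+}$ is then compact as a norm limit of compact operators, and norm continuity in $x\in\mathbb P^1(F)\setminus\{t_0,\ldots,t_{m+1}\}$ follows from the same estimates. The main obstacle is the Hilbert--Schmidt bound for the truncated kernel and its uniformity in $x$ away from the ramification points; since the relevant estimates involve only $|w(s,x)|$ and not the imaginary parts of $\lambda_j$, the argument of \cite{EFK3} transfers essentially verbatim.
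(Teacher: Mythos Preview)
Your approach is essentially the paper's: all three parts are handled by observing that the explicit kernel formulas of Subsection \ref{heopp} reduce to the untwisted ones of \cite{EFK3} up to unimodular twisting factors (since $|{\norm{u}^{\lambda_j}}|=\norm{u}^{-1}$), so the proofs of \cite{EFK3}, Propositions 3.11 and 3.13 transfer verbatim. One small correction: for (i) you invoke \cite{EFK3}, Proposition 3.10, but that result gives only boundedness and norm-continuity (as already noted in the remark preceding the proposition); compactness is \cite{EFK3}, Proposition 3.13, and its proof is not quite the truncation-in-$s$ argument you sketch---truncating the $s$-integral of the unitaries $U_{s,x}$ over a compact set does not by itself produce an $L^2$ Schwartz kernel (think of integrating translations over a bounded interval). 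The actual argument in \cite{EFK3} requires an additional ingredient beyond the $s$-truncation, so you should either follow that proof more closely or simply cite it.
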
 

\begin{proof} (i) is proved analogously to \cite{EFK3} Proposition 3.13. 
(ii), (iii) are proved analogously to 
\cite{EFK3}, Proposition 3.11. (ii) can also be checked explicitly from the formula of Proposition \ref{hefor} as explained in \cite{EFK3}, Remark 3.28. 
\end{proof} 

Define the {\bf  full Hecke operator} on $\mathcal H=\mathcal H^0\oplus \mathcal H^1=\mathcal H(\bla)\oplus \mathcal H(\bla')$ by the formula
$$
H_{x,{\rm full}}=\begin{pmatrix} 0& H_{x-}\\ H_{x+} & 0\end{pmatrix}.
$$ 
It follows that the operators $H_{x,{\rm full}}$ are self-adjoint and pairwise commuting. 

\subsubsection{Asymptotics of Hecke operators and the spectral decomposition} 
Let us now discuss the asymptotics of Hecke operators as $x\to \infty$. 
Set $c:=\lambda_{m+1}+1$.

\begin{proposition}\label{asym} (i) If $c\ne 0$ then 
$$
\norm{x}^{-\frac{1}{2}}H_{x\pm}=\Gamma^F(\pm c){\norm x}^{\pm\frac{c}{2}}Q_\pm+\Gamma^F(\mp c){\norm x}^{\mp\frac{c}{2}}R_{\pm}
+o(1),\ x\to \infty.
$$
Thus 
$$
\norm{x}^{-\frac{1}{2}}H_{x,{\rm full}}=
\Gamma^F(c){\norm x}^{\frac{c}{2}}
D+\Gamma^F(-c){\norm x}^{-\frac{c}{2}}D^\dagger +o(1),\ x\to \infty,
$$
where 
$$
D :=\begin{pmatrix} 0& R_{-}\\ Q_+ & 0\end{pmatrix}.
$$

(ii) If $c=0$ then one has 
$$
\norm{x}^{-\frac{1}{2}}H_{x,\pm}=\log\norm{x}+M+o(1),\ x\to \infty, 
$$
where 
\scriptsize
$$
(M\varphi)(y_1,...,y_m):=\int_F \left(\varphi(y_1-t_1s,...,y_m-t_ms)+\frac{\varphi(\frac{y_1}{1-y_1s^{-1}},...,\frac{y_m}{1-y_ms^{-1}})}{\prod_{j=1}^m \norm{1-y_js^{-1}}^{-\la_j}}
-\varphi(y_1,...,y_m)\right)\norm{\frac{ds}{s}}.
$$
\end{proposition}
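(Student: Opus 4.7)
My plan is to extract the two leading asymptotic terms of $H_{x\pm}$ as $x\to\infty$ directly from the integral representation in Proposition~\ref{hefor}, by splitting the $s$-integral into a small-$s$ and a large-$s$ regime and exploiting the translation invariance and norm-homogeneity of $\psi$. It suffices to treat $H_{x+}$, since $H_{x-}=H_{x+}^\dagger$, $Q_+^\dagger=Q_-$, $R_+^\dagger=R_-$, and $\overline{\Gamma^F(c)}=\Gamma^F(-c)$ for $c\in i\mathbb{R}$; the statement for $H_{x-}$ then follows by taking adjoints. I will write $\Lambda:=\sum_{j=0}^m\la_j$ and $d:=\tfrac12(\Lambda-\la_{m+1})=\tfrac12(\Lambda+1-c)$ for the homogeneity degree of $\psi$.

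For part (i), I split the $s$-integral at $\norm{s}=\sqrt{\norm{x}}$. In the small-$s$ region, factoring $\tfrac{t_j-x}{s-y_j}=\tfrac{-x(1-t_j/x)}{s-y_j}$ and pulling out $\norm{x}^d$ by homogeneity together with $\psi(-z)=\psi(z)$, the integrand converges, modulo $o(\norm{x}^d)$, to $\norm{x}^d\psi\!\bigl(\tfrac{1}{s-y_0},\ldots,\tfrac{1}{s-y_m}\bigr)\prod_j\norm{s-y_j}^{\la_j}\norm{ds}$, and Lemma~\ref{Rpsi} identifies the resulting integral with $\Gamma^F(-c)R\psi(y)$. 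In the large-$s$ region I substitute $s=xv$ and expand
\[
\frac{t_j-x}{xv-y_j}\;=\;-\frac{1}{v}+\frac{t_jv-y_j}{v(xv-y_j)};
\]
translation invariance of $\psi$ kills the $-1/v$ part, and homogeneity rewrites the residual small argument of $\psi$ as $\norm{xv^2}^{-d}\psi(t_0v-y_0,\ldots,t_mv-y_m)$, so the integrand becomes, to leading order, $\norm{x}^{1+\Lambda-d}\norm{v}^{c-1}\psi(t_jv-y_j)_j\norm{dv}$. After $v\mapsto-s$ (using $\psi(-z)=\psi(z)$) and \eqref{Qform}, this contribution is $\norm{x}^{1+\Lambda-d}\Gamma^F(c)Q\psi(y)$. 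Combining the two contributions and multiplying by the normalization $\prod_j\norm{x-t_j}^{-\la_j/2}\sim\norm{x}^{-\Lambda/2}$ relating $\Bbb H_x$ and $H_x$, the identities $d-\tfrac\Lambda 2=\tfrac{1-c}{2}$ and $1+\Lambda-d-\tfrac\Lambda 2=\tfrac{1+c}{2}$ deliver the desired asymptotic.

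For part (ii), I will take the limit $c\to 0$ in (i). Both $\Gamma^F(\pm c)$ have simple poles at $c=0$ with residues of equal magnitude and opposite signs, while at $c=0$ (i.e., $\la_{m+1}=-1$) one has $\iota=1$, so $\bla'=\bla$ and the operators $\Gamma^F(-c)R_+$ and $\Gamma^F(c)Q_+$ become regularized integrals whose $1/c$-singular parts both act as $\mathrm{Id}$. Expanding $\norm{x}^{\pm c/2}=1\pm\tfrac c2\log\norm{x}+O(c^2)$, the $1/c$ poles cancel, producing the leading term $\log\norm{x}\cdot\mathrm{Id}$ together with a finite operator $M$. The explicit formula for $M$ arises from the $c^0$-parts of the Laurent expansions of $\Gamma^F(c)Q_+$ and $\Gamma^F(-c)R_+$ at $c=0$, computed via \eqref{Qform} and \eqref{killsym1} at $\la_{m+1}=-1$; the counterterm $-\varphi(y)$ in the integrand of $M$ is precisely what remains after cancellation of the poles, and it ensures absolute integrability of the integrand at both $s=0$ and $s=\infty$.

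The main obstacle I anticipate is making the region-splitting compatible with uniform operator-norm error estimates on $\mathcal{H}(\bla)$: controlling both the boundary region $\norm{s}\sim\sqrt{\norm{x}}$ and the subleading corrections coming from the $1-t_j/x$ factors in the $\psi$-argument and from the mismatch $\prod_j\norm{x-t_j}^{-\la_j/2}/\norm{x}^{-\Lambda/2}$ in the prefactor. This will require an operator-valued refinement of Lemma~\ref{8.2anal}, applied with $\varphi$ replaced by the operator-valued function $U_{s,x}\phi$ appearing in \eqref{uniop}, in the spirit of the analogous untwisted arguments of \cite{EFK3}, Section~8.
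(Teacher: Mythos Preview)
Your approach is essentially the same as the paper's. The paper also splits the $s$-integral into the two regimes ``$s$ bounded'' and ``$s/x$ bounded'' (phrased geometrically as the integration curve $Z_{x,\bold y}$ breaking into two components as $x\to\infty$) and identifies the two contributions with $\Gamma^F(-c)R_+$ and $\Gamma^F(c)Q_+$ via \eqref{killsym1} and \eqref{Qform}; the only cosmetic difference is that the paper first sets $t_0=y_0=0$ and works with the reduced formula \eqref{killsym2a} for $\norm{x}^{-1/2}H_{x+}\varphi$, whereas you stay with the symmetric presentation $\psi(y_0,\ldots,y_m)$ from Proposition~\ref{hefor} and Lemma~\ref{Rpsi}. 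For part~(ii) the paper likewise obtains the result by sending $c\to 0$ in (i), rewriting the two terms so that their $1/c$ singularities cancel against one another to produce $\log\norm{x}$ and the integral defining $M$.
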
 

Note that Proposition \ref{asym}(ii) generalizes \cite{EFK3}, Propositions 3.15(i) and 3.21. 

\begin{proof} (i) We follow the proof of \cite{EFK3}, Proposition 3.21. 
 By \eqref{killsym2} we have 
 \begin{equation}\label{killsym2a}
\begin{aligned}
\norm{x}^{-\frac{1}{2}}(H_{x+}\varphi)(y_1,...,y_m)=\quad\quad\quad\quad\quad\quad\quad\quad\quad{}\\
\norm{x}^{-\frac{c}{2}}\int_F \varphi\left(\frac{t_1sx^{-1}-y_1}{1-y_1s^{-1}},...,\frac{t_msx^{-1}-y_m}{1-y_ms^{-1}}\right)\prod_{j=1}^m \norm{1-y_js^{-1}}^{\la_j}\norm{s}^{c}\norm{\frac{ds}{s}}.
\end{aligned} 
\end{equation} 
Now, as explained in the proof of \cite{EFK3}, Proposition 3.21, in the limit $x\to \infty$
the curve $Z_{x,\bold y}$ with parametrization $s\mapsto (\frac{t_1sx^{-1}-y_1}{1-y_1s^{-1}},...,\frac{t_msx^{-1}-y_m}{1-y_ms^{-1}})$ 
along which we are integrating in \eqref{killsym2a} falls apart into two components 
corresponding to the regimes when 
$s=s(x)$ has a finite limit as $x\to \infty$ and when $sx^{-1}$ has a finite limit when $x\to \infty$, respectively. As a result, similarly to the proof of \cite{EFK3}, Proposition 3.21, the integral \eqref{killsym2a} is asymptotic to the sum of two integrals over these components. Namely, we have 
$$
\norm{x}^{-\frac{1}{2}}(H_{x+}\varphi)(y_1,...,y_m)=
$$
$$
\norm{x}^{-\frac{c}{2}}\int_F \varphi\left(\frac{y_1}{1-y_1s^{-1}},...,\frac{y_m}{1-y_ms^{-1}}\right)\prod_{j=1}^m \norm{1-y_js^{-1}}^{\la_j}\norm{s}^{c}\norm{\frac{ds}{s}}+
$$
$$
\norm{x}^{\frac{c}{2}}\int_F \varphi(y_1-t_1s,...,y_m-t_ms)\norm{s}^{c}\norm{\frac{ds}{s}}+o(1),\ x\to \infty.
$$
Now, the first integral is the operator $\Gamma(-c)R_+$ (formula \eqref{killsym1}), while the second integral is the operator $\Gamma(c)Q_+$ (formula \eqref{Qform}), which implies the claimed asymptotics for $H_{x+}$. The asymptotics for $H_{x-}$ is obtained by replacing $c$ by $-c$. 

(ii) follows from (i) by taking the limit $c\to 0$. Namely, write (i) in the form
$$
\norm{x}^{-\frac{1}{2}}(H_{x+}\varphi)(y_1,...,y_m)=
$$
$$
(\norm{x}^{-\frac{c}{2}}-1)\int_F \varphi\left(\frac{y_1}{1-y_1s^{-1}},...,\frac{y_m}{1-y_ms^{-1}}\right)\prod_{j=1}^m \norm{1-y_js^{-1}}^{\la_j}\norm{s}^{c}\norm{\frac{ds}{s}}+
$$
$$
(\norm{x}^{\frac{c}{2}}-1)\int_F \varphi(y_1-t_1s,...,y_m-t_ms)\norm{s}^{c}\norm{\frac{ds}{s}}+
$$
$$
\int_F \left(\varphi(y_1-t_1s,...,y_m-t_ms)+\frac{\varphi(\frac{y_1}{1-y_1s^{-1}},...,\frac{y_m}{1-y_ms^{-1}})}{\prod_{j=1}^m \norm{1-y_js^{-1}}^{-\la_j}}
-\varphi(y_1,...,y_m)\right)\norm{s}^{c}\norm{\frac{ds}{s}}+o(1)
$$
as $x\to \infty$.
Now, each of the first two summands tends to $\frac{1}{2}\log\norm{x}$ as $c\to 0$, while the third summand tends to $M$, as desired.\footnote{Here it needs to be checked that 
the $o(1)$ term remains $o(1)$ as $c\to 0$. We leave this argument to the reader.}
\end{proof} 

\begin{corollary}\label{specdec} (i) We have $\cap_x {\rm Ker}H_{x\pm}=
0$, $\cap_x {\rm Ker}H_{x,{\rm full}}=0$. 

(ii) We have a spectral decomposition 
$$
\mathcal H=\oplus_k \mathcal H_k^\pm,
$$ 
where $\mathcal H_k^\pm$ are finite dimensional joint eigenspaces of $H_{x,{\rm full}}$ with eigenvalues 
$\pm \beta_k(x)$. 
\end{corollary}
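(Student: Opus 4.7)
The plan is to deduce (i) from the asymptotic expansions of Proposition \ref{asym} together with the invertibility of $Q_{\pm}$ and $R_{\pm}$, and then obtain (ii) from (i) by the spectral theorem for commuting families of compact self-adjoint operators.

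For part (i), I focus on $\cap_x \Ker H_{x+}$, the argument for $H_{x-}$ being identical by duality, and the claim for $H_{x,\mathrm{full}}$ then following from its block anti-diagonal form (a joint null-vector $(\psi_0,\psi_1)$ forces $H_{x+}\psi_0 = 0$ and $H_{x-}\psi_1=0$ for all $x$). Let $\varphi \in \cap_x \Ker H_{x+}$, so $\norm{x}^{-1/2} H_{x+}\varphi = 0$ identically in $x \ne t_j,\infty$. I split into two cases according to whether $c := \la_{m+1}+1$ vanishes.

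If $c \ne 0$, then by Proposition \ref{asym}(i),
\[
\Gamma^F(c)\norm{x}^{c/2}Q_+\varphi + \Gamma^F(-c)\norm{x}^{-c/2}R_+\varphi = o(1), \quad x\to\infty.
\]
Since $c \in i\mathbb{R}\setminus\{0\}$, the values $\norm{x}^{c/2}$ form a unitary character that is equidistributed on the unit circle as $\norm{x}\to\infty$. Choosing two sequences $x_n \to \infty$ and $x_n'\to\infty$ with $\norm{x_n}^{c/2}\to 1$ and $\norm{x_n'}^{c/2}\to \omega$ for some $\omega \ne \pm 1$, and passing to the limit, yields two linear relations
\[
\Gamma^F(c)Q_+\varphi + \Gamma^F(-c)R_+\varphi = 0, \qquad \omega\Gamma^F(c)Q_+\varphi + \omega^{-1}\Gamma^F(-c)R_+\varphi = 0,
\]
whose determinant $\omega - \omega^{-1}$ is nonzero. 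Hence $Q_+\varphi = R_+\varphi = 0$, and since $Q_+$ and $R_+$ are unitary isomorphisms $\mathcal{H}(\bla)\to\mathcal{H}(\bla')$, we conclude $\varphi=0$. If $c=0$, Proposition \ref{asym}(ii) gives $0 = (\log\norm{x})\varphi + M\varphi + o(1)$ as $x\to\infty$; if $\varphi\ne 0$, the norm of the right side tends to infinity, a contradiction, so $\varphi=0$. This proves (i).

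For part (ii), the setup from Proposition \ref{prophecke} gives: each $H_{x\pm}$ is compact, $H_{x+}^\dagger=H_{x-}$, and $H_{x-}H_{y+}=H_{y-}H_{x+}$ for all allowed $x,y$. The first two yield that each $H_{x,\mathrm{full}}$ is a compact self-adjoint operator on $\mathcal H$, and the third (applied in both orders) gives $H_{x,\mathrm{full}}H_{y,\mathrm{full}} = H_{y,\mathrm{full}}H_{x,\mathrm{full}}$. By the spectral theorem for a pairwise commuting family of compact self-adjoint operators, $\mathcal H$ decomposes as the orthogonal Hilbert sum of the joint kernel and finite-dimensional joint eigenspaces indexed by nonzero joint eigencharacters $x\mapsto \beta(x)$. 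Part (i) shows the joint kernel vanishes, so the decomposition is onto finite-dimensional joint eigenspaces. Finally, the off-diagonal block structure of $H_{x,\mathrm{full}}$ and the identity $H_{x-} = H_{x+}^\dagger$ imply that if $(\varphi_0,\varphi_1)$ is an eigenvector with eigenvalue $\beta(x)$, then $(\varphi_0,-\varphi_1)$ is an eigenvector with eigenvalue $-\beta(x)$; thus the joint eigenvalues occur in $\pm$-pairs $\pm\beta_k(x)$, giving the stated decomposition $\mathcal H = \oplus_k \mathcal H_k^{\pm}$.

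\textbf{Main obstacle.} The delicate step is the extraction of $Q_+\varphi = R_+\varphi = 0$ in the case $c\ne 0$: because $\norm{x}^{c/2}$ is merely a bounded unitary character (not dominant over $\norm{x}^{-c/2}$ in any pointwise sense), one cannot extract the coefficients by scaling alone, and must genuinely exploit the linear independence of two distinct characters via a choice of two subsequences. Once this is done, the rest is a formal application of Proposition \ref{prophecke} and the spectral theorem.
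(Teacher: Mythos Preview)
Your proof is correct and follows essentially the same route as the paper: use the asymptotics of Proposition~\ref{asym} together with unitarity of the limiting operators to kill the joint kernel, then invoke compactness and self-adjointness for the spectral decomposition. The only cosmetic difference is that the paper packages the two summands into the single unitary operator $D=\begin{pmatrix}0&R_-\\ Q_+&0\end{pmatrix}$ on $\mathcal H$ and argues directly for $H_{x,\mathrm{full}}$, whereas you treat $H_{x+}$ and $H_{x-}$ separately via $Q_\pm,R_\pm$; your two-subsequence extraction is exactly the step hidden in the paper's one-line appeal to unitarity of $D$ (the word ``equidistributed'' is stronger than what you actually use or need---density, or even just two values $1,\omega$ with $\omega^2\neq1$, suffices).
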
 

\begin{proof} (i) It suffices to show that $\cap_x {\rm Ker}H_{x,{\rm full}}=0$. But this follows 
from Proposition \ref{asym} and the fact that $R_\pm,Q_\pm$ are unitary operators, hence so is $D$.

(ii) immediately follows from (i) and the compactness of $H_{x,{\rm full}}$. 
\end{proof} 

Let $\pm \delta_k\in \Bbb C$ be the eigenvalue of $D$ on $\mathcal H_k^\pm$, so $|\delta_k|=1$.  
We choose the signs so that ${\rm arg}\delta_k\in (-\frac{\pi}{2},\frac{\pi}{2}]$. Then 
Proposition \ref{asym} implies the following asymptotics for
$\beta_k(x)$.  

\begin{corollary} We have 
$$
\beta_k(x)=\norm{x}^{\frac{1}{2}}(2{\rm Re}(\delta_k\Gamma^F(c){\norm x}^{\frac{c}{2}})+o(1)),\ x\to \infty.
$$
\end{corollary}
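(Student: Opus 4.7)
The plan is to apply Proposition \ref{asym}(i) on each finite-dimensional joint eigenspace $\mathcal H_k^\pm$ produced by Corollary \ref{specdec}(ii). Granted the setup stated just before the corollary, namely that each $\mathcal H_k^\pm$ is an eigenspace of $D$ with eigenvalue $\pm\delta_k$ of modulus one, the claimed asymptotic is immediate: for any unit $\psi\in \mathcal H_k^+$ one has $D\psi=\delta_k\psi$ and $D^\dagger\psi=\overline{\delta_k}\psi$, so pairing the operator asymptotic of Proposition \ref{asym}(i) with $\psi$ gives
\begin{equation*}
\norm{x}^{-1/2}\beta_k(x)=\Gamma^F(c)\norm{x}^{c/2}\delta_k+\Gamma^F(-c)\norm{x}^{-c/2}\overline{\delta_k}+o(1).
\end{equation*}
Since $c\in i\Bbb R$ we have $\overline{\norm{x}^{c/2}}=\norm{x}^{-c/2}$ and $\overline{\Gamma^F(c)}=\Gamma^F(-c)$ (from the functional equation and reality properties of $\Gamma^F$), so the right hand side equals $2{\rm Re}(\delta_k\Gamma^F(c)\norm{x}^{c/2})+o(1)$; multiplying through by $\norm{x}^{1/2}$ yields the formula.

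The substantive step is justifying the setup itself, i.e.\ that $D$ preserves each $\mathcal H_k^\pm$ and acts there by a scalar of modulus one. The plan is as follows. Fix a unit vector $\psi\in \mathcal H_k^+$ and any vector $\phi$ lying in a different component of the orthogonal spectral decomposition $\mathcal H=\bigoplus_{l,\pm}\mathcal H_l^\pm$, or in $\mathcal H_k^+\cap \psi^\perp$. Orthogonality of joint eigenvectors of the $H_{x,{\rm full}}$ gives $\langle H_{x,{\rm full}}\psi,\phi\rangle=0$, so pairing Proposition \ref{asym}(i) with $\phi$ and using $\langle D^\dagger\psi,\phi\rangle=\overline{\langle D\phi,\psi\rangle}$ yields
\begin{equation*}
\Gamma^F(c)\norm{x}^{c/2}\langle D\psi,\phi\rangle+\Gamma^F(-c)\norm{x}^{-c/2}\overline{\langle D\phi,\psi\rangle}=o(1),\quad x\to\infty.
\end{equation*}
As $\norm{x}\to\infty$, the factor $\norm{x}^{c/2}$ oscillates on the unit circle (since $c\in i\Bbb R\setminus\{0\}$), so an almost-periodicity argument (see below) forces both matrix elements $\langle D\psi,\phi\rangle$ and $\langle D\phi,\psi\rangle$ to vanish. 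Letting $\phi$ range over the orthogonal complement of $\psi$ in $\mathcal H$ shows $D\psi\in\Bbb C\psi$; unitarity of $D$ gives $D\psi=\delta_k\psi$ with $|\delta_k|=1$. A linearity argument (comparing $D(\psi_1)+D(\psi_2)$ with $D(\psi_1+\psi_2)$) shows that $\delta_k$ is independent of the choice of unit $\psi\in\mathcal H_k^+$. The corresponding statement for $\mathcal H_k^-$, with eigenvalue $-\delta_k$, follows from the sign-flip intertwiner $(\psi^0,\psi^1)\mapsto (\psi^0,-\psi^1)$, which conjugates $H_{x,{\rm full}}$ to its negative and $D$ to its negative; the convention $\arg\delta_k\in(-\pi/2,\pi/2]$ then fixes the labeling.

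The main (mild) obstacle is the almost-periodicity step: given $a,b\in\Bbb C$ and $c\in i\Bbb R\setminus\{0\}$, one needs that $a\norm{x}^{c/2}+b\norm{x}^{-c/2}\to 0$ as $\norm{x}\to\infty$ forces $a=b=0$. For $F$ archimedean this is immediate since $\log\norm{x}$ ranges continuously to infinity and the function $\theta\mapsto a e^{c\theta/2}+be^{-c\theta/2}$ is periodic in $\theta=\log\norm{x}$, so eventual smallness implies identical vanishing. For $F$ non-archimedean, $\log\norm{x}$ runs through an arithmetic progression and one argues instead via the identity $|a\norm{x}^{c/2}+b\norm{x}^{-c/2}|^{2}=|a|^{2}+|b|^{2}+2{\rm Re}(a\overline{b}\norm{x}^{c})$: the oscillating term has vanishing Ces\`aro mean (being either genuinely periodic or equidistributed on a circle), so eventual smallness forces $|a|^{2}+|b|^{2}=0$. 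With this verification in hand, every remaining step is bookkeeping, and the corollary follows.
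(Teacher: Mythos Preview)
Your proof is correct. The paper offers no argument for this corollary: it simply asserts in the sentence preceding it that $D$ acts on $\mathcal H_k^\pm$ by the scalar $\pm\delta_k$, after which the claimed asymptotic is immediate from Proposition~\ref{asym}(i) by pairing with a unit eigenvector --- exactly your first paragraph. You go further and actually justify that assertion, which the paper leaves to the reader, by extracting the off-diagonal matrix elements of $D$ from the two-term asymptotic via almost-periodicity; this is the natural route and essentially the only one available from what has been stated. An equivalent, slightly slicker phrasing: pick two sequences $x\to\infty$ along which $\norm{x}^{c/2}$ takes two distinct constant values on the unit circle, solve the resulting $2\times2$ linear system to write $D$ as a norm limit of linear combinations of the $H_{x,{\rm full}}$, and conclude that $D$ lies in the norm-closed algebra they generate and is therefore scalar on each joint eigenspace. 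One minor caveat: your separation step in the non-archimedean case really uses $\norm{\cdot}^{c}\ne 1$ rather than merely $c\ne 0$, but that is precisely the hypothesis under which the underlying Lemma~\ref{8.1anal} (and hence Proposition~\ref{asym}(i)) is established.
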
 

So setting $\delta_k^*=e^{2\pi i\theta_k}:=\frac{c\Gamma^F(c)}{|c\Gamma^F(c)|}\delta_k$, $-\pi<\theta_k\le \pi$, 
we get 
$$
\norm{x}^{-\frac{1}{2}}\beta_k(x)=\frac{|c\Gamma^F(c)|}{c}(e^{2\pi i\theta_k}{\norm x}^{\frac{c}{2}}-e^{-2\pi i\theta_k}{\norm x}^{-\frac{c}{2}})+o(1)=
$$
$$
\frac{|c\Gamma^F(c)|}{c}\left(\cos \theta_k\cdot ({\norm x}^{\frac{c}{2}}-{\norm x}^{-\frac{c}{2}})+i\sin \theta_k\cdot ({\norm x}^{\frac{c}{2}}+{\norm x}^{-\frac{c}{2}})\right)+
o(1),\ x\to \infty.
$$
Thus, Proposition \ref{asym}(ii) implies that 
$$
\theta_k=\frac{c}{2i}\mu^{(k)}+o(c),\ c\to 0.
$$
where $\mu^{(k)}\in \Bbb R$ are the eigenvalues of $M$. 

Thus we see that we have orthonormal bases $\lbrace \bold e_k^0\rbrace$ of $\mathcal H^0$ 
and $\lbrace \bold e_k^1\rbrace$ of $\mathcal H^1$ such that 
$$
Q_+\bold e_k^0=\delta_k \bold e_k^1,\ Q_-\bold e_k^1=\delta_k^{-1}\bold e_k^0,\ R_+\bold e_k^0=\delta_k^{-1}\bold e_k^1,\ R_-\bold e_k^1=\delta_k \bold e_k^0,
$$
and the eigenvectors of $H_{x,{\rm full}}$ with eigenvalues $\pm\beta_k(x)$ are $\bold e_k^\pm=\bold e_k^0\pm \bold e_k^1$. Thus the vectors $\bold e_k^0$ satisfy the equation 
$$
R_{-}H_{x+}\bold e_k^0=\delta_k\beta_k(x)\bold e_k^0.
$$
So the spectral problem for the operator $H_{x,{\rm full}}$ on $\mathcal H$ is equivalent 
to the spectral problem for $R_-H_{x+}$ on $\mathcal H^0$, and the eigenvalues 
of $R_-H_{x+}$ are 
$$
\widehat\beta_k(x):=\delta_k\beta_k(x)=\norm{x}^{\frac{1}{2}}(\delta_k^2\Gamma(c)\norm{x}^{\frac{c}{2}}+\Gamma(-c)\norm{x}^{-\frac{c}{2}}+o(1)),\ x\to \infty.
$$
Note that $\widehat\beta_k(x)$ do not depend on the above choice of signs for $\delta_k$. 

\begin{example} Let $m=1$ (3 points), then $H(\bla)\cong \Bbb C$ by sending 
$\varphi$ to $\varphi(1)$. 
Let 
$$
\lambda_0=-1+a,\ \lambda_1=-1+b,\ t_1=y_1=1.
$$ 
Hence 
$$
Q_+=\frac{1}{\Gamma^F(c)}\int_F \norm{1-s}^{\frac{a+b-c-1}{2}}\norm{s}^{c-1}\norm{ds}=\frac{\Gamma^F(\tfrac{a+b-c+1}{2})}{\Gamma^F(\tfrac{a+b+c+1}{2})},
$$
$$
R_{-}=\frac{1}{\Gamma^F(c)}\int_F \norm{1-s}^{\frac{-a+b-c-1}{2}}\norm{s}^{\frac{a-b-c-1}{2}}\norm{ds}=
\frac{\Gamma^F(\tfrac{a-b-c+1}{2})}{\Gamma^F(\tfrac{a-b+c+1}{2})}.
$$
Thus 
$$
\delta_k=\sqrt{\frac{\Gamma^F(\tfrac{a+b-c+1}{2})\Gamma^F(\tfrac{a-b-c+1}{2})}{\Gamma^F(\tfrac{a+b+c+1}{2})\Gamma^F(\tfrac{a-b+c+1}{2})}}.
$$
Also by \eqref{uniop}, 
$$
H_{x+}=\int_F  \norm{s}^{\frac{a-b+c-1}{2}}\norm{s-1}^{\frac{-a+b+c-1}{2}}\norm{s-x}^{\frac{a+b-c-1}{2}}\norm{ds},
$$
so Proposition \ref{asym}(i) takes the form 
$$
\int_F  \norm{s}^{\frac{a-b+c-1}{2}}\norm{s-1}^{\frac{-a+b+c-1}{2}}\norm{s-x}^{\frac{a+b-c-1}{2}}\norm{ds}=
$$
$$
B^F(\tfrac{-a+b+c+1}{2},\tfrac{a-b+c+1}{2}){\norm x}^{\frac{a+b-c-1}{2}}+
B^F(\tfrac{a+b-c+1}{2},\tfrac{-a-b-c+1}{2}){\norm x}^{\frac{a+b+c-1}{2}}+o(\norm{x}^{-\frac{1}{2}}),\ x\to \infty.
$$
This asymptotic formula is also a special case of \eqref{hgin}, when ${\rm Re}\beta={\rm Re}\gamma=\frac{1}{2}$. 
\end{example} 

\begin{remark} Analogously to \cite{EFK3}, Proposition 3.15(i), 
a similar asymptotic formula for $H_x$ to Proposition \ref{asym} holds when $x\to t_j$ 
$0\le j\le m$, with an additional factor $S_j$: if $\lambda_j\ne -1$ then 
$$
\norm{x-t_j}^{-\frac{1}{2}}H_{x}=\Gamma^F(\lambda_j+1)\norm {x-t_j}^{-\frac{\lambda_j+1}{2}}R^{(j)}S_j+\Gamma^F(-\lambda_j-1)\norm{x-t_j}^{\frac{\lambda_j+1}{2}}S_jR^{(j)}
+o(1)
$$
as $x\to t_j$, where $R^{(j)}:=R_{1,...,1,-1,1,...,1}$ with $-1$ in
the $j$-th position.
The proof and the computation of the limit $\la_j\to -1$ are parallel to the case $x\to \infty$. 
\end{remark} 

\begin{remark}\label{exte} This analysis may be extended to the {\bf  complementary series}, i.e., 
when some $\la_j$, instead of being in $-1+i\Bbb R$, are allowed to lie in the interval $(-2,0)$. For simplicity assume that $\la_{m+1}\in -1+i\Bbb R$, so that $M_{\la_{m+1}}$ is tempered and we can define a reasonable multiplicity space ${\rm Mult}_{PGL_2(F)}(M_{\la_{m+1}}^*,M_{\la_0}\otimes...\otimes M_{\la_m})$. If $\la\in (-2,0)$, we still have $M_\la=L^2(\Bbb P^1(F),\norm{K}^{-\frac{\la}{2}})$, but now with inner product 
$$
(f,g)=\frac{1}{\Gamma^F(-\la-1)}\int_{\Bbb P^1(F)^2}f(y)\overline{g(z)}{\norm{y-z}^{-\la-2}}\norm{dydz}^{\frac{\la+2}{2}}
$$
(more precisely, the integral converges for $\la\in (-2,-1)$ but analytically continues to $\la\in (-2,0)$ as a positive definite inner product). Thus the inner product in $\mathcal H(\bla)$ (translation invariant homogeneous functions of degree $\frac{1}{2}(\sum_{j=0}^{m}\la_j-\la_{m+1})$) also has to be modified accordingly and will become more complicated, but the formula for Hecke operators remains the same. 
\end{remark} 

\begin{remark}\label{schwar} At least for $\ell=1$, one should be able to extend this theory to the case of {\bf  admissible} representations $V_i$ (not necessarily tempered, or even unitarizable) by working in the Schwartz space context instead of $L^2$ space (for example, using the approach of \cite{BK2}). For instance, in this context the extension to complementary series 
from Remark \ref{exte} should be much more straightforward -- we don't need to worry about positive inner products and can just do analytic continuation with respect to the Casimir eigenvalues $\frac{1}{2}(\lambda_j+1)^2$ (which are no longer required to be real). 
\end{remark} 

\begin{remark} The material of Subsection \ref{pgl2} generalizes in a straightforward way 
when $\lambda_j$ are taken to be arbitrary multiplicative characters of $F$ 
of the form $\lambda_j(y)=\norm{y}^{-1}\lambda_j^0(y)$, where $\lambda_j^0$ are unitary 
characters. The above setting is the special case when $\lambda_j^0$ are imaginary powers of the norm. 
\end{remark} 

\section{Analytic Langlands correspondence over $\Bbb C$} 

In this section we discuss the analytic Langlands correspondence over
$\Bbb C$, including various twists. We begin with recalling the basic setting discussed in \cite{EFK2}. 

\subsection{The general setting of the analytic Langlands correspondence over $\Bbb C$} \label{ALcomp} 

When one talks of Langlands correspondence for a group $G$, one
usually means not just a formulation of a spectral problem for Hecke
operators, but also a parametrization of their spectrum by data
related to the Langlands dual group $G^\vee$. Such a description is
essentially available for the {\bf arithmetic} Langlands
correspondence for a curve $X$ over a finite field. In this case the
Langlands conjecture describes the spectrum of Hecke operators in
terms of \'etale $G^\vee$-local systems on $X$. On the other hand, for
the {\bf analytic} Langlands correspondence dealing with curves over a
general non-archimedean field local field $F$, we cannot yet formulate
even a conjectural description of the spectrum. But for archimedean
fields we can use quantum Hitchin Hamiltonians commuting with Hecke
operators to describe the spectrum (see Subsection \ref{archi}). The
most complete conjectural picture exists for $F=\Bbb
C$ (\cite{EFK1,EFK2,EFK3}); we discuss it in this section. The case
of curves over $F=\Bbb R$ is discussed in Section 4.

Consider first the unramified case. Let $B^\vee$ be a Borel subgroup
of $G^\vee$ with maximal torus $T^\vee$, $Z^\vee$ the center of
$G^\vee$, $\g^\vee:={\rm Lie}G^\vee$, $\mathfrak{b}^\vee:={\rm
  Lie}B^\vee$, $\mathfrak{t}^\vee:={\rm Lie}T^\vee$.  Let
$Q^\vee\subset \Lambda$ be the root lattice of $G^\vee$, then
$\Hom(\Lambda/Q^\vee,\Bbb C^\times)=Z^\vee$. Let $d_i,
i=1,\ldots,\on{rank} G$ be the degrees of the basic invariants for $G$
and $G^\vee$.

\begin{definition}[\cite{BD,BD2}] \label{operde}
  A $G^\vee$-{\bf oper} on $X$ is a triple
  $(\mcE,\mcE_{B^\vee},\nabla)$, where $\mcE$ is a $G^\vee$-bundle on $X$,
  $\mcE_{B^\vee}$ is its $B^\vee\cap [G^\vee,G^\vee]$-reduction, and
  $\nabla$ is a connection on $\mcE$ which has the form 
 $$
 \nabla=d+(f+b(z))dz,\ b\in \mathfrak{b}^\vee[[z]]
 $$
for any trivialization of $\mcE_{B^\vee}$ (and hence $\mcE$) on the formal
neighborhood of any
point $x_0 \in X$, where $f$ is the lower nilpotent element of a
principal $\mathfrak{sl}_2$-triple $\{ e,h,f \} \subset \g^\vee$ such
that $h\in {\mathfrak t}^\vee$ and $e\in {\mathfrak b}^\vee$ and $z$
is a formal coordinate at
 $x_0$.\footnote{In fact, this definition is more restrictive than the
 one in \cite{BD,BD2}, where $\mcE_{B^\vee}$ is assumed to be a
 $B^\vee$-bundle. But the two definitions coincide when $G^\vee$ is
 semisimple.}
\end{definition}

%Equivalently, one may say that a $G^\vee$-oper on $X$ is a $G^\vee$-bundle 
%on $X$ with a reduction to $B^\vee\cap [G^\vee,G^\vee]$ and a connection 
%$\nabla$ satisfying the condition of Definition \ref{operde}.

The above $\sw_2$ triple defines a {\bf principal homomorphism}
$\phi: SL_2\to G^\vee$.

Since any two Borel subgroups of $G^\vee$ are conjugate to each other,
any two maximal tori in a given Borel subgroup $B^\vee$ of $G^\vee$
are conjugate to each other by an element of $B^\vee$, and any two
$\sw_2$-triples of the kind considered in the above definition are
conjugate by an element of the torus $T^\vee$, we obtain the following
result.

\begin{lemma}    \label{doesnotdep}
Let $B'{}^\vee$ be another Borel subgroup of $G^\vee$. The spaces
of $G^\vee$-opers corresponding to $B^\vee$ and $B'{}^\vee$ are
canonically isomorphic.
\end{lemma}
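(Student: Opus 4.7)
The plan is to construct the canonical isomorphism by conjugation and to argue that the choice of conjugating element is irrelevant because inner automorphisms of $B^\vee$ act as the identity on the groupoid of $B^\vee$-bundles.

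First, using the conjugacy of Borel subgroups, I would pick any $g\in G^\vee$ with $gB^\vee g^{-1}=B'{}^\vee$, and let $c_g:B^\vee\to B'{}^\vee$ denote conjugation by $g$. This induces an equivalence $\Phi_g$ from the groupoid of $B^\vee$-reductions of $G^\vee$-bundles to that of $B'{}^\vee$-reductions, leaving the underlying $G^\vee$-bundle unchanged. I would extend this to opers by setting $\Phi_g(\mcE,\mcE_{B^\vee},\nabla):=(\mcE,\Phi_g(\mcE_{B^\vee}),\nabla)$ and then verify that the oper condition is preserved. In a trivialization of $\Phi_g(\mcE_{B^\vee})$ induced from one of $\mcE_{B^\vee}$ via $c_g$, the reference triple $\{e,h,f\}\subset\g^\vee$ gets replaced by $\{\Ad(g)e,\Ad(g)h,\Ad(g)f\}$, which is again a principal $\sw_2$-triple with $\Ad(g)h\in{\mathfrak t}'{}^\vee:=g\mathfrak{t}^\vee g^{-1}$ and $\Ad(g)e\in{\mathfrak b}'{}^\vee$, so the resulting connection is still of oper type.

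Next, I would show that $\Phi_g$ is independent of $g$ up to canonical natural isomorphism, so that the collection $\{\Phi_g\}$ glues to a well-defined canonical equivalence. If $g_1,g_2$ both conjugate $B^\vee$ to $B'{}^\vee$, then $b:=g_2^{-1}g_1\in N_{G^\vee}(B^\vee)=B^\vee$, and $\Phi_{g_1}=\Phi_{g_2}\circ\Phi_{c_b}$, where $\Phi_{c_b}$ is the functor on $B^\vee$-bundles induced by the inner automorphism $c_b$ of $B^\vee$. For any group $H$ and any $h\in H$, right multiplication by $h$ defines a natural isomorphism $\id\Rightarrow\Phi_{c_h}$ on the groupoid of principal $H$-bundles; applied with $H=B^\vee$, this yields a canonical natural isomorphism $\Phi_{g_1}\cong\Phi_{g_2}$ at the level of $B^\vee$-reductions, hence at the level of opers since the underlying $G^\vee$-bundle and $\nabla$ are untouched.

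The step I expect to demand the most care is checking that the above natural isomorphism is compatible with the oper condition across varying choices of reference $\sw_2$-triple. This is exactly where the remaining two facts recalled before the lemma are needed: any two maximal tori in $B^\vee$ are $B^\vee$-conjugate, and any two principal $\sw_2$-triples adapted to a fixed $T^\vee\subset B^\vee$ are $T^\vee$-conjugate. Writing $b=tu$ in the Levi decomposition $B^\vee=T^\vee\ltimes U^\vee$, the action of $u$ alters only the $\mathfrak{b}^\vee[[z]]$-valued correction $b(z)$, while the action of $t$ rescales $f$ but carries the triple to another admissible one; thus the canonicality assertion reduces to precisely these uniqueness statements, and the lemma follows.
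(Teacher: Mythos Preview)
Your proposal is correct and follows essentially the same approach as the paper, which simply records the three standard facts (conjugacy of Borels, conjugacy of maximal tori within a Borel by an element of that Borel, and $T^\vee$-conjugacy of admissible $\sw_2$-triples) and declares the lemma as a consequence. You have spelled out in detail what the paper leaves implicit; one small point is that in Definition~\ref{operde} the reduction is to $B^\vee\cap[G^\vee,G^\vee]$ rather than to $B^\vee$, but this does not affect your argument.
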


Given a flat $G^\vee$-bundle $(\mcE,\nabla)$, we may speak of an
{\bf oper structure} on it, which is a reduction $\mcE_{B^\vee}$ of
$\mcE$ to $B^\vee\cap [G^\vee,G^\vee]$ satisfying the above
condition.

\begin{lemma}[\cite{BD,BD2}]    \label{atmost}
A flat $G^\vee$-bundle can have at most one oper structure.
\end{lemma}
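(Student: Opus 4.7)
The lemma is global in nature: on a formal disk, a positive-dimensional family of oper reductions can coexist (for $G^\vee=SL_2$ and the trivial oper $\nabla=d+fdz$ on $\mathrm{Spec}\,\mathbb{C}[[z]]$, the constant sections $[1\!:\!\gamma]$, $\gamma\in\mathbb{C}$, all provide oper reductions, corresponding to gauge transformations $\exp(\gamma f)$ satisfying the degenerate Riccati equation $\partial\gamma=0$). The plan is to show that globally the oper $B^\vee$-reduction is an invariant of the underlying holomorphic $G^\vee$-bundle $\mathcal{E}$, independent of $\nabla$, from which the lemma follows immediately.

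The first step is rigidity of the underlying bundle: any oper structure forces $\mathcal{E}$ to be isomorphic to a specific bundle $\mathcal{E}^{\mathrm{oper}}$. Indeed, the coefficient $1$ of the principal nilpotent $f=\sum_i f_{\alpha_i}\in\mathfrak{n}^{\vee,\mathrm{op}}$ in the oper form gives, upon projecting the connection form modulo $\mathfrak{b}^\vee$ and then modulo $[\mathfrak{n}^{\vee,\mathrm{op}},\mathfrak{n}^{\vee,\mathrm{op}}]$, a nowhere-vanishing section of $\mathcal{E}_{T^\vee}\times_{T^\vee}\mathbb{C}_{-\alpha_i}\otimes\Omega_X$ for each simple root $\alpha_i$, where $\mathcal{E}_{T^\vee}:=\mathcal{E}_{B^\vee}/N^\vee$. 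These force $\alpha_i(\mathcal{E}_{T^\vee})\cong\Omega_X$ for all $i$, so (up to an overall $Z^\vee$-twist fixed by the isomorphism class of $\mathcal{E}$) $\mathcal{E}_{T^\vee}\cong(2\rho^\vee)_*(\Omega_X^{1/2})$, and therefore $\mathcal{E}\cong\mathcal{E}^{\mathrm{oper}}$.

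Next, one exhibits a canonical filtration on $\mathrm{ad}(\mathcal{E}^{\mathrm{oper}})$ intrinsic to the holomorphic structure. The principal height grading $\mathfrak{g}^\vee=\bigoplus_{k\in\mathbb{Z}}\mathfrak{g}^\vee_k$ (with $\mathfrak{g}^\vee_k$ the span of root vectors of height $k$ and $\mathfrak{g}^\vee_0=\mathfrak{t}^\vee$) produces a decreasing $B^\vee$-stable filtration $F^k\mathfrak{g}^\vee=\bigoplus_{j\geq k}\mathfrak{g}^\vee_j$ (stability because $\mathrm{ad}(N^\vee)$ strictly raises height), and hence a filtration $F^\bullet\mathrm{ad}(\mathcal{E}^{\mathrm{oper}})$ with graded pieces $\mathrm{gr}^k\cong\bigoplus_{\mathrm{ht}(\beta)=k}\Omega_X^{k}$, of slope $k(2g-2)$. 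For $g\geq 2$ these slopes strictly decrease in $k$, so $F^\bullet\mathrm{ad}(\mathcal{E}^{\mathrm{oper}})$ coincides with the Harder--Narasimhan filtration of $\mathrm{ad}(\mathcal{E}^{\mathrm{oper}})$---an invariant of the holomorphic bundle alone.

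Finally, the oper $B^\vee$-reduction is recovered as the pointwise stabilizer of $F^\bullet$: since the stabilizer in $G^\vee$ of the standard principal filtration of $\mathfrak{g}^\vee$ equals $B^\vee$, the sub-$B^\vee$-bundle of $\mathcal{E}$ consisting of frames identifying $F^\bullet\mathrm{ad}(\mathcal{E})|_x$ with the standard principal filtration on $\mathfrak{g}^\vee$ is canonically the reduction $\mathcal{E}_{B^\vee}$. Since both the filtration and its pointwise stabilizer depend only on $\mathcal{E}$, any two oper structures must coincide. The main obstacle is the identification of the principal and Harder--Narasimhan filtrations in the third step, which requires an explicit slope computation and the genus bound $g\geq 2$; for $g=0,1$ the lemma holds in the ramified framework of Section~2, where analogous arguments use logarithmic connections and the slope computation acquires contributions from the ramification divisor restoring strict monotonicity.
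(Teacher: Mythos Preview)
The paper does not give its own proof of this lemma; it is stated with a citation to \cite{BD,BD2}. Your argument---pinning down $\mathcal{E}_{T^\vee}$ from the nowhere-vanishing simple-root components of $f$, identifying the induced principal-height filtration on $\mathrm{ad}(\mathcal{E})$ with the Harder--Narasimhan filtration (using the slope calculation for $\mathrm{g}\ge 2$), and recovering the Borel reduction as the frame bundle of this canonical filtration---is exactly the standard Beilinson--Drinfeld proof from those references, and is correct.
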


Thus it makes sense to say that a given flat $G^\vee$-bundle is or is
not an oper.

\begin{example}\label{tor} If $T^\vee$ is a torus then a $T^\vee$-oper on $X$ is any connection $\nabla$ on the trivial $T^\vee$-bundle on $X$. Thus $\nabla=d+\omega$ where $\omega\in H^0(X,K_X\otimes {\rm Lie}T^\vee)$.
\end{example} 

As explained in \cite{BD,BD2}, $G_{\rm ad}^\vee$-opers on $X$ are
parametrized by a certain affine space ${\rm Op}_{G_{\rm ad}^\vee}(X)$
of dimension $({\rm g}-1)\dim G$ -- a torsor over the Hitchin base
\begin{equation}    \label{Hitch}
{\bf  Hitch}:=\oplus_i H^0(X,K_X^{\otimes d_i}).
\end{equation}
By Example \ref{tor}, this is also true for a torus, hence for a product of a torus with 
an adjoint group, i.e., for any $G^\vee$ such that $[G^\vee,G^\vee]$ is adjoint. 
In other words, denoting by $Z^\vee_{\rm der}$ the intersection 
$Z^\vee\cap [G^\vee,G^\vee]$ of $Z^\vee$ with the derived group $[G^\vee,G^\vee]$, we see that this description is always valid for $G^\vee/Z^\vee_{\rm der}$-opers. 

More generally, for arbitrary $G^\vee$ the variety ${\rm Op}_{G^\vee}(X)$ of $G^\vee$-opers on $X$ is a torsor over the affine space ${\rm Op}_{G^\vee/Z^\vee_{\rm der}}(X)$ with fiber $H^1(X,Z^\vee_{\rm der})$. Moreover, any choice of a spin structure $K_X^{\frac{1}{2}}$ on $X$ gives rise to a splitting of this torsor, i.e., fixes a canonical component ${\rm Op}_{G^\vee}^0(X)\cong {\rm Op}_{G^\vee/Z^\vee_{\rm der}}(X)$. Indeed, consider the unique up to isomorphism (by the Riemann-Roch theorem) 
nontrivial extension 
\begin{equation}    \label{nontr}
0\to K_X^{\frac{1}{2}}\to \mathcal E_{SL_2}\to K_X^{-\frac{1}{2}}\to 0
\end{equation}
defining an $SL_2$-bundle $\mathcal E_{SL_2}$ on $X$, and define
$\mathcal E_{G^\vee}:=\phi(\mathcal E_{SL_2})$. If the genus ${\rm g}
> 1$, then according to \cite{BD}, any connection on $\mathcal
E_{G^\vee}$ is an oper. Moreover, there is a canonical isomorphism
$H^0(X,K_X\otimes {\rm ad}\mathcal E_{G^\vee})\cong {\bf Hitch}$
identifying the translation actions of $H^0(X,K_X\otimes {\rm
  ad}\mathcal E_{G^\vee})$ on connections and of ${\bf Hitch}$ on
opers. So such opers form a component of ${\rm Op}_{G^\vee}(X)$, which
we will denote by ${\rm Op}_{G^\vee}^0(X)$.\footnote{If ${\rm g} = 0$,
the bundle underlying $SL_2$-opers is still the non-trivial extension
$\mathcal E_{SL_2}$ given by \eqref{nontr} but it is isomorphic to the
trivial $SL_2$-bundle in this case. The bundle underlying
$G^\vee$-opers is $\mathcal E_{G^\vee} = \phi(\mathcal E_{SL_2})$, so
it is isomorphic to the trivial $G^\vee$-bundle, and there is a unique
$G^\vee$-oper that corresponds to the trivial connection on $\mathcal
E_{G^\vee}$. If ${\rm g = 1}$, we should take instead the trivial
extension \eqref{nontr} and set $\mathcal E_{G^\vee}=\phi(\mathcal
E_{SL_2})$. In this case $K_X \simeq \OO_X$ and so $K_X^{\pm
  \frac{1}{2}}$ is a square root of $\OO_X$. With a choice of such a
square root, we obtain a component in the space of $G^\vee$-opers
which is isomorphic to ${\bf Hitch}$.} In other words, an oper from
this component is just a connection on a certain fixed $G^\vee$-bundle
$\mathcal E_{G^\vee}$.\footnote{Note that the associated
$PGL_2$-bundle to $\mathcal E_{SL_2}$ is independent on the choice of
$K_X^{\frac{1}{2}}$, Thus if $\phi$ factors through $PGL_2$ then the
component ${\rm Op}_{G^\vee}^0(X)$ does not depend on the choice of
$K_X^{\frac{1}{2}}$.}

Given a $G^\vee$-oper $\chi = (\mcE,\mcE_{B^\vee},\nabla)$, we have
the underlying flat $G^\vee$-bundle $(\mcE,\nabla)$ and the
corresponding $G^\vee$-local system on $X$. Recall that any flat
$G^\vee$-bundle has at most one oper structure. Also, according to
\cite{BD2}, \S 1.3, the automorphism group of every flat
$G^\vee$-bundle underlying a $G^\vee$-oper is the center
$Z^\vee$. Therefore, the space ${\rm Op}_{G^\vee}(X)$ can be realized
as a certain half-dimensional (in fact, Lagrangian in the Atiyah-Bott
holomorphic symplectic structure) complex analytic submanifold of the
complex manifold ${\rm LocSys}^\circ_{G^\vee}(X)$ of $G^\vee$-local
systems on $X$ with the smallest possible group of automorphisms;
namely, $Z^\vee$. The group $H^1(X,Z^\vee_{\rm der})$
naturally acts on ${\rm Op}_{G^\vee}(X)$.

If we choose a base point $x_0\in X$, then the $G^\vee$-local system on
$X$ corresponding to $\chi$ gives rise to a monodromy representation
$\rho_\chi: \pi_1(X,x_0)\to G^\vee(\C)$ (it is well-defined up to
conjugation by an element of $G^\vee(\C)$). In this section, to
simplify our notation when we discuss monodromy representations, we
will write $G^\vee$ instead of $G^\vee(\C)$.

\begin{remark}    \label{monim}
Let $G^\vee = SL_2$ and $\chi$ any $SL_2$-oper on $X$. Denote by $M$
the Zariski closure of the image of the monodromy representation
$\rho_\chi: \pi_1(X,p)\to SL_2$. We claim that for ${\rm g} > 1$,
$M=SL_2$. Indeed, by passing to a cover of $X$ if needed, we can
assume without loss of generality that $M$ is connected. Hence it is
either contained in a Borel subgroup of $SL_2$ or is $SL_2$
itself. If it's the former, then the vector bundle $\mathcal E_{SL_2}$
would contain a line subbundle $\mathcal L$ preserved by the oper
connection. Then $\on{deg}(\mathcal L)=0$, and since
$\on{deg}(K_X^{-\frac{1}{2}})<0$ for ${\rm g} > 1$, it follows that
the map $\mathcal L \to K_X^{-\frac{1}{2}}$ defined by the extension
\eqref{nontr} is 0. But then $\mathcal L$ must be isomorphic to
$K_X^{\frac{1}{2}}$ which is impossible since
$\on{deg}(K_X^{\frac{1}{2}})>0$. This also implies an analogous
statement for $G^\vee = PGL_2$: if ${\rm g} > 1$, then for any
$PGL_2$-oper $\chi$ the Zariski closure of the image of the
corresponding monodromy representation $\rho_\chi: \pi_1(X,p)\to
PGL_2(C)$ is equal to $PGL_2$.
\end{remark}

Given a $G^\vee$-local system $\rho$ on $X$ and an algebraic representation 
$\varphi: G^\vee\to GL_N(\Bbb C)$, we have a $GL_N(\Bbb C)$-local system
$\varphi(\rho)$ on $X$. There exists a unique 
$G^\vee$-local system $\overline\rho$ such that for every $\varphi$, $\varphi(\overline\rho)\cong \overline{\varphi(\rho)}$.

\begin{definition} We say that a $G^\vee$-local system $\rho$ on $X$ is ${\bf  real}$ if 
$\rho\cong \overline \rho$.
\end{definition} 

Thus the space ${\rm LocSys}^\circ_{G^\vee}(X)_{\Bbb R}$ of real 
local systems is a half-dimensional real submanifold of ${\rm LocSys}^\circ_{G^\vee}(X)$ (in fact, Lagrangian under the real part of the holomorphic symplectic form). 
As explained in \cite{EFK2}, Remark 1.9, $\rho$ is real iff
its monodromy group can be conjugated into 
an inner real form $G^\vee_{\Bbb R}$ of $G^\vee$. 

\begin{definition} 
A {\bf  real} $G^\vee$-{\bf  oper} is a $G^\vee$-oper such that
the corresponding $G^\vee$-local system $\rho$ is real.
\end{definition} 

In other words, a real oper is an intersection point of the above two half-dimensional submanifolds. It is expected (and known for $G^\vee=SL_2$,  see \cite{Fa}) that these manifolds intersect transversally, so the set of real opers is discrete. 
Moreover, it is conjectured in \cite{EFK2} that for real opers the inner form $G^\vee_{\Bbb R}$ is, in fact, split, and this is known for $G^\vee=SL_2$ (\cite{GKM}).\footnote{This is also easy to see for any $G^\vee$ in the tamely ramified case, see  \cite{EFK2}, Remark 1.9.}

Note that we may also consider the complex conjugate submanifold $$\overline{\rm Op}_{G^\vee}(X)\subset {\rm LocSys}^\circ_{G^\vee}(X).$$ The points of this submanifold are local systems that are realized by an antiholomorphic $G^\vee$-oper (which we call an {\bf  anti-oper} for short). This is a third Lagrangian submanifold of ${\rm LocSys}^\circ_{G^\vee}(X)$ with respect to the real part of the holomorphic symplectic form, which intersects the other two submanifolds exactly at the same points where they intersect each other (i.e., at real opers). In other words, a real oper is the same thing as a real anti-oper and also the same as a local system that's both an oper and an anti-oper.   

Now, the main conjecture of \cite{EFK2} is as follows (we formulate it
for semisimple $G$, as for abelian $G$ it is not difficult and proved
in \cite{F4}, see also \cite{EFK2}). 
Recall that the manifold ${\rm Bun}^\circ_G(X)$ is the union of 
connected components ${\rm Bun}^\circ_{G,\alpha}(X)$ labeled by 
the first Chern class $c\in H^2(X,\pi_1(G))=\pi_1(G)$ of a $G$-bundle on $X$, and 
that $\pi_1(G)=Z^{\vee *}=\Lambda/Q^\vee$.

\begin{conjecture}\label{mainc} (i) The Hilbert space $\mathcal H=L^2({\rm Bun}^\circ_G(X))$ can be written as an orthogonal direct sum of 1-dimensional spaces
$$
\mathcal H=\bigoplus_{\rho,\beta} \mathcal H_{\rho,\beta}
$$
invariant under Hecke operators, where $\rho$ runs over real $G^\vee$-opers in ${\rm Op}^0_{G^\vee}(X)$, and $\beta$ runs over eigenvalues of Hecke operators corresponding to $\rho$. The quantum Hitchin Hamiltonians act on $\mathcal H_{\rho,\beta}$ via the character $\rho$. 

(ii) The eigenvalue $\beta_\lambda(x,\overline x)$ 
for the Hecke operator $H_{x,\lambda}$ in $\mathcal H_{\rho,\beta}$ is
given up to scaling by the formula of \cite{EFK2}, Conjecture 5.1 (see
Conjecture \ref{51} below).

(iii) The set of such eigenvalues corresponding to a given $\rho$ is a torsor over the group $Z^\vee=\Hom(\Lambda/Q^\vee,\Bbb C^\times)$ where the action of this group on eigenvalues is by multiplication, i.e.
$$
(\xi\circ \beta)_{\lambda}=\xi(\lambda)\beta_{\lambda}.
$$ 

(iv) 
The decomposition $\mathcal H=\oplus_{c\in Z^{\vee *}}L^2({\rm Bun}^\circ_{G,c}(X))$
is invariant under quantum Hitchin Hamiltonians, and on each summand $L^2({\rm Bun}^\circ_{G,c}(X))$ they have simple spectrum labeled by real $G^\vee$-opers $\rho$ in ${\rm Op}^0_{G^\vee}(X)$. The Hecke operators $H_{x,\lambda}$ act between these summands, acting on labels $c$ by $c\mapsto c+\lambda$, which gives rise to the action in (iii). 
\end{conjecture}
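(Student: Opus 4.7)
The plan is to reduce the statement, via a sequence of analytic and algebro-geometric inputs, to properties of the quantum Hitchin system and the Beilinson--Drinfeld construction of opers. First, I would treat the quantum Hitchin algebra $\mcA=\mcD\otimes\overline{\mcD}$ acting by unbounded operators on $\mcH=L^2(\Bun_G^\circ(X))$. The central analytic step is to show that each generator admits a canonical self-adjoint extension, so that the whole algebra $\mcA$ strongly commutes and has a well-defined joint spectral decomposition. Combined with the (conjectured) compactness of Hecke operators from Conjecture~\ref{spedeccon}, and the algebraic commutation of Hecke operators with elements of $\mcA$ (which is a theorem proved in \cite{EFK2} using the Beilinson--Drinfeld Hecke eigensheaf construction at the critical level), this gives an orthogonal decomposition of $\mcH$ into finite-dimensional joint eigenspaces of $\mcA$ and the operators $H_{x,\la}$ simultaneously.

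Next, on each joint eigenspace the algebra $\mcD$ acts by a character $\chi:\mcD\to\C$, and the holomorphic half of the Beilinson--Drinfeld isomorphism identifies $\on{Spec}\mcD$ with ${\rm Op}_{\g^\vee}(X)$; the antiholomorphic half identifies $\on{Spec}\overline{\mcD}$ with the complex conjugate. So each character of $\mcA$ is given by a pair $(\chi,\overline{\chi'})$ with $\chi,\chi'\in{\rm Op}_{\g^\vee}(X)$. I would then argue, following \cite{EFK2}, that square-integrability of the eigenfunction forces the two opers to coincide, $\chi=\chi'$, i.e.\ the underlying $G^\vee$-local system is isomorphic to its complex conjugate, yielding a \emph{real} oper; and conversely, a standard density/compactness argument shows every real oper in ${\rm Op}^0_{G^\vee}(X)$ arises this way. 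The transversality of ${\rm Op}_{G^\vee}(X)$ and $\overline{{\rm Op}_{G^\vee}(X)}$ inside ${\rm LocSys}^\circ_{G^\vee}(X)$ (known for $SL_2$ by \cite{Fa}, to be assumed in general) then gives that the set of real opers is discrete, so each isotypic component $\mcH_\rho$ is finite-dimensional.

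The eigenvalue formula (ii) will be derived from the universal oper equation: the Schwartz kernel of $H_{x,\la}$ satisfies a differential equation in $x$ whose coefficients are the quantum Hitchin Hamiltonians, so on a joint eigenspace it reduces to a scalar ODE (actually, a $\D$-module on $X$) whose solutions are matrix coefficients of the oper $\rho$ in the representation $V_\la$ of $G^\vee$. This is exactly the content of \cite{EFK2}, Conjecture~5.1. Part (iv), describing the action on the components ${\rm Bun}^\circ_{G,c}$, is almost formal: the Hecke modifications shift the degree by $\la\in\Lambda$, which projects to an element of $\Lambda/Q^\vee=\pi_1(G)$, and this induces the claimed $Z^\vee$-torsor structure on eigenvalues from (iii) via the Fourier pairing between $\pi_1(G)$ and $Z^\vee$. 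Multiplicity one (the $1$-dimensionality of $\mcH_{\rho,\beta}$) will follow from (iv) together with the fact that on each fixed component, the spectrum of $\mcA$ is simple.

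The principal obstacles are: (a) constructing the self-adjoint extensions of the quantum Hitchin Hamiltonians and proving they strongly commute -- this is essentially the strong form of the archimedean spectral problem and is open even for $G=SL_2$ beyond special cases; (b) proving the square-integrability/reality dichotomy, i.e.\ that an $L^2$ eigenfunction necessarily corresponds to a real oper rather than merely to a pair $(\chi,\overline{\chi'})$; and (c) the surjectivity statement that every real oper is actually realized. Item (a) is the most serious, since without it the spectral decomposition in (i) is only formal; in the tamely ramified $PGL_2$ case in genus $0$ one can bypass it by using the explicit compactness and asymptotics from Proposition~\ref{asym}, and a similar strategy, replacing spectral theory of $\mcA$ by direct analysis of Hecke operators and their asymptotics near degenerations of $X$, appears to be the most promising path in the general setting.
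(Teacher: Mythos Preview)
The statement you are attempting to prove is labeled \emph{Conjecture} in the paper, and the paper does not provide a proof of it: it is presented as ``the main conjecture of \cite{EFK2}'' and is left open in full generality. So there is no ``paper's own proof'' to compare against. Your proposal is in fact a reasonable summary of the strategy sketched across \cite{EFK1,EFK2,EFK3} and in the surrounding text of the paper itself, and you correctly identify the three principal gaps (self-adjoint extensions and strong commutativity of the Hitchin Hamiltonians; the reality condition $\chi=\chi'$ for $L^2$ eigenfunctions; surjectivity onto all real opers) as the genuine open problems.

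One clarification on scope: parts of the conjecture are known in special cases. For $G=PGL_n$, $\lambda=\omega_1$, the universal oper equation underlying (ii) is a theorem (\cite{EFK2}, Theorem~1.18, recalled here as Theorem~\ref{mainthm}); and for $G=PGL_2$ on $\pone$ with parabolic points, the full statement including simple spectrum is proved in \cite{EFK3} (with the twisted extension in Theorem~\ref{mutatis} of the present paper). In those cases the route is exactly the one you describe in your last paragraph: bypass the analytic issues with $\mcA$ by working directly with the compact Hecke operators and their explicit asymptotics. But for general $G$ and $X$, your obstacles (a)--(c) remain open, so what you have written is a proof \emph{plan}, not a proof---which is consistent with the statement's status as a conjecture.
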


In in Corollary \ref{eigH1} below, we will recall the
  formula for the Hecke eigenvalues $\beta_\la(x,\overline x)$
  obtained in \cite{EFK2}, Corollary 1.19 in the case $G=PGL_n$ and
  $\la=\omega_1$. For $G$ of types $B_\ell, C_\ell$, or $G_2$ and $\la
  = \omega_1$, we conjecture an analogous formula in Conjecture
  \ref{eigHG}. In the general case, the formula for the Hecke
  eigenvalues is given in Conjecture \ref{51} (it coincides with
  Conjecture 5.1 of \cite{EFK2}).

Note that we have a free action of the finite group $H^1(X,Z)$ on ${\rm Bun}_G^\circ(X)$, 
where $Z$ is the center of $G$, and this action commutes with Hecke and quantum Hitchin
operators. Hence this group acts by a character $\chi_\rho\in H^1(X,Z)^*\cong H^1(X,Z^*)$ 
on each (1-dimensional) joint eigenspace of these operators corresponding to a real $G^\vee$-local system $\rho$ and some choice of eigenvalue $\beta$. Let us explain how to compute 
$\chi_\rho$. 

Let $G^\vee_{\rm sc}$ be the simply connected cover of $G^\vee$. 
Recall that we have an exact sequence 
$$
1\to H^1(X,\pi_1(G^\vee))\to H^1(X,G^\vee_{\rm sc})\to H^1(X,G^\vee)\to H^2(X,\pi_1(G^\vee)),
$$
and that $\pi_1(G^\vee)=Z^*$. Thus every $G^\vee$-local system $\rho: \pi_1(X)\to G^\vee$ has a first Chern class $c_\rho\in H^2(X,Z^*)$. However, as explained above, if $\rho$ 
is an oper then as a holomorphic bundle it reduces to the principal $SL_2$, 
so $c_\rho=1$ (as $SL_2(\Bbb C)$ is simply connected). 
Moreover, in this case there is a unique lift of $\rho$ 
to a $G^\vee_{sc}$-oper $\rho'$ in the canonical component 
${\rm Op}^0_{G^\vee_{\rm sc}}(X)$. Now, the reality of $\rho$ means that $\rho\cong \overline \rho$, where $\overline \rho$ is the complex conjugate of $\rho$, but then $\rho'$ is not necessarily real: we have $\rho'\cong \eta\overline{\rho'}$ for a unique $\eta\in H^1(X,Z^*)$.
We expect that 
$\chi_\rho=\eta$.\footnote{Note that the isomorphism $H^1(X,Z)^*\cong H^1(X,Z^*)$ comes from the cup product on $H^1(X,\Bbb Z)$, so it is defined uniquely up to inversion and changes to inverse when we change the complex structure on $X$ (hence the orientation) to the opposite one. So replacing this isomorphism by its inverse results just in replacing $\eta$ by $\eta^{-1}$.}   

\subsection{Analytic Langlands correspondence twisted by $Z$-gerbes on $X$}\label{gerbes}

The setting of the previous subsection has a twisted generalization where we take $G$ simply connected, but instead of ordinary principal $G$-bundles take bundles twisted by $Z$-gerbes on $X$ defined by $c\in H^2(X,Z)=Z$ (this is mentioned in \cite{GW}, Subsection 9.2). Such twisted bundles are defined on an open cover $\lbrace U_i\rbrace$ of $X$ by holomorphic transition functions $g_{ij}: U_i\cap U_j\to G$ such that $g_{ij}g_{jk}g_{ki}=\widetilde c_{ijk}\in Z$ on $U_i\cap U_j\cap U_k$, where $\widetilde c$ is a \v Cech 2-cocycle representing $c$. Let ${\rm Bun}_G^\circ(X)_c$ 
be the variety of regularly stable twisted bundles with class $c$, 
and ${\rm Bun}_G^\circ(X)_{\rm tw}$ be the disjoint union of ${\rm Bun}_G^\circ(X)_c$ 
over all $c\in H^2(X,Z)$. We have a principal $H^1(X,Z)$-bundle 
$$
{\rm Bun}_G^\circ(X)_{\rm tw}\to {\rm Bun}_{G_{\rm ad}}^\circ(X).
$$ 
Thus the Hilbert space 
$\mathcal H=L^2({\rm Bun}_G^\circ(X)_{\rm tw})$ 
carries commuting actions of quantum Hitchin Hamiltonians and Hecke operators.

\begin{conjecture} \label{mainc1} Conjecture \ref{mainc} and the formula for $\chi_\rho$ 
holds in this twisted setting with the group $Z^\vee$ (trivial in our case) 
replaced by $\pi_1(G^\vee)$, the center of $G^\vee_{\rm sc}$. 
\end{conjecture}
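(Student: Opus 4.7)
The plan is to prove Conjecture \ref{mainc1} by reducing it to Conjecture \ref{mainc} applied to the adjoint quotient $G_{\rm ad}$. The key fact making the reduction work is that, for $G$ simply connected, the moduli ${\rm Bun}_G^\circ(X)_{\rm tw}$ of gerbe-twisted bundles summed over all $c \in H^2(X,Z) = Z$ naturally forms a principal $H^1(X,Z)$-bundle over ${\rm Bun}_{G_{\rm ad}}^\circ(X)$, and the components ${\rm Bun}_{G_{\rm ad},c}^\circ(X)$ labeled by $\pi_1(G_{\rm ad}) = Z$ match the gerbe classes upstairs. The numerical input on the Galois side is $Z((G_{\rm ad})^\vee) = Z(G^\vee_{\rm sc}) = \pi_1(G^\vee)$, which is how the group $\pi_1(G^\vee)$ of the conclusion arises.

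First I would make the covering structure precise using the long exact cohomology sequence of $1 \to Z \to G \to G_{\rm ad} \to 1$: the $c$-twisted $G$-bundles are identified with $G_{\rm ad}$-bundles of obstruction class $c$ equipped with a lift, and the fibers of ${\rm Bun}_G^\circ(X)_c \to {\rm Bun}_{G_{\rm ad},c}^\circ(X)$ are $H^1(X,Z)$-torsors. Regular stability depends only on the adjoint bundle, so it transports cleanly. Next, I would verify that Hecke correspondences and the quantum Hitchin system descend through this cover: the correspondences ${\mathcal Z}_\la$ only see the adjoint bundle (the modification is recorded adjointly for $\la$ in the common coweight lattice), and the Beilinson-Drinfeld construction of quantum Hitchin operators is intrinsic to $G_{\rm ad}$. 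This yields the $H^1(X,Z)$-isotypic decomposition
\[
\mathcal{H} = \bigoplus_{\chi \in H^1(X,Z)^\vee} L^2({\rm Bun}_{G_{\rm ad}}^\circ(X), \mathcal{L}_\chi),
\]
invariant under both families of operators, identifying each summand with a twisted $L^2$-space on ${\rm Bun}_{G_{\rm ad}}^\circ$ of the type considered in Conjecture \ref{mainc}.

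Now I would apply Conjecture \ref{mainc} to $G_{\rm ad}$. The Langlands dual is $(G_{\rm ad})^\vee = G^\vee_{\rm sc}$, whose center is $Z(G^\vee_{\rm sc}) = \pi_1(G^\vee)$. Item (iii) of Conjecture \ref{mainc} for $G_{\rm ad}$ then states that the eigenvalues above any fixed real oper form a torsor over $\pi_1(G^\vee)$, while item (iv) says Hecke operators shift the component of ${\rm Bun}_{G_{\rm ad}}^\circ$ (equivalently, the gerbe twist $c$) and the eigenvalue simultaneously. The formula for $\chi_\rho$ is obtained in parallel with the untwisted case: lifting a real $G^\vee$-oper to a $G^\vee_{\rm sc}$-oper $\rho'$ in the canonical component produces $\eta \in H^1(X, Z(G^\vee_{\rm sc})) = H^1(X, \pi_1(G^\vee))$ via $\rho' \cong \eta \overline{\rho'}$, and this should match the $H^1(X,Z)$-character $\chi_\rho$ via the Poincar\'e pairing $H^1(X, Z) \otimes H^1(X, \pi_1(G^\vee)) \to \mathbb{C}^\times$, using that $Z$ and $\pi_1(G^\vee)$ are naturally Pontryagin dual by Langlands duality of root data.

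The main obstacle is that Conjecture \ref{mainc} for $G_{\rm ad}$ is itself open in general, so this plan yields a conditional reduction: granting Conjecture \ref{mainc} for $G_{\rm ad}$, Conjecture \ref{mainc1} follows. The substantive technical checks are the compatibility of Hecke operators with the $H^1(X,Z)$-cover -- in particular that the Hecke integration intertwines the $H^1(X,Z)$-action on both sides -- and the verification that the canonical component ${\rm Op}^0_{G^\vee_{\rm sc}}$ fibers correctly over the components of ${\rm Op}^0_{G^\vee}$ in a manner compatible with gerbe twists on the automorphic side. Both checks are expected to be routine given the explicit description of the covering and the functoriality of the Beilinson-Drinfeld construction, but they deserve careful treatment, especially the matching of signs in the Poincar\'e pairing determining $\chi_\rho$.
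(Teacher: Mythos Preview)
The statement you are addressing is a \emph{conjecture} in the paper, not a theorem; the paper offers no proof. Your proposal is therefore not to be compared against a proof in the paper but evaluated on its own terms.

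Your reduction is essentially the reason the conjecture is formulated as it is: the paper already records the principal $H^1(X,Z)$-bundle ${\rm Bun}_G^\circ(X)_{\rm tw}\to {\rm Bun}_{G_{\rm ad}}^\circ(X)$, and the replacement $Z^\vee \rightsquigarrow \pi_1(G^\vee)=Z(G^\vee_{\rm sc})$ is exactly what one gets by applying Conjecture~\ref{mainc} to $G_{\rm ad}$, whose dual is $G^\vee_{\rm sc}$. So your plan is correct as a conditional reduction, and you are right to flag that it does not prove the conjecture unconditionally since Conjecture~\ref{mainc} itself is open. In that sense your proposal is not a proof of Conjecture~\ref{mainc1} but a clarification of why it is the natural twisted analogue of Conjecture~\ref{mainc}; this matches the paper's own intent in stating it as a conjecture rather than a theorem.

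One small point to be careful about: Conjecture~\ref{mainc} as stated concerns $L^2({\rm Bun}_G^\circ(X))$, not the $\mathcal{L}_\chi$-twisted $L^2$-spaces. Your isotypic decomposition produces the summands $L^2({\rm Bun}_{G_{\rm ad}}^\circ(X),\mathcal{L}_\chi)$ for all characters $\chi$ of $H^1(X,Z)$, but only the trivial $\chi$ lands literally inside the scope of Conjecture~\ref{mainc} for $G_{\rm ad}$. For nontrivial $\chi$ you are implicitly invoking the expected extension of Conjecture~\ref{mainc} to these twisted sectors (this is discussed around the formula for $\chi_\rho$ in the paper). So strictly speaking your reduction requires Conjecture~\ref{mainc} together with the $\chi_\rho$-formula for $G_{\rm ad}$, which is again conjectural. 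This is consistent with what you wrote but worth stating explicitly.
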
 

\subsection{Differential equations for the Hecke operators for
  $G=PGL_2$ and $X=\pone$}    \label{Differential equations for Hecke operators}

In this subsection we consider the case of $G=PGL_2$ and
  $X=\pone$ with parabolic structures at finitely many points. We will
  consider the case of $G=PGL_n$ and a smooth projective curve $X$ of
  genus ${\rm g} > 1$ in in the next subsection.

We generalize the results of \cite{EFK3}, Subsection 4.2 to the twisted setting of Subsection \ref{pgl2}. Namely, we show that for $F=\Bbb R$ the Hecke operators satisfy a second-order differential equation (the oper equation), while for $F=\Bbb C$ 
they satisfy a system of two such equations -- holomorphic (the oper equation) and anti-holomorphic (the anti-oper equation, conjugate to the oper equation), which can be used to describe their spectrum. Let us now derive the oper equation. 

We return to the setting of Subsection \ref{pgl2} for $F=\Bbb R$ or $F=\Bbb C$. Consider the Gaudin operators 
$$
G_i:=\sum_{j\ne i}\frac{\Omega_{ij}}{t_i-t_j},\ 0\le i\le m,
$$
where $\Omega=e\otimes f+f\otimes e+\frac{1}{2}h\otimes h$ and 
\begin{equation} \label{sl2for} 
e=\partial_y,\ h=-2y\partial_y+\lambda,\ f=-y^2\partial_y+\lambda y
\end{equation}
(this differs from \cite{EFK3}, (7.7) by the Chevalley involution). 
Thus, setting $\partial_i:=\frac{\partial}{\partial y_i}$, we have (\cite{EFK1}, (7.8)):
$$
G_i=\sum_{j\ne i} \frac{1}{t_i-t_j}\left(-(y_i-y_j)^2\partial_i\partial_j+(y_i-y_j)(\lambda_i\partial_j-\lambda_j\partial_i)+\frac{\lambda_i\lambda_j}{2}\right),
$$
$$
\widehat G_i:=G_i-\sum_{j\ne i}\frac{\lambda_i\lambda_j}{2(t_i-t_j)}=\sum_{j\ne i} \frac{1}{t_i-t_j}\left(-(y_i-y_j)^2\partial_i\partial_j+(y_i-y_j)(\lambda_i\partial_j-\lambda_j\partial_i)\right).
$$
Note that on translation invariant functions of $y_0,...,y_m$ we have 
$$
\sum_{i=0}^m G_i=\sum_{i=0}^m \widehat G_i=0,\ \sum_{i=0}^m t_iG_i=E(E-\lambda-1)+\frac{\lambda^2-\sum_i\lambda_i^2}{4},
\ \sum_{i=0}^m t_i\widehat G_i=E(E-\lambda-1),
$$
where $E:=\sum_{i=0}^m y_i\partial_i$ is the Euler vector field and $\lambda:=\sum_i \lambda_i$
(see e.g. \cite{EFK1}, Section 7). 

The following proposition is a complete analog of \cite{EFK3}, Proposition 4.3. 

\begin{proposition}\label{opereq} (Universal oper equations)
(i) We have 
$$
\left(\partial_x^2-\sum_{i\ge 0}\frac{\lambda_i}{x-t_i}\partial_x\right)\Bbb H_x-\Bbb H_x\sum_{i\ge 0}\frac{\widehat G_i}{x-t_i}=0.
$$

(ii) We have 
$$
\left(\partial_x^2-\sum_{i\ge 0}\frac{\lambda_i(\lambda_i+2)}{4(x-t_i)^2}\right)H_x-
H_x\sum_{i\ge 0}\frac{G_i}{x-t_i}=0. 
$$ 
\end{proposition}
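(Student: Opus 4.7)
I would prove (i) first by differentiating the Hecke integral under the integral sign and reducing to a pointwise identity modulo total $s$-derivatives; then (ii) follows from (i) by a gauge transformation. This mirrors the strategy of \cite{EFK3}, Proposition 4.3 (the untwisted case $\lambda_j=-1$), with the twist parameters now entering both through the weight $W(y,s):=\prod_j\norm{s-y_j}^{\lambda_j}$ in the kernel and through the new first-order piece $-\sum_i\lambda_i/(x-t_i)\,\partial_x$ on the left-hand side of (i).

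Writing the kernel of $\Bbb H_x$ as $\psi(u(x,y,s))\,W(y,s)$ with $u_j:=(t_j-x)/(s-y_j)$, the chain rule yields $\partial_x u_j=-(s-y_j)^{-1}$, $\partial_{y_j}u_j=(t_j-x)(s-y_j)^{-2}$, together with the pivotal relation $\partial_s u_j=-\partial_{y_j}u_j$, while $\partial_s W=W\sum_j\lambda_j/(s-y_j)$. My plan is to apply $\partial_x^2-\sum_i\lambda_i/(x-t_i)\,\partial_x$ on one side and $\sum_i\widehat G_i/(x-t_i)$ on the other, expanding both as bilinear expressions in $\psi_{z_jz_k}(u)$ and $\psi_{z_j}(u)$ weighted by rational functions of $(x,y,s,t)$. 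The goal is to exhibit their difference as $\partial_s(J\,W)$ for an explicit $J$ linear in the $\psi_{z_j}(u)$: trading $s$-derivatives for $y_j$-derivatives via $\partial_s u_j=-\partial_{y_j}u_j$ produces exactly the $(u_i-u_j)^2\psi_{z_iz_j}(u)$ and $(u_i-u_j)\psi_{z_j}(u)$ structures of $\widehat G_i$ after applying the partial-fraction identity $1/((x-t_i)(x-t_j))=(t_i-t_j)^{-1}\bigl(1/(x-t_i)-1/(x-t_j)\bigr)$ and symmetrizing in $i\leftrightarrow j$; meanwhile the $\partial_s W$ contribution absorbs the extra $-\sum_i\lambda_i/(x-t_i)\partial_x$ term, the only piece with no counterpart in the untwisted case. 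Boundary contributions at $s=y_j$ and $s=\infty$ vanish in the sense of the $\epsilon$-regularization used throughout Subsection \ref{pgl2}.

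For (ii), set $A(x):=\prod_j\norm{x-t_j}^{-\lambda_j/2}$ so that $H_x=A\,\Bbb H_x$. A direct conjugation gives
$$
\left(\partial_x^2-\tfrac{1}{4}\sum_i\tfrac{\lambda_i(\lambda_i+2)}{(x-t_i)^2}\right)H_x=A\left(\partial_x^2-\sum_i\tfrac{\lambda_i}{x-t_i}\partial_x+\Phi(x)\right)\Bbb H_x,
$$
where $\Phi(x):=A^{-1}\partial_x^2 A-\tfrac{1}{4}\sum_i\lambda_i(\lambda_i+2)/(x-t_i)^2$. Using $\partial_x\log A=-\sum_j\lambda_j/(2(x-t_j))$, the expression $\Phi$ collapses by the same partial-fraction trick to the scalar $\tfrac12\sum_i\tfrac{1}{x-t_i}\sum_{j\neq i}\lambda_i\lambda_j/(t_i-t_j)$. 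Combined with (i) and the shift $G_i=\widehat G_i+\tfrac12\sum_{j\neq i}\lambda_i\lambda_j/(t_i-t_j)$, this yields (ii).

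The main obstacle is the book-keeping in (i): one must construct a single $J$ whose $\partial_s(JW)$ simultaneously matches the $\psi_{z_jz_k}(u)$- and the $\psi_{z_j}(u)$-coefficients of the discrepancy, and the new interaction between the weight twist $\lambda_j/(s-y_j)$ generated by $\partial_s W$ and the operator twist $\lambda_i/(x-t_i)$ produces a cancellation beyond the untwisted setting of \cite{EFK3} that must be verified by direct computation.
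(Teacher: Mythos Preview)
Your proposal is correct and follows essentially the same approach as the paper's proof: differentiate the integral formula for $\Bbb H_x$ under the integral sign, exploit the identity $\partial_s u_j=-\partial_{y_j}u_j$ (equivalently, the paper's $\sum_i\partial_i\psi_j=-\partial_s\psi_j$), use partial fractions in $1/((x-t_i)(x-t_j))$, and integrate by parts in $s$ so that the $\partial_sW$ term produces precisely $\sum_j\lambda_j/(x-t_j)\,\partial_x\Bbb H_x$. Your reduction of (ii) to (i) by the gauge $A(x)=\prod_j\norm{x-t_j}^{-\lambda_j/2}$ is exactly what the paper means by ``(ii) easily follows from (i)'', and your computation of $\Phi$ is correct.
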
 

Here the differential equations hold in the same sense as in \cite{EFK3}, Proposition 4.3.

\begin{proof} (ii) easily follows from (i), so let us prove (i). The proof is parallel to the proof of \cite{EFK3}, Proposition 4.3. We only redo the algebraic part of the proof, as the analytic details are exactly the same. Let $u_i=u_i(s):=y_i-s$ and $\psi,\psi_i,\psi_{ij}$ be the zeroth, first and second derivatives of $\psi$ evaluated at the point $\bold z$ with coordinates $z_i:=\frac{t_i-x}{y_i-s}=\frac{t_i-x}{u_i}$. Thus 
\begin{equation}\label{equ1}
\partial_i\psi_j=\frac{(x-t_i)\psi_{ij}}{u_i^2},\ \sum_{i\ge 0}\partial_i\psi_j=-\partial_s\psi_j
\end{equation} 
Also let 
\begin{equation}\label{muofs}
d\mu(s):=\prod_{i=0}^m \norm{s-y_i}^{\lambda_i}\norm{ds}.
\end{equation}
We have
$$
\partial_x(\Bbb H_x\psi)=\int_{F}\sum_{i\ge 0}\frac{\psi_{i}}{u_i}d\mu(s),\
\partial_x^2(\Bbb H_x\psi)=\int_{F}\sum_{i,j\ge 0}\frac{\psi_{ij}}{u_iu_j}d\mu(s).
$$
Also
$$
\sum_{i\ge 0}\frac{\Bbb H_x\widehat G_i\psi}{x-t_i}=-\int_{F}\sum_{i\ne j}\frac{(\frac{t_i-x}{u_i}-\frac{t_j-x}{u_j})^2\psi_{ij}+(\frac{t_i-x}{u_i}-\frac{t_j-x}{u_j})(\lambda_j\psi_i-\lambda_i\psi_j)}{(x-t_i)(t_i-t_j)}d\mu(s).
$$
Subtracting, we get 
$$
\partial_x^2(\Bbb H_x\psi)-\Bbb H_x\sum_{i\ge 0}\frac{\widehat G_i\psi}{x-t_i}=
$$
$$
\int_{F}\left(\sum_i\frac{\psi_{ii}}{u_i^2}+\sum_{i\ne j}\frac{(\frac{(t_i-x)^2}{u_i^2}+\frac{(t_j-x)^2}{u_j^2})\psi_{ij}+(\frac{t_i-x}{u_i}-\frac{t_j-x}{u_j})(\lambda_j\psi_i-\lambda_i\psi_j)}{(x-t_i)(t_i-t_j)}\right)d\mu(s)=
$$
$$
\int_{F}\left(\sum_{i,j\ge 0}\frac{(t_i-x)\psi_{ij}}{(t_j-x)u_i^2}+\sum_{i\ne j}\frac{(\frac{t_i-x}{u_i}-\frac{t_j-x}{u_j})(\lambda_j\psi_i-\lambda_i\psi_j)}{(x-t_i)(t_i-t_j)}\right)d\mu(s).
$$
Now, using integration by parts, \eqref{equ1} and \eqref{muofs}, we have 
$$
\int_{F}\sum_{i,j\ge 0}\frac{(t_i-x)\psi_{ij}}{(t_j-x)u_i^2}d\mu(s)=
-\int_{F}\sum_{j\ge 0}\frac{1}{x-t_j}\sum_{i\ge 0}\partial_i\psi_{j}d\mu(s)=
$$
$$
\int_{F}\sum_{j\ge 0}\frac{1}{x-t_j}\partial_s \psi_jd\mu(s)=
-\int_{F}\sum_{j\ge 0}\frac{\psi_j}{x-t_j}\sum_{i\ge 0}\frac{\lambda_i}{u_i}d\mu(s).
$$
Thus 
$$
\partial_x^2(\Bbb H_x\psi)-\Bbb H_x\sum_{i\ge 0}\frac{\widehat G_i\psi}{x-t_i}=
$$
$$
-\int_{F}\left(\sum_{j\ge 0}\frac{\psi_j}{x-t_j}\sum_{i\ge 0}\frac{\lambda_i}{u_i}+\frac{1}{2}\sum_{i\ne j}\frac{(\frac{t_i-x}{u_i}-\frac{t_j-x}{u_j})(\lambda_j\psi_i-\lambda_i\psi_j)}{(t_i-x)(t_j-x)}\right)d\mu(s)=
$$
$$
-\int_{F}\left(\sum_{i\ge 0}\frac{\lambda_i}{(t_i-x)u_i}\psi_i+\sum_{i\ne j}\frac{\lambda_j}{(t_j-x)u_i}\psi_i\right)d\mu(s)=
$$
$$
-\int_F\left(\sum_{j\ge 0}\frac{\lambda_j}{t_j-x} \sum_{i\ge 0}\frac{\psi_i}{u_i}\right)d\mu(s)=
\sum_{j\ge 0}\frac{\lambda_j}{x-t_j}\partial_x(\Bbb H_x\psi).
$$
\end{proof} 

Also, similarly to \cite{EFK3}, Proposition 4.11, we have 

\begin{proposition}\label{commu}  
$$
[H_x,G_j]=0.
$$
\end{proposition}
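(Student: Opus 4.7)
The proof parallels that of \cite{EFK3}, Proposition 4.11, with the twist parameters $\la_i$ contributing additional boundary-type terms that must be accounted for. Since $G_j-\widehat G_j$ is a scalar in $y$, it suffices to show $[\Bbb H_x,\widehat G_j]=0$ on translation invariant homogeneous functions, and then pass from $\Bbb H_x$ to $H_x$ by the normalization $H_x=\prod_i\norm{x-t_i}^{-\la_i/2}\Bbb H_x$ (which is a scalar in $y$).

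The plan is to compute $\widehat G_j \Bbb H_x\psi$ and $\Bbb H_x\widehat G_j\psi$ by differentiating under the integral sign and then use integration by parts in $s$ to match them. With the notation of the proof of Proposition \ref{opereq}, set $u_i:=s-y_i$, $z_i:=(t_i-x)/u_i$, and let $\psi_i,\psi_{ij}$ denote the first and second partial derivatives of $\psi$ evaluated at $\mathbf z$. By the chain rule,
$$
\partial_i(\Bbb H_x\psi)=-\int_F\frac{(t_i-x)\psi_i}{u_i^2}\,d\mu(s)+\int_F\frac{\la_i\psi(\mathbf z)}{u_i}\,d\mu(s),
$$
where $d\mu(s)=\prod_i\norm{s-y_i}^{\la_i}\norm{ds}$, and similarly for $\partial_i\partial_j$. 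Applying $\widehat G_j$ to $\Bbb H_x\psi$ produces an expression involving $\psi_{ik},\psi_i$ and (via the $\la_i$ terms coming from differentiating $d\mu$) $\psi$ itself. On the other hand, $\Bbb H_x\widehat G_j\psi$ can be written as an integral over $s$ involving the same type of terms, but evaluated at $\mathbf z$ with a different pattern of coefficients (using $y_i-y_j$ and $\la_i,\la_j$ in the Gaudin formula rather than the coefficients produced by the chain rule).

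The matching of the two expressions will proceed in three steps. First, the terms quadratic in $\psi_{ik}$ are compared directly: the key algebraic identity is
$$
\tfrac{1}{t_j-t_k}\Bigl(\tfrac{t_j-x}{u_j}-\tfrac{t_k-x}{u_k}\Bigr)^{\!2}=\tfrac{(t_j-x)^2}{u_j^2(t_j-t_k)}+\tfrac{(t_k-x)^2}{u_k^2(t_j-t_k)}-\tfrac{2(t_j-x)(t_k-x)}{u_ju_k(t_j-t_k)},
$$
combined with the partial fraction $(t_j-x)(t_k-x)/[(t_j-t_k)u_ju_k]$ reductions that already appeared in Proposition \ref{opereq}. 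Second, after integration by parts in $s$, the $\partial_s$ hitting $d\mu(s)$ brings out a factor $\sum_i\la_i/u_i$, whose cross terms with the chain-rule $\la$-contributions are exactly what is needed to cancel the linear-in-$\psi_i$ discrepancies arising from $\widehat G_j$ acting through the kernel. Third, the pure $\psi$ terms (those proportional to $\la_i\la_k$) either cancel by antisymmetry in $i,k$ or are manifestly independent of $s$ and hence cancel against each other after integration.

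The main obstacle is purely organizational: the algebraic identities involved are elementary, but the number of terms generated by $\widehat G_j$ acting on a double integral with an $s$-dependent change of variables is considerable, and one must keep careful track of which terms arise from the chain rule, from integration by parts, and from the $\la_i$-weights in the measure. Once these are sorted, the cancellations follow the same pattern as in \cite{EFK3}, Proposition 4.11, and as noted in \cite{EFK3}, Remark 3.28, pairwise commutativity of Hecke operators can alternatively be checked directly from the formula of Proposition \ref{hefor}; the present commutation with $G_j$ is in the same spirit.
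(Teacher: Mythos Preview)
Your approach is correct and matches the paper's, which gives no proof beyond the remark ``similarly to \cite{EFK3}, Proposition 4.11''; you have simply fleshed out that reference into a sketch of how the integration-by-parts argument extends to the twisted setting. The minor sign discrepancy in your convention $u_i:=s-y_i$ (the paper uses $u_i:=y_i-s$) and in the displayed formula for $\partial_i(\Bbb H_x\psi)$ is cosmetic and does not affect the argument.
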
 

As shown in \cite{EFK2}, if $F=\Bbb C$ then the Hecke operators also satisfy an anti-holomorphic second-order differential equation (the anti-oper equation) which is the complex conjugate of the oper equation of Proposition \ref{opereq}(ii). Thus, 
introducing the operator-valued oper $\pa_x^2 - S(x)$, where
\begin{equation}    \label{Sz}
  S(x) := \sum_{i=0}^m \frac{\la_i(\la_i+2)}{4(x-t_i)^2} +
  \sum_{i=0}^m \frac{G_i}{x-t_i},
\end{equation}
for $F=\Bbb C$ we obtain 
\begin{equation}    \label{diffeq1}
(\pa_x^2-S(x)) H_x = 0, \qquad (\ol\pa_x^2 - \ol{S(x)})
H_x = 0
\end{equation}
(for $F=\Bbb R$ we only have the first equation). 

We also obtain the following equation for the eigenvalues $\beta_k(x)$ of $H_{x,{\rm full}}$, which is a generalization of Corollary 4.14 of \cite{EFK3}. 

\begin{corollary}\label{spec2} The function $\beta_k(x)$ 
satisfies the differential equation 
\begin{equation}\label{ope}
L(\bmu_k)\beta_k(x)=0,
\end{equation}
where 
$$
L(\bmu_k):=\partial_x^2-\sum_{i\ge 0}\frac{\lambda_i(\lambda_i+2)}{4(x-t_i)^2}-\sum_{i\ge 0}\frac{\mu_{i,k}}{x-t_i}
$$ 
is an $SL_2$-oper, 
with
\begin{equation}\label{muik}
\sum_{i=0}^{m} \mu_{i,k}=0,\ \sum_{i=0}^{m} t_i\mu_{i,k}=\frac{\lambda_{m+1}(\lambda_{m+1}+2)}{4}-\sum_{i=0}^m \frac{\lambda_i(\lambda_i+2)}{4}. 
\end{equation} 

Moreover, if $F=\Bbb C$, then $\beta$ also satisfies the complex conjugate equation 
$\overline {L(\bmu_k)}\beta_k(x)=0$. 

\end{corollary}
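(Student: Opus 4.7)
The plan is to deduce Corollary \ref{spec2} by applying the universal oper equation of Proposition \ref{opereq}(ii) to a joint eigenvector of the Hecke operator $H_{x,\text{full}}$ and the Gaudin Hamiltonians. By Proposition \ref{commu}, the Hecke operators $H_{x+}$ and $H_{x-}$ commute with all Gaudin operators $G_j$, and the $G_j$ commute among themselves (a standard fact about the Gaudin model). Consequently, the $G_j$ preserve each finite-dimensional joint eigenspace $\mathcal{H}_k^\pm$ of $H_{x,\text{full}}$ (finite-dimensionality being guaranteed by Corollary \ref{specdec}). Since the $G_j$ are formally self-adjoint on $\mathcal{H}(\bla)$, we may choose $\mathbf{e}_k^0$ so that it is simultaneously an eigenvector of all $G_j$ with eigenvalues $\mu_{j,k}$.

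Next, I would apply Proposition \ref{opereq}(ii) to $\mathbf{e}_k^0$ and use $G_i\mathbf{e}_k^0=\mu_{i,k}\mathbf{e}_k^0$ to obtain
$$
\left(\partial_x^2-\sum_{i\ge 0}\frac{\lambda_i(\lambda_i+2)}{4(x-t_i)^2}\right)(H_{x+}\mathbf{e}_k^0)=\sum_{i\ge 0}\frac{\mu_{i,k}}{x-t_i}\,H_{x+}\mathbf{e}_k^0.
$$
Recalling from the discussion preceding Corollary \ref{specdec} that $H_{x+}\mathbf{e}_k^0=\beta_k(x)\mathbf{e}_k^1$ with $\mathbf{e}_k^1\in\mathcal{H}^1$ independent of $x$, the equation collapses to the scalar oper equation $L(\bmu_k)\beta_k(x)=0$, as claimed.

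The two constraints in \eqref{muik} follow by using the identities recorded just before Proposition \ref{opereq}: on translation-invariant functions one has $\sum_{i=0}^m G_i=0$, giving $\sum_i\mu_{i,k}=0$; and $\sum_{i=0}^m t_iG_i=E(E-\lambda-1)+\frac{1}{4}(\lambda^2-\sum_i\lambda_i^2)$ with $\lambda=\sum_{i=0}^m\lambda_i$. Since $\mathcal{H}(\bla)$ consists of functions homogeneous of degree $E=\frac{1}{2}(\sum_{j=0}^m\lambda_j-\lambda_{m+1})$, a short direct computation reduces the right-hand side to $\frac{1}{4}\lambda_{m+1}(\lambda_{m+1}+2)-\sum_{i=0}^m\frac{1}{4}\lambda_i(\lambda_i+2)$, which is the second identity in \eqref{muik}. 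For $F=\Bbb C$, the anti-holomorphic oper equation for $H_x$ established in \cite{EFK2} is the complex conjugate of Proposition \ref{opereq}(ii); applying it to $\mathbf{e}_k^0$ by the same argument gives $\overline{L(\bmu_k)}\beta_k(x)=0$.

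The main technical point is the simultaneous diagonalization of the $G_j$ on each $H_{x,\text{full}}$-eigenspace. This reduces to the formal self-adjointness and mutual commutativity of the Gaudin Hamiltonians on $\mathcal{H}(\bla)$, both of which are standard; the remaining assertions are then purely algebraic manipulations using Propositions \ref{opereq} and \ref{commu}.
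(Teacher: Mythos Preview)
Your proof is correct and is precisely the argument the paper intends: the corollary is stated without proof as a direct consequence of Proposition \ref{opereq}(ii) and Proposition \ref{commu} (it is described as a generalization of \cite{EFK3}, Corollary 4.14), and you have spelled out exactly that deduction, including the verification of the constraints \eqref{muik} from the identities $\sum_i G_i=0$ and $\sum_i t_iG_i=E(E-\lambda-1)+\tfrac14(\lambda^2-\sum_i\lambda_i^2)$ on the homogeneity-degree subspace.
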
 

Note that equation \eqref{ope} is Fuchsian at the points $t_j$ with characteristic exponents $\frac{1}{2}\pm \frac{\la_j+1}{2}$, and by \eqref{muik} it is also Fuchsian at $\infty$ with characteristic exponents 
 $-\frac{1}{2}\mp \frac{\la_{m+1}+1}{2}$. In other words, basic solutions behave 
 near $t_j$ as $(x-t_j)^{\frac{1}{2}}$ and $(x-t_j)^{\frac{1}{2}}\log(x-t_j)$ 
 if $\la_j=-1$ and as $(x-t_j)^{\frac{1}{2}\pm \frac{\la_j+1}{2}}$ 
 else, while at $\infty$ they behave as 
 $x^{\frac{1}{2}},x^{\frac{1}{2}}\log x$ if $\la_{m+1}=-1$ and $x^{\frac{1}{2}\pm \frac{\la_{m+1}+1}{2}}$ else.\footnote{We remind that since the oper $L(\bmu)$ is a map from $K^{-\frac{1}{2}}$ to $K^{\frac{3}{2}}$, solutions of the equation $L(\bmu)\beta=0$ are sections of 
 $K^{-\frac{1}{2}}$, but we view them as functions by multiplying by $(dx)^{-\frac{1}{2}}$.}
Thus the monodromy operators of \eqref{ope} at $t_j$, $0\le j\le m+1$ are conjugate 
 to $\begin{pmatrix} -1 & 1\\ 0 & -1\end{pmatrix}$ if $\la_j=-1$ and 
 to $\begin{pmatrix} e^{\pi i\la_j} & 0\\ 0 & e^{-\pi i\la_j}\end{pmatrix}$ 
 else. 
 
Thus for $F=\Bbb C$ the spectral opers have a property that the system $L\beta=0$, $\overline L\beta=0$ has a single-valued solution. So the monodromy of such an oper must preserve 
a nondegenerate hermitian form. Moreover, as the above matrices cannot be conjugated to 
$SU(2)$, this form must be of signature (1,1). Thus spectral opers must have monodromy in 
$SU(1,1)\cong SL(2,\Bbb R)$, i.e. they belong to the (discrete) set $\mathcal B=\mathcal B(\lambda_0,...,\lambda_{m+1})$ 
of real opers of the form of Corollary \ref{spec2}. 
Furthermore, the joint eigenspaces of $H_{x,{\rm full}}$ are 1-dimensional. 
In other words, we have 

\begin{theorem}    \label{mutatis}
Theorem 4.15 of \cite{EFK3} extends mutatis mutandis to the ramified setting with any weights $\lambda_j\in -1+i\Bbb R$. 
\end{theorem}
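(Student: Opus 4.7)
The plan is to follow the argument of \cite{EFK3}, Theorem 4.15 essentially verbatim, with the only modifications coming from replacing the scalar weights $\la_j = -1$ by arbitrary $\la_j \in -1 + i\Bbb R$ and tracking how this changes the local monodromy exponents at the punctures. The ingredients are already in place: compactness (Proposition \ref{prophecke}), commutativity with Gaudin operators (Proposition \ref{commu}), and the universal oper equation (Proposition \ref{opereq}).

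First, by Corollary \ref{specdec} the space $\mathcal H$ decomposes into finite-dimensional joint eigenspaces of the $H_{x,{\rm full}}$, which by Proposition \ref{commu} are also joint eigenspaces of the Gaudin operators $G_j$. Applying Proposition \ref{opereq}(ii) to any such joint eigenvector shows that its Hecke eigenvalue $\beta_k(x)$ satisfies the oper equation \eqref{ope}, where the residues $\mu_{i,k}$ are determined by the Gaudin eigenvalues and satisfy \eqref{muik}, as in Corollary \ref{spec2}. Since over $F = \Bbb C$ the Hecke operator also satisfies the complex conjugate oper equation \eqref{diffeq1}, the function $\beta_k(x)$ is single-valued on $\Bbb P^1(\Bbb C) \setminus \{t_0, \ldots, t_{m+1}\}$ and is annihilated by both $L(\bmu_k)$ and $\overline{L(\bmu_k)}$.

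Second, the existence of a single-valued solution forces the monodromy of $L(\bmu_k)$ to preserve a nondegenerate Hermitian pairing between solutions of $L$ and $\overline L$. The local exponents $\tfrac{1}{2} \pm \tfrac{\la_j+1}{2}$ (and the unipotent Jordan form when $\la_j = -1$) show that no local monodromy can be conjugated into $SU(2)$, so the invariant form must have signature $(1,1)$ and the global monodromy lies in $SU(1,1) \cong SL_2(\Bbb R)$; thus $\bmu_k \in \mathcal B$. One-dimensionality of the joint eigenspace then follows because a real oper admits, up to scalar, a unique single-valued solution of the $L, \overline L$ system, namely the one built from a Hermitian-isotropic vector preserved by the monodromy.

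The main obstacle is the converse direction: showing that every real oper in $\mathcal B$ is actually realized as a spectral parameter. Following \cite{EFK3}, Theorem 4.15, one constructs from each $\bmu \in \mathcal B$ a candidate eigenfunction out of the unique single-valued solution of $L(\bmu) \beta = 0 = \overline{L(\bmu)} \beta$ and must verify it defines an element of $\mathcal H(\bla)$, i.e., lies in $L^2$ with the correct asymptotics at the punctures $t_j$ and at $\infty$. This verification requires the asymptotic analysis of Proposition \ref{asym} together with its analogue at the finite punctures (mentioned in the remark following Proposition \ref{asym}), which controls how the eigenvalues $\beta_k$ behave and, via the oper equation, pins down the $L^2$ behavior of the eigenfunctions. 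Because the weights $\la_j$ affect only the explicit local exponents at $t_j$ and the $\Gamma^F$-factors in the asymptotic formulas, and because Proposition \ref{asym} is already stated for arbitrary $\la_j \in -1 + i \Bbb R$, the analytic estimates from \cite{EFK3} transfer without essential modification.
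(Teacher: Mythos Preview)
Your first two paragraphs match the paper's argument: spectral opers lie in $\mathcal B$ by the single-valuedness of $\beta_k$ and the local monodromy computation, and this is exactly what the text establishes before stating the theorem. However, your third paragraph misidentifies the content of the result. Theorem 4.15 of \cite{EFK3} (and hence its twisted extension here) asserts only that the spectrum injects into $\mathcal B$ and that joint eigenspaces are one-dimensional. The converse---that every real oper in $\mathcal B$ actually arises---is \emph{not} part of the theorem; the paper says explicitly, immediately after the statement, that this coincidence is only conjectural in general and proved only for $4$ and $5$ points. So your ``main obstacle'' is not an obstacle at all: it simply is not being claimed.

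Your sketch of that converse is also confused on its own terms. The single-valued solution of the system $L\beta=0,\ \overline L\beta=0$ is the Hecke \emph{eigenvalue} as a function of $x\in\Bbb P^1$, not an element of $\mathcal H(\bla)$. An eigenfunction lives in the multiplicity space of principal series representations (functions of $y_0,\dots,y_m$), and nothing in your construction produces such an object from $\beta$. The proofs for $4$ and $5$ points in \cite{EFK3} proceed by entirely different means. Separately, your one-dimensionality argument is incomplete as stated: uniqueness of the single-valued $\beta$ shows only that the eigenvalue function is determined by the oper, not that the eigenspace in $\mathcal H(\bla)$ has dimension one. Two linearly independent vectors with the same Gaudin and Hecke eigenvalues are not ruled out by that observation alone; you need to invoke the actual simplicity argument from \cite{EFK3}.
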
 

Moreover, similarly to \cite{EFK3}, the set of spectral opers conjecturally coincides 
with $\mathcal B$, and this is definitely true at least for 4 and 5 points. The proofs of these facts 
are analogous to the proofs in the untwisted case given in
\cite{EFK3}.

\subsection{Differential equations for the Hecke operators for
  $G=PGL_n$}    \label{diffPGLn}

Let $G=PGL_n$ and $X$ a smooth projective curve of genus ${\rm g} >
1$. Analogues of the universal oper equations of Proposition
\ref{opereq} were obtained in Subsection 1.4 and
Section 4 of \cite{EFK2}. In this subsection we summarize these results.

Recall the component $\Op^0_{SL_n}(X)$ of the space of $SL_n$-opers on
$X$ introduced in Subsection \ref{ALcomp}. For even $n$ it depends on
the choice of an equivalence class of the square root
$K_X^{\frac{1}{2}}$ (a spin structure), which we will denote by
$\ga$.\footnote{In \cite{EFK2}, we denoted this component by
$\Op^\ga_{SL_n}(X)$.} It is an affine space which is a torsor over the
vector space ${\bf Hitch}$ defined by formula \eqref{Hitch}.

Given $\chi \in \Op^0_{SL_n}(X)$, denote by
$(\V_{\omega_1},\nabla_\chi)$ the corresponding holomorphic flat
rank $n$ vector bundle on $X$ whose determinant is identified with the
trivial flat line bundle. The oper Borel reduction gives rise to an
embedding
$$
\kappa_{\omega_1}: K_X^{\frac{n-1}{2}} \hookrightarrow \V_{\omega_1},
$$
and therefore an embedding
$$
\wt\kappa_{\omega_1}: \OO_X \hookrightarrow \V_{\omega_1} \otimes K_X^{-\frac{n-1}{2}}.
$$
Hence we obtain a section
$$
s_{\omega_1} := \wt\kappa_{\omega_1}(1) \in \Gamma(X,\V_{\omega_1}
\otimes K_X^{-\frac{n-1}{2}}).
$$
Likewise, we obtain a section
$$
s_{\omega_{n-1}} \in \Gamma(X,\V_{\omega_{n-1}} \otimes
K_X^{-\frac{n-1}{2}}) = \Gamma(X,\V^*_{\omega_1} \otimes
K_X^{-\frac{n-1}{2}}).
$$

Let $D^\ga_n(X)$ be the affine space of $n$th order differential
operators $P: K_X^{-\frac{n-1}{2}} \to 
K_X^{\frac{n+1}{2}}$, where for even $n$ we use our chosen square root
$K_X^{\frac{1}{2}}$, such that
\begin{enumerate}
\item $\on{symb}(P) \in H^0(X,\OO_X)$ equals $1$;
\item The operator $P - (-1)^n P^*$, where $P^*:
  K_X^{-\frac{n-1}{2}} \to K_X^{\frac{n+1}{2}}$ is the algebraic adjoint operator
  (see \cite{BB}, Sect. 2.4), has order $n-2$.
\end{enumerate}

The following statement is proved in \cite{BD2}, \S 2.8.

\begin{lemma}    \label{Dga}
The assignment
  $$
  \chi \in \on{Op}^0_{SL_n}(X) \quad \mapsto \quad P_\chi \in
  D^\ga_n(X)
  $$
  defines a bijection $\on{Op}^0_{SL_n}(X) \simeq D^\ga_n(X)$
  such that the sections $s_{\omega_1} \in \Gamma(X,\V_{\omega_1}
  \otimes K_X^{-\frac{n-1}{2}})$ and $s_{\omega_{n-1}} \in \Gamma(X,\V_{\omega_{n-1}}
  \otimes K_X^{-\frac{n-1}{2}})$ satisfy
  $$
  P_\chi \cdot s_{\omega_1} = 0, \qquad P_\chi^* \cdot s_{\omega_{n-1}} = 0,
  $$
  where $P_\chi^*$ is the algebraic adjoint of $P_\chi$.
\end{lemma}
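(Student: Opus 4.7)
The plan is to construct $P_\chi$ by using $s_{\omega_1}$ as a cyclic vector for the connection $\nabla_\chi$, then invert the construction by recovering the flat bundle from the solution sheaf of $P$. The oper condition makes $s_{\omega_1}$ tautologically cyclic in a very rigid way, and this rigidity is what produces both the normalization of the principal symbol and (combined with the $SL_n$ condition) the adjoint restriction on the subprincipal symbol.

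In detail, starting from $\chi = (\V_{\omega_1},\nabla_{\chi})$ I would apply $\nabla_\chi$ iteratively to $s_{\omega_1}$, obtaining for each $k \ge 0$ a section $\nabla_\chi^k s_{\omega_1} \in \Gamma(X, \V_{\omega_1} \otimes K_X^{k - \frac{n-1}{2}})$. In a formal coordinate $z$ at a point $x_0\in X$, trivializing $\mcE_{B^\vee}$ and writing $\nabla_\chi = d + (f + b(z))dz$ with $b \in \b^\vee[[z]]$, the vector $s_{\omega_1}$ corresponds to the highest weight vector $v_{\omega_1}$ of the standard representation of $\sw_n$, and $f$ acts as the lower shift operator in the standard basis. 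Therefore $\nabla_\chi^k s_{\omega_1}$ is, modulo terms coming from the lower-order Borel piece $b(z)$, equal to $f^k v_{\omega_1}$, which for $k=0,\dots,n-1$ runs through a basis of the standard representation. Hence these sections form a full frame of $\V_{\omega_1} \otimes K_X^{k-\frac{n-1}{2}}$ on the formal disc, and the unique linear relation expressing $\nabla_\chi^n s_{\omega_1}$ in terms of the lower $\nabla_\chi^k s_{\omega_1}$ defines a differential operator $P_\chi: K_X^{-\frac{n-1}{2}} \to K_X^{\frac{n+1}{2}}$ annihilating $s_{\omega_1}$. The principal symbol is $1$ because the top-order coefficient comes from $f^n v_{\omega_1} = 0$ with the unit coefficient forced by the shift action of $f$.

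The adjoint property requires more care. Write $P_\chi$ locally as $\pa_z^n + \sum_{j=2}^{n} q_j(z) \pa_z^{n-j}$ (the coefficient of $\pa_z^{n-1}$ automatically vanishes because the oper $b(z)$ can be brought, by a unique gauge transformation inside the unipotent radical $N^\vee$ of $B^\vee$, into the canonical form with values in a transversal slice to the orbit of $f$, which for the principal $\sw_2$ in $\sw_n$ excludes the $h$-direction; equivalently this is the vanishing of the trace that characterizes $SL_n$ rather than $GL_n$). A direct calculation of the algebraic adjoint $P_\chi^*$ from \cite{BB}, Sect. 2.4 shows that the order $n-1$ part of $P_\chi - (-1)^n P_\chi^*$ depends precisely on the coefficient $q_1$, and so vanishes. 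Hence $P_\chi \in D^\ga_n(X)$. The equation $P_\chi \cdot s_{\omega_1}=0$ holds globally because it is the globalization of the linear relation used to define $P_\chi$; the equation $P_\chi^* \cdot s_{\omega_{n-1}} = 0$ then follows by applying the same construction to the dual oper on $\V_{\omega_1}^* \simeq \V_{\omega_{n-1}}$, observing that $s_{\omega_{n-1}}$ plays the role of the cyclic vector for the Borel reduction of the dual oper, and that the resulting differential operator is $P_\chi^*$ (this last identification uses that the transpose of the principal $\sw_2$ embedding intertwines highest and lowest weight vectors).

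For the inverse direction, given $P \in D^\ga_n(X)$, I would let $\V_P$ be the rank $n$ vector bundle whose sheaf of holomorphic sections on any open set $U$ consists of $n$-tuples $(\phi_0,\dots,\phi_{n-1})$ of sections of $K_X^{k-\frac{n-1}{2}}$ subject to a standard companion-matrix flat connection built from $P$; the $\OO_X$-subsheaf generated by the first coordinate gives the oper line, and the resulting data form an $SL_n$-oper lying in $\on{Op}^0_{SL_n}(X)$ (the determinant is trivialized by the vanishing of the subprincipal coefficient). The two constructions are mutually inverse essentially because both amount to choosing a cyclic vector and reading off the minimal polynomial. The main obstacle I anticipate is the adjoint bookkeeping: carefully verifying that the $SL_n$-condition (trivial determinant of the oper bundle, together with the canonical form of opers relative to the principal slice) is exactly equivalent to the vanishing of the $\pa^{n-1}$-coefficient of $P_\chi$, and that this in turn is equivalent to the order bound on $P_\chi - (-1)^n P_\chi^*$. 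This is a local computation in the principal $\sw_2$-decomposition of $\sw_n$, but it is the technical heart of the lemma.
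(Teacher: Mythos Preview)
The paper does not give its own proof of this lemma; it simply attributes the result to \cite{BD2}, \S 2.8. Your sketch is essentially the standard argument found there (going back to Drinfeld--Sokolov): build $P_\chi$ from the cyclic vector $s_{\omega_1}$ by iterating $\nabla_\chi$, identify the vanishing of the $\partial^{n-1}$-coefficient with the $SL_n$ condition (and hence with the order bound on $P_\chi-(-1)^nP_\chi^*$), and invert via the companion-matrix connection. So your approach is correct and matches the cited reference.
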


Let $\V_{\omega_1}^{\on{univ}}$ be the universal vector bundle over
$\on{Op}^0_{SL_n}(X) \times X$ equipped with a partial connection
$\nabla^{\on{univ}}$ (along $X$) defined by the property
$$
(\V_{\omega_1}^{\on{univ}},\nabla^{\on{univ}})|_{\chi \times X} =
(\V_{\omega_1},\nabla_\chi).
$$
Set $\V_{\omega_1,X}^{\on{univ}} := \pi_*(\V_{\omega_1}^{\on{univ}})$,
where $\pi: \on{Op}^0_{SL_n}(X) \times X \to X$ is the natural
projection and $\pi_*$ is the $\OO$-module direct image. Using
the connection $\nabla^{\on{univ}}$, we obtain a left $\D_X$-module
$\V_{\omega_1,X}^{\on{univ}}$.

Now let $D_{PGL_n,\alpha}$ be the algebra of global holomorphic
differential operators acting on the component $\Bun_{PGL_n,\alpha}$
of $\Bun_{PGL_n}$. According to the results of \cite{BD}, these
algebras are isomorphic to each other and
$$
D_{PGL_n,\alpha} \simeq \on{Fun} \on{Op}^0_{SL_n}(X).
$$
From now on, we will use the notation $D_{PGL_n}$ for $D_{PGL_n,\alpha}$.

Thus, $D_{PGL_n}$ naturally acts on $\V_{\la,X}^{\on{univ}}$, and this
action commutes with the action of $\D_X$. We obtain the following
result.

\begin{lemma}    \label{nsigma}
  There is a unique $n$-th order differential operator
\begin{equation}    \label{sigman}
\sigma: K_X^{-\frac{n-1}{2}} \to D_{PGL_n} \otimes K_X^{\frac{n+1}{2}}
\end{equation}
satisfying the following property: for any $\chi \in
\on{Op}^0_{SL_n}(X) = \on{Spec} D_{PGL_n}$, applying the
corresponding homomorphism $D_{PGL_n} \to \C$ we obtain $P_\chi$.
\end{lemma}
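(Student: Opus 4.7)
The plan is to deduce the lemma essentially as a reformulation of Lemma \ref{Dga} together with the Beilinson--Drinfeld isomorphism $D_{PGL_n}\simeq \on{Fun}\on{Op}^0_{SL_n}(X)$. The uniqueness is the easier half: if two such $\sigma,\sigma'$ existed, then $\tau:=\sigma-\sigma'$ would be an $n$-th order operator $K_X^{-\frac{n-1}{2}}\to D_{PGL_n}\otimes K_X^{\frac{n+1}{2}}$ (more precisely, of order $\le n-1$, since the symbol-$1$ condition on $P_\chi$ pins down the symbol of $\sigma$ uniquely) whose specialization under every homomorphism $\on{ev}_\chi: D_{PGL_n}\to \C$ is $0$. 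Writing $\tau$ in a local coordinate with $D_{PGL_n}$-valued coefficients and invoking the fact that the $\on{ev}_\chi$ jointly separate elements of $D_{PGL_n}=\on{Fun}\on{Op}^0_{SL_n}(X)$, we conclude that each coefficient of $\tau$ vanishes, so $\tau=0$.

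For existence, I would construct $\sigma$ by simply packaging the family $\{P_\chi\}_{\chi\in\on{Op}^0_{SL_n}(X)}$ into a single operator with coefficients in $D_{PGL_n}$. Concretely, work locally on $X$: choose an open $U\subset X$ with a coordinate $z$ and a trivialization of $K_X^{\frac{1}{2}}$, so that for each $\chi$ we may write
\[
P_\chi|_U \;=\; \partial_z^n+\sum_{i=2}^{n} v_i(z,\chi)\,\partial_z^{n-i},
\]
where the condition $P_\chi-(-1)^n P_\chi^*$ has order $n-2$ is a set of linear relations on the $v_i$. By the construction of $P_\chi$ in Lemma \ref{Dga} (which goes through the oper Borel reduction and its iterated covariant derivatives with respect to $\nabla_\chi$), each $v_i(z,\chi)$ is a regular function of $\chi\in\on{Op}^0_{SL_n}(X)$, hence under $D_{PGL_n}\simeq \on{Fun}\on{Op}^0_{SL_n}(X)$ defines an element $V_i(z)\in \OO_U\otimes D_{PGL_n}$. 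Setting
\[
\sigma|_U := \partial_z^n+\sum_{i=2}^{n} V_i(z)\,\partial_z^{n-i}
\]
yields the desired operator on $U$, and by construction $\on{ev}_\chi(\sigma|_U)=P_\chi|_U$.

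The main step is checking that these local models glue to a global operator $\sigma: K_X^{-\frac{n-1}{2}}\to D_{PGL_n}\otimes K_X^{\frac{n+1}{2}}$; this is where I would use the intrinsic characterization. Applying the $\D_X$-action to the universal section $s_{\omega_1}^{\on{univ}}\in \Gamma(X,\V_{\omega_1,X}^{\on{univ}}\otimes K_X^{-\frac{n-1}{2}})$ and recalling that $\V_{\omega_1,X}^{\on{univ}}$ is locally free of rank $n$ over $\OO_X\otimes D_{PGL_n}$, the iterated derivatives $(\nabla^{\on{univ}})^j s_{\omega_1}^{\on{univ}}$ for $j=0,\dots,n-1$ form an $\OO_X\otimes D_{PGL_n}$-basis of $\V_{\omega_1,X}^{\on{univ}}$; expressing $(\nabla^{\on{univ}})^n s_{\omega_1}^{\on{univ}}$ in this basis produces, globally and coordinate-freely, a universal operator $\sigma$ satisfying $\sigma\cdot s_{\omega_1}^{\on{univ}}=0$. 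The self-adjointness condition needed to land in $D^\ga_n(X)(D_{PGL_n})$ is inherited pointwise from $P_\chi$, and uniqueness at every $\chi$ upgrades to uniqueness of $\sigma$ by the first paragraph. The only real subtlety I anticipate is verifying that the $\D_X$-module $\V_{\omega_1,X}^{\on{univ}}$ is locally $\OO_X\otimes D_{PGL_n}$-free of rank $n$ generated by $s_{\omega_1}^{\on{univ}},\nabla s_{\omega_1}^{\on{univ}},\dots,\nabla^{n-1}s_{\omega_1}^{\on{univ}}$; this holds pointwise in $\chi$ by the oper structure and is preserved under flat base change to $\on{Op}^0_{SL_n}(X)$, so it reduces to a standard Nakayama-style spreading-out argument.
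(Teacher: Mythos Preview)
Your proposal is correct and matches the paper's approach, though you supply considerably more detail than the paper does. In the paper the lemma is stated with only the preceding sentence ``Thus, $D_{PGL_n}$ naturally acts on $\V_{\la,X}^{\on{univ}}$, and this action commutes with the action of $\D_X$. We obtain the following result.'' as justification; in other words, the paper treats it as an immediate consequence of Lemma~\ref{Dga}, the Beilinson--Drinfeld isomorphism $D_{PGL_n}\simeq \on{Fun}\on{Op}^0_{SL_n}(X)$, and the existence of the universal bundle $\V_{\omega_1,X}^{\on{univ}}$ with its commuting $\D_X$- and $D_{PGL_n}$-actions. Your coordinate-free argument via iterated covariant derivatives of $s_{\omega_1}^{\on{univ}}$ is exactly the mechanism the paper has in mind, and your uniqueness argument (evaluation maps separate points of the reduced ring $D_{PGL_n}$) is the obvious one.
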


Let $\on{Op}^0_{SL_n}(X)_{\R}$ be the set of real $SL_n$-opers in
$\on{Op}^0_{SL_n}(X)$. If $\chi \in \on{Op}^0_{SL_n}(X)_{\R}$, then
the monodromy representations associated to $\chi$ and $\ol\chi$ are
isomorphic. Therefore, $(\V_{\omega_1},\nabla_\chi)$ and
$(\ol{\V}_{\omega_1},\ol\nabla_\chi)$ are isomorphic as $C^\infty$
flat vector bundles on $X$. Hence we obtain a non-degenerate pairing
$$
h_{\chi,\omega_1}(\cdot,\cdot): (\V_{\omega_1},\nabla_{\chi}) \otimes
  (\ol{\V}_{\omega_{n-1}},\ol\nabla_{\chi}) \to ({\mc
  C}^\infty_X,d)
$$
of $C^\infty$ flat vector bundles on $X$. The flat vector bundle
$(\V_{\omega_1},\nabla_{\chi})$ is known to be irreducible if ${\rm
  g}>1$ (see \cite{BD}, Sect. 3.1.5(iii)); therefore this pairing is
unique up to a scalar.

The following results were proved in \cite{EFK2}, Theorem
1.18 and Corollary 1.16.

\begin{theorem}    \label{mainthm}
  The Hecke operator $H_{\omega_1}$, viewed as an operator-valued
  section of $\Omega_X^{-\frac{n-1}{2}} = K_X^{-\frac{n-1}{2}} \otimes
  \ol{K}_X^{-\frac{n-1}{2}}$, satisfies the system of differential equations
  \begin{equation}    \label{eqHom1}
    \sigma \cdot H_{\omega_1} = 0, \qquad \ol\sigma \cdot
    H_{\omega_1} = 0.
  \end{equation}
\end{theorem}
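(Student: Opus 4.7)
My plan is to exploit the commutativity $[H_{\omega_1}, D_{PGL_n}] = 0$ and reduce the universal oper equations for $H_{\omega_1}$ to the scalar oper equations $P_\chi \cdot s_{\omega_1} = 0$ and $P_\chi^*\cdot s_{\omega_{n-1}}=0$ of Lemma \ref{Dga}. Since the quantum Hitchin algebra $D_{PGL_n}\cong \on{Fun} \on{Op}^0_{SL_n}(X)$ acts on $\mc H$ by commuting (conjecturally normal) unbounded operators, it suffices to verify the identity on matrix coefficients between generalized joint eigenvectors labeled by $\chi\in \Op^0_{SL_n}(X)$. On the $\chi$-eigenspace the operator $\sigma$ specializes to the scalar differential operator $P_\chi: K_X^{-\frac{n-1}{2}}\to K_X^{\frac{n+1}{2}}$, so the problem becomes to show that the $\chi$-eigenvalue of $H_{x,\omega_1}$, viewed as an operator-valued section in $x$, is annihilated by $P_\chi$ in the holomorphic direction and by $\ol{P_\chi}$ in the antiholomorphic direction.

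The geometric input I would use is the Beilinson--Drinfeld description of the Hecke functor via the factorization structure on the affine Grassmannian and geometric Satake: under this dictionary the fiber at $x$ of the Hecke modification of type $\omega_1$ is identified, on the $\chi$-eigenspace, with the fiber at $x$ of the flat oper bundle $(\V_{\omega_1},\nabla_\chi)$, and the canonical line $\kappa_{\omega_1}(K_X^{\frac{n-1}{2}})\subset \V_{\omega_1}$ singles out $s_{\omega_1,\chi}$ as the section produced by the Hecke modification. Thus, after pairing with eigenvectors $\psi_\chi$ and $\psi_\chi'$ on either side, the function $x\mapsto \langle \psi'_\chi, H_{x,\omega_1}\psi_\chi\rangle$ is a matrix coefficient of $s_{\omega_1,\chi}$ against a dual vector in the fiber of $\V_{\omega_1}$. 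The hypothesis that $\chi$ is real gives the non-degenerate pairing $h_{\chi,\omega_1}$ between $(\V_{\omega_1},\nabla_\chi)$ and $(\ol\V_{\omega_{n-1}},\ol\nabla_\chi)$, so the same matrix coefficient can be read in the antiholomorphic slot as a pairing with $s_{\omega_{n-1},\chi}$.

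Combining Lemma \ref{Dga} with these two identifications yields vanishing of $\sigma\cdot H_{\omega_1}$ and $\ol\sigma\cdot H_{\omega_1}$ on each $\chi$-eigenspace, and passing to the closure in $\mc H$ gives the theorem. It is worth noting that the anti-oper equation is not a formal consequence of the oper equation: it requires the extra structure of the pairing $h_{\chi,\omega_1}$ provided by the reality of $\chi$, or equivalently, the fact that the $C^\infty$ bundle underlying $\V_{\omega_1}$ carries two independent flat structures (one holomorphic and one antiholomorphic) corresponding to $\chi$ and $\ol\chi\cong \chi$.

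The main obstacle I anticipate is analytic rather than algebraic. The Schwartz kernel of $H_{x,\omega_1}$ is only known to be well-defined as an operator-valued distribution, so the identities must be interpreted as equalities of operator-valued distributions on $X$, and one must rigorously commute the differential operator $\sigma$ (with coefficients in $D_{PGL_n}$) past the integral defining the Hecke action. Controlling boundary terms near the unstable locus of $\Bun_{PGL_n}$ and carrying out the relevant integration-by-parts, in the spirit of the proof of Proposition \ref{opereq} above, will be the technical heart of the argument. A secondary subtlety is the normalization: the Beilinson--Drinfeld half-density $\norm{dQ}$ must be matched precisely against the geometric Satake identification with $\on{Gr}^{\omega_1}_{PGL_n}$ so that the proportionality between $H_{x,\omega_1}\psi_\chi$ and $s_{\omega_1,\chi}$ holds with an $x$-independent constant, without which $\sigma$ would only annihilate $H_{\omega_1}$ up to a lower-order term.
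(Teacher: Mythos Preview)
The paper does not give its own proof of this theorem; it cites \cite{EFK2}, Theorem~1.18. From the description in Subsection~\ref{archi} (``the Schwartz kernels of the Hecke operators satisfy appropriate differential equations''), the argument in \cite{EFK2} proceeds at the level of the Schwartz kernel of $H_{\omega_1}$ on $\Bun^\circ_{PGL_n}(X)^2\times X$: one shows directly, using the Beilinson--Drinfeld description of the Hecke correspondence and of $D_{PGL_n}\cong\on{Fun}\Op^0_{SL_n}(X)$, that the kernel is annihilated by $\sigma$ acting in the $X$-variable with coefficients in $D_{PGL_n}$ acting on one $\Bun$-factor. This is an identity of (operator-valued) distributions and uses no spectral decomposition. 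The $PGL_2$ prototype is Proposition~\ref{opereq}, whose proof is an explicit integration by parts on the integral kernel.

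Your route, by contrast, first passes to joint eigenspaces of $D_{PGL_n}$ and then checks the scalar equation $P_\chi\cdot\beta_\chi=0$. This is circular in the present logical order: the existence of a complete eigenbasis indexed by $\chi$ (and even the strong commutation needed to form it) is Conjecture~\ref{mainc}/\ref{spedeccon}, not a theorem, and Theorem~\ref{mainthm} is precisely one of the inputs used to support that conjecture (it is what forces the eigenvalue to land on the discrete set of real opers and yields Corollary~\ref{eigH1}). Moreover, your identification ``eigenvalue $=s_{\omega_1,\chi}$'' via geometric Satake is a $\mathcal D$-module statement about Hecke \emph{eigensheaves}; transporting it to an equality of $L^2$ eigenvalues is exactly what requires first knowing the differential equation and then invoking the uniqueness in Proposition~\ref{unique}. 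So Step~2 of your plan tacitly assumes the theorem you are trying to prove.

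A separate point: your derivation of $\ol\sigma\cdot H_{\omega_1}=0$ via the pairing $h_{\chi,\omega_1}$ and the reality of $\chi$ is misplaced. The antiholomorphic equation does not depend on any $\chi$ being real; it is the complex conjugate of the same kernel identity, obtained by letting $\ol{D_{PGL_n}}$ act on the other $\Bun$-factor. Reality of $\chi$ and the pairing $h_{\chi,\omega_1}$ enter only afterwards, in Proposition~\ref{unique}, to pin down the single-valued solution and deduce Corollary~\ref{eigH1}.
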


\begin{proposition}    \label{unique}
    $h_{\chi,\omega_1}(s_{\omega_1}, \ol{s_{\omega_{n-1}}})$ is a
  unique, up to a scalar, section $\Phi$ of $\Omega_X^{-\frac{n-1}{2}}$
  which is a solution of the system of differential equations
\begin{equation}    \label{diffeqs}
    P_\chi \cdot \Phi = 0, \qquad \ol{P^*_\chi} \cdot
    \Phi = 0
\end{equation}
\end{proposition}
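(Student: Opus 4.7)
The plan is to prove existence (that the explicit section solves both equations in \eqref{diffeqs}) and uniqueness (that the solution space is one-dimensional) separately. For \emph{existence}, I would work locally. On an analytic neighborhood $U\subset X$ trivializing both $\V_{\omega_1}$ and $\V_{\omega_{n-1}}$, pick a $\nabla_\chi$-flat frame $\{e_i\}$ of $\V_{\omega_1}$ together with its dual flat frame $\{e_i^*\}$ of $\V_{\omega_{n-1}}\simeq\V_{\omega_1}^*$, and expand
$$
s_{\omega_1}|_U=\sum_i s_i\, e_i,\qquad s_{\omega_{n-1}}|_U=\sum_j t_j\, e_j^*,
$$
with $s_i,t_j\in\Gamma(U,K_X^{-(n-1)/2})$. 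Because $P_\chi$ (built out of $\nabla_\chi$ on the $\V$-factor) acts componentwise in any flat frame, Lemma~\ref{Dga} yields $P_\chi\cdot s_i=0$ and $P_\chi^*\cdot t_j=0$ for all $i,j$. Flatness of $h_{\chi,\omega_1}$ makes the scalars $c_{ij}:=h_{\chi,\omega_1}(e_i,\ol{e_j^*})$ locally constant, and
$$
\Phi|_U=\sum_{i,j}c_{ij}\, s_i\,\ol{t_j},
$$
from which $P_\chi\cdot\Phi=0$ and $\ol{P_\chi^*}\cdot\Phi=0$ follow term by term. That $\Phi$ is not identically zero follows from the facts that the oper sub-line bundles $K_X^{(n-1)/2}\hookrightarrow\V_{\omega_1}$ and $K_X^{(n-1)/2}\hookrightarrow\V_{\omega_{n-1}}$ are saturated (so $s_{\omega_1}$ and $s_{\omega_{n-1}}$ are nowhere vanishing) and that $h_{\chi,\omega_1}$ is nondegenerate.

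For \emph{uniqueness}, a smooth local solution of $P_\chi\cdot\Phi=0$ has the form $\Phi=\sum_i a_i(\ol z)\phi_i(z)$ with $\{\phi_i\}$ a basis of local holomorphic solutions of $P_\chi$ and $a_i$ arbitrary smooth; imposing $\ol{P_\chi^*}\cdot\Phi=0$ forces $a_i(\ol z)=\sum_j c_{ij}\ol{\psi_j(z)}$ with constants $c_{ij}\in\C$ and $\{\psi_j\}$ a basis of local holomorphic solutions of $P_\chi^*$. So the sheaf of local solutions of \eqref{diffeqs} is a rank-$n^2$ local system on $X$ with fiber $\on{Sol}(P_\chi)\otimes\ol{\on{Sol}(P_\chi^*)}$. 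The key geometric input is an identification of local systems $\on{Sol}(P_\chi)\cong(\V_{\omega_1},\nabla_\chi)^*$: pairing a horizontal local section $\alpha$ of $\V_{\omega_1}^*$ with $s_{\omega_1}$ produces a holomorphic solution of $P_\chi$ (by flatness of the canonical pairing and $P_\chi\cdot s_{\omega_1}=0$), and cyclicity of $s_{\omega_1}$ for $\nabla_\chi$ makes $\alpha\mapsto\langle\alpha,s_{\omega_1}\rangle$ an isomorphism. Hence $\on{Sol}(P_\chi)$ has monodromy $\rho_\chi^*$; analogously $\on{Sol}(P_\chi^*)\cong\V_{\omega_{n-1}}^*\cong\V_{\omega_1}$ has monodromy $\rho_\chi$. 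Globally defined smooth solutions of \eqref{diffeqs} therefore correspond to $\pi_1(X)$-invariants in $\rho_\chi^*\otimes\ol{\rho_\chi}$, i.e.\ to elements of $\on{Hom}_{\pi_1(X)}(\ol{\rho_\chi},\rho_\chi)$.

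The reality hypothesis on $\chi$ yields $\ol{\rho_\chi}\simeq\rho_\chi$, reducing the solution space to $\on{End}_{\pi_1(X)}(\rho_\chi)$; the irreducibility of $(\V_{\omega_1},\nabla_\chi)$ for ${\rm g}>1$ (\cite{BD}, Sect.~3.1.5(iii), cited above) together with Schur's lemma shows that this endomorphism algebra is one-dimensional. Combined with the nonzero section built in the existence step, this completes the proof. The only substantive step is the identification of $\on{Sol}(P_\chi)$ with the dual of the oper flat bundle — i.e.\ correctly tracking the passage from the matrix connection $\nabla_\chi$ to the scalar oper operator $P_\chi$ via the cyclic vector $s_{\omega_1}$, and its effect on monodromy — since the remaining ingredients (local structure of solutions, reality, Schur's lemma) are essentially formal.
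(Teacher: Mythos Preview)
Your proof is correct and follows essentially the same approach as the paper: the paper (in its proof of the generalization, Proposition~\ref{monG}) phrases uniqueness by observing that a solution $\Phi$ determines a nonzero homomorphism of flat $C^\infty$ bundles $(\mcV_{\omega_1},\nabla_\chi)\otimes(\ol{\mcV}_{\omega_{n-1}},\ol\nabla_\chi)\to(\mc C^\infty,d)$, and then invokes irreducibility of $(\mcV_{\omega_1},\nabla_\chi)$ to conclude that such homomorphisms form a one-dimensional space. Your monodromy computation unwinds this same argument through the Riemann--Hilbert correspondence, identifying global solutions with $\pi_1(X)$-invariants in $\rho_\chi^*\otimes\ol{\rho_\chi}$ and applying Schur's lemma; the two are equivalent, with your version being more explicit and the paper's more conceptual.
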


These two results have the following corollary describing the
eigenvalues of the Hecke operator $H_{\omega_1}$ obtained in
\cite{EFK2}, Corollary 1.19.

\begin{corollary}    \label{eigH1}
Each of the eigenvalues $\beta_{\omega_1}(x,\ol{x})$ of the Hecke operator
$H_{\omega_1}$ on ${\mc H}$ corresponding to a real oper $\chi \in
\on{Op}^0_{SL_n}(X)_{\R}$ (see Conjecture \ref{mainc}) is equal to a
scalar multiple of
$h_{\chi,\omega_1}(s_{\omega_1},\ol{s_{\omega_{n-1}}})$.
\end{corollary}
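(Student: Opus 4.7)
The plan is to combine Theorem \ref{mainthm} and Proposition \ref{unique} via Conjecture \ref{mainc}, descending from the universal operator identities to scalar equations on the eigenvalue. First, I would fix a real oper $\chi\in\on{Op}^0_{SL_n}(X)_{\R}$ and one of its joint eigenspaces $\mc H_{\chi,\beta}\subset\mc H$. By Conjecture \ref{mainc}(i), this eigenspace is one-dimensional; $H_{\omega_1}$ acts on it by multiplication by the $\Omega_X^{-(n-1)/2}$-valued function $\beta_{\omega_1}(x,\ol x)$, while the algebra $D_{PGL_n}\simeq\on{Fun}\on{Op}^0_{SL_n}(X)$ of quantum Hitchin Hamiltonians acts through the evaluation character at $\chi$, and the anti-holomorphic copy $\ol{D_{PGL_n}}$ acts through the corresponding conjugate character.

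Second, I would restrict the universal identities of Theorem \ref{mainthm}, namely $\sigma\cdot H_{\omega_1}=0$ and $\ol\sigma\cdot H_{\omega_1}=0$, to $\mc H_{\chi,\beta}$. By Lemma \ref{nsigma}, applying the evaluation-at-$\chi$ character to the $D_{PGL_n}$-valued coefficients of $\sigma$ produces exactly $P_\chi$, so the first identity collapses to the scalar equation $P_\chi\cdot\beta_{\omega_1}(x,\ol x)=0$. A parallel argument in the anti-holomorphic variable, combined with the identification that specializes $\ol\sigma$ at $\chi$ to $\ol{P^*_\chi}$ (rather than to $\ol{P_\chi}$), yields the conjugate equation $\ol{P^*_\chi}\cdot\beta_{\omega_1}(x,\ol x)=0$.

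Third, I would invoke Proposition \ref{unique}: the solution space of the system \eqref{diffeqs} is one-dimensional and is spanned by $h_{\chi,\omega_1}(s_{\omega_1},\ol{s_{\omega_{n-1}}})$. Since $\beta_{\omega_1}$ now satisfies this system, it must be a scalar multiple of $h_{\chi,\omega_1}(s_{\omega_1},\ol{s_{\omega_{n-1}}})$, which is the desired conclusion.

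The main obstacle in carrying out this plan is the identification in the second step of the specialization of $\ol\sigma$ at $\chi$ with $\ol{P^*_\chi}$ rather than the naive $\ol{P_\chi}$. This rests on the precise form of $\ol\sigma$: the anti-holomorphic universal oper relevant to the Hecke kernel involves the contragredient fundamental representation $\V_{\omega_{n-1}}=\V^*_{\omega_1}$, so that on the dual side the oper operator is naturally the algebraic adjoint $P^*_\chi$. This is the same mechanism that forces the two sections appearing in Proposition \ref{unique} to be $s_{\omega_1}$ and $s_{\omega_{n-1}}$ rather than two copies of $s_{\omega_1}$. Once this adjoint-versus-direct identification is in place, the eigenvalue equations match those of Proposition \ref{unique} verbatim and the corollary follows immediately.
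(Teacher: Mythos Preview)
Your proposal is correct and follows essentially the same approach as the paper, which simply states that the corollary follows from combining Theorem \ref{mainthm} and Proposition \ref{unique}. Your discussion of why $\ol\sigma$ specializes to $\ol{P^*_\chi}$ rather than $\ol{P_\chi}$ is a useful clarification of a point the paper leaves implicit.
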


\subsection{General case}    \label{gencase}

In \cite{EFK2}, Section 5, we described a conjectural analogue of this
picture for general $G, X$, and $\lambda\in \Lambda^+$. In the general
case, the analogues of the differential equations \eqref{eqHom1}
satisfied by the Hecke operators are more complicated (see Subsection
\ref{genwt} below). However, there are several cases in which these
equations can be presented in a simple form similar to equations
\eqref{eqHom1}. Those cases will be discussed in Subsection
\ref{prin}.

We start by introducing some notation following \cite{EFK2}, Section
5. For $\chi \in \on{Op}^0_{\LG}(X)$ and $\la \in \Lambda^+$, we have
the flat holomorphic vector bundle $(\V_\la,\nabla_{\chi,\la})$ on $X$
associated to the irreducible representation $V_\la$ of $\LG$ with
highest weight $\la$ (according to \cite{BD2}, \S 3, the corresponding
vector bundles are isomorphic to each other for all $\chi \in
\on{Op}^0_{\LG}(X)$, which justifies the notation $\V_{\la}$; see
Theorem \ref{bundle} below). The oper Borel reduction gives rise to an
embedding
\begin{equation}    \label{kala}
  \kappa_\la: K_X^{\langle \la,\rho \rangle} \hookrightarrow
  \V_\la
\end{equation}
(if $n$ is a half-integer, we take the power
of $K_X^{\frac{1}{2}}$ in our chosen isomorphism class $\gamma$).

For $n \in \frac{1}{2}\Z$, denote by $\D_{X,n}$ the sheaf of
differential operators acting on the line bundle $K_X^{n}$ on $X$. We
have
$$
\D_{X,n} \simeq K_X^n
\underset{\OO_X}\otimes \D_X \underset{\OO_X}\otimes K_X^{-n}
$$

For $\la \in \Lambda_+$, set
\begin{equation}    \label{dla}
d(\la) := 2\langle \la,\rho \rangle.
\end{equation}
The $\D_X$-module structure on $\V_\la$ defined by the oper connection
$\nabla_{\chi,\la}$ gives rise to a $\D_{X,-\frac{d(\la)}{2}}$-module
structure on the $\OO_X$-module
$$
\mcV^K_\la := K_X^{-\frac{d(\la)}{2}} \underset{\OO_X}\otimes
\mcV_\la.
$$
We denote this $\D_{X,-\frac{d(\la)}{2}}$-module by $\V^K_{\chi,\la}$.

The map \eqref{kala} gives rise to a map
$$
  \wt\kappa_\la: \OO_X \hookrightarrow K_X^{-\frac{d(\la)}{2}} \otimes
  \V_\la
$$
and a non-zero section
\begin{equation}    \label{sla}
  s_\la := \wt\kappa_\la(1) \in \Gamma(X,K_X^{-\frac{d(\la)}{2}} \otimes
  \V_\la).
\end{equation}
Following \cite{EFK2}, Section 5.2, we denote by
$I_{\la,\chi}$ the left annihilating ideal of $s_\la$ in the sheaf
$\D_{X,-\frac{d(\la)}{2}}$.

Now suppose that $\chi \in \on{Op}^0_{\LG}(X)_{\R}$. Then we have an
isomorphism of $C^\infty$ flat bundles 
$$
  (\V_\la,\nabla_{\chi,\la}) \simeq (\ol{\V}_\la,\ol\nabla_{\chi,\la})
$$
and hence a pairing
  $$
  h_{\chi,\la}(\cdot,\cdot): (\V_\la,\nabla_{\chi,\la}) \otimes
  (\ol{\V}_{-w_0(\la)},\ol\nabla_{\chi,-w_0(\la)}) \to ({\mc
    C}^\infty_X,d)
  $$
  as $V^*_\la \simeq V_{-w_0(\la)}$. Since $\langle
  -w_0(\la),\rho \rangle = \langle \la,\rho \rangle =
  \frac{d(\la)}{2}$, we have
  $$
  \ol{s_{-w_0(\la)}} \in \Gamma(X,\ol{K}_X^{-\frac{d(\la)}{2}} \otimes
  \ol{\V}_{-w_0(\la)}).
  $$

We recall the results of \cite{BD2}, \S 3, on the
structure of $(\V_\la,\nabla_{\chi,\la})$ and $\kappa_\la$. Fix a
principal $\sw_2$ subalgebra
  $$
  \sw_2 = \on{span}\{ e,h,f \} \subset \lg
  $$
  such that $\on{span}\{ e,h \}$ is
  in the Borel subalgebra $\lb \subset \lg$ used in the definition of
  $\LG$-opers.

Denote by $V_m$ the irreducible $(m+1)$-dimensional representation of
$\sw_2$. The irreducible representation $V_\la$ of $\LG$ decomposes
into a direct sum of irreducible representations of the principal
$\sw_2$ subalgebra:
  \begin{equation}    \label{decsl2}
    V_\la \simeq V_{d(\la)} \oplus \left( \bigoplus_{m <
    d(\la)} V_m^{\oplus c_{m,\la}} \right), \qquad
    c_{m,\la} \in \Z_{\geq 0},
  \end{equation}
  where $d(\la)$ is given by formula \eqref{dla}.
  
Recall the rank two vector bundle $\mcE_{SL_2}$ which is a non-trivial
extension \eqref{nontr} (as before, here we take $K_X^{\frac{1}{2}}$ in the
isomorphism class $\gamma$ that we have chosen). Define
  the rank $(m+1)$ vector bundle $\mcE_m := \on{Sym}^m(\mcE_{SL_2})$ on $X$.
  By construction, $\mcE_m$ is equipped with a filtration $\{
\mcE^{\leq i}_m \}_{i=0,\ldots,m}$ such that
$$
\mcE^{\leq i}_m/\mcE^{\leq (i-1)}_m \simeq K_X^{\frac{m}{2}-i}.
$$
Fix isomorphisms
  $$
  \jmath^i_m: \mcE^{\leq i}_m/\mcE^{\leq (i-1)}_m \to (\mcE^{\leq
  (i+1)}_m/\mcE^{\leq i}_m) \otimes K_X, \qquad i=0,\ldots,m.
  $$
Let $B_m \subset \on{End}(\mcE_m)$ be the subbundle of endomorphisms
preserving this filtration, and $p$ the canonical section of
$(\on{End}(\mcE_m)/B_m) \otimes K_X$ such that $p(\mcE^{\leq i}_m) \subset
\mcE^{\leq (i+1)}_m \otimes K_X$ and it induces the isomorphisms
$\jmath^i_m$ on successive quotients.

  The following results are due to Beilinson and Drinfeld \cite{BD,BD2}.

  \begin{theorem}    \label{bundle}
(1) For any $\chi \in \on{Op}^0_{SL_{m+1}}(X)$, we have
    $\V_{\chi,\omega_1} \simeq \mcE_m$, the oper Borel reduction corresponds
    to $B_m$, and $\nabla_{\chi,\omega_1} = d + p$ mod $B_m$.

(2) For any $\chi \in \on{Op}^0_{\LG}(X)$,
  \begin{equation}    \label{decomV}
    \mcV_\la \simeq \mcE_{d(\la)} \oplus \left( \bigoplus_{m <
      d(\la)} \mcE_m^{\oplus c_{m,\la}} \right),
  \end{equation}
  where the numbers $c_{m,\la}$ are defined by
  \eqref{decsl2}, and
  \begin{equation}    \label{nablala}
   \nabla_{\chi,\la} = d + p \quad \on{mod} \quad B
  \end{equation}
  where $B$ is the direct sum of $B_{d(\la)}$ and all $B_m$'s
  corresponding to the summands of \eqref{decomV}.
  \end{theorem}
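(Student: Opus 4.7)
The strategy is to reduce both statements to properties of the non-trivial extension $\mcE_{SL_2}$ via the principal embedding $\phi: SL_2 \to \LG$. Recall from Subsection \ref{ALcomp} that the holomorphic bundle underlying any oper $\chi \in \on{Op}^0_{\LG}(X)$ is canonically identified with $\phi(\mcE_{SL_2})$, and the oper Borel reduction is the image under $\phi$ of the tautological line subbundle $K_X^{\frac{1}{2}} \hookrightarrow \mcE_{SL_2}$ from \eqref{nontr}. Thus both claims become a matter of applying the representation $V_\la$ and tracking how the principal $\sw_2$ acts.

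For part (1), take $\LG = SL_{m+1}$ and $\la = \omega_1$. Under the principal embedding $\sw_2 \hookrightarrow \sl_{m+1}$, the standard representation $V_{\omega_1} = \C^{m+1}$ is identified with $V_m = \on{Sym}^m(V_1)$. Passing to associated bundles yields
$$
\V_{\chi,\omega_1} \simeq \on{Sym}^m(\mcE_{SL_2}) = \mcE_m,
$$
and the induced reduction is precisely the filtration-preserving subbundle $B_m$. For the connection, trivialize the oper Borel bundle on a formal neighborhood of a point so that $\nabla_\chi = d + (f+b(z))\,dz$ with $b \in \lb[[z]]$. Applying $V_{\omega_1}$ and reducing modulo $B_m$ (which absorbs the image of $\lb$), we obtain $d+p$, where $p$ is the canonical section of $(\End(\mcE_m)/B_m)\otimes K_X$ determined by the action of $f$ on successive quotients of the filtration; this is exactly the datum of the maps $\jmath^i_m$.

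For part (2), run the same argument with arbitrary $\la \in \Lambda^+$. Decompose $V_\la$ under the principal $\sw_2$ as in \eqref{decsl2}; each irreducible summand $V_m$ produces the associated bundle $\on{Sym}^m(\mcE_{SL_2}) = \mcE_m$, yielding the direct sum decomposition \eqref{decomV}. The oper Borel reduction decomposes compatibly into $B = B_{d(\la)} \oplus \bigoplus_m B_m^{\oplus c_{m,\la}}$, and the connection reduction mod $B$ is obtained exactly as before: since $b(z) \in \lb$ acts as a filtration-preserving endomorphism on every summand $\mcE_m$, its image lies in $B$, while the action of the principal nilpotent $f$ contributes, summand by summand, the canonical section $p$ inducing the isomorphisms $\jmath^i_m$.

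The main obstacle is the rigidity statement that the underlying holomorphic bundle is $\phi(\mcE_{SL_2})$ for \emph{every} $\chi \in \on{Op}^0_{\LG}(X)$. This is the non-trivial cohomological input from \cite{BD,BD2}: one uses the affine structure of $\on{Op}^0_{\LG}(X)$ as a torsor over the Hitchin base, together with the vanishing of $H^0$ on negatively graded pieces of $\on{ad}\mcE_{\LG} \otimes K_X$ in the principal grading, to show that deformations of $\chi$ within $\on{Op}^0_{\LG}(X)$ do not alter the underlying holomorphic bundle. Once this rigidity is in place, the bundle decomposition and the canonical form of the connection in both (1) and (2) follow mechanically from the principal $\sw_2$ representation theory. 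Care should be taken to construct $p$ intrinsically rather than via a coordinate trivialization so that it is manifestly a global section of $(\End(\V_\la)/B) \otimes K_X$.
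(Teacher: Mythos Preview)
The paper does not give its own proof of this theorem: it is stated as a result of Beilinson and Drinfeld with a citation to \cite{BD,BD2} and no argument. Your sketch is essentially the standard proof one finds in those references, so there is nothing to compare against in the paper itself; your outline is correct and follows the expected route via the principal $\sw_2$ decomposition of $V_\la$ applied to the fixed underlying bundle $\phi(\mcE_{SL_2})$.
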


\subsection{Differential equations for Hecke operators corresponding
  to principal weights}    \label{prin}

In the Subsection \ref{diffPGLn} we used the interpretation of
$SL_n$-opers as scalar differential operators of order $n$ (see Lemma
\ref{Dga}).  As we show in this subsection, an analogous
interpretation is possible if $G$ is a connected simple algebraic
group such that $G^\vee$ has an irreducible representation $V_\la$
with highest weight $\la$ that remains irreducible under a principal
$\sw_2$ subalgebra of $\g^\vee$. We will call such weights {\em
  principal}. According to the results of \cite{SS}, which go back to
E. Dynkin in characteristic 0 (see Theorem \ref{list} below),
principal weights are $\la=\omega_1$ and $\omega_\ell$ for $\g^\vee$ of
type $A_\ell$, and $\la=\omega_1$ for $\g^\vee$ of types $B_\ell$, $C_\ell$
and $G_2$. For $A_\ell, B_\ell$, and $C_\ell$, the corresponding
scalar differential operators were described in \cite{DrS,BD2} (see
also \cite{FG} and \cite{LM}, where such operators were discussed in
special cases).

We have $V_\la \simeq V_{d(\la)}$ in the decomposition \eqref{decsl2}
(i.e. there are no lower terms) if and only if $\la$ is a principal
weight of $\LG$. In this case, we have the following corollary of
Theorem \ref{bundle}.
  
\begin{corollary}    \label{irr}
Suppose that $\la$ is a principal weight of $\LG$.

(1) $(\mcV_\la,\nabla_{\chi,\la})$ together with its
  oper Borel reduction is an $SL_{d(\la)+1}$-oper.

(2) If ${\rm g}>1$, then the
  flat vector bundle $(\mcV_\la,\nabla_{\chi,\la})$ is
  irreducible.
\end{corollary}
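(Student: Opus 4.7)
The plan is to deduce both parts directly from Theorem \ref{bundle} together with the Beilinson--Drinfeld irreducibility result for the standard representation of an $SL_n$-oper cited just before Theorem \ref{mainthm}.

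For part (1), the first step is to observe that the hypothesis of principality of $\la$ means exactly that the decomposition \eqref{decsl2} of $V_\la$ under the principal $\sw_2 \subset \lg$ collapses to the single summand $V_\la \simeq V_{d(\la)}$, i.e.\ $c_{m,\la}=0$ for all $m < d(\la)$. Substituting this into Theorem \ref{bundle}(2), the direct sum \eqref{decomV} reduces to $\mcV_\la \simeq \mcE_{d(\la)}$, and \eqref{nablala} becomes $\nabla_{\chi,\la} = d + p \bmod B_{d(\la)}$. The second step, which is the only nontrivial point, is to check that the oper Borel reduction of $\mcV_\la$ inherited from the $\LG$-oper structure coincides with the standard filtration $\{\mcE_{d(\la)}^{\leq i}\}$ associated to $B_{d(\la)}$. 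This holds because the principal $\sw_2$ lies in $\lb$ and commutes with the action of the Cartan in the appropriate sense, so for the principal weight $\la$ the $h$-weight spaces of $V_\la$ are one-dimensional and already form a complete $\lb$-stable flag; under the identification $\mcV_\la \simeq \on{Sym}^{d(\la)}\mcE_{SL_2}$ provided by Theorem \ref{bundle}(2), this flag is carried to the standard filtration of $\mcE_{d(\la)}$. Comparing with Theorem \ref{bundle}(1) applied to $SL_{d(\la)+1}$ shows that $(\mcV_\la, \mathrm{oper\ flag}, \nabla_{\chi,\la})$ is precisely the data of an $SL_{d(\la)+1}$-oper.

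For part (2), with (1) in hand the flat bundle $(\mcV_\la, \nabla_{\chi,\la})$ is the standard representation of an $SL_{d(\la)+1}$-oper on $X$, so the irreducibility statement cited from \cite{BD}, Sect.~3.1.5(iii) (recalled in the excerpt just before Theorem \ref{mainthm}) applies verbatim once ${\rm g} > 1$. As a self-contained alternative one can argue along the lines of Remark \ref{monim}: a proper flat subbundle $\mcW \subset \mcV_\la$, intersected with the oper filtration, would produce a line subbundle of nonnegative degree whose projection to the bottom quotient $K_X^{-d(\la)/2}$ must vanish for ${\rm g} > 1$; this forces $\mcW$ to be contained in an oper-filtered piece, contradicting the fact that the oper connection shifts the filtration by one step.

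The step I expect to require the most care is the identification of the two Borel reductions in part (1): explicitly matching the $B^\vee$-reduction on $\mcV_\la$ coming from the $\LG$-oper data with the filtration $\{\mcE_{d(\la)}^{\leq i}\}$ coming from $\on{Sym}^{d(\la)}\mcE_{SL_2}$. This is precisely where principality of $\la$ is indispensable, since without it the presence of the lower summands $\mcE_m^{\oplus c_{m,\la}}$ in \eqref{decomV} would spoil the comparison. Once this compatibility is in place, both parts become essentially formal consequences of the statements of \cite{BD,BD2} already quoted in Theorem \ref{bundle}.
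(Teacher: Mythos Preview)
Your proposal is correct and follows essentially the same approach as the paper: for part (1) you reduce \eqref{decomV} to the single summand $\mcE_{d(\la)}$ and compare with Theorem \ref{bundle}(1), and for part (2) you invoke \cite{BD}, Sect.~3.1.5(iii) applied to the resulting $SL_{d(\la)+1}$-oper. The paper's proof is terser (it just says part (1) ``readily follows from Theorem \ref{bundle}''), while you spell out the compatibility of Borel reductions; your alternative self-contained argument for part (2) via Remark \ref{monim} is not in the paper but is a legitimate extra.
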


\begin{proof}
If $\la$ is principal, then $\mcV_\la \simeq \mcE_{d(\la)}$. Part (1)
readily follows from Theorem \ref{bundle}. By \cite{BD}, Proposition
3.1.5(iii), if ${\rm g}>1$, then the flat $\LG$-bundle underlying any
$\LG$-oper does not admit a reduction to a nontrivial parabolic
subgroup of $\LG$. This proves part (2).
\end{proof}

\begin{remark}    \label{nonmin}
If $\la$ is not a principal weight, so $V_\la$ is reducible as a
representation of a principal $\sw_2$ subalgebra of $\lg$ (see
\eqref{decsl2}), then there exists $\chi \in \Op^0_{\LG}(X)$ such that
the flat vector bundle $(\mcV_\la,\nabla_{\chi,\la})$ is
reducible. For example, this is so if the $\LG$-oper $\chi$ is in
the image of the canonical embedding $\Op_{PGL_2}(X) \hookrightarrow
\Op^0_{\LG}(X)$ constructed in \cite{BD2}, \S 3. Indeed, for such
$\chi$, the oper connection $\nabla_{\chi,\la}$ preserves the
decomposition \eqref{decomV}.

However, in the next subsection we will show that for generic $\chi
\in \Op^0_{\LG}(X)$ the flat vector bundle
$(\mcV_\la,\nabla_{\chi,\la})$ is irreducible for any $\la
\in \Lambda^+$, if ${\rm g}>1$ (see Corollary \ref{irrgen},(2)).\qed
\end{remark}

Recall from Subsection \ref{gencase} that for any $\la \in \Lambda^+$
we have a canonical section $s_\la \in \Gamma(X,\mcV^K_\la)$ defined
by the oper Borel reduction and the left annihilating ideal
$I_{\la,\chi}$ of $s_\la$ in the sheaf $\D_{X,-\frac{d(\la)}{2}}$. Let us
specialize to the case of a principal weight $\la$. Corollary
\ref{irr} then implies the following.

\begin{lemma}    \label{minu}
Suppose that $\la$ is a principal weight of a group $G$. Then
$\V^K_{\chi,\la}$ is an irreducible $\D_{X,-\frac{d(\la)}{2}}$-module,
and we have an exact sequence of left $\D_{X,-\frac{d(\la)}{2}}$-modules
$$
0 \to I_{\la,\chi} \to \D_{X,-\frac{d(\la)}{2}} \to
\mcV^K_{\chi,\la} \to 0
$$
\end{lemma}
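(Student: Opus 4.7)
The plan is to deduce both assertions directly from Corollary \ref{irr}(2). Since $\la$ is a principal weight, that corollary tells us that the flat vector bundle $(\V_\la,\nabla_{\chi,\la})$ is irreducible (using the implicit assumption ${\rm g}>1$). The first step I would take is to upgrade this to the statement that $\V_\la$ is simple as a left $\D_X$-module: as $\V_\la$ is $\OO_X$-coherent, any $\D_X$-submodule is automatically a flat subbundle, hence must be either zero or all of $\V_\la$ by irreducibility of the flat connection. Tensoring with the line bundle $K_X^{-d(\la)/2}$ gives the standard Morita equivalence between $\D_X$-modules and $\D_{X,-d(\la)/2}$-modules (recall from the excerpt that $\D_{X,n} \simeq K_X^n \otimes_{\OO_X} \D_X \otimes_{\OO_X} K_X^{-n}$), so $\V^K_{\chi,\la} = K_X^{-d(\la)/2} \otimes \V_\la$ inherits simplicity as a $\D_{X,-d(\la)/2}$-module.

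For the exact sequence, I would consider the evaluation map $\varphi : \D_{X,-d(\la)/2} \to \V^K_{\chi,\la}$, $P \mapsto P \cdot s_\la$. By the very definition of $I_{\la,\chi}$ as the left annihilator of $s_\la$, we have $\ker \varphi = I_{\la,\chi}$. The image $\on{Im}\,\varphi$ is a $\D_{X,-d(\la)/2}$-submodule of $\V^K_{\chi,\la}$, and it is nonzero because $s_\la = \widetilde\kappa_\la(1)$ is nonzero (the map $\widetilde\kappa_\la : \OO_X \hookrightarrow \V^K_\la$ coming from the oper Borel reduction is injective by construction). Applying the simplicity established in the first paragraph, the image must coincide with $\V^K_{\chi,\la}$, which yields exactness of the stated sequence.

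I do not expect any serious obstacles here: the substantive input is the irreducibility statement of Corollary \ref{irr}(2), which itself rests on Theorem \ref{bundle} and the Beilinson--Drinfeld structural theory of opers, while everything else is a clean translation between flat vector bundles, $\D$-modules, and their twists by (fractional) powers of $K_X$. The only point that deserves to be stated carefully is the equivalence between irreducibility of a flat vector bundle on a smooth projective curve and simplicity as a $\D$-module, which follows from the fact that a $\D_X$-submodule of an $\OO_X$-coherent $\D_X$-module is automatically $\OO_X$-coherent.
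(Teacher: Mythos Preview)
Your proposal is correct and follows exactly the route the paper intends: the paper simply states that ``Corollary \ref{irr} then implies the following'' without spelling out details, and your argument (irreducibility of the flat bundle $\Rightarrow$ simplicity as a $\D_X$-module $\Rightarrow$ simplicity of the twist, then surjectivity of $P\mapsto P\cdot s_\la$ by simplicity) is the standard unpacking of that implication.
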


Recall from Subsection \ref{gencase} that $s_\la$ and
$\ol{s}_{-w_0(\la)}$ are sections of $K^{-\frac{d(\la)}{2}}$ and
$\ol{K}^{-\frac{d(\la)}{2}}$, respectively.

\begin{proposition}    \label{monG}
Let $\la$ is a principal weight of a group $G$ and $\chi \in
\on{Op}_{\LG}(X)_{\R}$. Then $h_{\chi,\la}(s_\la, \ol{s_{-w_0(\la)}})$
is a unique, up to a scalar, non-zero global $C^\infty$ section of
$\Omega_X^{-\frac{d(\la)}{2}}$ over $X$ annihilated by the
ideals $I_{\la,\chi}$ and $\ol{I_{-w_0(\la),\chi}}$.
\end{proposition}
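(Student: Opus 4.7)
The plan is to follow the strategy of Proposition \ref{unique} (the $G^\vee = SL_n$, $\la = \omega_1$ case), with Lemma \ref{minu} playing the role of Lemma \ref{Dga}. The argument splits naturally into two parts: verifying that $\Phi_0 := h_{\chi,\la}(s_\la, \ol{s_{-w_0(\la)}})$ lies in the claimed solution space, and showing that this solution space is at most one-dimensional.

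For the first part, I would use the flatness of the pairing $h_{\chi,\la}$ together with the defining property of $I_{\la,\chi}$. Flatness implies that for any holomorphic differential operator $P \in \D_{X,-d(\la)/2}$ and local sections $u$ of $\V^K_{\chi,\la}$, $\bar v$ of $\ol{\V^K_{\chi,-w_0(\la)}}$, one has $P \cdot h_{\chi,\la}(u,\bar v) = h_{\chi,\la}(P\cdot u,\bar v)$. Specializing to $P \in I_{\la,\chi}$, $u = s_\la$, $\bar v = \ol{s_{-w_0(\la)}}$ yields $P \cdot \Phi_0 = 0$, and the annihilation of $\Phi_0$ by $\ol{I_{-w_0(\la),\chi}}$ is symmetric. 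Non-vanishing follows because $s_\la$ and $\ol{s_{-w_0(\la)}}$ are nowhere-vanishing generators of the line subbundles given by the oper Borel reduction, and $h_{\chi,\la}$ is nondegenerate.

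For uniqueness, let $\Phi \in C^\infty(X,\Omega_X^{-d(\la)/2})$ be annihilated by $I_{\la,\chi}$ and $\ol{I_{-w_0(\la),\chi}}$. Using Lemma \ref{minu}, which asserts that $s_\la$ generates $\V^K_{\chi,\la}$ as a $\D_{X,-d(\la)/2}$-module with annihilator exactly $I_{\la,\chi}$ (and analogously on the antiholomorphic side), I would construct from $\Phi$ a flat pairing
$$
\tilde h_\Phi : \V_\la \otimes_\C \ol{\V}_{-w_0(\la)} \to C^\infty_X
$$
by the local prescription $\tilde h_\Phi(P\cdot s_\la,\, \bar Q \cdot \ol{s_{-w_0(\la)}}) := (P\bar Q)\cdot \Phi$, after untwisting the $K^{\mp d(\la)/2}$ factors. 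Well-definedness follows from $[P,\bar Q]=0$ and the hypothesis on $\Phi$: if $(P-P') \in I_{\la,\chi}$ and $(\bar Q - \bar Q') \in \ol{I_{-w_0(\la),\chi}}$, then $(P\bar Q - P'\bar Q')\Phi = 0$. Flatness is immediate, and by construction $\tilde h_\Phi(s_\la,\ol{s_{-w_0(\la)}}) = \Phi$.

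To conclude, I invoke Schur's lemma. By Corollary \ref{irr}(2) the flat bundle $(\V_\la,\nabla_{\chi,\la})$ is irreducible; since $\chi$ is real, $\ol{\V}_\la \simeq \V_\la$ as $C^\infty$ flat bundles, and combining this with the standard isomorphism $V_{-w_0(\la)}^* \simeq V_\la$ yields an isomorphism $\ol{\V}_{-w_0(\la)}^{\,*} \simeq \V_\la$ of $C^\infty$ flat bundles. Hence the space of flat pairings $\V_\la \otimes \ol{\V}_{-w_0(\la)} \to C^\infty_X$ is one-dimensional, spanned by $h_{\chi,\la}$. Therefore $\tilde h_\Phi = c\cdot h_{\chi,\la}$ for some scalar $c$, and evaluating on $s_\la\otimes\ol{s_{-w_0(\la)}}$ gives $\Phi = c\cdot\Phi_0$. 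The most delicate point in the argument is the well-definedness and global consistency of $\tilde h_\Phi$: one must check that the local prescription respects the defining ideals and patches across coordinate charts, which reduces to Lemma \ref{minu} and the commutativity of holomorphic with antiholomorphic differential operators.
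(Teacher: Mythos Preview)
Your proof is correct and follows essentially the same route as the paper: both use Lemma \ref{minu} to convert a solution $\Phi$ into a $\D_{X,-d(\la)/2}\otimes\ol{\D}_{X,-d(\la)/2}$-module map $\V^K_{\chi,\la}\otimes\ol{\V}^K_{\chi,-w_0(\la)}\to\Omega_X^{-d(\la)/2}$ (equivalently a flat pairing), and then invoke irreducibility from Corollary \ref{irr}(2) to force this pairing to be a scalar multiple of $h_{\chi,\la}$. One small remark: your non-vanishing argument is slightly incomplete---nondegeneracy of $h_{\chi,\la}$ alone does not prevent $h_{\chi,\la}(s_\la,\ol{s_{-w_0(\la)}})$ from vanishing; the correct justification is that if it did vanish identically, then by flatness and cyclicity (Lemma \ref{minu}) $h_{\chi,\la}(u,\ol{s_{-w_0(\la)}})=0$ for all $u$, contradicting nondegeneracy.
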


\begin{proof}
  Clearly, $h_{\chi,\la}(s_\la, \ol{s_{-w_0(\la)}})$ satisfies the
  conditions of the lemma. Conversely, suppose that $\phi_\la(\chi)$
  is a non-zero section of $\Omega_X^{-\frac{d(\la)}{2}}$
  annihilated by the ideals $I_{\la,\chi}$ and
  $\ol{I_{-w_0(\la),\chi}}$. By the definition of these ideals, we then
  have a non-zero homomorphism of $\D_{X,-\frac{d(\la)}{2}} \otimes
  \ol\D_{X,-\frac{d(\la)}{2}}$-modules
  $$
  \alpha_{\la,\chi}: \mcV^K_{\chi,\la} \otimes
  \ol\mcV^K_{\chi,-w_0(\la)} \to \Omega_X^{-\frac{d(\la)}{2}}
  $$
  sending $s_\la \otimes \ol{s_{-w_0(\la)}}$ to
  $\phi_\la(\chi)$. Equivalently, we have a non-zero homomorphism of
flat $C^\infty$ vector bundles
  $$
  (\mcV_\la,\nabla_{\chi,\la}) \otimes
  (\ol\mcV_{-w_0(\la)},\ol\nabla_{\chi,-w_0(\la)}) \to ({\mc C}^\infty,d)
  $$
  By Corollary \ref{irr}, the flat vector bundle
  $(\mcV_\la,\nabla_{\chi,\la})$ is
  irreducible. Therefore the vector space of such homomorphisms is
  one-dimensional. Hence $\phi_\la$ is equal to a scalar multiple of
  $h_{\chi,\la}(s_\la, \ol{s_{-w_0(\la)}})$.
\end{proof}

 For $G=PGL_n$, $\la = \omega_1$, let
$$
I'_{\omega_1,\chi} := K_X^n \otimes I_{\omega_1,\chi}.
$$
This is a left submodule of the $({\mc
  D}_{X,\frac{n+1}{2}},{\mc D}_{X,\frac{-n+1}{2}})$-bimodule of differential
operators acting from $K_X^{\frac{-n+1}{2}}$ to $K_X^{\frac{n+1}{2}}$. The submodule
$I'_{\omega_1,\chi}$ is generated by a globally defined
$n$th order differential operator $P_\chi$ on $X$ associated to $\chi$
by Lemma \ref{Dga}, that is
$$
I'_{\omega_1,\chi} = {\mc D}_{X,\frac{n+1}{2}} \cdot P_\chi.
$$
Therefore in this case a section
annihilated by the ideal $I_{\la,\chi}$ is the same as a section
satisfying the $n$th order differential equation \eqref{diffeqs}.

A similar statement is true for a general principal weight $\la$ of a
group $G$. For $\chi \in \Op^0_{\LG}(X)$ let
$$
I'_{\la,\chi} := K_X^{d(\la)+1} \otimes I_{\la,\chi},
$$ which is a left submodule of the $({\mc D}_{X,\frac{1+d(\la)}{2}},{\mc
  D}_{X,-\frac{d(\la}{2}})$-bimodule of differential operators acting from
$K_X^{-\frac{d(\la)}{2}}$ to $K_X^{\frac{1+d(\la)}{2}}$. As shown in the proof of
Corollary \ref{irr}, the flat vector bundle
$(\mcV_\la,\nabla_{\chi,\la})$ has the structure of an
$SL_{d(\la)+1}$-oper, which we will denote by $\wt\chi_\la$. This
implies that the ideal $I'_{\la,\chi}$ is generated by the
corresponding differential operator $P_{\wt\chi_\la}$ of order
$d(\la)+1$ on $X$ acting from $K_X^{-\frac{d(\la)}{2}}$ to $K_X^{1+\frac{d(\la)}{2}}$.
Therefore, we again obtain that a section annihilated by the ideal
$I_{\la,\chi}$ is the same as a section satisfying a differential
equation of the form \eqref{diffeqs} of order $d(\la)+1$. Thus,
Proposition \ref{monG} has the following equivalent reformulation
which is an analogue of Corollary \ref{unique} for a general principal
weight.

\begin{corollary}
For a principal weight $\la$ of a group $G$, $h_{\chi,\la}(s_\la,
\ol{s_{-w_0(\la)}})$ is a unique, up to a scalar, non-zero section
$\Phi$ of $\Omega_X^{-\frac{d(\la)}{2}}$ which is a solution of the system of
differential equations
\begin{equation}    \label{diffeqs1}
    P_{\wt\chi_\la} \cdot \Phi = 0, \qquad \ol{P^*_{\wt\chi_\la}} \cdot
    \Phi = 0.
\end{equation}
\end{corollary}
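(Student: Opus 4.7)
The plan is to show that the corollary is a direct translation of Proposition \ref{monG} from the language of annihilating ideals to the language of scalar differential operators, once we exploit the fact that a principal weight yields an $SL_{d(\la)+1}$-oper structure on $(\mcV_\la,\nabla_{\chi,\la})$.

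First I would identify the annihilator ideal $I_{\la,\chi}$ with the left ideal generated by the scalar operator $P_{\wt\chi_\la}$. Since $\la$ is principal, Corollary \ref{irr}(1) furnishes an $SL_{d(\la)+1}$-oper structure $\wt\chi_\la$ on $(\mcV_\la,\nabla_{\chi,\la})$, and Lemma \ref{Dga} then produces a scalar differential operator
$$
P_{\wt\chi_\la}: K_X^{-\frac{d(\la)}{2}} \longrightarrow K_X^{\frac{d(\la)+2}{2}}
$$
of order $d(\la)+1$ annihilating the section $s_{\omega_1}$ of the $SL_{d(\la)+1}$-oper. Under the identification $\mcV_\la \simeq \mcE_{d(\la)}$ of Theorem \ref{bundle}, the section $s_{\omega_1}$ coincides with $s_\la$, so $P_{\wt\chi_\la}$ annihilates $s_\la$. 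The sheaf of differential operators $\mcD_{X,\frac{1+d(\la)}{2}}$ is generated over $\OO_X$ by the symbol, so by Lemma \ref{minu} the twisted ideal $I'_{\la,\chi} = K_X^{d(\la)+1}\otimes I_{\la,\chi}$ is precisely $\mcD_{X,\frac{1+d(\la)}{2}}\cdot P_{\wt\chi_\la}$. Consequently, a section $\Phi$ of $\Omega_X^{-\frac{d(\la)}{2}}$ is annihilated by $I_{\la,\chi}$ if and only if $P_{\wt\chi_\la}\cdot \Phi = 0$.

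Next I would carry out the analogous analysis for $-w_0(\la)$. The weight $-w_0(\la)$ is also principal, because the irreducibility of $V_\la$ under the principal $\sw_2$ is equivalent to the irreducibility of its dual $V_{-w_0(\la)}\simeq V_\la^*$. Hence $(\mcV_{-w_0(\la)},\nabla_{\chi,-w_0(\la)})$ acquires an $SL_{d(\la)+1}$-oper structure, and it is naturally dual to $\wt\chi_\la$. The scalar differential operator attached to this dual oper is the algebraic adjoint $P^*_{\wt\chi_\la}$: indeed, Lemma \ref{Dga} applied to the $SL_{d(\la)+1}$-oper $\wt\chi_\la$ gives $P^*_{\wt\chi_\la}\cdot s_{\omega_{d(\la)}}=0$, and $s_{\omega_{d(\la)}}$ is identified with $s_{-w_0(\la)}$ under $\mcV_{-w_0(\la)}\simeq\mcV_{\omega_{d(\la)}}$ of the dual $SL_{d(\la)+1}$-oper. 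Applying complex conjugation, the condition that a section of $\Omega_X^{-\frac{d(\la)}{2}}$ be annihilated by $\ol{I_{-w_0(\la),\chi}}$ becomes the antiholomorphic equation $\ol{P^*_{\wt\chi_\la}}\cdot\Phi = 0$.

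With these two translations in hand, the system \eqref{diffeqs1} is exactly equivalent to the pair of ideal-annihilation conditions in Proposition \ref{monG}. The corollary then follows immediately by invoking that proposition: the solution space is one-dimensional and spanned by $h_{\chi,\la}(s_\la,\ol{s_{-w_0(\la)}})$. The main subtlety to check carefully is the identification of the scalar operator for the dual oper as the formal adjoint $P^*_{\wt\chi_\la}$ with the correct line bundle twists, which is where one must keep track of the shift between $K_X^{-d(\la)/2}$ and $K_X^{(d(\la)+2)/2}$ and of the compatibility between the oper duality pairing and the adjoint pairing of differential operators (as in the $A_\ell$ case of Lemma \ref{Dga}).
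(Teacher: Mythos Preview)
Your proposal is correct and follows essentially the same approach as the paper: the corollary is presented there as an immediate reformulation of Proposition \ref{monG}, using the discussion just before it that the twisted ideal $I'_{\la,\chi}$ is generated by the single scalar operator $P_{\wt\chi_\la}$ coming from the $SL_{d(\la)+1}$-oper structure $\wt\chi_\la$. Your write-up supplies more detail than the paper (the explicit use of Lemma \ref{minu} to justify that the ideal is principal, and the explicit treatment of the dual weight $-w_0(\la)$), but the route is the same. One phrasing to tighten: the sentence ``$\mcD_{X,\frac{1+d(\la)}{2}}$ is generated over $\OO_X$ by the symbol'' is not quite what you mean; the point is rather that since $\mcV^K_{\chi,\la}$ has $\OO_X$-rank $d(\la)+1$ by Lemma \ref{minu} and $P_{\wt\chi_\la}$ lies in $I'_{\la,\chi}$ with order $d(\la)+1$ and unit symbol, it must generate the ideal.
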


\begin{remark}
(1) For $\LG$ of type $B_\ell$ (resp. $C_\ell$) the map $\chi
\mapsto P_{\wt\chi_\la}$ sets up a one-to-one correspondence between
$\Op^0_{\LG}(X)$ and the space of self-adjoint (resp. anti-self
adjoint) scalar differential operators, respectively, of order
$d(\la)+1$ acting from $K_X^{-\frac{d(\la)}{2}}$ to $K_X^{1+\frac{d(\la)}{2}}$ and
having symbol 1. This is proved in \cite{BD2}, \S 3 (following
\cite{DrS}) together with Lemma \ref{Dga}, which is the analogous
statement for $\LG$ of type $A_\ell$.

(2) If $\la$ is not principal, then we do not expect that the ideal
$I_{\la,\chi}$, or its twist such as $I'_{\la,\chi}$ is generated by a
single globally defined differential operator. Hence in \cite{EFK2},
Section 5, we formulated everything in terms of the ideal
$I_{\la,\chi}$ itself. See also Subsection \ref{genwt} below.
\end{remark}

Now we are going to formulate a conjectural analogue of Theorem
\ref{mainthm} for principal weights (Conjecture \ref{eqHG}).

For any $\la \in \Lambda^+$, let $\V_{\la}^{\on{univ}}$ be the
universal vector bundle over $\on{Op}^0_{\LG}(X) \times X$ with a
partial connection $\nabla^{\on{univ}}$ along $X$, such that
$$
(\V_{\la}^{\on{univ}},\nabla^{\on{univ}})|_{\chi \times X} =
(\V_{\la},\nabla_{\chi,\la}), \qquad \chi \in \on{Op}^0_{\LG}(X).
$$
Let $\pi:
\on{Op}^0_{\LG}(X) \times X \to X$ be the projection and set
$$
\V_{X,\la}^{\on{univ}} := \pi_*(\V_{\la}^{\on{univ}}), \qquad
\V_{X,\la}^{K,\on{univ}} := K_X^{-\frac{d(\la)}{2}} \otimes \V_{X,\la}^{\on{univ}}.
$$
Then $\V_{X,\la}^{K,\on{univ}}$ is naturally a
$\D_{X,-\frac{d(\la)}{2}}$-module on $X$, equipped with a commuting action
of $\on{Fun} \on{Op}^0_{\LG}(X) \simeq D_G$.

Moreover, the oper Borel reduction gives rise to an embedding
$$
\kappa_{\la}^{\on{univ}}: K_X^{-\frac{d(\la)}{2}} \hookrightarrow
\V_{\la,X}^{\on{univ}}
$$
and hence a canonical section
$$
s_{\la}^{\on{univ}} \in \Gamma(X,\V_{X,\la}^{K,\on{univ}}).
$$
Consider the cyclic $D_G \otimes \D_{X,-\frac{d(\la)}{2}}$-module $(D_G
\otimes \D_{X,-\frac{d(\la)}{2}}) \cdot s_{\la}^{\on{univ}}$ generated by
$s_{\la}^{\on{univ}}$.

The next lemma, which follows from Corollary
  \ref{irr},(2), is an analogue of Lemma \ref{nsigma} for a general
  principal weight.

\begin{lemma}    \label{Ds}
  For a principal weight $\la$ of a group $G$, there is an isomorphism
  \begin{equation}
  (D_G \otimes \D_{X,-\frac{d(\la)}{2}}) \cdot
  s_{\la}^{\on{univ}}  \simeq \V_{X,\la}^{K,\on{univ}}
  \end{equation}
  of $D_G \otimes \D_{X,-\frac{d(\la)}{2}}$-modules.
\end{lemma}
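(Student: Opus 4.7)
My plan is to show that the tautological inclusion
$$
\iota \colon M := (D_G \otimes \D_{X,-\frac{d(\la)}{2}}) \cdot s_{\la}^{\on{univ}} \hookrightarrow \V_{X,\la}^{K,\on{univ}}
$$
is surjective. Let $Q$ denote its cokernel; the goal is $Q = 0$. I would reduce to a fiberwise statement over $\on{Op}^0_{\LG}(X)$ and then invoke the irreducibility provided by Corollary \ref{irr}(2).

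First, by Theorem \ref{bundle}(2) the underlying holomorphic vector bundle $\V_\la$ of $(\V_\la,\nabla_{\chi,\la})$ is independent of $\chi$, so $\V_\la^{\on{univ}}$ is (non-canonically) the pullback to $\on{Op}^0_{\LG}(X)\times X$ of a single vector bundle on $X$. In particular $\V_\la^{\on{univ}}$ is flat over $\on{Op}^0_{\LG}(X) = \on{Spec} D_G$, so base change along any closed point $\chi$ is exact on $\V_{X,\la}^{K,\on{univ}}$ and identifies the specialization $\V_{X,\la}^{K,\on{univ}} \otimes_{D_G} \C_\chi$ with the $\D_{X,-\frac{d(\la)}{2}}$-module $\V_{\chi,\la}^K$. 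Under this identification the image of $M$ becomes the $\D_{X,-\frac{d(\la)}{2}}$-submodule of $\V_{\chi,\la}^K$ generated by $s_\la$ (which is nowhere zero, since $\kappa_\la$ is an embedding of vector bundles).

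The key step is the application of Corollary \ref{irr}(2): for a principal weight $\la$ and ${\rm g} > 1$, the flat bundle $(\V_\la,\nabla_{\chi,\la})$ is irreducible, equivalently $\V_{\chi,\la}^K$ is a simple $\D_{X,-\frac{d(\la)}{2}}$-module. Since $s_\la \neq 0$, it generates the whole simple module, hence the specialization of $M$ at $\chi$ exhausts $\V_{\chi,\la}^K$, and therefore $Q \otimes_{D_G} \C_\chi = 0$ for every $\chi \in \on{Op}^0_{\LG}(X)$.

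Finally, I would upgrade the fiberwise vanishing to $Q = 0$ by a Nakayama argument in the $D_G$-direction. Working locally on $X$ over an affine open $U$ trivializing $\V_\la^{\on{univ}}$ and the chosen $K_X^{\frac{1}{2}}$, the restriction $\V_{X,\la}^{K,\on{univ}}|_U$ is a free $D_G \otimes \OO_U$-module of rank $\dim V_\la$, so $Q|_U$ is coherent over $D_G \otimes \OO_U$. Fiberwise Nakayama (applied in the $D_G$-direction, using affineness of $\on{Op}^0_{\LG}(X)$) then forces $Q|_U = 0$, and gluing over a cover of $X$ gives $Q = 0$. I expect the main (albeit minor) technical obstacle to be this last bookkeeping step across the two factors, but it is a routine application of coherent-sheaf base change on the product $\on{Op}^0_{\LG}(X)\times X$.
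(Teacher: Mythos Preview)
Your proposal is correct and follows exactly the route the paper indicates: the paper's entire proof is the sentence ``follows from Corollary \ref{irr},(2)'', and you have spelled out precisely how that deduction works---fiberwise cyclicity from simplicity of $\V_{\chi,\la}^K$ (this is Lemma \ref{minu} in the paper), followed by a Nakayama/support argument over $\on{Spec} D_G$ to pass from the fibers to the universal family. There is no difference in strategy, only in the level of detail.
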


Recall that
the Hecke operator $H_\la$ is an operator-valued section of
$\Omega_X^{-\frac{d(\la)}{2}}$. Hence we can apply to it the
sheaf $\D_{X,-\frac{d(\la)}{2}}$ as well as the algebra $D_G$. The two actions
commute, and they
generate a $\D_{X,-\frac{d(\la)}{2}}$-module inside the sheaf
of operator-valued $C^\infty$ sections of $\Omega_X^{-\frac{d(\la)}{2}}$ on
$X$. Let us denote this
$\D_{X,-\frac{d(\la)}{2}}$-module by $\langle H_\la \rangle$.
Likewise, we can apply to $H_\la$ the sheaf $\ol{\D}_{X,-\frac{d(\la)}{2}}$
and the algebra $\ol{D}_G$. Denote the resulting
$\ol{\D}_{X,-\frac{d(\la)}{2}}$-module by $\ol{\langle H_\la
  \rangle}$.

Recall that for any $\chi \in \on{Op}^0_{\LG}$ we then have the
corresponding differential operator $P_{\wt\chi_\la}$. These operators
give rise to an analogue of the operator \eqref{nsigma}; namely,
\begin{equation}    \label{sigman1}
\sigma: K_X^{-\frac{d(\la)}{2}} \to D_G \otimes K_X^{1+\frac{d(\la)}{2}}
  \end{equation}
satisfying the property that for any $\chi \in
\on{Op}^0_{\LG}(X) = \on{Spec} D_G$, applying the
corresponding homomorphism $D_G \to \C$ we obtain
$P_{\wt\chi_\la}$.

The following statement is an analogue of Theorem \ref{mainthm} for a
general principal weight.

\begin{conjecture}    \label{eqHG}
  For a principal weight $\la$ of a group $G$, the Hecke operator
  $H_\la$ satisfies the system of equations
\begin{equation}    \label{eqHomG}
    \sigma \cdot H_\la = 0, \qquad \ol\sigma \cdot
    H_\la = 0.
  \end{equation}

Equivalently, there are isomorphisms
  \begin{equation}
    \langle H_\la
  \rangle \simeq \V_{X,\la}^{K,\on{univ}}, \qquad \ol{ \langle H_\la
      \rangle } \simeq \ol{\V_{X,\la}^{K,\on{univ}}}
  \end{equation}
  of $D_G \otimes \D_{X,-\frac{d(\la)}{2}}$-modules
  (resp. $\ol{D}_G \otimes \ol{\D}_{X,-\frac{d(\la)}{2}}$-modules).
\end{conjecture}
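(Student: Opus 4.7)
The plan is to reduce Conjecture \ref{eqHG} to the already proved Theorem \ref{mainthm} (the case $(G,\la)=(PGL_n,\omega_1)$) by exploiting Corollary \ref{irr}(1), which tells us that for a principal weight $\la$ the flat bundle $(\V_\la,\nabla_{\chi,\la})$ together with its oper reduction is itself an $SL_{d(\la)+1}$-oper $\wt\chi_\la$. First I would note that the two formulations of the conjecture are equivalent: by Lemma \ref{Ds}, $\V_{X,\la}^{K,\on{univ}}$ is cyclically generated by $s_\la^{\on{univ}}$ as a $D_G\otimes\D_{X,-\frac{d(\la)}{2}}$-module, and by Lemma \ref{minu} its pointwise annihilator ideal is generated by the scalar differential operator $P_{\wt\chi_\la}$, whose universal version over $\Op^0_{\LG}(X)$ is precisely $\sigma$ (and its algebraic adjoint gives $\ol\sigma$ after complex conjugation). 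Thus the vanishing \eqref{eqHomG} is equivalent to the existence of a surjection $\V_{X,\la}^{K,\on{univ}}\twoheadrightarrow\langle H_\la\rangle$ sending $s_\la^{\on{univ}}\mapsto H_\la$; and this surjection must be an isomorphism because, by Corollary \ref{irr}(2), the fibre $\V_{\chi,\la}^K$ is an irreducible $\D_{X,-\frac{d(\la)}{2}}$-module for every $\chi$, together with the fact that $H_\la$ does not identically vanish on $\Bun_G^\circ(X)$ (using Corollary \ref{eigH1} and Proposition \ref{monG} for real opers and density of real opers in $\Op^0_{\LG}(X)$).

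Next I would derive the scalar differential equation $\sigma\cdot H_\la=0$ itself by following the strategy of \cite{EFK2}, Section 4, but generalising beyond the minuscule case. The core input is the Beilinson--Drinfeld isomorphism $\mathfrak a\colon \mathrm{Fun}\,\Op^0_{\LG}(X)\xrightarrow{\sim} D_G$ and the compatibility, on the Hecke correspondence $\mcZ_\la$, between the actions of the Feigin--Frenkel centre at the critical level on the two copies of $\Bun_G^\circ(X)$ and the insertion of $V_\la$ at $x$. Unravelling this compatibility in the universal family produces the following geometric statement: the Schwartz kernel of $H_\la$, viewed as an operator-valued section of $\Omega_X^{-\frac{d(\la)}{2}}$, is a highest-weight vector in a $(D_G\otimes\D_{X,-\frac{d(\la)}{2}})$-module that is a quotient of $\V_{X,\la}^{K,\on{univ}}$. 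For a principal weight this module has the single generator $s_\la^{\on{univ}}$ killed by the scalar operator $P_{\wt\chi_\la}$, which is exactly the universal equation $\sigma\cdot H_\la=0$. The anti-holomorphic equation $\ol\sigma\cdot H_\la=0$ follows from the identity $H_\la^\dagger=H_{-w_0(\la)}$, the isomorphism $V_{-w_0(\la)}\simeq V_\la^*$, and the antiholomorphic symmetry of the complex analytic moduli space $\Bun_G^\circ(X)(\C)$.

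The main obstacle is the analytic verification that the Beilinson--Drinfeld equation, which is proved in the algebraic/de Rham setting, genuinely holds as a distributional identity on the $L^2$ Schwartz kernel of $H_\la$. In the case $(PGL_n,\omega_1)$ treated in \cite{EFK2} one is helped by the minuscule structure of $\V_{\omega_1}$, which makes $\mcZ_{\omega_1}$ smooth (a projective bundle over each component of $\Bun_G^\circ(X)$) and allows an essentially pointwise verification. For a general principal $\la$ the Hecke correspondence $\mcZ_\la$ is singular along the boundary strata of smaller relative position, so one must either pass to a suitable resolution (for example the convolution Grassmannian decomposition coming from writing $\la$ as a sum of minuscule or fundamental coweights and iterating Hecke operators) or localise the argument to the open stratum of regular modifications and then extend by continuity in $x$, using the norm-continuity of $H_{x,\la}$ in $x$ away from a codimension-one subvariety. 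I expect that the cleanest route is the former: write $H_\la$ as (the appropriate projection from) a composition $H_{\mu_1}\cdots H_{\mu_k}$ with each $\mu_i$ minuscule or fundamental, apply the already established differential equations factor by factor, and then identify the resulting differential equation on the projection with the scalar $\sigma$ using Corollary \ref{irr}(1) and the uniqueness part of Proposition \ref{monG}.
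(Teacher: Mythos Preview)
The statement you are attempting to prove is stated in the paper as a \emph{conjecture}, not a theorem; the paper gives no proof of it. Theorem \ref{mainthm} (the case $G=PGL_n$, $\la=\omega_1$) is proved in \cite{EFK2}, and Conjecture \ref{eqHG} is explicitly presented as its conjectural analogue for a general principal weight. So there is no paper proof to compare against, and your proposal is an attempt to go beyond what the paper establishes.

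That said, your outline has genuine gaps. First, the reduction step via ``write $H_\la$ as a projection from a composition $H_{\mu_1}\cdots H_{\mu_k}$ with each $\mu_i$ minuscule or fundamental'' does not buy you what you need: the only case in which the differential equation is actually proved in \cite{EFK2} is $PGL_n$ with the minuscule coweight $\omega_1$, and the principal weights of interest here (for $G^\vee$ of type $B_\ell$, $C_\ell$, $G_2$) are in general not minuscule and cannot be written as sums of coweights for which the equation is already known. So the factorisation strategy reduces to an unproved case, not to Theorem \ref{mainthm}. Second, your argument that the surjection $\V_{X,\la}^{K,\on{univ}}\twoheadrightarrow\langle H_\la\rangle$ is an isomorphism invokes ``density of real opers in $\Op^0_{\LG}(X)$'', but real opers form a discrete (countable) subset of the affine space of opers, and the paper does not claim they are Zariski dense; moreover, you appeal to Corollary \ref{eigH1}, which is specific to $(PGL_n,\omega_1)$, and implicitly to Conjecture \ref{mainc}, which is itself open. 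Third, the ``main obstacle'' you identify---passing the Beilinson--Drinfeld de Rham identity to a distributional identity on the $L^2$ kernel when $\mcZ_\la$ is singular---is exactly the reason the paper leaves this as a conjecture; your sketch of a resolution does not address the analytic control needed on the boundary strata.

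In short: the equivalence of the two formulations is correctly argued (this is essentially the content of Lemmas \ref{minu} and \ref{Ds}), but the substantive claim \eqref{eqHomG} remains open, and your proposed route does not close it.
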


The following conjecture
follows from Conjecture \ref{eqHG} and Proposition \ref{monG} in the
same way as Corollary \ref{eigH1} follows from Theorem \ref{mainthm}
and Corollary \ref{unique} in the case $G=PGL_n, \la=\omega_1$.

\begin{conjecture}    \label{eigHG}
Let $\la$ be a principal weight of a group $G$. Each of the
eigenvalues $\beta_\la(x,\ol{x})$ of the Hecke operator $H_\la$ on
${\mc H}$ corresponding to a real oper $\chi \in
\on{Op}^0_{G^\vee}(X)_{\R}$ (see Conjecture \ref{mainc}) is equal to a
scalar multiple of $h_{\chi}(s_\la,\ol{s_{-w_0(\la)}})$.
\end{conjecture}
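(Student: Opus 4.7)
The plan is to deduce Conjecture \ref{eigHG} from Conjecture \ref{eqHG} together with Proposition \ref{monG}, following the same reduction used in \cite{EFK2} to derive Corollary \ref{eigH1} from Theorem \ref{mainthm} and Proposition \ref{unique} in the case $G = PGL_n$, $\la = \omega_1$. Fix a real oper $\chi \in \on{Op}^0_{\LG}(X)_{\R}$ and, using Conjecture \ref{mainc}, a nonzero joint eigenvector $\psi$ in the one-dimensional Hecke eigenspace $\mcH_{\chi,\beta}$. On $\psi$, the quantum Hitchin Hamiltonians in $D_G$ act through the character $\chi: D_G \to \C$, and $H_\la$ acts as multiplication by the eigenvalue $\beta_\la(x,\ol{x})$, which I regard as a global $C^\infty$ section of $\Omega_X^{-d(\la)/2}$.

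First, I would convert Conjecture \ref{eqHG} into a pair of scalar equations for $\beta_\la$. Applying the universal holomorphic oper equation $\sigma \cdot H_\la = 0$ to $\psi$ and evaluating the $D_G$-valued coefficients of $\sigma$ through the character $\chi$ replaces $\sigma$ by the scalar differential operator $P_{\wt\chi_\la}$ of order $d(\la)+1$ attached by Corollary \ref{irr}(1) to the $SL_{d(\la)+1}$-oper structure on $(\V_\la,\nabla_{\chi,\la})$, yielding $P_{\wt\chi_\la} \cdot \beta_\la(x,\ol{x}) = 0$. The antiholomorphic equation $\ol\sigma \cdot H_\la = 0$ specializes analogously: using the identification $V_\la^* \simeq V_{-w_0(\la)}$, which identifies the scalar oper operator attached to the dual flat bundle with the algebraic adjoint $P^*_{\wt\chi_\la}$ (in the spirit of Lemma \ref{Dga} for $SL_n$-opers), one obtains $\ol{P^*_{\wt\chi_\la}} \cdot \beta_\la(x,\ol{x}) = 0$. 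Thus $\beta_\la$ is a nonzero global $C^\infty$ section of $\Omega_X^{-d(\la)/2}$ satisfying the system \eqref{diffeqs1}.

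Next, I would invoke uniqueness: since $\la$ is principal, so is $-w_0(\la)$, and the ideals $I'_{\la,\chi}$, $I'_{-w_0(\la),\chi}$ are generated by the single operators $P_{\wt\chi_\la}$, $P^*_{\wt\chi_\la}$ respectively; the reformulation of Proposition \ref{monG} in terms of these generators then guarantees that any nonzero global $C^\infty$ solution of \eqref{diffeqs1} is a scalar multiple of $h_{\chi,\la}(s_\la, \ol{s_{-w_0(\la)}})$, which gives the claim. The principal obstacle is Conjecture \ref{eqHG} itself, whose proof requires building the universal oper equation for an arbitrary principal weight by extending the construction of \cite{EFK2} (which used the realization of $SL_n$-opers as scalar differential operators from Lemma \ref{Dga}) via the analogous realization obtained from Lemma \ref{Ds}; once this geometric input is in place, the two-step reduction above is essentially formal. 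The only remaining technical point is verifying that $\beta_\la(x,\ol{x})$ is genuinely a smooth global section of $\Omega_X^{-d(\la)/2}$, which should follow from the one-dimensionality of the Hecke eigenspace predicted by Conjecture \ref{mainc} combined with the ellipticity of the system \eqref{diffeqs1}.
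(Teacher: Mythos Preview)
Your proposal is correct and matches the paper's approach exactly: the paper states just before Conjecture \ref{eigHG} that it ``follows from Conjecture \ref{eqHG} and Proposition \ref{monG} in the same way as Corollary \ref{eigH1} follows from Theorem \ref{mainthm} and Corollary \ref{unique} in the case $G=PGL_n, \la=\omega_1$,'' and offers no further argument. Your write-up simply unpacks this one-line reduction in detail, correctly identifying that the essential content lies in Conjecture \ref{eqHG} (still open) rather than in the deduction itself.
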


This is a special case (corresponding to the principal weights) of
Conjecture 5.1 of \cite{EFK2} which we mentioned in Conjecture
\ref{mainc},(ii) above. In Subsection \ref{genwt} we will discuss the
general case (see Conjecture \ref{51}).

\subsection{Monodromy of opers}    \label{irred}

Let $G$ be a connected reductive algebraic group over
  $\C$ and $X$ a smooth projective curve over $\C$ of genus ${\rm g} >
  1$.

Let ${\rm LocSys}_{G^\vee}(X)$ be the stack of Betti $G^\vee$-local
systems on $X$, and let ${\rm Conn}_{G^\vee}(X)$ be the stack of
$G^\vee$-connections
(i.e., de Rham $G^\vee$-local systems) on $X$. It contains the stack
of $G^\vee$-opers which, according to \cite{BD,BD2}, is the quotient
of the variety of $G^\vee$-opers (which is a union of affine spaces
which are torsors over ${\bf Hitch}$) by the trivial action of the center
$Z^\vee$ of $G^\vee$. Slightly abusing notation, in this section we
will denote this stack by $\Op_{G^\vee}(X)$. 

We have the analytic monodromy map $M: {\rm Conn}_{G^\vee}(X)\to {\rm LocSys}_{G^\vee}(X)$, which is an analytic isomorphism. Let $Z\subset {\rm Conn}_{G^\vee}(X)$ be the Zariski closed substack of connections whose differential Galois group 
is a proper subgroup of $G^\vee$. 

Despite the map $M$ not being algebraic, we have 

\begin{lemma} $M(Z)$ is a Zariski closed substack of 
${\rm LocSys}_{G^\vee}(X)$. 
\end{lemma}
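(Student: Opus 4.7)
The strategy is to transport the question via the Riemann--Hilbert correspondence to the Betti side and then to use the algebraic structure of the representation variety. Although $M$ is only an analytic isomorphism, on points it assigns to a connection its monodromy representation. Since $X$ is projective, every holomorphic connection is regular singular, so Schlesinger's density theorem identifies the differential Galois group of a connection with the Zariski closure of its monodromy. Consequently
\[
M(Z) \;=\; \{[\rho] \in {\rm LocSys}_{G^\vee}(X) \,:\, \overline{\rho(\pi_1(X))} \subsetneq G^\vee\},
\]
an intrinsic condition on local systems, and our task becomes showing that this locus is Zariski closed.

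\noindent Next I would realize ${\rm LocSys}_{G^\vee}(X)$ as the quotient stack $[R/G^\vee]$, where $R := \Hom(\pi_1(X), G^\vee)$ is the closed subscheme of $(G^\vee)^{2{\rm g}}$ cut out by the surface-group relation and $G^\vee$ acts by simultaneous conjugation. It suffices to prove that the $G^\vee$-invariant subset $R^\circ \subset R$ consisting of representations with non-dense image is Zariski closed. Since a closed subgroup of $G^\vee$ is proper iff it is contained in some maximal proper closed subgroup, one has the natural decomposition
\[
R^\circ \;=\; \bigcup_{H} G^\vee \cdot \Hom(\pi_1(X), H),
\]
where $H$ runs over representatives of the conjugacy classes of maximal proper closed subgroups of $G^\vee$. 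For the groups relevant to the paper (connected reductive, and in fact typically simple of adjoint type), classical classification results of Dynkin, Borel--de Siebenthal, and Liebeck--Seitz guarantee that there are only finitely many such conjugacy classes. For each representative $H$, the corresponding piece is the image of the algebraic morphism $G^\vee \times_{N_{G^\vee}(H)} \Hom(\pi_1(X), H) \to R$. When $H$ is a proper parabolic, $G^\vee / N_{G^\vee}(H)$ is projective, the morphism is proper, and the image is automatically closed.

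\noindent The main obstacle is verifying closedness (and not merely constructibility) of each piece when $H$ is not parabolic, i.e.\ when $N_{G^\vee}(H)$ fails to be parabolic and $G^\vee/N_{G^\vee}(H)$ is only affine. I would handle this by a direct degeneration argument: given a one-parameter family $\rho_t$ of representations each factoring (after conjugation) through $H$, one extracts in the limit a compatible family of conjugators using properness of the Chow/Hilbert scheme of subgroups of $G^\vee$ of bounded dimension, and shows the limit still factors through a conjugate of $H$. Combined with the finiteness of the index set, $R^\circ$ becomes a finite union of Zariski closed subsets, hence closed, and descending to the quotient stack yields the desired closedness of $M(Z)$.
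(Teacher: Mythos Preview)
Your first step is exactly the paper's proof: via Riemann--Hilbert (and Schlesinger's density theorem, since holomorphic connections on a projective curve are regular), $M(Z)$ is identified with the Betti-intrinsic locus of local systems whose monodromy has proper Zariski closure. The paper stops there, asserting that this locus ``can be defined algebraically in the Betti realization'' and hence is closed; it does not spell out the closedness argument you attempt.

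Your attempted verification has a genuine gap. For non-parabolic maximal $H$, the individual saturation $G^\vee\cdot\Hom(\pi_1(X),H)$ is \emph{not} in general closed in $R$, so your degeneration sketch cannot conclude that the limit factors through a conjugate of $H$. Already for $G^\vee=SL_2$: conjugates of the maximal torus $T$ (or of $N(T)$) degenerate in the Grassmannian of $\mathfrak{sl}_2$ to the line through a nilpotent, so a family of diagonalizable representations can limit to a unipotent one, which lies in $G^\vee\cdot\Hom(\pi_1,B)$ but not in $G^\vee\cdot\Hom(\pi_1,N(T))$. Thus the ``finite union of closed pieces'' argument fails as stated.

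The fix is easy and in fact makes the finiteness of maximal subgroups irrelevant. Your Hilbert/Chow scheme idea already proves closedness of $R^\circ$ directly: the scheme of closed subgroup schemes of $G^\vee$ with fixed Hilbert polynomial is proper, and the condition that a tuple in $(G^\vee)^{2{\rm g}}$ lands in a given subgroup scheme is closed. Hence a one-parameter family in $R^\circ$ yields, after passing to the limit in this proper parameter space, a proper closed subgroup $H_0\subsetneq G^\vee$ (not necessarily conjugate to the original $H$) through which the limiting representation factors. So $R^\circ$ is closed, and the descent to the quotient stack finishes the argument. (Note also that the ``finitely many maximal subgroups'' claim fails outright for reductive $G^\vee$ with positive-dimensional center, e.g.\ a torus, where there are no maximal proper closed subgroups; in that generality even the paper's description of $Z$ as Zariski closed needs care, and one should really restrict to semisimple $G^\vee$, as the paper does when it reduces to the simple adjoint case.)
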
 

\begin{proof} $M(Z)$ can be defined algebraically in the Betti realization --
it is the substack of local systems whose structure group 
is a proper subgroup of $G^\vee$. This implies the statement. 
\end{proof}  

\begin{theorem}\label{densee} $Z$ does not contain ${\rm Op}_{G^\vee}(X)$. 
In other words, there exists a $G^\vee$-oper $\chi$ whose monodromy is
Zariski dense in $G^\vee$ (equivalently, whose differential Galois
group is the entire $G^\vee$). 
\end{theorem}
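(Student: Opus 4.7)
By the preceding lemma, $M(Z)$ is Zariski closed in ${\rm LocSys}_{G^\vee}(X)$, so it suffices to exhibit a single oper $\chi$ with $\rho_\chi\notin M(Z)$. I proceed by contradiction: assume $M({\rm Op}_{G^\vee}(X))\subset M(Z)$. First reduce to $G^\vee$ simple of adjoint type, the torus case being immediate from Example \ref{tor} and the central extensions controlled by $H^1(X,Z^\vee_{\rm der})$. Any component of ${\rm Op}_{G^\vee}(X)$ is then an irreducible affine variety (a torsor over the Hitchin base ${\bf Hitch}$), so the Zariski closure $W$ of its image in ${\rm LocSys}_{G^\vee}(X)$ is an irreducible Zariski closed subvariety, contained by assumption in $M(Z)$.

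The next step is to classify the possible generic values of $M_\chi$. The Beilinson--Drinfeld result already cited in Remark \ref{monim} and Remark \ref{nonmin} (that the flat bundle underlying any $G^\vee$-oper has no reduction to a proper parabolic) ensures $M_\chi$ is reductive. Combining this with the companion theorem of Subsection \ref{irred} to be established just below---namely, that for $G^\vee$ simple adjoint, $M_\chi$ is a connected simple subgroup of $G^\vee$ containing a principal $PGL_2$---together with the classical finiteness of conjugacy classes of simple subalgebras of $\g^\vee$ containing a principal $\sw_2$, the irreducibility of ${\rm Op}_{G^\vee}(X)$ forces the existence of a single proper simple subgroup $H\subsetneq G^\vee$ (up to conjugacy) such that $M_\chi$ is conjugate into $H$ on a Zariski open subset of ${\rm Op}_{G^\vee}(X)$.

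Finally, I derive a contradiction by a first-order deformation argument. The Hitchin directions $T_\chi{\rm Op}_{G^\vee}(X) \cong {\bf Hitch}=\bigoplus_i H^0(X,K_X^{d_i})$ inject into $H^1(X,\text{ad}\,\rho_\chi)$ via the differential $dM$ of the monodromy map, with image spanned, under the principal $\sw_2$-decomposition of $\g^\vee$ and Theorem \ref{bundle}(2), by classes in the summands corresponding to $\mcE_{2d_i-2}$. If monodromy remained in $H$ along every Hitchin deformation, this image would factor through the image of the map $H^1(X,\text{ad}_H\,\rho_\chi)\to H^1(X,\text{ad}\,\rho_\chi)$, whose source has complex dimension $2({\rm g}-1)\dim H < 2({\rm g}-1)\dim G^\vee$. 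The hard part---and the heart of the argument---is to verify that the Lagrangian image of $dM$ is not contained in the image of $H^1(X,\text{ad}_H\,\rho_\chi)$. Concretely, using explicit branching rules for the principal $\sw_2 \subset \h \subset \g^\vee$, one must show that at least one isotypic summand of $\g^\vee$ as a principal-$\sw_2$-module does not lie in $\h$; the corresponding Hitchin factor $H^0(X,K_X^{d_i})$ then yields a tangent vector whose image under $dM$ projects nontrivially to $H^1(X,(\g^\vee/\h)_{\rho_\chi})$, producing a first-order deformation moving $M_\chi$ out of $H$ and contradicting the assumed inclusion $M(\Op_{G^\vee}(X))\subset M(Z)$.
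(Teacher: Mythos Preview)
Your approach differs substantially from the paper's. The paper works at a \emph{permissible} real oper $\psi$ (one coming from the principal $PGL_2$, e.g.\ the uniformizing oper), and invokes Wakabayashi's transversality theorem \cite{Wa}: at $\psi$, the complex tangent $U=T_\psi\Op_{G^\vee}(X)$ and the real tangent $V=T_\psi{\rm LocSys}_{G^\vee}(X)(\R)$ satisfy $U\oplus V=V_{\C}$. A two-line linear algebra lemma then shows that any real subspace $W\subset V$ with $U\subset W_{\C}$ must equal $V$; applied to $W=T_\psi{\rm LocSys}_K(X)(\R)$ (with $K$ the putative generic monodromy closure) this forces $K=G^\vee$. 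No forward reference to Theorem~\ref{zar} is needed.

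Your route via Theorem~\ref{zar} is legitimate in principle (that theorem's proof does not use the present one, so the order can be swapped), but as written your final paragraph has a gap. You only invoke the conclusion that $M_\chi$ contains \emph{some} principal $PGL_2$; to deduce that the Hitchin deformation $\delta v_i\, p_i$ lands in $H^1(X,(\g^\vee/\h)_{\rho_\chi})$ you need the $H$-reduction of the flat bundle to be aligned with the oper $B^\vee$-reduction, i.e.\ that the oper's principal $\sw_2$ lies in $\h$ and the holomorphic $\h$-subbundle is $\bigoplus_{f_j}\mcE_{2f_j}$. That alignment is precisely the Harder--Narasimhan argument inside the proof of Theorem~\ref{zar}, not a consequence of the bare statement you quote. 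Once you grant the \emph{full} Theorem~\ref{zar} (that $\chi$ is induced from a $G'$-oper), the entire tangent-space analysis becomes superfluous: the map $\al_{G',G^\vee}:\Op_{G'}(X)\to\Op_{G^\vee}(X)$ has image of dimension $({\rm g}-1)\dim G'<({\rm g}-1)\dim G^\vee$, there are only finitely many such $G'$ by Theorem~\ref{list}, and an irreducible affine space cannot be a finite union of lower-dimensional closed subsets.
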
 

\begin{proof} 
It is sufficient to prove the theorem in the case that
  $G^\vee$ is simple of adjoint type. In this case $\Op_{G^\vee}(X)$ is an
  affine space which we will view as a subvariety of ${\rm
    LocSys}_{G^\vee}(X)$. Since the automorphism group of the flat
  $G^\vee$-bundle underlying any $G^\vee$-oper is trivial if $G^\vee$
  is of adjoint type (see \cite{BD2}, \S 1.3), it follows that any
  $\chi \in \Op_{G^\vee}(X)$ has a Zariski open neighborhood ${\rm
    LocSys}_{G^\vee}(X)_\chi$ in ${\rm
    LocSys}_{G^\vee}(X)$ which is a smooth subvariety. The set ${\rm
    LocSys}_{G^\vee}(X)_\chi(\C)$ of its $\C$-points is a smooth
complex manifold.

Suppose that $\chi$ is a real $G^\vee$-oper. Then the set ${\rm
  LocSys}_{G^\vee}(X)_\chi(\R)$ of $\R$-points of the variety ${\rm
  LocSys}_{G^\vee}(X)_\chi$ is a smooth real submanifold of ${\rm
  LocSys}_{G^\vee}(X)_\chi(\C)$. Hence $\chi$ is a point of
intersection of two smooth manifolds, $\Op_{G^\vee}(X)(\C)$ and ${\rm
  LocSys}_{G^\vee}(X)_\chi(\R)$, in ${\rm
  LocSys}_{G^\vee}(X)_\chi(\C)$ (all viewed as real manifolds).

Following \cite{Wa}, call a real $G^\vee$-oper coming from a principal
$PGL_2$ subgroup of $G^\vee$ {\em permissible}. According to
Theorem A of \cite{Wa}, if $\chi$ is permissible, then the above two
subvarieties are transversal at $\chi$ (note that this implies that
permissible opers are discrete in $\Op_{G^\vee}(X)$). We will prove
the Zariski density of the image of the monodromy representation of a
generic $G^\vee$-oper by doing linear analysis around any given
permissible $G^\vee$-oper (for instance, we can take the image of the
real $PGL_2$-oper uniformizing $X$).

We start with an obvious lemma from linear algebra.

\begin{lemma}\label{auxle} Let $V$ be a finite dimensional real vector
  space of dimension $2d$ and $U$ a complex subspace of $V_{\Bbb C}$
  of dimension d transversal to $V$ (as a real vector space), i.e.,
  such that $U+V=V_{\Bbb C}$. Also, let $W$ be a subspace of $V$. If
  $U$ is contained in $W_{\Bbb C}$ then $W=V$.
\end{lemma}

\begin{proof} Since $U\oplus V=V_{\Bbb C}$, we have $W_{\Bbb C}\oplus
  V=V_{\Bbb C}$, hence $W=V$.
\end{proof} 

According to Remark \ref{monim}, for a permissible $G^\vee$-oper the
Zariski closure of the corresponding monodromy representation is
equal to a principal $PGL_2$ subgroup of $G^\vee$. Therefore, the
Zariski closure of the monodromy group of a sufficiently generic
$G^\vee$-oper $\chi$ has to be a subgroup $K$ of $G^\vee$
containing its principal $PGL_2$ subgroup. Hence $K$ is a semisimple
group (it is the same for all sufficiently generic $\chi$ up to
conjugacy). Fix a permissible $G^\vee$-oper $\psi$. Note that $\psi$
is a smooth point of both ${\rm LocSys}_{G^\vee}(X)$ and ${\rm
  LocSys}_{K}(X)$, since the centralizer of the principal $PGL_2$ in
$G^\vee$ is trivial (this also follows from the fact \cite{BD2} we
mentioned above that the group of automorphisms of any $G^\vee$-oper
is trivial if $G^\vee$ is of adjoint type). Consider the tangent
spaces $V:=T_\psi{\rm LocSys}_{G^\vee}(X)(\Bbb R)$, $U:=T_\psi {\rm
  Op}_{G^\vee}(X)(\Bbb C) = {\bf Hitch}$, and $W:=T_\psi{\rm
  LocSys}_{K}(X)(\Bbb R)$. Then by assumption, $U$ is contained in
$W_{\Bbb C}$, and by Theorem A of \cite{Wa}, $U$ and $V$ (which have
the same real dimension) are transversal in $V_{\Bbb C}$. Thus by
Lemma \ref{auxle} $W=V$, and hence $K=G^\vee$. This completes the
proof of the theorem.
\end{proof} 

Theorem \ref{densee} implies
 
\begin{corollary}    \label{irrgen}
  (1) Opers $\chi\in {\rm Op}_{G^\vee}(X)$ whose differential Galois
  group is $G^\vee$ (equivalently, such that the Zariski closure of
  their monodromy is $G^\vee$) form a dense Zariski open subset
  $\mathcal U\subset {\rm Op}_{G^\vee}(X)$.

  (2) For a generic $\chi \in {\rm Op}_{G^\vee}(X)$
    (namely, for $\chi\in \mathcal U$ from part (1)) the associated
    flat vector bundle $(\mathcal V_\lambda,\nabla_{\chi,\la})$ on $X$
    corresponding to an arbitrary dominant integral weight $\lambda$
    of $G^\vee$ is irreducible.
\end{corollary}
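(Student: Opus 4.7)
The plan is to deduce both parts from Theorem~\ref{densee} together with the lemma preceding it (which establishes the Zariski closedness of $M(Z)$).

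For part (2), the argument from part (1) should run as follows. Suppose $\chi\in\mathcal U$, so the Zariski closure of the image of $\rho_\chi\colon\pi_1(X,x_0)\to G^\vee$ is all of $G^\vee$. A nontrivial flat holomorphic subbundle of $(\mathcal V_\la,\nabla_{\chi,\la})$ would correspond under the Riemann--Hilbert correspondence to a proper nontrivial subspace $W\subset V_\la$ invariant under $\rho_\chi(\pi_1(X,x_0))$; since the $G^\vee$-action on $V_\la$ is algebraic, $W$ would then be invariant under the Zariski closure, hence under all of $G^\vee$, contradicting the irreducibility of $V_\la$ as a $G^\vee$-module. Therefore $(\mathcal V_\la,\nabla_{\chi,\la})$ is irreducible.

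For part (1), my plan is to identify the complement of $\mathcal U$ with $\on{Op}_{G^\vee}(X)\cap Z$, where $Z\subset\on{Conn}_{G^\vee}(X)$ is the substack of connections whose differential Galois group is a proper subgroup of $G^\vee$. The lemma before Theorem~\ref{densee} gives $M(Z)$ Zariski closed in $\on{LocSys}_{G^\vee}(X)$; since the Riemann--Hilbert map $M$ is an analytic isomorphism, $Z$ is analytically closed, and hence so is $\on{Op}_{G^\vee}(X)\cap Z$. By Theorem~\ref{densee} this intersection is a proper subset, and since each irreducible component of $\on{Op}_{G^\vee}(X)$ is an affine space, it is nowhere dense. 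This gives that $\mathcal U$ is analytically open and dense.

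The main obstacle will be upgrading this to Zariski openness, since the Riemann--Hilbert map is only analytic. My planned approach is Tannakian: for each dominant weight $\la$, the locus in $\on{Op}_{G^\vee}(X)$ where $(\mathcal V_\la,\nabla_{\chi,\la})$ admits a nontrivial flat sub-object is cut out by a rank-drop condition on the algebraic family $\mathcal V_\la^{\on{univ}}$, hence is Zariski closed. By Tannakian reconstruction of $G^\vee$ from its finite-dimensional representation category, finitely many weights $\la_1,\ldots,\la_r$ suffice to detect every proper closed subgroup of $G^\vee$ via reducibility of some $\mathcal V_{\la_i}$. The complement of $\mathcal U$ then equals the union of the corresponding finitely many Zariski closed loci, and is itself Zariski closed; establishing this finiteness (and that the rank-drop loci are honestly algebraic on the oper side, not merely on the Betti side) is where the bulk of the technical work will lie.
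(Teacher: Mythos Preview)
Your argument for part~(2) is correct and is exactly the implicit reasoning behind the paper's statement.

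For part~(1), however, you are working much harder than necessary because you overlooked a key assertion in the setup: the paper \emph{defines} $Z\subset\on{Conn}_{G^\vee}(X)$ as the \emph{Zariski closed} substack of connections with proper differential Galois group. This closedness is on the de Rham side, where everything is algebraic, and is being taken as a known fact (semicontinuity of the differential Galois group in algebraic families). Since $\on{Op}_{G^\vee}(X)$ sits as an algebraic subvariety of $\on{Conn}_{G^\vee}(X)$, the intersection $Z\cap\on{Op}_{G^\vee}(X)$ is Zariski closed in $\on{Op}_{G^\vee}(X)$; by Theorem~\ref{densee} it is proper; since (each component of) $\on{Op}_{G^\vee}(X)$ is an affine space, hence irreducible, its complement $\mathcal U$ is Zariski open and dense. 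That is the whole argument.

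Your route through the Betti side via the lemma on $M(Z)$ introduces the analytic-versus-algebraic problem artificially: the lemma is there to serve the proof of Theorem~\ref{densee}, not the corollary. Your proposed Tannakian fix---finitely many $\la$'s whose reducibility loci cover the complement of $\mathcal U$---would in fact work (it amounts to re-proving that $Z$ is Zariski closed, using that a simple group has finitely many conjugacy classes of maximal proper closed subgroups), but it is a substantial detour to establish something the paper has already handed you.
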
 

Suppose that $G^\vee$ is a simple algebraic group of adjoint type. Let
$\chi$ be a $G^\vee$-oper and denote by $M_\chi$ the Zariski closure
of the monodromy of $\chi$. By Corollary \ref{irrgen}, we have $M_\chi
= G^\vee$ for generic $\chi$. But this is not the case for
some $G^\vee$-opers. For example, for the permissible $G^\vee$-opers
$\chi$ discussed above (coming from a principal $PGL_2$ subgroup of
$G^\vee$) we have $M_\chi = PGL_2$. The proof of the following
result was communicated to us by D. Arinkin \cite{Ar}.

\begin{theorem}    \label{zar}
Suppose that the Zariski closure $M_\chi$ of the monodromy group of a
$G^\vee$-oper $\chi$ on a curve of genus ${\rm g} > 1$ is a proper
subgroup of $G^\vee$. Then $M_\chi \subset G'$, a proper simple
subgroup of $G^\vee$ that contains a principal $PGL_2$ subgroup of
$G^\vee$ and such that the flat $G^\vee$-bundle
  underlying $\chi$ is induced from a flat $G'$-bundle admitting a
  $G'$-oper structure.
\end{theorem}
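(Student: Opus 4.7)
The plan is to argue in four steps, exploiting the rigid local structure of opers together with the Zariski closure $M_\chi$ of the monodromy.

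\emph{Step 1 (Reductivity).} By Beilinson--Drinfeld (\cite{BD2}, \S 3.1.5(iii)), the flat $G^\vee$-bundle underlying any $G^\vee$-oper admits no reduction to a proper parabolic subgroup of $G^\vee$. Hence $M_\chi$ lies in no proper parabolic, and Serre's theorem on $G$-complete reducibility forces the identity component $M_\chi^0$ to be reductive. After replacing $X$ by the finite \'etale cover associated to $M_\chi^0 \subset M_\chi$ (still of genus $>1$, and whose pullback of $\chi$ is still an oper, now with connected monodromy closure $M_\chi^0$), I may assume $M_\chi$ is connected and reductive.

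\emph{Step 2 (Principal $\sw_2$).} I would show that $\mathfrak{m}_\chi := \on{Lie}(M_\chi)$ contains a principal $\sw_2$ of $\g^\vee$. Locally on $X$ an oper has the Drinfeld--Sokolov canonical form $\nabla = d + (f + v(z))\,dz$ with $f$ a principal nilpotent of $\g^\vee$ and $v(z)$ taking values in the centralizer of the principal $e$. Since the flat bundle reduces to $M_\chi$, there are local $\nabla$-flat trivializations in which $\nabla$ takes values in $\mathfrak{m}_\chi$; comparing with the oper trivialization via a local gauge transformation $g(z)\in G^\vee$ shows that a $G^\vee$-conjugate of $f$ lies in $\mathfrak{m}_\chi$, which we may absorb into $M_\chi$ by replacing $M_\chi$ with a conjugate. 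A reductive subalgebra of $\g^\vee$ containing a regular nilpotent must, by the Dynkin--Kostant classification, contain the whole principal $\sw_2$-triple through $f$; equivalently, the nonzero Hitchin image of $\chi$ in $\bigoplus_i H^0(X, K_X^{d_i})$ is preserved by the reduction of structure group, forcing $\mathfrak{m}_\chi$ to carry invariants of the same degrees $d_1,\ldots,d_\ell$ as $\g^\vee$.

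\emph{Step 3 (Simplicity of $G'$).} Set $G' := M_\chi$. Its centralizer in $G^\vee$ centralizes the principal $\sw_2$ and therefore lies in the finite center $Z^\vee$, so $G'$ has finite center. Decompose $[\g',\g'] = \bigoplus_{i=1}^k \g_i'$ into simple ideals and write $f = \sum_i f_i$. Regularity of $f$ in $\g^\vee$ forces each $f_i$ to be either zero or regular in $\g_i'$; but the principal $\sw_2$-triple $\{e,h,f\}$ is a simple subalgebra, so exactly one index contributes, giving $k=1$. Hence $G'$ is a proper simple subgroup of $G^\vee$ containing a principal $PGL_2$ subgroup.

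\emph{Step 4 (Induced oper structure).} Since $G'$ contains $\phi(PGL_2)$ for the principal homomorphism $\phi: SL_2 \to G^\vee$, the Borel $B^\vee \subset G^\vee$ (the normalizer of the principal nilpotent line) meets $G'$ in a Borel $B' := B^\vee \cap G'$ of $G'$. The oper Borel reduction $\mathcal{E}_{B^\vee}$ of $\chi$ descends through the $G'$-reduction $\mathcal{E}_{G'}$ of the flat bundle to a $B'$-reduction $\mathcal{E}_{B'}$, and the Drinfeld--Sokolov local form, now entirely in $\g'$, exhibits $(\mathcal{E}_{G'}, \mathcal{E}_{B'}, \nabla)$ as a $G'$-oper inducing $\chi$. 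The hardest step will be Step~2: making rigorous the claim that the differential-Galois Lie algebra of a compact-Riemann-surface oper necessarily contains the principal nilpotent. On a smooth compact $X$ no local residues are available, so one must genuinely compare the flat $M_\chi$-reduction with the holomorphic oper Borel reduction and argue via the Hitchin image, whose nontriviality in every degree $d_i$ encodes the full principal $\sw_2$-content of the oper connection.
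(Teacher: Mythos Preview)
Your overall architecture matches the paper's, but Step~2 has a genuine gap that you yourself flag. Comparing a holomorphic trivialization of the $M_\chi$-reduction (in which $\nabla = d + m(z)\,dz$, $m\in\mathfrak m_\chi$) with the oper trivialization (in which $\nabla = d + (f+b(z))\,dz$) via a gauge change $g(z)$ gives
\[
m(z) \;=\; g(z)\bigl(f+b(z)\bigr)g(z)^{-1} \;-\; g'(z)\,g(z)^{-1},
\]
and the inhomogeneous term $g'(z)g(z)^{-1}$ prevents you from reading off that a conjugate of $f$ lies in $\mathfrak m_\chi$. Your alternative suggestion via the ``nonzero Hitchin image'' does not work either: opers are only a \emph{torsor} over the Hitchin base, so there is no canonical element to test, and in any case the invariants live in $\g^\vee$-data with no a~priori relation to $\g'$. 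This is exactly the step where the paper (following Arinkin) introduces a new idea.

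The paper's device is the \emph{Harder--Narasimhan reduction}. The holomorphic $G'$-bundle $\mcE_{G',\chi}$ has a canonical HN parabolic reduction $P(\mcE_{G',\chi})$; functoriality of HN under extension of structure group forces it to be compatible with the HN reduction of $\mcE_{G^\vee}$, which is precisely the oper Borel $B^\vee$. Hence $P(\mcE_{G',\chi})=B':=G'\cap B^\vee$ is a Borel of $G'$, and the associated $T'$-bundle induces $\mcE_{T^\vee}\simeq K_X^{\rho}$, so $\rho\in T'$. Now trivialize $\mcE_{B',\chi}$ locally: this \emph{single} trivialization simultaneously trivializes $\mcE_{B^\vee}$ (so the connection has oper shape $d+(f+b(z))\,dz$) and is a $G'$-trivialization (so $f+b(z)\in\g'$). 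Since $\rho\in\g'$, the scaling limit
\[
\lim_{t\to 0}\; t\,\mathrm{Ad}(\rho(t))\bigl(f+b(0)\bigr) \;=\; f
\]
extracts $f\in\g'$, whence by Jacobson--Morozov a principal $\sw_2$ sits in $\g'$; subtracting $f\,dz$ then gives $b(z)\in\mathfrak b'[[z]]$, yielding the $G'$-oper structure. This simultaneously fixes your Step~4, where you assumed without justification that the flat $G'$-reduction and the holomorphic $B^\vee$-reduction are compatible. For simplicity of $G'$ the paper simply invokes Dynkin's classification of principal pairs (Theorem~\ref{list}) rather than arguing directly; your decomposition argument in Step~3 would need more care (regularity of $f=\sum f_i$ in $\g^\vee$ does not immediately force each nonzero $f_i$ to be regular in $\g'_i$).
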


\begin{proof}
By going, if needed, to a finite cover of $X$, we can assume without
loss of generality that $M_\chi$ is connected (the statement for this
cover will imply the statement for $X$, since pullback of an oper to
the cover is still an oper). By \cite{BD}, Proposition 3.1.5(iii),
$M_\chi$ is not contained in any non-trivial parabolic subgroup of
$G^\vee$. Hence by Morozov's theorem (see e.g. \cite{Bou},
Ch. VIII, Section 10) it is contained in a proper connected reductive
subgroup $G' \subset G^\vee$.

The flat $G^\vee$-bundle $E_\chi = (\mcE_{G^\vee},\nabla_\chi)$ (where
$\mcE_{G^\vee}$ is the $G^\vee$-bundle introduced after equation
\eqref{nontr}) underlying
$\chi$ can then be reduced to a flat $G'$-bundle $E_{G',\chi} =
(\mcE_{G',\chi},\nabla'_\chi)$, i.e. we have an isomorphism of flat
$G^\vee$-bundles $E_{G',\chi}\times_{G'}G^\vee \cong E_\chi$.

Fix a point $x_0\in X$. Conjugating inside $G^\vee$ if needed, we may
assume that $G'$ is the fiber of ${\rm Ad}(\mcE_{G',\chi})$ at $x_0$.

It is shown in \cite{BH} that if $H$ is a connected reductive group
over $\C$, then any $H$-bundle $\mcE_H$ on $X$ has a canonical
Harder-Narasimhan reduction to a parabolic subgroup $P(\mcE_H)$ of
$H$. These canonical Harder-Narasimhan reductions for $\mcE_{G',\chi}$
and $\mcE_{G^\vee}$ define a parabolic subgroup $P(\mcE_{G',\chi})$ of
$G'$ and the Borel subgroup $P(\mcE_{G^\vee})=B^\vee\subset G^\vee$
(the one we have used in the definition of $G^\vee$-opers),
respectively. These subgroups must be compatible. Therefore
$P(\mcE_{G',\chi}) = B':=G'\cap B^\vee$ is a Borel subgroup of $G'$, and
$\mcE_{G',\chi}\cong \mcE_{B',\chi} \times_{B'}G'$, where $\mcE_{B',\chi}$
is a $B'$-bundle on $X$.

Let $T'$ be a maximal torus in $B'$ and $T^\vee$ a maximal torus of
$B^\vee$, such that $T' \subset T^\vee$. Consider the $B'/[B',B']\cong
T'$-bundle $\mcE_{T',\chi}$ associated to $\mcE_{B',\chi}$. By
construction, the induced $T^\vee$-bundle is the $T^\vee$-bundle
$\mcE_{T^\vee}$ associated to the $B^\vee$-bundle $\mcE_{B^\vee}$
which is the oper $B^\vee$-reduction of the $G^\vee$-bundle
$\mcE_{G^\vee}$. As explained at the beginning of Subsection
\ref{ALcomp}, the $G^\vee$-bundle $\mcE_{G^\vee}$ is induced from the
$PGL_2$-bundle corresponding to the vector bundle \eqref{nontr} under
the principal embedding $PGL_2 \hookrightarrow G^\vee$. This implies
that $\mcE_{T^\vee} \simeq K_X^{\rho}$ in the sense that for any
character $\psi\in \bold X^*(T^\vee)$, the corresponding line bundle
$\psi(\mcE_{T^\vee})$ is isomorphic to $K_X^{\langle \psi,\rho
  \rangle}$. This implies that the image of the cocharacter $\rho:
\mathbb G_m \to T^\vee$ is contained in $T' \subset T^\vee$.

Let us trivialize the bundle $\mcE_{B',\chi}$ over the formal
neighborhood of $x_0$, on which we pick a formal coordinate $z$. This
trivializes $E_{G',\chi}$ as well as $\mcE_{B^\vee}$ and
$\mcE_{G^\vee}$. We also obtain a trivialization of the corresponding
adjoint vector bundles. The affine space of connections on
$E_{G^\vee}$ is then identified with the space $d+\g^\vee[[z]]dz$. The
connections that come from connections on the $G'$-bundle
$E_{G',\chi}$ belong to its subspace $d+\g'[[z]]dz$. On the other
hand, according to Definition \ref{operde}, the oper connection
$\nabla_\chi$ has the form
\begin{equation}    \label{nabchi}
\nabla_\chi=d+(f+b(z))dz,
\end{equation}
where $f\in \g^\vee$ is a principal nilpotent element satisfying
$[\rho,f]=-f$ and $b(z) \in {\mathfrak b}^\vee[[z]]$.

Therefore $\g'$ contains $f+b(0)$. Since $\g'$ also contains
$\rho$, we obtain that
$$
\lim_{t\to 0} t Ad(\rho(t))(f+b(0))=f\in \g'.
$$
By the Jacobson-Morozov theorem, it now follows that $G'$ contains a
principal subgroup of $G^\vee$, as desired. Subtracting $fdz$ from
\eqref{nabchi}, we obtain that $b(z)\in \g'
\cap \mathfrak b^\vee[[z]] = {\mathfrak b}'[[z]]$. Therefore,
$\mcE_{G',\chi}$ with its $B'$-reduction $\mcE_{B',\chi}$ and
connection \eqref{nabchi} is a $G'$-oper.

From the classical result of Dynkin (see \cite{SS,EO}), which we
recall in Theorem \ref{list} below, it also follows that $G'$ is
simple. This completes the proof.
\end{proof}

We recall the classification of pairs $G'\subset G^\vee$, where
$G^\vee$ is a simple algebraic group over $\C$ and $G'$ is its
connected reductive subgroup containing a regular unipotent element of
$G^\vee$ (and hence a principal $PGL_2$ subgroup of
$G^\vee$). Following \cite{EO}, we will call such a pair (and the
corresponding pair of Lie algebras) a {\it principal pair}. In the
case when $G^\vee$ is of adjoint type that we are considering, it
suffices to classify the principal pairs of Lie algebras. This
classification is given by the following theorem, which is due to
\cite{SS} and in the characteristic 0 case goes back to the work of
Dynkin (see \cite{EO}, Theorem 6.4).

\begin{theorem}    \label{list}
The principal pairs of Lie algebras $\mathfrak{g}'\subset
\mathfrak{g}^\vee$ (with a proper inclusion) are given by the
following list:

(1) $\mathfrak{sp}(2n)\subset \mathfrak{sl}(2n)$, $n\ge 2$;

(2) $\mathfrak{so}(2n+1)\subset \mathfrak{sl}(2n+1)$, $n\ge 2$;

(3) $\mathfrak{so}(2n+1)\subset \mathfrak{so}(2n+2)$, $n\ge 3$;

(4) $G_2\subset \mathfrak{so}(7)$;

(5) $G_2\subset \mathfrak{so}(8)$;

(6) $G_2\subset \mathfrak{sl}(7)$;

(7) $F_4\subset E_6$.

(8) ${\mathfrak{sl}_2}\subset \g^\vee$ for any simple $\g^\vee$.

The Lie subalgebras $\g'$ given in (1),(2),(3),(5),(7) are the
invariant subalgebras of an automorphism of the Dynkin diagram
$\g^\vee$; (4) is obtained by composing (5) and (3); and (6) is
obtained by composing (5) and (2).
\end{theorem}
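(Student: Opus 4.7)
The plan is to prove Theorem~\ref{list} by reducing to a finite case analysis via the theory of exponents, following Dynkin's strategy. First, I would observe that any principal pair $\g' \subset \g^\vee$ with $\g'$ reductive must have $\g'$ simple: the principal nilpotent $f \in \g^\vee$ lies in $\g'$, so by Jacobson--Morozov it extends to an $\sw_2$-triple $\{e,h,f\}$ which can be taken to lie in $\g'$; this triple is necessarily principal in $\g'$ as well, since $f$ is regular nilpotent in any reductive subalgebra containing it. The centralizer of $h$ in the center of $\g'$ is zero (as $h$ acts with positive weights on any would-be central torus direction of $\g'$), so $\g'$ is semisimple; and the same $\sw_2$-analysis excludes nontrivial direct-sum decompositions of $\g'$.

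The key numerical input is that the decomposition of $\g^\vee$ under the principal $\sw_2$ takes the form
\[
\g^\vee \cong \bigoplus_{i=1}^{\ell(\g^\vee)} V_{2m_i(\g^\vee)},
\]
where $m_i$ are the exponents, and likewise for $\g'$. Since the principal $\sw_2$ of $\g'$ coincides with that of $\g^\vee$, the branching of $\g^\vee$ to $\g'$ must be compatible with this common $\sw_2$-structure: the $\g'$-module $\g^\vee/\g'$ decomposes into $\g'$-irreducibles whose restrictions to the principal $\sw_2$ of $\g'$ have highest-weight multiset exactly $\{2m_i(\g^\vee)\} \setminus \{2m_j(\g')\}$. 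Each such $\g'$-irreducible summand must therefore remain nearly irreducible (or decompose into very few pieces) under the principal $\sw_2$ of $\g'$.

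This is an extremely restrictive condition. Dynkin's classification of irreducible representations of simple Lie algebras that remain irreducible under a principal $\sw_2$ (recovered here as the principal-weight list recalled in Theorem~\ref{list} itself, and generalized in \cite{SS}) cuts the possibilities down to a small, enumerable list. Matching exponent multisets against this list produces the candidate inclusions (1)--(8); for (1)--(3) and (5),(7), $\g'$ is realized as the fixed subalgebra of an outer automorphism of the Dynkin diagram of $\g^\vee$, while (4) and (6) arise by composition as indicated. Verifying each entry is then a direct computation using the standard realizations (the 7-dimensional $G_2$-representation and the 26-dimensional $F_4$-representation for the exceptional cases).

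The main obstacle is the exhaustiveness of the case analysis, particularly ensuring no principal pairs are missed in positive characteristic, where Dynkin's characteristic-$0$ classification of semisimple subalgebras no longer applies verbatim; this is precisely the content of the refinement in \cite{SS}, which recasts the problem in terms of stabilizers of regular unipotent elements and handles the small-characteristic subtleties (such as new principal pairs involving $G_2$ in characteristic $3$ or $F_4$ in characteristic $2$, which are absorbed by the same list). Beyond this, the argument is essentially a tabulation.
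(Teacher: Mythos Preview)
The paper does not prove Theorem~\ref{list} at all: it is stated as a known classification result, attributed to Dynkin in characteristic~$0$ and to Saxl--Seitz \cite{SS} in general, with a pointer to \cite{EO}, Theorem~6.4. There is no proof in the paper to compare your proposal against.

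Your sketch is a reasonable outline of the Dynkin-style argument, but a few points are loose. The argument that $\g'$ is semisimple should say: any central torus of $\g'$ centralizes the principal $\sw_2$, but the centralizer of a principal $\sw_2$ in a simple $\g^\vee$ is zero (your phrasing about $h$ acting with positive weights on the center is backwards). The step ruling out $\g' = \g_1 \oplus \g_2$ also needs more than you wrote: one uses that a regular nilpotent of $\g^\vee$ lies in a single simple ideal of any reductive subalgebra containing it only after showing the principal $\sw_2$ cannot split nontrivially, which again comes from the exponent count (the top piece $V_{2(h^\vee-1)}$ in $\g^\vee$ must sit in a single summand). Finally, since the paper works over $\C$, the positive-characteristic discussion is not needed here; if you keep it, note that \cite{SS} does \emph{not} give exactly the same list in small characteristic (there are genuine extra cases), so your parenthetical ``absorbed by the same list'' is not quite right.
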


Using Theorem \ref{zar} and Theorem \ref{list}, we can give a
description of the possible Zariski closures $M_\chi$ of the monodromy
groups of arbitrary $G^\vee$-opers $\chi$. Namely, $M_\chi$
must be a simple subgroup of $G^\vee$ that contains a principal
$PGL_2$ subgroup of $G^\vee$.

\begin{lemma}    \label{al}
Suppose that a connected simple subgroup $H^\vee$ of a connected
simple group $G^\vee$ contains a principal $(P)SL_2$ subgroup of
$G^\vee$. Then we have a canonical map
\begin{equation}    \label{alM}
\al_{H^\vee,G^\vee}: \on{Op}_{H^\vee}(X) \to \on{Op}_{G^\vee}(X).
\end{equation}
\end{lemma}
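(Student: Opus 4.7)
The plan is to construct $\al_{H^\vee,G^\vee}$ by induction of structure group along $H^\vee\hookrightarrow G^\vee$. First, I would fix a principal $\sw_2$-triple $\{e,h,f\}\subset\mathfrak{h}^\vee$. Because $H^\vee$ contains a principal $(P)SL_2$ of $G^\vee$, and because all principal $\sw_2$-triples of $\mathfrak{h}^\vee$ form a single $H^\vee$-orbit (conjugation within $H^\vee \subset G^\vee$ preserves $G^\vee$-conjugacy classes), one such triple -- and hence every such triple -- is simultaneously principal in $\g^\vee$. Since $e$ is then regular nilpotent in both Lie algebras, it lies in a unique Borel subalgebra $\mathfrak{b}_{H^\vee}^\vee\subset\mathfrak{h}^\vee$ and a unique Borel subalgebra $\mathfrak{b}^\vee\subset\g^\vee$, and $\mathfrak{b}_{H^\vee}^\vee\subset\mathfrak{b}^\vee$ automatically, since $\mathfrak{b}_{H^\vee}^\vee$ is a solvable subalgebra of $\g^\vee$ containing the regular nilpotent $e$. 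The analogous compatibility $\mathfrak{t}_{H^\vee}^\vee\subset\mathfrak{t}^\vee$ holds for the maximal tori containing $h$. Since $H^\vee$ and $G^\vee$ are simple, the Borel reductions in Definition \ref{operde} are simply to $B_{H^\vee}^\vee$ and $B^\vee$.

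Next, given an $H^\vee$-oper $\chi=(\mcE_{H^\vee},\mcE_{B_{H^\vee}^\vee},\nabla)$, I would set
$$
\mcE_{G^\vee}:=\mcE_{H^\vee}\times_{H^\vee}G^\vee,\qquad \mcE_{B^\vee}:=\mcE_{B_{H^\vee}^\vee}\times_{B_{H^\vee}^\vee}B^\vee,
$$
and equip $\mcE_{G^\vee}$ with the induced connection. To check the $G^\vee$-oper condition, I would argue locally: pick a trivialization of $\mcE_{B_{H^\vee}^\vee}$ near any point $x_0\in X$, in which
$$
\nabla=d+(f+b(z))dz,\qquad b(z)\in\mathfrak{b}_{H^\vee}^\vee[[z]].
$$
The same trivialization induces a trivialization of $\mcE_{B^\vee}$ (and hence of $\mcE_{G^\vee}$) near $x_0$, and the induced connection has the same local expression, now with $b(z)\in\mathfrak{b}^\vee[[z]]$ and $\{e,h,f\}$ viewed as a principal $\sw_2$-triple of $\g^\vee$ adapted to $\mathfrak{b}^\vee$ and $\mathfrak{t}^\vee$. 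This is precisely the condition in Definition \ref{operde}, so we obtain a $G^\vee$-oper $\al_{H^\vee,G^\vee}(\chi)$.

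For canonicity, I would note that any two choices of principal $\sw_2$-triple with the required dual property are $H^\vee$-conjugate, and this conjugation lifts compatibly to the inductive construction; the Borel and torus choices are then uniquely determined by $e$, so no further choice intervenes. Combining this with Lemma \ref{doesnotdep} (independence of $\on{Op}_{G^\vee}(X)$ from the choice of Borel in $G^\vee$) and Lemma \ref{atmost} (uniqueness of the oper structure on a given flat bundle) yields a well-defined map independent of all choices. The main technical subtlety I anticipate is verifying cleanly that every principal $\sw_2$ of $\mathfrak{h}^\vee$ really is principal in $\g^\vee$ under the hypothesis; while this follows case by case from Theorem \ref{list} -- the listed embeddings all carry a regular unipotent to a regular unipotent -- a uniform argument should proceed via the transitivity of $H^\vee$ on its principal $\sw_2$'s together with the hypothesis that at least one such $\sw_2$ lies in a principal $(P)SL_2$ of $G^\vee$.
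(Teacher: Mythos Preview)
Your proposal is correct and follows essentially the same route as the paper: fix a principal $\sw_2$-triple in $\mathfrak{h}^\vee$ that is also principal in $\g^\vee$, obtain compatible Borels $B_{H^\vee}^\vee\subset B^\vee$, induce the flat bundle and its Borel reduction along $H^\vee\hookrightarrow G^\vee$, verify the oper condition from Definition~\ref{operde}, and invoke Lemmas~\ref{atmost} and~\ref{doesnotdep} for canonicity. The paper's proof is terser (it does not spell out the local computation or the torus compatibility), but the argument is the same.
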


\begin{proof}
Under the condition of the lemma, the Borel subgroup $B_H^\vee$ of
$H^\vee$ used in the definition of $H^\vee$-opers contains the Borel
subgroup of a particular principal $(P)SL_2$ subgroup of $G^\vee$ with
the Lie algebra spanned by the elements $h$ and $e$ of the
corresponding principal $\sw_2$ triple. Then $B_H^\vee$ is contained
in a unique Borel subgroup $B^\vee$ of $G^\vee$. Definition
\ref{operde} implies that for every $H^\vee$-oper $\eta$, the flat
$G^\vee$-bundle induced from the flat $H^\vee$-bundle underlying
$\eta$ has a structure of $G^\vee$-oper with respect to
$B^\vee$. This structure is unique by Lemma \ref{atmost}. Since the
space of opers does not depend on the choice of a Borel subgroup by
Lemma \ref{doesnotdep}, we obtain a canonical map \eqref{alM}.
\end{proof}

\begin{theorem}    \label{zar1}
Let $M_\chi$ be the Zariski closure of the monodromy group of a
$G^\vee$-oper $\chi$ on a curve of genus ${\rm g} > 1$, where $G^\vee$
is a simple algebraic group of adjoint type. Then $M_\chi$ is a simple
algebraic group of adjoint type containing a principal $PGL_2$
subgroup of $G^\vee$ and $\chi$ is in the image of the map
$\al_{M_\chi,G^\vee}$.
\end{theorem}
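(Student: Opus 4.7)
The plan is to iterate Theorem \ref{zar} to produce a strictly decreasing chain of simple adjoint subgroups of $G^\vee$ that stabilizes at a subgroup equal to $M_\chi$. If $M_\chi = G^\vee$, the claim is immediate, so assume $M_\chi \subsetneq G^\vee$. Theorem \ref{zar} produces a proper simple subgroup $G_1 \subset G^\vee$ containing a principal $PGL_2$ of $G^\vee$ together with a $G_1$-oper $\chi_1$ such that $\chi = \al_{G_1, G^\vee}(\chi_1)$. A key observation from the proof of Theorem \ref{zar} is that the principal nilpotent $f$ used to write $\chi = d + (f + b(z))dz$ lies in $\g_1$ and is itself the principal nilpotent of the $G_1$-oper structure $\chi_1$; thus the principal $PGL_2$ of $G_1$ coincides, as a subgroup of $G^\vee$, with the principal $PGL_2$ of $G^\vee$.

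To see that $G_1$ is of adjoint type, note that $Z(G_1)$ commutes with all of $G_1$, and in particular with the principal $PGL_2$ of $G^\vee$ contained in $G_1$. Since the centralizer of a principal $PGL_2$ subgroup in an adjoint simple group is trivial, we conclude $Z(G_1) = \{1\}$. Now the Zariski closure of the monodromy of $\chi_1$, viewed as a $G_1$-oper, is again $M_\chi$, sitting inside $G_1$. Apply Theorem \ref{zar} to $\chi_1$ in the ambient group $G_1$: either $M_\chi = G_1$, or we produce a strictly smaller simple subgroup $G_2 \subsetneq G_1$ and a $G_2$-oper $\chi_2$ inducing $\chi_1$. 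By the same reasoning, $G_2$ contains the principal $PGL_2$ of $G^\vee$ and is of adjoint type. Since $\dim G_i$ strictly decreases, this iteration terminates at some $H := G_k$ with $M_{\chi_k} = H$.

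Consequently $M_\chi = H$ is a simple adjoint subgroup of $G^\vee$ containing a principal $PGL_2$ of $G^\vee$. Composing the induction maps $\al_{G_{i+1}, G_i}$ at successive stages, and appealing to Lemma \ref{atmost} (uniqueness of the oper structure on a given flat bundle) to identify this composition with $\al_{H, G^\vee}$, we obtain $\chi = \al_{H, G^\vee}(\chi_k) \in \on{Im}(\al_{M_\chi, G^\vee})$, as required.

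The main obstacle is to verify that the iteration can be carried out coherently: one needs each successive simple subgroup $G_{i+1} \subset G_i$ to contain the \emph{same} principal $PGL_2$ of $G^\vee$ (not merely a principal $PGL_2$ of $G_i$), so that $\al_{G_{i+1}, G^\vee}$ is defined and the telescoping of induction maps through Lemma \ref{al} makes sense. This rests on the observation, implicit in the proof of Theorem \ref{zar}, that the principal nilpotent element of the ambient oper is preserved by the reduction and serves as the principal nilpotent of the reduced oper.
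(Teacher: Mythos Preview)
Your proposal is correct and follows essentially the same inductive/iterative route as the paper: repeatedly apply Theorem~\ref{zar} to descend through a strictly decreasing chain of simple subgroups until reaching $M_\chi$. The only notable difference is in how adjointness of the intermediate groups is established: the paper simply reads it off from the explicit list in Theorem~\ref{list}, whereas you give a direct argument via the triviality of the centralizer of a principal $PGL_2$ in an adjoint group. Your version is slightly more self-contained, and you are also more explicit than the paper about why the same principal $PGL_2$ persists through the chain (so that the maps $\al$ compose to $\al_{M_\chi,G^\vee}$ via Lemma~\ref{atmost}); the paper leaves this transitivity implicit.
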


\begin{proof}
We argue by induction on $d=\dim(G^\vee)$, with the base being the
case $d=3$, which follows from Remark \ref{monim}. Suppose that
$M_\chi \neq G^\vee$. Then by Theorem \ref{zar} and Lemma \ref{al},
$M_\chi$ is contained in a simple subgroup $G' \subset G^\vee$ and
$\chi$ is in the image of the map $\al_{G',G^\vee}$. But $\on{dim}G' <
\on{dim}G^\vee$ and by passing to a finite cover of $X$, we can assume
without loss of generality that $G'$ is connected. Moreover, according
to the list of Theorem \ref{list}, if $G^\vee$ is of adjoint type,
then so is $G'$. Passing from $G^\vee$ to $G'$ and using our inductive
assumption, we obtain the result.
\end{proof}

\begin{remark}
Explicit examples of opers with proper subgroups $M_\chi \subset
G^\vee$ (in genus 0 with ramification) can be found in \cite{FG}.
\end{remark}

\subsection{General case, continued}    \label{genwt}

Let $G$ be a connected simple algebraic group and $\la \in
\Lambda^+$. We will use the notation introduced in Subsection
\ref{prin}. In Conjecture 5.1 of \cite{EFK2} we gave a conjectural
formula for the eigenvalues $\beta_\la(x,\ol{x})$ of the Hecke
operator $H_\la$. It generalizes Corollary \ref{eigH1} in the case
$G=PGL_n$ and Conjecture \ref{eigHG} in the case of principal
weights. Actually, for each $\chi \in \on{Op}^0_{\LG}(X)_{\R}$ there
are (conjecturally) finitely many eigenvalues which differ from each
other by a root of unity (see \cite{EFK2}, Remark 5.1). Hence the
statement Conjecture 5.1 of \cite{EFK2} is made in the form ``up to
a non-zero scalar.'' Here it is in terms of the notation of the
present paper.

\begin{conjecture}   \label{51}
The eigenvalues $\beta_\la(x,\ol{x})$ of the Hecke operator $H_\la$
corresponding to $\chi \in \on{Op}^\ga_{\LG}(X)_{\R}$ are
equal to $h_{\chi,\la}(s_\la,\ol{s_{-w_0(\la)}})$ up to a non-zero
scalar.
\end{conjecture}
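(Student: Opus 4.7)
My plan would be to extend the strategy that worked for $G=PGL_n$ with $\la=\omega_1$ (Theorem \ref{mainthm} combined with Proposition \ref{unique}) and its principal-weight generalization (Conjecture \ref{eqHG} combined with Proposition \ref{monG}). The proof decomposes into three steps. First, establish the natural generalization of Conjecture \ref{eqHG} to arbitrary $\la \in \Lambda^+$: the $D_G \otimes \D_{X,-d(\la)/2}$-module $\langle H_\la \rangle$ generated by the operator-valued section $H_\la$ of $\Omega_X^{-d(\la)/2}$ should be canonically isomorphic to $\V_{X,\la}^{K,\on{univ}}$, with an analogous antiholomorphic statement for $\ol{\langle H_\la \rangle}$. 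Following \cite{EFK2}, this should be derived from the Beilinson--Drinfeld construction of the quantum Hitchin Hamiltonians via the Feigin--Frenkel center at the critical level, together with the standard identification of the chiral Hecke algebra with the Wakimoto-type module whose singular vector computes the oper-Borel section $s_\la^{\on{univ}}$.

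Second, specialize at a real oper $\chi \in \Op^0_{\LG}(X)_{\R}$. The ideals $I_{\la,\chi}$ and $\ol{I_{-w_0(\la),\chi}}$ are precisely the left annihilators of $s_\la$ and $\ol{s_{-w_0(\la)}}$, so evaluating the differential equations at $\chi$ shows that every joint Hecke eigenvalue $\beta_\la(x,\ol x)$ corresponding to $\chi$ is a global $C^\infty$ section of $\Omega_X^{-d(\la)/2}$ annihilated by both ideals. Equivalently, $\beta_\la$ defines a non-zero flat pairing
\[
(\mcV_\la,\nabla_{\chi,\la}) \otimes (\ol{\mcV}_{-w_0(\la)},\ol\nabla_{\chi,-w_0(\la)}) \to (\mathcal{C}_X^\infty,d),
\]
whose existence at the level of $C^\infty$ bundles is guaranteed by the reality of $\chi$.

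Third, invoke Corollary \ref{irrgen}(2): for generic $\chi$ the flat bundle $(\mcV_\la,\nabla_{\chi,\la})$ is irreducible, hence by Schur's lemma the space of such flat pairings is one-dimensional and spanned by $h_{\chi,\la}$. Applied to the sections $s_\la$ and $\ol{s_{-w_0(\la)}}$ this identifies $\beta_\la(x,\ol x)$ with a scalar multiple of $h_{\chi,\la}(s_\la,\ol{s_{-w_0(\la)}})$. For the non-generic loci where $(\mcV_\la,\nabla_{\chi,\la})$ is reducible, I would use Theorem \ref{zar1}: the monodromy factors through a simple subgroup $G' \subset \LG$ containing a principal $PGL_2$ and $\chi$ lies in the image of $\alpha_{G',\LG}$; decomposing $V_\la$ into $G'$-isotypic pieces produces finitely many independent flat pairings, which accounts for the appearance of finitely many eigenvalues differing by roots of unity noted in \cite{EFK2}, Remark 5.1, and for the ``up to a non-zero scalar'' clause in the statement. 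One then checks by a functoriality argument along $\alpha_{G',\LG}$ (cf.\ Subsection \ref{funct}) that the eigenvalue is the pairing built from $s_\la$ and $\ol{s_{-w_0(\la)}}$ rather than some other linear combination.

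The main obstacle is the first step. In the principal-weight case Lemma \ref{Ds} makes $\V_{X,\la}^{K,\on{univ}}$ cyclic over $D_G \otimes \D_{X,-d(\la)/2}$, so the equations reduce to a single scalar oper $P_{\wt\chi_\la}$ and essentially to the $SL_{d(\la)+1}$-oper computation of Theorem \ref{mainthm}. For general $\la$ the module $\V_{X,\la}^{K,\on{univ}}$ decomposes as in \eqref{decomV} and is no longer cyclic over $\D_{X,-d(\la)/2}$ alone; one must genuinely exploit the $D_G$-action and identify $\langle H_\la \rangle$ with it as a $D_G \otimes \D_{X,-d(\la)/2}$-module. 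Carrying this out amounts to upgrading the critical-level argument of \cite{EFK2} to the full set of oper-Borel descendants of $s_\la^{\on{univ}}$, and this is the technical heart of the problem.
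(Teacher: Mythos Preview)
Your overall strategy matches the paper's own two-step plan stated immediately after Conjecture \ref{51}: (1) establish the differential equations (this is Conjecture 5.5 of \cite{EFK2}, still open), and (2) prove uniqueness of solutions. You also correctly identify step (1) as the real obstacle.

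However, your handling of the reducible case in step three is muddled, and your explanation of the ``up to a non-zero scalar'' clause is wrong. You write that when $(\mcV_\la,\nabla_{\chi,\la})$ is reducible, decomposing $V_\la$ into $G'$-isotypic pieces ``produces finitely many independent flat pairings,'' and that this accounts for the finitely many eigenvalues differing by roots of unity. This conflates two unrelated phenomena. The root-of-unity ambiguity in the eigenvalues comes from the action of the center $Z^\vee$ on Hecke eigenspaces (Conjecture \ref{mainc}(iii)), and is present for \emph{every} $\chi$, generic or not; it has nothing to do with reducibility of the monodromy. More importantly, the system of differential equations coming from the ideals $I_{\la,\chi}$ and $\ol{I_{-w_0(\la),\chi}}$ has a \emph{unique} solution up to scalar even in the reducible case. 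The paper proves exactly this in Proposition \ref{forevery}: the point is that $I_{\la,\chi}$ is the annihilator of $s_\la$, and $s_\la$ lies in the specific irreducible $M_\chi$-subbundle $\mcV_\la^{M}$ containing the highest-weight line. So the ideal already forces any solution into that irreducible piece, and Schur's lemma applies there. You do not need a separate functoriality argument to single out the correct pairing; the ideals do it for you. Your detour through ``finitely many pairings'' followed by a functoriality check would work, but it is unnecessary and obscures the mechanism.
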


In \cite{EFK2}, Section 5, we gave the following strategy for proving
this conjecture:

(1) Prove that eigenvalues
$\beta_\la(x,\ol{x})$ satisfy a system or differential equations
\begin{equation}    \label{Ichi}
  D \beta_\la(x,\ol{x}) = 0, \qquad \ol{D'} \beta_\la(x,\ol{x}) = 0,
  \qquad D \in I_{\la,\chi}, D' \in I_{-w_0(\la),\chi}.
\end{equation}
This is essentially the statement of \cite{EFK2}, Conjecture 5.5. 

(2) Show that every solution of the system \eqref{Ichi} is equal to
$h_{\chi,\la}(s_\la,\ol{s_{-w_0(\la)}})$ up to a non-zero scalar. In
the case when the monodromy representation of the flat vector bundle
$(\mcV_\la,\nabla_{\chi,\la})$ is irreducible, this is the statement
of \cite{EFK2}, Corollary 5.4.

We now consider this statement in the case when the monodromy
representation of the flat vector bundle
$(\mcV_\la,\nabla_{\chi,\la})$ is reducible, using the results of
the previous subsection. For simplicity, we will assume that $G^\vee$
is of adjoint type.

\begin{proposition}    \label{forevery}
For every $\chi \in \on{Op}^0_{\LG}(X)_{\R}$, $h_{\chi,\la}(s_\la,
\ol{s_{-w_0(\la)}})$ is a unique, up to a scalar, non-zero section of
$\Omega_X^{-\frac{d(\la)}{2}}$ annihilated by the ideals
$I_{\la,\chi}$ and $\ol{I_{-w_0(\la),\chi}}$.
\end{proposition}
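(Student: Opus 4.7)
The plan is to reduce the uniqueness to an application of Schur's lemma by identifying, via Theorem \ref{zar1}, a single irreducible monodromy subrepresentation that contains the oper Borel reduction.

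Let $M = M_\chi \subset G^\vee$ denote the Zariski closure of the monodromy of $\chi$. By Theorem \ref{zar1}, $M$ is a simple algebraic subgroup of $G^\vee$ of adjoint type containing a principal $PGL_2$ of $G^\vee$, and $\chi$ lies in the image of $\al_{M,G^\vee}$. The classification of principal pairs (Theorem \ref{list}) implies that this principal $PGL_2$ is in fact principal in $M$. Since the monodromy of $\chi$ factors through $M$, the decompositions $V_\la|_M = \bigoplus_j W_j$ and $V_{-w_0(\la)}|_M = \bigoplus_k U_k$ into irreducible $M$-modules give flat decompositions $\mcV_\la = \bigoplus_j \mcW_j$ and $\mcV_{-w_0(\la)} = \bigoplus_k \mcU_k$. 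Because the principal $\sw_2$-weight $d(\la)$ occurs in $V_\la$ with multiplicity one, exactly one summand $W_{j_0}$ contains the $T^\vee$-highest (equivalently principal $\sw_2$-highest) weight vector picked out at each fiber by $\kappa_\la$, and therefore $s_\la$ is a global section of $\mcW_{j_0}^K := K_X^{-d(\la)/2} \otimes \mcW_{j_0}$. Since $W_{j_0}$ is an irreducible $M$-module and the monodromy is Zariski-dense in $M$, the $\D_{X,-d(\la)/2}$-module $\mcW_{j_0}^K$ is irreducible, so the nonzero section $s_\la$ generates it, yielding $\D_{X,-d(\la)/2}/I_{\la,\chi} \cong \mcW_{j_0}^K$. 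An analogous argument identifies the unique summand $U_{k_0}$ of principal $\sw_2$-highest weight $d(\la)$ inside $V_{-w_0(\la)}$ and gives $\ol\D_{X,-d(\la)/2}/\ol{I_{-w_0(\la),\chi}} \cong \ol{\mcU_{k_0}^K}$.

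Running the argument from the proof of Proposition \ref{monG}, a section $\phi$ of $\Omega_X^{-d(\la)/2}$ annihilated by both ideals corresponds to a nonzero $\D_X \otimes \ol\D_X$-module homomorphism $\mcW_{j_0}^K \otimes \ol{\mcU_{k_0}^K} \to \Omega_X^{-d(\la)/2}$ sending $s_\la \otimes \ol{s_{-w_0(\la)}}$ to $\phi$, equivalently to a flat $C^\infty$-homomorphism $\mcW_{j_0} \otimes \ol{\mcU_{k_0}} \to (\mc C^\infty_X, d)$, equivalently to an $M$-invariant linear functional on $W_{j_0} \otimes \ol{U_{k_0}}$. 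The same principal $\sw_2$-argument applied inside $V_{-w_0(\la)} \cong V_\la^*$ shows that $U_{k_0} \cong W_{j_0}^*$ as $M$-modules, and since complex conjugation carries principal $\sw_2$-subalgebras to principal $\sw_2$-subalgebras while $\ol V_{-w_0(\la)} \cong V_{-w_0(\la)}$ as $\pi_1$-representations (by reality of $\chi$), the induced permutation of $M$-irreducible summands preserves principal $\sw_2$-highest weights and therefore fixes $U_{k_0}$, giving $\ol{U_{k_0}} \cong U_{k_0} \cong W_{j_0}^*$. Schur's lemma then bounds the space of such functionals by one dimension, and a nonzero element is provided by $h_{\chi,\la}(s_\la, \ol{s_{-w_0(\la)}})$, which is pointwise nonzero because $s_\la(x)$ is the principal $\sw_2$-highest weight vector of $W_{j_0}$ and $\ol{s_{-w_0(\la)}}(x)$ corresponds under $\ol{U_{k_0}} \cong W_{j_0}^*$ to a functional pairing nontrivially with it.

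The main technical obstacle will be to justify rigorously that $s_\la$ lies in the single flat summand $\mcW_{j_0}$: the oper Borel reduction is a holomorphic datum whereas the decomposition $\mcV_\la = \bigoplus_j \mcW_j$ is a flat one, so these are a priori incompatible. The resolution is a pointwise argument: at each $x \in X$ a local trivialization of $\mcE_{G^\vee}$ compatible with both the oper Borel reduction $\mcE_{B^\vee}$ and the flat $M$-reduction of $\mcE_{G^\vee}$ exists because $B^\vee \cap M$ is a Borel of $M$ (as established in the proof of Theorem \ref{zar}); in such a trivialization the $T^\vee$-highest weight vector of $V_\la$ is a $T_M$-weight vector lying in the $M$-irreducible summand $W_{j_0}$, and since this holds at every point of $X$ the components of $s_\la$ in $\mcW_j$ for $j \neq j_0$ vanish identically.
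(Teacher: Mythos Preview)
Your proof is correct and follows essentially the same strategy as the paper: invoke Theorem \ref{zar1} to obtain the simple subgroup $M=M_\chi$, decompose $V_\la$ and $V_{-w_0(\la)}$ under $M$, single out the unique $M$-irreducible summand containing the principal $\sw_2$-highest weight line, observe that $s_\la$ and $s_{-w_0(\la)}$ lie in those summands, and then apply the argument of Proposition \ref{monG} (irreducibility plus Schur) to that summand. The only packaging difference is that the paper, rather than arguing pointwise via compatibility of the oper Borel reduction with the flat $M$-reduction, simply uses the second clause of Theorem \ref{zar1}: since $\chi=\al_{M,G^\vee}(\chi^M)$ for an $M$-oper $\chi^M$, the section $s_\la$ is by construction the image of the corresponding $M$-oper section $s^M_\la$, which settles your ``technical obstacle'' in one line.
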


\begin{proof}
In the case when the monodromy representation of the flat vector
bundle $(\mcV_\la,\nabla_{\chi,\la})$ is irreducible, this was proved
in \cite{EFK2}, Corollary 5.4.

Suppose now that $(\mcV_\la,\nabla_{\chi,\la})$ is reducible. This
means that the Zariksi closure $M = M_\chi$ of the monodromy of $\chi$
is a proper subgroup of $G^\vee$. By Theorem \ref{zar1}, $M$ is a
simple subgroup of $G^\vee$ (also of adjoint type) that contains a
principal $PGL_2$ subgroup of $G^\vee$, and $\chi$ is the image of an
$M$-oper $\chi^M$ under the map $\al_{M,G^\vee}: \on{Op}_M(X) \to
\on{Op}_{G^\vee}(X)$.

Under the action of $M$, the representation $V_\la$ of $\LG$
decomposes into a direct sum of irreducible representations of
$M$. Denote by $V^{M}_\la$ its component containing the highest weight
subspace of $V_\la$ with respect to the Borel subgroup $B^\vee \cap M$
of $M$ (where $B^\vee$ is the Borel subgroup of $G^\vee$ we have used
to define $G^\vee$-opers). Since ${\mathfrak m} := \on{Lie}(M)$
contains a principal $\sw_2$ subalgebra, $V^{M}_\la$ contains the
highest component $V_{d(\la)}$ in the decomposition \eqref{decsl2} of
$V_\la$ under the principal $\sw_2$. This implies that the dual
representation $(V^{M}_\la)^*$ is isomorphic to $V^{M}_{-w_0(\la)}$,
the component containing the highest weight subspace of $V_{-w_0(\la)}
= V_\la^*$.

The decomposition of $V_\la$ into a direct sum of irreducible
representations of $M$ gives rise to a direct sum decomposition of
the flat vector bundle $(\mcV_\la,\nabla_{\chi,\la})$ into irreducible
flat vector bundles each having real monodromy. In particular, the
flat subbundle $(\mcV^{M}_\la,\nabla_{\chi,\la})$ of
$(\mcV_\la,\nabla_{\chi,\la})$ corresponding to the highest weight
component $V^{M}_\la \subset V_\la$ is irreducible and has real
monodromy. Therefore, there is a unique up to a scalar non-zero
pairing
$$
h^{M}_{\chi,\la}(\cdot,\cdot): (\V^M_\la,\nabla_{\chi,\la}) \otimes
  (\ol{\V}^M_{-w_0(\la)},\ol\nabla_{\chi,-w_0(\la)}) \to ({\mc
    C}^\infty_X,d),
$$
where $\V^M_{-w_0(\la)}$ is the subbundle of
$\V_{-w_0(\la)}$ corresponding to $V^M_{-w_0(\la)} \simeq (V^M_\la)^*$.
From the above direct sum decomposition it is clear that
$h^M_{\chi,\la}(\cdot,\cdot)$ is equal to the restriction of
$h_{\chi,\la}(\cdot,\cdot)$ to $\V^M_\la \otimes
\ol{\V}^M_{-w_0(\la)}$ up to a non-zero scalar.

Moreover, the canonical section $s_\la \in
\Gamma(X,K_X^{-\frac{d(\la)}{2}} \otimes \V_\la)$ corresponding to the
Borel reduction of the $G^\vee$-oper $\chi$ (see formula \eqref{sla})
is equal to the image of the canonical section $s^M_\la \in
\Gamma(X,K_X^{-\frac{d(\la)}{2}} \otimes \V^M_\la)$ for the $M$-oper
$\chi^M$ under the embedding $\V^M_\la \hookrightarrow \V_\la$. And
likewise, for the canonical sections $s_{-w_0(\la)}$ and
$s^M_{-w_0(\la)}$.

Using irreducibility of the flat bundle $(\V^M_\la,\nabla_{\chi,\la})$
in the same way as in the proof of Corollary \ref{monG}, we obtain
that $h^M_{\chi,\la}(s^M_\la,\ol{s^M_{-w_0(\la)}})$ is a unique, up to
a scalar, non-zero section of $\Omega_X^{-\frac{d(\la)}{2}}$
annihilated by the ideals $I_{\la,\chi}$ and
$\ol{I_{-w_0(\la),\chi}}$. But according to the above discussion,
$h^M_{\chi,\la}(s^M_\la,\ol{s^M_{-w_0(\la)}})$ is equal to
$h_{\chi,\la}(s_\la,\ol{s_{-w_0(\la)}})$ up to a non-zero
scalar. This implies the statement of the proposition.
\end{proof}

This proves part (2) of the above argument. Hence in order to prove
Conjecture 5.1 of \cite{EFK2} it remains to prove Conjecture 5.5 of
\cite{EFK2} (see \cite{EFK2}, Section 5.3 for an outline of how to
prove it using the results of \cite{BD}).

\subsection{Functoriality in the analytic Langlands
  correspondence}    \label{funct}

Consider the framework of the Langlands Program for a smooth
projective curve $X$ over a finite field $\Fq$. Let $G$ and $H$ be two
split reductive algebraic groups over $\Fq$, and $G^\vee$ and $H^\vee$
their Langlands dual groups. The {\em Langlands functoriality
principle} (see \cite{Arthur:funct} for a survey) is the statement
that for any homomorphism
\begin{equation}    \label{homa}
a: H^\vee \to G^\vee
\end{equation}
between them there should be a map (sometimes called {\em transfer}),
from the set of $L$-packets of tempered automorphic representations of
$H(\AD_F)$ to the set of $L$-packets of tempered automorphic
representations of $G(\AD_F)$ (where $\AD_F$ is the ring of adeles of
$F = \Fq(X)$, the function field of $X$).

The existence of such a map is quite surprising: even though we have a
homomorphism \eqref{homa} of dual groups $a: H^\vee \to G^\vee$, there
is {\em a priori} no connection between the groups $G$ and $H$. The
explanation is found on the dual side of the Langlands correspondence,
which gives a parameterization of $L$-packets of tempered automorphic
representations of $G(\AD)$ in terms of homomorphisms $W(F) \to
G^\vee$, where $W(F)$ is the Weil group of $F$. Given a homomorphism
\eqref{homa}, every Langlands parameter $\sigma: W(F) \to H^\vee$ for
$H(\AD)$ gives rise to a Langlands parameter $a \circ \sigma: W(F) \to
G^\vee$ for $G(\AD_F)$.

This interpretation also makes it clear that functoriality should
satisfy the following {\em transitivity} property: if $K$ is another
reductive group and we have a chain of homomorphisms of dual groups:
\begin{equation}    \label{transitivity}
K^\vee \to H^\vee \to G^\vee,
\end{equation}
then the composition of the transfers from
$K(\AD_F)$ to $H(\AD_F)$ and from $H(\AD_F)$ to $G(\AD_F)$ should
coincide with the transfer obtained directly from the composition
$K^\vee \to \LG$.

Another important property is that the Hecke eigenvalues of the
automorphic representations (at the unramified places) should match
under the transfer in a natural way.

What should be the analogues of the Langlands functoriality and the
transfers in the analytic Langlands correspondence for a curve $X$
over $\C$?

According to our main Conjecture \ref{mainc}, the role of the
Langlands parameters for a reductive group $X$ is now played by real
$G^\vee$-opers. From Theorem \ref{zar1}, it is clear that the
homomorphisms \eqref{homa} that we should consider are the ones that
map a principal $SL_2$ (or $PGL_2$) subgroup of $H^\vee$ to a
principal $SL_2$ (or $PGL_2$) subgroup of $G^\vee$. We will call such
homomorphisms {\em principal}. For simple $G^\vee$, at the level of
Lie algebras, the list of principal homomorphisms is given in Theorem
\ref{list} following \cite{SS} and \cite{EO} (note also that we have
discussed principal embeddings in the case when $\g^\vee = \sw_n$ in
Subsection \ref{prin}.)

Suppose for simplicity that $G^\vee$ is of adjoint type. Then it
follows from Theorem \ref{list} that $H^\vee$ is also of adjoint
type. Given a principal homomorphism \eqref{homa}, by
  Lemma \ref{al} we obtain a canonical map
\begin{equation}    \label{embop}
  \al_{H^\vee,G^\vee}: \on{Op}_{H^\vee}(X) \to
  \on{Op}_{G^\vee}(X)
\end{equation}
which is an embedding of affine spaces in the case when $H^\vee$ and
$G^\vee$ are of adjoint type. The following result follows immediately
from Theorem \ref{zar1}.

\begin{proposition}
A $G^\vee$-oper in the image of $\al_{H^\vee,G^\vee}$ which is not in
the image of $\al_{K^\vee,G^\vee}$ for any $K^\vee \subset H^\vee$
consists of the $G^\vee$-opers on $X$ such that the Zariski closure of
their monodromy is equal to $H^\vee$.
\end{proposition}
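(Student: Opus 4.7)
The plan is to prove the proposition as a double inclusion. For the ``$\supseteq$'' direction, suppose $\chi$ is a $G^\vee$-oper with $M_\chi = H^\vee$. Theorem \ref{zar1} immediately yields that $\chi$ lies in the image of $\al_{M_\chi, G^\vee} = \al_{H^\vee, G^\vee}$. For the second requirement, I would argue by contradiction: if $\chi = \al_{K^\vee, G^\vee}(\xi)$ for a principal subgroup $K^\vee \subsetneq H^\vee$ of $G^\vee$, then by construction the flat $G^\vee$-bundle underlying $\chi$ is induced from a flat $K^\vee$-bundle, so the monodromy of $\chi$ is conjugate into $K^\vee$. This forces $M_\chi \subseteq K^\vee \subsetneq H^\vee$, contradicting $M_\chi = H^\vee$.

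For the ``$\subseteq$'' direction, suppose $\chi = \al_{H^\vee,G^\vee}(\eta)$ for some $H^\vee$-oper $\eta$, and that $\chi$ is not in the image of $\al_{K^\vee,G^\vee}$ for any principal proper subgroup $K^\vee \subsetneq H^\vee$. Again by construction of $\al_{H^\vee,G^\vee}$, the monodromy of $\chi$ is conjugate into $H^\vee$, i.e., $M_\chi \subseteq H^\vee$. Assume for contradiction that $M_\chi \subsetneq H^\vee$. Since $H^\vee$ is itself a simple adjoint group (this being the hypothesis of the preceding discussion, and also visible from the list in Theorem \ref{list}), I would apply Theorem \ref{zar1} to the $H^\vee$-oper $\eta$: the Zariski closure of the monodromy of $\eta$ equals $M_\chi$ (since the monodromy of $\chi$ is obtained by post-composing with the inclusion $H^\vee \hookrightarrow G^\vee$), so $M_\chi$ is a simple subgroup of $H^\vee$ of adjoint type containing a principal $PGL_2$ of $H^\vee$, and $\eta = \al_{M_\chi, H^\vee}(\xi)$ for some $M_\chi$-oper $\xi$.

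The key step to close the argument is the transitivity identity
\[
\al_{H^\vee, G^\vee} \circ \al_{M_\chi, H^\vee} = \al_{M_\chi, G^\vee}.
\]
This requires two verifications: first, that a principal $PGL_2$ of $H^\vee$ remains a principal $PGL_2$ of $G^\vee$ under our hypothesis that $H^\vee \hookrightarrow G^\vee$ is a principal embedding (so that $\al_{M_\chi, G^\vee}$ is defined by Lemma \ref{al}); and second, that the two oper structures on the induced flat $G^\vee$-bundle agree, which I would deduce from the uniqueness of oper reductions stated in Lemma \ref{atmost}. Once this transitivity is in place, one obtains $\chi = \al_{M_\chi, G^\vee}(\xi)$ with $M_\chi \subsetneq H^\vee$ a principal subgroup, contradicting the assumption on $\chi$.

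The main obstacle will be verifying the transitivity identity above. Although conceptually immediate --- the composition of two principal embeddings is principal, and the oper Borel reduction is transported naturally --- it requires unpacking the construction in Lemma \ref{al}: one must track both the chosen Borel subgroups $B_{M_\chi}^\vee \subset B_{H^\vee} \cap M_\chi$ and $B_{H^\vee}^\vee \subset B^\vee \cap H^\vee$, and check they are compatible with the unique $B^\vee$ of $G^\vee$ containing the principal $\sw_2$-triple coming from $M_\chi$. Lemma \ref{atmost} guarantees that any two oper structures on the same underlying flat $G^\vee$-bundle coincide, so the two iteratively constructed oper structures agree automatically once one verifies that both are bona fide $G^\vee$-opers with respect to compatible Borels, which is then essentially a matter of chasing definitions.
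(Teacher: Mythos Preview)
Your proposal is correct and takes essentially the same approach as the paper, which simply states that the result ``follows immediately from Theorem \ref{zar1}.'' You have correctly unpacked this into a double inclusion, and the transitivity identity you flag as the main obstacle is precisely what the paper asserts (without further justification) in the sentence immediately following the proposition: ``Since these maps are transitive for a pair of embeddings \eqref{transitivity}\ldots''
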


Thus, we obtain a stratification of the affine space
$\on{Op}_{G^\vee}(X)$ by affine subspaces given by the images of
the embeddings $\al_{H^\vee,G^\vee}$ corresponding to all principal
homomorphisms \eqref{homa}. It gives rise to the corresponding
family of embeddings of the sets of real opers, which are the
Langlands parameters of the analytic Langlands correspondence:
\begin{equation}    \label{embopR}
  \al^{\R}_{H^\vee,G^\vee}: \on{Op}_{H^\vee}(X)_{\R} \hookrightarrow
  \on{Op}_{G^\vee}(X)_{\R}.
\end{equation}

Since these maps are transitive for a pair of embeddings
\eqref{transitivity}, Conjecture \ref{mainc} implies that an analogue
of the transitivity property holds in the analytic Langlands
correspondence.

Next, by analogy with the Langlands functoriality discussed above, we
expect that the eigenvalues of the Hecke operators for the groups $G$
and $H$ related by a principal homomorphism \eqref{homa} should
match. Let us verify that this is compatible with our Conjecture
\ref{51} (which is Conjecture 5.1 of \cite{EFK2}) that gives an
explicit formula for these eigenvalues (up to a scalar).

Given $\la \in \Lambda^+_G$, let $a(\la) \in \Lambda^+_H$ be the
dominant integral weight of $H$ obtained via the homomorphism
\eqref{homa}. We have the corresponding Hecke operators $H_\la$ and
$H_{a(\la)}$ for the groups $G$ and $H$, respectively, depending on a
point of $X$. According to Conjecture \ref{51}, the eigenvalues
$H_\la$ and $H_{a(\la)}$ are parametrized by real opers in
$\on{Op}_{G^\vee}(X)_{\R}$ and $\on{Op}_{H^\vee}(X)_{\R}$,
respectively (more precisely, for each real oper, we expect finitely
many Hecke eigenvalues differing by a root of unity; see \cite{EFK2},
Remark 5.1 for more details). Given $\chi \in
\on{Op}_{H^\vee}(X)_{\R}$, denote by $a(\chi)$ the image of $\chi$
under the embedding \eqref{embopR}. Let
$\beta_{a(\la)}^\chi(x,\ol{x})$ be the eigenvalue of $H_{a(\la)}$, and
$\beta_{\la}^{a(\chi)}(x,\ol{x})$ the corresponding eigenvalue of
$H_{\la}$. Matching of these eigenvalues means that they are equal up
to an overall non-zero scalar (independent of the point of $X$).

Our conjectural formula for these eigenvalues in Conjecture \ref{51}
says that $\beta_{a(\la)}^\chi(x,\ol{x})$
(resp. $\beta_{\la}^{a(\chi)}(x,\ol{x})$) is equal to
$h^{H^\vee}_{\chi,\la}(s_{a(\la)}, \ol{s_{a(-w_0(\la))}})$
(resp. $h^{G^\vee}_{a(\chi),\la}(s_\la, \ol{s_{-w_0(\la)}})$) up to a
non-zero scalar. But we have shown in the proof of Proposition
\ref{forevery} that the two expressions are proportional to each
other. Hence we find that our conjectural formulas for the Hecke
eigenvalues are indeed compatible with the analytic Langlands version
of functoriality.

\begin{remark}
An important example of functoriality in the case of a curve over a
finite field comes from the embedding of a maximal torus of the group
$G^\vee$, $T^\vee \hookrightarrow G^\vee$. In this case, the
corresponding automorphic functions for the group $G$ over the adeles
are known as the {\em Eisenstein series}. The above discussion
explains why we do not expect analogues of Eisenstein series in the
analytic Langlands correspondence for a simple algebraic group $G$ and
a curve over $\C$ (as we can see from Conjecture \ref{mainc}): such an
embedding is not principal and therefore should not lead to
functoriality.
\end{remark}

\subsection{Analytic Langlands correspondence twisted by an ${\rm
    Aut}G$-torsor on $X$}     \label{twisted}

Analytic Langlands correspondence can be naturally generalized to the case when the connected reductive group $G$ is replaced by a flat group scheme $\mathcal G$ 
over $X$ with fibers isomorphic to $G$. For simplicity let us discuss this theory in the case of complex curves ($F=\Bbb C$).

\begin{example}\label{previ} 
Suppose $G=\Bbb G_m^n$ is an $n$-dimensional torus. Then the possible 
groups $\mathcal G$ are parametrized by homomorphisms $\phi: \pi_1(X)\to GL_n(\Bbb Z)$ 
with finite image $\Gamma$. Let 
$\widetilde X$ be the cover of $X$ corresponding to the kernel of $\phi$. Then $\Gamma$ acts 
on $\widetilde X$ and $\mathcal G$ trivializes over $\widetilde X$, 
so the corresponding moduli stack $Bun_{\mathcal G}(X)$ 
is the stack of $\Gamma$-equivariant $G$-bundles on $\widetilde X$. 
Set-theoretically, this is the subgroup of ${\rm Pic}(\widetilde X)^n$ 
consisting of bundles $E$ with a consistent family of isomorphisms 
$\gamma_*E\cong \phi(\gamma)E$, $\gamma\in \Gamma$. 
In this case it is easy to see (see \cite{EFK2}) that the spectrum of Hecke operators 
is parametrized by $\Gamma$-equivariant real
 $G^\vee=\Bbb G_m^n$-opers on $\widetilde X$, 
i.e., lifts $\rho: \pi_1(X)\to \Gamma\ltimes \Bbb G_m^n(\Bbb R)$ of $\phi$ 
such that the local system $\rho: {\rm Ker}\phi=\pi_1(\widetilde X)\to \Bbb G_m^n(\Bbb R)$ is an oper 
(hence also an anti-oper), similarly to 
Subsection \ref{ALcomp}.  
\end{example} 

\begin{example} 
Suppose that $G$ is adjoint. In this case, the possible groups $\mathcal G$ are classified by $H^1(X,{\rm Aut}\Delta_G\ltimes \mathcal O_{X,G})$, where $\mathcal O_{X,G}$ is the sheaf of regular functions on $X$ with values in $G$. 
However, two elements $\theta_1,\theta_2\in H^1(X,{\rm Aut}\Delta_G\ltimes \mathcal O_{X,G})$ which map to the same element in 
$H^1(X,{\rm Aut}\Delta_G)=\Hom(\pi_1(X),{\rm Aut}\Delta_G)$ 
define Morita equivalent groups $\mathcal G$, so 
the corresponding moduli spaces are the same
(\cite{Br}, Subsection 1.6, Proposition 1.2). In other words, similarly to Subsection \ref{princibun}, the theory depends only on the inner class of $\mathcal G$ (see \cite{Br}, Remark 1.2). This is, in fact, a general feature which extends beyond $F=\Bbb C$. 

Thus we may restrict ourselves to groups $\mathcal G$ obtained from maps 
$\phi: \pi_1(X)\to {\rm Aut}\Delta_G$. So, similarly to Example \ref{previ} 
we may define $\Gamma:={\rm Im}\phi$ and realize the corresponding stack 
$Bun_{\mathcal G}(X)$ as the stack of 
$\Gamma$-equivariant principal $G$-bundles on the 
$\Gamma$-cover $\widetilde X$ of $X$. As in Example \ref{previ}, we expect that 
the spectrum of Hecke operators is parametrized by $\Gamma$-equivariant 
real $G^\vee$-opers on $\widetilde X$, i.e. lifts $\rho: \pi_1(X)\to \Gamma\ltimes G^\vee(\Bbb R)$ of $\phi$ whose restriction to ${\rm Ker}\phi$ is an oper (hence also an anti-oper). 
\end{example} 

\begin{remark} More generally, suppose a finite group $\Gamma$ acts simultaneously on $G$ by root datum automorphisms and on a curve $\widetilde X$. Then we can consider harmonic analysis 
on the space ${\rm Bun}^\circ_G(\widetilde X)^\Gamma$ of $\Gamma$-equivariant 
regularly stable $G$-bundles on $\widetilde X$. If $\Gamma$ acts 
on $\widetilde X$ freely, this reduces to the above setting 
with $X=\widetilde X/\Gamma$, but the theory extends naturally 
to the case when the action is not necessarily free. 
We note that such moduli spaces of twisted bundles 
have been recently studied in connection with twisted conformal blocks
and twisted Verlinde formula, see \cite{DM,HK}. 
In the framework of the usual Langlands correspondence 
over function fields, such twisted setting is considered in \cite{L}, Section 12. 
\end{remark} 

 \section{Analytic Langlands correspondence over $\Bbb R$}    \label{R}

\subsection{The general setup} 
In this section we will focus on the case $F=\Bbb R$ and propose a conjectural description of
the spectrum of Hecke operators in terms of $G^\vee$-opers satisfying suitable reality conditions, generalizing the results of \cite{EFK3}, Subsection 4.7. Much of our analysis is based on \cite{GW}, Section 6. 

We first specialize the setting of Subsection \ref{formsredgps} to the case $F=\Bbb R$, so 
$F_{\rm sep}=\Bbb C$ and $\Gamma_F=\Bbb Z/2$. We will only consider either real or complex points of algebraic groups $G$, and will write $G$ for $G(\Bbb C)$ when no confusion is possible. Let $G$ a split connected reductive group defined over $\Bbb Q$ and $G^\vee$ its Langlands dual group. Let $Z,Z^\vee$ be the centers of $G,G^\vee$. These groups are equipped with a natural operation of complex conjugation, $g\mapsto \overline g$. A {\bf  real structure} on $G$ is a holomorphic automorphism $\theta: G\to G$ satisfying the 1-cocycle condition $\theta\circ \theta^*={\rm Id}$, where $\theta^*(g):=\overline {\theta(\overline g)}$. Two such 1-cocycles differ by a coboundary iff the corresponding real structures are isomorphic. In fact, one can (and usually does) choose a representative $\theta$ of its cohomology class so that $\theta$ commutes with complex conjugation, i.e., $\theta^*=\theta$ and $\theta^2={\rm Id}$, which gives rise to the {\bf Satake diagram} of the corresponding real form.\footnote{Another possibility is to choose $\theta$ to commute with the complex conjugation of the compact form of $G$, which gives rise to the {\bf Vogan diagram} of the real form.} Such $\theta$ gives rise to an {\bf antiholomorphic involution} $\sigma(g):=\theta(\overline g)$. The corresponding {\bf  group of real points} $G^\sigma=G^\sigma(\Bbb R)$ (which may be disconnected) is the 
subgroup of $g\in G$ stable under $\sigma$, i.e., satisfying 
$\theta(g)=\overline g$. The inner class of $\sigma$ gives rise to a root datum involution $s=s_\sigma$ for $G$ which is also one for $G^\vee$. 

Recall \cite{ABV} that to $G,s$ we may attach the {\bf  Langlands L-group} ${}^LG={}^LG_s$, the semidirect product of $\Bbb Z/2={\rm Gal}(\Bbb C/\Bbb R)$ by $G^\vee$, with the action of $\Bbb Z/2$ defined by $\omega\circ s$, where $\omega$ is the {\bf Chevalley involution} defining the compact form of $G$.   

Let $X=X(\Bbb C)$ be a compact complex Riemann surface of genus ${\rm g}\ge 2$. Let $\tau: X\to X$ 
be an antiholomorphic involution. We specialize the setting of 
Subsections \ref{princibun},\ref{smpro},\ref{heop} to the case $F=\Bbb R$. 
Given a holomorphic principal $G$-bundle $P$ on $X$, we can define 
the antiholomorphic bundle $\tau(P)$, hence a holomorphic bundle 
$(\sigma,\tau)(P)$. A {\bf  pseudo-real structure} on $P$ is an isomorphism  $A: (\sigma,\tau)(P)\to P$. Such a structure defines a class 
$\alpha_P$ in 
$$
 H^2(\Bbb Z/2,Z^s(\Bbb C))={\rm Ker}(1-s|_Z)/{\rm Im}(1+s|_Z)
 $$ 
 which depends only on $P$ and not on $A$. A pseudo-real structure $A$ on $P$ is a {\bf  real structure} if 
\begin{equation}\label{condi}
A\circ (\sigma,\tau)(A)=1
\end{equation} 
(in particular, this means that the cocycle $a_A$ and hence the class $\alpha_P$ equals $1$). 
For example, if $G$ is adjoint then the isomorphism $A$ is unique if exists and \eqref{condi} is automatic if ${\rm Aut}(P)=1$, which happens for regularly stable bundles. Note that if  
$g\in  G$ and $g\sigma(g)=1$ then 
$$
Ag^{-1}: ({\rm Ad}(g)\sigma,\tau)(P)\to P
$$ 
satisfies \eqref{condi} with $\sigma$ replaced by
 $\sigma'={\rm Ad}(g)\sigma$. Thus the space 
of such regularly stable bundles depends only on the inner class $s$ of $\sigma$, in agreement with Subsection \ref{princibun} (see also \cite{BGH}, Proposition 3.8). Following Subsection \ref{princibun}, we denote this space by ${\rm Bun}^\circ_{G,s}(X,\tau)$. 

The space ${\rm Bun}^\circ_{G,s}(X,\tau)$ is a real analytic manifold, which 
is a disjoint union of open submanifolds ${\rm Bun}^\circ_{G,s,\alpha}(X,\tau)$, $\alpha\in H^2(\Bbb Z/2,Z^s(\Bbb C))$. Moreover, 
for every character $\chi$ of 
$$
H^1(\Bbb Z/2,Z^s(\Bbb C))={\rm Ker}(1+s|_Z)/{\rm Im}(1-s|_Z)
$$ 
we have a Hermitian line bundle $\mathcal L_\chi$ on 
each ${\rm Bun}^\circ_{G,s,\alpha}(X,\tau)$ defined in Subsection \ref{heop}.

Let 
$$
\mathcal H(s,\tau,\alpha,\chi):=L^2({\rm Bun}^\circ_{G,s,\alpha}(X,\tau),\mathcal L_\chi)
$$ 
be the Hilbert space of $L^2$ half-densities on ${\rm Bun}^\circ_{G,s,\alpha}(X,\tau)$ valued in $\mathcal L_\chi$. 
Let 
$$ 
\mathcal H(s,\tau,\alpha)=\oplus_{\chi} \mathcal H(s,\tau,\alpha,\chi),\  \mathcal H(s,\tau)=\oplus_{\alpha} \mathcal H(s,\tau,\alpha).
$$
We have (conjecturally) a spectral decomposition of $\mathcal H(s,\tau)$ under the action Hecke operators compatible with the $(\alpha,\chi)$-grading.

\begin{remark} To be more precise, the definition of the Langlands L-group in \cite{ABV} uses $s$ instead of $\omega\circ s$. 
This is in fact a major difference between the classical Langlands correspondence for real groups and the analytic Langlands correspondence for curves over $\Bbb R$. An explanation of this phenomenon is provided by \cite{EFK1}, Proposition 3.6, which says that taking the formal adjoint of quantum Hitchin Hamiltonians corresponds to applying the Chevalley involution on opers. See also Remark \ref{witrema} below.
\end{remark}

\subsection{The case when $\tau$ has no fixed points}\label{nofix} 
We first consider the easier case when $\tau$ has no fixed points. Let $\rho$ be a local system on the non-orientable surface $X/\tau$ with structure group ${}^LG$. If we choose a base point $p\in X/\tau$ 
then such a local system corresponds to a homomorphism $\pi_1(X/\tau,p)\to {}^LG$ which is unique up to conjugation. We will say that $\rho$ is an {\bf  L-system} if it attaches to every orientation-reversing path in $X/\tau$ a conjugacy class in ${}^LG$ that maps to the nontrivial element in $\Bbb Z/2$. The following conjecture is equivalent to the conjecture made in \cite{GW}, Section  6.2 on the basis of insights from 4-dimensional supersymmetric gauge theory
(as well as the duality proposal from \cite{BS}). 

\begin{conjecture}\label{gawi} (i) There is an orthogonal decomposition 
$$
\mathcal H(s,\tau,1)=\bigoplus_\rho \mathcal H(s,\tau,1)_\rho,
$$
where $\rho$ runs over L-systems on $X/\tau$ with values in ${}^LG={}^LG_s$ whose pullback to $X$ have the structure of a $G^\vee$-oper. 

(ii) For $\lambda\in \Lambda_+$ the Hecke operator $H_{\lambda x+s(\lambda)\overline x}$  
acts on $\mathcal H(s,\tau,1)_\rho$ by the eigenvalue 
$\bbe_{\rho,\lambda}(x,\overline x)$ defined by the formula in \cite{EFK2}, Conjecture 5.1. In particular, if $G=PGL_n$, $\lambda=\omega_1$ and 
$L_\rho=\partial^n+a_2\partial^{n-2}+...+a_n$ is the $SL_n$-oper 
(i.e., holomorphic differential operator 
$K_X^{\frac{1-n}{2}}\to K_X^{\frac{1+n}{2}}$) corresponding to $\rho$ 
then $\bbe_{\lambda,\rho}(x,\overline x)$ 
is the (unique up to scaling) single-valued 
section of $|K_X|^{1-n}$ satisfying the system of oper equations 
$L_\rho\bbe=0,\overline{L_\rho^*}\bbe=0$. 
\end{conjecture}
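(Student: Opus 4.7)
The plan is to mirror the architecture of the complex-case argument (Subsection \ref{ALcomp} and \cite{EFK2}) while accounting for the antiholomorphic involution $\tau$, treating $\mathcal H(s,\tau,1)$ as the $\tau$-equivariant part of a spectral theory over the complex curve $X(\Bbb C)$. First I would establish the real analogue of the universal oper equation: the Hecke operators $H_{x,\lambda}$ on $\mathcal H(s,\tau,1)$ satisfy a single differential equation $\sigma\cdot H_\lambda=0$ (as opposed to a pair of conjugate equations in the case $F=\C$), where $\sigma$ is built from the quantum Hitchin Hamiltonians as in Lemma \ref{nsigma} and Proposition \ref{opereq}. Together with compactness of $H_{x,\lambda}$ and joint diagonalizability with the quantum Hitchin system, this forces the joint spectrum to be labeled by $G^\vee$-opers $\chi$ on $X$. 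For $\lambda=\omega_1$ and $G=PGL_n$, an eigenfunction reduces (as in Proposition \ref{unique} and Corollary \ref{eigH1}) to a solution of $L_\rho\bbe=0$ and, by the additional $\tau$-equivariance discussed below, also of $\overline{L_\rho^*}\bbe=0$; for general principal $\lambda$ this is Conjecture \ref{eigHG}, and in general Conjecture \ref{51}.

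Next I would identify the reality condition cutting down from the complex-case spectrum to L-systems on $X/\tau$. The condition that the eigenfunction descend to a $\mathcal L_\chi$-valued half-density on ${\rm Bun}_{G,s}^\circ(X,\tau)$ translates, under the correspondence between opers and spectral data, into the requirement that the monodromy local system $\rho$ of $\chi$ on $X$ be isomorphic to a Galois twist of its $\tau$-pullback. Invoking the fact that the formal adjoint of a quantum Hitchin Hamiltonian corresponds to the Chevalley involution $\omega$ on opers (\cite{EFK1}, Proposition 3.6 — which is precisely the source of the $\omega\circ s$ appearing in ${}^LG_s$), this equivariance condition becomes $\rho \cong \tau^*((\omega\circ s)\bar\rho)$. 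Choosing a basepoint $p\in X/\tau$ and lifting to $X$, this is exactly the condition that $\rho$ extends to a homomorphism $\pi_1(X/\tau,p)\to {}^LG_s$ sending every orientation-reversing loop to the nontrivial coset in $\mathbb Z/2$; i.e., that $\rho$ is the pullback of an L-system on $X/\tau$.

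Third, for the Hecke eigenvalue formula in (ii), I would note that the diagonal combination $H_{\lambda x+s(\lambda)\overline x}$ is the unique combination of the Hecke modifications at $x$ and $\overline x$ preserving $\tau$-equivariance; its kernel descends from ${\rm Bun}_G^\circ(X(\Bbb C))\times {\rm Bun}_G^\circ(X(\Bbb C))$ to ${\rm Bun}_{G,s}^\circ(X,\tau)\times {\rm Bun}_{G,s}^\circ(X,\tau)$. Consequently, the complex-case formula $\beta_\lambda(x,\overline x)=h_{\chi,\lambda}(s_\lambda,\overline{s_{-w_0(\lambda)}})$ of Conjecture \ref{51} specializes and descends, giving the desired eigenvalue on $\mathcal H(s,\tau,1)_\rho$, with the $PGL_n$ case taking the concrete form $L_\rho\bbe=0,\ \overline{L_\rho^*}\bbe=0$ via Lemma \ref{Dga} and Corollary \ref{eigH1}.

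The main obstacle is completeness: showing that every L-system of the prescribed type genuinely occurs in the decomposition, rather than merely that each eigenspace corresponds to one. This is the same difficulty as in the complex case, now compounded by the need to analyze the $\tau$-fixed locus of the intersection of ${\rm Op}_{G^\vee}(X)$ with ${\rm LocSys}_{G^\vee}^\circ(X)_\R$; it would require a $\tau$-equivariant refinement of the transversality theorem of Faltings \cite{Fa} and the geometry behind \cite{EFK2}, Section 5.3. A secondary technical obstacle is proving the absence of continuous spectrum on $\mathcal H(s,\tau,1)$, which reduces to the compactness conjecture of the Hecke operators (Conjecture \ref{spedeccon}) in the absence of real ovals, and can reasonably be expected to follow from the archimedean methods that settle the complex case when they do.
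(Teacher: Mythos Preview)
The statement you are attempting to prove is a \emph{conjecture} in the paper, not a theorem: the paper does not give a proof of Conjecture~\ref{gawi}. It is presented as a prediction coming from four-dimensional supersymmetric gauge theory (\cite{GW}, Section~6.2), then verified in abelian and product-group examples, and finally one direction is sketched conditionally in the last Remark of Subsection~\ref{nofix}: assuming Conjecture~5.5 of \cite{EFK2} (the universal oper equations for the Hecke kernel), each Hecke eigenvalue is forced to come from an oper satisfying the reality condition, by an argument parallel to \cite{EFK1}, Corollary~1.19. There is no argument in the paper for the converse (completeness) direction.

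Your proposal is therefore not a proof of something the paper proves, but a reasonable outline of a strategy toward the conjecture, and it is broadly consistent with the paper's own partial remarks. Your first two paragraphs (universal oper equation plus the identification of the $\tau$-equivariance condition with the L-system condition via the Chevalley involution) match the paper's sketch of the forward direction; the paper explicitly credits the appearance of $\omega\circ s$ to \cite{EFK1}, Proposition~3.6, just as you do. You also correctly isolate the two genuine obstructions the paper leaves open: completeness (every L-system with oper pullback actually occurs) and discreteness/compactness (Conjecture~\ref{spedeccon}). So there is no gap relative to the paper --- but you should be aware that what you have written is a plan toward an open conjecture, not a comparison target for an existing proof.
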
 

\begin{remark}\label{witrema} More precisely, as was explained to us by E. Witten, what comes from ordinary gauge theory is this picture for the {\bf  compact} inner class $s$. To obtain other inner classes, one needs to consider {\bf twisted gauge theory} where the twisting is by a root datum automorphism of $G$. Namely, gauge fields in this theory are invariant under complex conjugation $\tau$ composed with this automorphism. This may be seen as the physical explanation of the appearance 
of the Chevalley involution in the definition of ${}^LG$ in analytic Langlands correspondence, which does not happen in the usual Langlands correspondence for real groups. 
\end{remark} 

\begin{example} Let $s=\omega$ (the compact inner class). 
Then ${}^LG=\Bbb Z/2\times G^\vee$, so 
an L-system is the same thing as a $G^\vee$-local system on $X/\tau$. 
So in this case according to Conjecture \ref{gawi}, 
the spectral local systems are $\rho$ which are isomorphic to $\rho^\tau$ and such that $\rho$ is an oper (hence also an anti-oper), so $\rho$ is a real oper 
``with real coefficients". But among these we should only choose 
those local systems that descend to $X/\tau$ and the eigenspaces are labeled by these extensions.

More precisely, recall that opers for adjoint groups have no
nontrivial automorphisms (\cite{BD2}, \S 1.3). So for any connected reductive $G$ we get an obstruction for such $\rho$ to descend to $X/\tau$ which lies in $Z^\vee/(Z^\vee)^2=H^2(\Bbb Z/2,Z^\vee)$. Moreover, if this obstruction vanishes then the freedom for choosing the extension  is in a torsor over $H^1(\Bbb Z/2,Z^\vee)=Z_2^\vee$, the 2-torsion subgroup in $Z^\vee$. 

Indeed, $\pi_1(X/\tau)$ is generated by $\pi_1(X)$ and an element $t$ such that $tbt^{-1}=\gamma(b)$
for some automorphism $\gamma$ of $\pi_1(X)$, and 
$t^2=c\in \pi_1(X)$, so that $\gamma^2(b)=cbc^{-1}$. 
So given a representation $\rho: \pi_1(X)\to G^\vee$, 
an L-system would be given by an assignment $\rho(t)=T\in G^\vee$ such that
(1) $T^2=\rho(c)$ and (2) $T\rho(a)T^{-1}=\rho(\gamma(a))$. 
If $\rho\cong \rho\circ \gamma$ then 
$T$ satisfying (2) is unique 
up to multiplying by $u\in Z^\vee$, and $T^2=\rho(c)z$, $z\in Z^\vee$. 
Moreover, if $T$ is replaced by $Tu$ then $z$ is replaced by $zu^2$, hence 
the obstruction to satisfying (1) lies in $Z^\vee/(Z^\vee)^2$. And if this obstruction vanishes, 
then the choices of $T$ form a torsor over $Z_2^\vee$ acting by 
$T\mapsto Tz$. 

\begin{remark} As pointed out in \cite{GW}, Section 6, 
this reality condition on the $G^\vee$-oper on $X$ 
is equivalent to the condition that $\rho$ extends as a topological local system 
to the 3-manifold 
$$
U_\tau:=(X\times [-1,1])/(\tau, -{\rm Id})
$$ 
whose boundary is $X$, introduced in \cite{GW}, and this extension is a part of the data. This follows from the fact that the inclusion $X/\tau\hookrightarrow U_\tau$ is a homotopy equivalence. 
\end{remark}

\end{example} 

\begin{remark} We have the inflation-restriction exact sequence
$$
H^1(\pi_1(X),Z^s(\Bbb C))^{\Bbb Z/2}\to H^2(\Bbb Z/2,Z^s(\Bbb C))\to H^2(\pi_1(X/\tau),Z^s(\Bbb C)).
$$
Let $\overline\alpha$ be the image in $H^2(\pi_1(X/\tau),Z^s(\Bbb C))$ 
of $\alpha\in H^2(\Bbb Z/2,Z^s(\Bbb C))$. So $\overline\alpha=1$ iff $\alpha$ is the image of 
$\eta\in H^1(\pi_1(X),Z^s(\Bbb C))^{\Bbb Z/2}=H^1(X,Z^s(\Bbb C))^{\Bbb Z/2}$, which corresponds to a pseudo-real $Z^s$-bundle 
on $X$. Multiplication by $\eta$ acts on the space of pseudo-real bundles commuting with Hecke operators, changing $\alpha_P$ to $\alpha_P+\alpha$. 
This implies that if $\overline \alpha=1$ then 
Conjecture \ref{gawi} generalizes in a straightforward way to 
give the spectral decomposition of $\mathcal H(s,\tau,\alpha)$: namely, the spectrum of the Hecke operators is the same as in $\mathcal H(s,\tau,1)$. More generally, this shows that the spectrum 
of Hecke operators on $\mathcal H(s,\tau,\alpha)$ for general $\alpha$ 
depends only on $\overline\alpha$. 

It remains to describe the spectrum in the case when $\overline\alpha\ne 1$. 
As was explained to us by D. Gaiotto, in this case Conjecture \ref{gawi} can be generalized 
by considering gauge theory on the 3-manifold $U_\tau$ (homotopy equivalent to $X/\tau$)
twisted by the $Z^s$-gerbe corresponding to $\overline \alpha$. Mathematically 
this corresponds to the setting of Subsection \ref{gerbes} extended to the case $F=\Bbb R$. 
We omit the details. 
\end{remark} 

\begin{example} Let $G=K\times K$ for some complex group $K$, and 
$s$ be the permutation of components (the only real form  in this inner class is $K$ 
regarded as a real group). In this case ${\rm Bun}^\circ_{G,s}(X,\tau)={\rm Bun}^\circ_{G}(X)$, 
the usual moduli space for the complex field. Also ${}^LG_s={}^LG_{\omega\circ s}=\Bbb Z/2\ltimes (K^\vee\times K^\vee)$, where $\Bbb Z/2$ acts by permutation. So an L-system is a $K^\vee\times K^\vee$ local system on $X$ of the form $(\rho,\rho^\tau)$. Thus the spectrum is parametrized by $\rho$ such that both $\rho$ and $\rho^\tau$ are opers, i.e., $\rho$ is both an oper and an anti-oper, i.e. a real oper, which agrees with the main conjecture from \cite{EFK2}. (Note that in this case 
$H^i(\Bbb Z/2,Z^\vee)=1$ so there is no obstructions or freedom for extensions).   
\end{example} 

\begin{example}\footnote{This is based on the letter \cite{W} in which E. Witten kindly explained to us the predictions of \cite{GW} in the abelian case.} Let us verify Conjecture \ref{gawi} 
for $G=GL_1$. In this case the possible $s$ are $1$ and 
$-1$, each being its entire inner class. So consider two cases: 

{\bf  1.} Compact case: $s=-1$. Then the spectrum is parametrized by characters of 
$\pi_1(X/\tau)$, i.e., elements of $H^1(X/\tau,\Bbb C^\times)=(\Bbb C^\times)^g\times \Bbb Z/2$, which come from $GL_1$-opers. 

{\bf  2.} Split case: $s=1$. Then the spectrum is parametrized by 
$H^1(X/\tau,\Bbb C^\times_\tau)$, where $\Bbb C^\times_\tau$ 
is the local system where $\tau$ acts by inversion. We have 
$H^1(X/\tau,\Bbb C^\times_\tau)=(\Bbb C^\times)^g$, and the spectrum is 
parametrized by such local systems that come from $GL_1$-opers. 

In both cases the resulting ``spectral" opers form a lattice $\Bbb Z^g$. 
They are of the form $d+\phi$ where (roughly speaking) $\phi$ in the first case has 
integral periods on $\tau$-antiinvariant cycles and in the second case integral periods on $\tau$-invariant cycles.
\end{example} 

\begin{example} Consider the simplest instance of the previous example, with genus 1 curve $X=\Bbb C/(\Bbb Z\oplus \Bbb Zi)$ and coordinate $z=x+iy$, with $\tau(z)=\overline{z}+\frac{1}{2}$. Then $X/\tau$ is the Klein bottle with $\pi_1(X/\tau)$ generated by $t$ and $b$ with $tbt^{-1}=b^{-1}$. 

{\bf  1.} In the compact case $s=-1$ we need to consider characters of this group, which send $b$ to $\pm 1$ and $t$ to any nonzero number. So the corresponding opers are $L=d+\phi$ where $\phi$ has half-integral period in the imaginary direction, i.e. $\phi=\pi n$, $n\in \Bbb Z$. Thus the Hecke eigenvalue is expected to be proportional 
to $e^{2\pi iny_0}=e^{\pi n(z_0-\overline z_0)}$. 

And indeed, this is what we see if we compute the eigenvalues 
of Hecke operators. Namely, the moduli of bundles of degree $0$ admitting a real structure 
consist of two circles $x=0$ and $x=\frac{1}{2}$, call them $S_0$ and $S_1$, with 
coordinate $y\in [0,1)$, swapped by $\tau$. However, the space of real bundles of degree $0$ is the union of their {\bf  double covers}  
$\widetilde S_0$ and $\widetilde S_1$. The reason is that a real bundle is a bundle admitting a real structure with a choice of an isomorphism $A: (s,\tau)(E)\to E$ such that $A\circ (s,\tau)(A)=1$, which is defined up to sign, and there is no canonical choice of this sign (one can check that it changes as we go around the circle). 
So the eigenbasis of Hecke operators is $\psi_n^+=(e^{\pi iny},e^{\pi iny})$ 
and $\psi_n^-=(e^{\pi iny},-e^{\pi iny})$, $n\in \Bbb Z$, with eigenvalues 
$e^{2\pi iny_0}$ and $-e^{2\pi iny_0}$. This also shows that we have a 2-dimensional space corresponding to each oper, 
which agrees with the fact that we have two extensions for each oper to a local system on $X/\tau$ (as $Z_2^\vee=\Bbb Z/2$). 

{\bf  2.} In the split case $s=1$ we need to consider homomorphisms $\pi_1(X/\tau)\to \Bbb Z/2\ltimes \Bbb C^\times$ that send $t$ to $\lbrace -1,1\rbrace$, so $b$ goes to any nonzero number, while $t^2$ maps to $1$. So the corresponding opers are $L=d+\phi$ where $\phi$ has integral period in the real direction, i.e. $\phi=2\pi in$, $n\in \Bbb Z$.   
So the Hecke eigenvalue is expected to be proportional to 
$e^{4\pi inx_0}=e^{2\pi in(z_0+\overline z_0)}$. 

And indeed, this is what we see. Namely, in this case bundles admitting a real structure 
form the circle $S$ defined by the equation $y=0$, with coordinate $x$. The circle $y=\frac{1}{2}$ consists of pseudo-real bundles, i.e., those for which $A\circ (s,\tau)(A)<0$ for any isomorphism $A: (s,\tau)E\to E$, so it does not contribute. Moreover, in this case the choice of $A$ such that $A\circ (s,\tau)(A)=1$ is unique up to isomorphism if exists. So the set of real bundles is $S$ (i.e., we don't get double covers) and the basis of eigenfunctions is $\psi_n=e^{2\pi inx}$, $n\in \Bbb Z$, with Hecke eigenvalue $e^{4\pi inx_0}$. Also extension of opers is unique and the space corresponding to each oper is $1$-dimensional.  
\end{example} 

\begin{remark} We can derive that every eigenvalue $\bbe(x,\overline x)$ of the Hecke operator 
$H_{\lambda x+s(\lambda)\overline x}$ is indeed of the form $\bbe_{\rho,\la}(x,\overline x)$ for some oper $\rho$ 
satisfying the reality condition of Conjecture \ref{gawi} (for
semisimple $G$) from Conjecture 5.5 of \cite{EFK2} (which is
proved in
Theorem 1.18 of {\it loc. cit.} for $G=PGL_n$ and $\lambda=\omega_1$).
Namely, this statement implies that each eigenvalue
of the Hecke operators is a unique (up to a scalar)
solution of the system of linear differential
equations $L \bbe=0,\overline L^*\bbe=0$, where $L$ runs over the holomorphic
differential operators from the annihilating ideal $I_{\lambda,\rho}$
introduced in Section 5 of \cite{EFK2}. Therefore, we obtain that this system
has a single-valued solution on $X$ invariant under $\tau$. 
The topological condition on the oper $\rho$ given in Conjecture \ref{gawi} 
should follow from this similarly to the argument of \cite{EFK1},
Corollary 1.19.  
\end{remark} 

\subsection{The case when $\tau$ has fixed points:  
genus $0$ with $m+2$ real ramification points}\label{fixpo}

\subsubsection{The untwisted case} 
We will now consider the case when $\tau$ has fixed points, which is more complicated. We restrict ourselves to $G=PGL_2$. We start with the genus zero case with ramification points considered in \cite{EFK3}. 

Let $t_0<t_1<...<t_m, t_{m+1}=\infty\in \Bbb R\Bbb P^1$ be the ramification points. 
Let $a,b\in \Bbb R$, $x=a+ib\in \Bbb C$, and let $H_{x,\overline x}$ be the Hecke operator 
from \cite{EFK3}, Example 3.30, obtained by averaging over Hecke modifications at $(x,\overline{x})$ 
using the lines $(s,\overline{s})$. The following lemma is straightforward. 

\begin{lemma}\label{l1} The eigenvalues 
$\bbe_k(x,\overline x)$ of $H_{x,\overline x}$ satisfy the equality 
$$
\bbe_k(x,\overline x)|_{x=a}=\beta_k(a)^2
$$
for $a\in \Bbb R$, where $\beta_k$ are the eigenvalues of $H_a$. 
\end{lemma}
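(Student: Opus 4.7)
The plan is to show that $H_{x,\overline x}|_{x=a}$ coincides, as an operator on $\mathcal H$, with the square $H_{a,\mathrm{full}}^2$, from which the eigenvalue identity $\bbe_k(x,\overline x)|_{x=a}=\beta_k(a)^2$ follows immediately via the spectral decomposition in Corollary \ref{specdec}.

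First, I would recall the geometric content of $H_{x,\overline x}$ from \cite{EFK3}, Example 3.30. For $x \in X(\Bbb C)\setminus X(\Bbb R)$, the effective divisor $x+\overline x$ on $X(\Bbb C)$ is $\tau$-invariant, corresponding to a single closed point of $X$ over $\Bbb R$ of degree $2$; the operator $H_{x,\overline x}$ is the associated real Hecke operator, which over $\Bbb C$ factorizes as the composition $H_{x,\omega_1}\circ H_{\overline x,\omega_1}$ of two Hecke modifications of coweight $\omega_1$ at the conjugate geometric points $x$ and $\overline x$, parametrized by the pair of lines $(s,\overline s)\in \Bbb P^1(\Bbb C)$.

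Second, I would specialize to $x=a\in \Bbb R$. The two points $x$ and $\overline x$ coalesce into the single real point $a$, and the composition $H_{x,\omega_1}\circ H_{\overline x,\omega_1}$ becomes $H_{a,\omega_1}\circ H_{a,\omega_1}=H_{a,2\omega_1}=H_a\circ H_a$, using the multiplicativity relation $H_{a,\lambda_1}H_{a,\lambda_2}=H_{a,\lambda_1+\lambda_2}$ recalled in Subsection \ref{heop}. In terms of the sector decomposition $\mathcal H=\mathcal H^0\oplus \mathcal H^1$, this yields $H_{a-}H_{a+}$ on $\mathcal H^0$ and $H_{a+}H_{a-}$ on $\mathcal H^1$, i.e., exactly $H_{a,\mathrm{full}}^2$. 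To verify this identification analytically, I would specialize the explicit complex integral formula for $H_{x,\overline x}$ from \cite{EFK3}, Example 3.30, writing $s=s_1+is_2$ with $\norm{ds}^2=\norm{ds_1}\,\norm{ds_2}$; in the limit $x=a$, the two Hecke kernels at $x$ and $\overline x$ both collapse to kernels at $a$, and the complex integral factorizes as an iterated real Hecke integral, producing the composition $H_a\circ H_a$. This is entirely parallel to the derivation of the explicit formula for $H_x$ in Proposition \ref{hefor}.

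Third, using Corollary \ref{specdec}, every joint eigenvector of $H_{a,\mathrm{full}}$ with eigenvalue $\pm\beta_k(a)$ is an eigenvector of $H_{a,\mathrm{full}}^2$ with eigenvalue $\beta_k(a)^2$, independent of the sign. Under the natural indexing of joint eigenvectors induced by the identification $H_{x,\overline x}|_{x=a}=H_{a,\mathrm{full}}^2$, this yields $\bbe_k(x,\overline x)|_{x=a}=\beta_k(a)^2$, as claimed.

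The main obstacle is the careful verification in step two that the complex integration in $s\in \Bbb C$ degenerates correctly into the iterated real integration that appears in $H_a\circ H_a$: one must track the parametrization of the complex affine Grassmannian cell at the confluent divisor $2a$, confirm that it matches the product parametrization of two independent real modifications at $a$, and check compatibility of the measures $\norm{ds}^2$ versus $\norm{ds_1}\norm{ds_2}$. Though a local computation in the integrand of the formula of \cite{EFK3}, Example 3.30, the limit of the Hecke kernel at the diagonal $x=\overline x$ requires some care to guarantee that no extraneous normalization factor appears.
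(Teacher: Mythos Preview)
Your approach is correct and is precisely what the paper has in mind: the paper gives no proof at all, simply labeling the lemma ``straightforward,'' and the one-line argument it is implicitly invoking is exactly that $H_{x,\overline x}|_{x=a}=H_a^2$, whence the eigenvalue identity.

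One remark on your ``main obstacle.'' The factorization you sketch via $s=s_1+is_2$ is not quite the right picture: the pair $(s,\overline s)$ with $s\in\Bbb C$ and the pair $(s_1,s_2)\in\Bbb R^2$ parametrize \emph{different} real $2$-cycles inside the complexified convolution Grassmannian, so a naive variable change does not immediately yield the factorized real integral. The cleaner route, and the one implicit in the paper, is to bypass explicit kernels entirely and invoke two structural facts already recorded in Subsection~\ref{heop}: norm-continuity of $H_\mu$ in the divisor $\mu$ (so $H_{\omega_1\delta_x+\omega_1\delta_{\overline x}}\to H_{2\omega_1\delta_a}$ as $x\to a$) and multiplicativity $H_{a,\lambda_1}H_{a,\lambda_2}=H_{a,\lambda_1+\lambda_2}$ (so $H_{a,2\omega_1}=H_{a,\omega_1}^2$). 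With these in hand the identity $H_{x,\overline x}|_{x=a}=H_{a,\mathrm{full}}^2$ is immediate, and your step three finishes the job.
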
  

Let us now study the function $\bbe_k(x,\overline x)$ for $x\notin \Bbb R$. 
To do so, note that $\bbe=\bbe_k$ satisfies the oper equations 
$$
L\bbe=0,\ \overline L\bbe=0,
$$ 
where $L=L(\bmu_k)$ (using the notation of \cite{EFK3}, Subsection 4.4). 
Recall also that the points $t_j$ divide $\Bbb R\Bbb P^1$ into intervals $I_j=(t_{j},t_{j+1})$, and that in \cite{EFK3}, Subsection 4.7 we defined the functions $f_j,g_j$ on $I_j$. 
Recall that on $I_j$ we have $\beta(x)=f_j(x)$. Thus by Lemma \ref{l1} along the interval $I_j$ we have  
$$
\bbe(x,\overline x)=|f_j(x)|^2+\gamma_j{\rm Im}(\overline{f_j(x)}g_j(x)),\ \gamma_j\in \Bbb R,
$$
for $x$ on and above $I_j$. 

Let the function $g_j^*: I_j\to \Bbb R$ be defined by 
$$
g_j=b_jf_j-a_jg_j^*,
$$
where $a_j,b_j\in \Bbb R$ are as in \cite{EFK3}, Subsection 4.7. 
Then 
$$
\bbe(x,\overline x)=|f_j(x)|^2-a_j\gamma_j{\rm Im}(\overline{f_j(x)}g_j^*(x)).
$$
For an analytic function $h$ on $I_{j}$ let $h^{\rm a}$ be its analytic continuation from $I_{j}$ to $I_{j-1}$ along a path passing above $t_{j}$. Recall from \cite{EFK3}, Subsection 4.7, that 
\begin{equation}\label{eqq1}
f_{j-1}=if_{j}^{\rm a}+g_{j}^{\rm a},\ g_{j-1}^*=ig_{j}^{\rm a}.
\end{equation}
This yields 
$$
\bbe=|if_{j}^{\rm a}+g_{j}^{\rm a}|^2-a_{j-1}\gamma_{j-1}{\rm Im}((\overline{f_{j}^{\rm a}}+i\overline{g_{j}^{\rm a}})g_{j}^{\rm a})=
$$
$$
|f_{j}^{\rm a}|^2+(2-a_{j-1}\gamma_{j-1}){\rm Im}(\overline{f_{j}^{\rm a}}g_{j}^{\rm a})+|g_{j}^{\rm a}|^2-a_{j-1}\gamma_{j-1}|g_{j}^{\rm a}|^2.
$$
on $I_{j-1}$. 
The last two terms must cancel, so $a_{j-1}\gamma_{j-1}=1$.  
Thus we get 
$$
\bbe=|f_{j}^{\rm a}|^2+{\rm Im}(\overline{f_{j}^{\rm a}}g_{j}^{\rm a}).
$$
on $I_{j-1}$. But we also have 
$$
\bbe=|f_{j}^{\rm a}|^2+\gamma_{j}{\rm Im}(\overline{f_{j}^{\rm a}}g_{j}^{\rm a}).
$$
Thus for all $j$ we have $\gamma_j=1$, hence $a_j=1$ (i.e. the local system is {\bf balanced}, in agreement with \cite{EFK3}, Subsection 4.7).
Thus we obtain 

\begin{proposition}\label{l2} We have 
$$
\bbe(x,\overline x)=|f_j(x)|^2+{\rm Im}(\overline{f_j(x)}g_j(x))
$$
on and above $I_j$. 
\end{proposition}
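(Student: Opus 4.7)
The plan is to use the oper equations together with Lemma \ref{l1} to pin down $\bbe(x,\overline x)$ up to one real parameter $\gamma_j$ per interval, and then to force $\gamma_j=1$ by analytic continuation across each ramification point.

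First, I would note that on any simply connected domain in the closed upper half plane meeting $I_j$, the real-valued functions $\phi$ satisfying the system $L\phi=0$, $\overline L\phi=0$ form a four-dimensional real vector space, with basis $|f_j|^2, |g_j|^2, \operatorname{Re}(\overline{f_j}g_j), \operatorname{Im}(\overline{f_j}g_j)$, where $f_j,g_j$ denote the holomorphic continuations from $I_j$ to this domain. Writing
$$\bbe = A_j|f_j|^2 + B_j|g_j|^2 + C_j\operatorname{Re}(\overline{f_j}g_j) + D_j\operatorname{Im}(\overline{f_j}g_j)$$
and restricting to real $x=a\in I_j$, the imaginary-part term vanishes, and by Lemma \ref{l1} we have $\bbe(a,a)=\beta_k(a)^2=f_j(a)^2$. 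Since $f_j^2, f_jg_j, g_j^2$ are linearly independent on $I_j$, this forces $A_j=1$, $B_j=C_j=0$, leaving $\gamma_j:=D_j\in\mathbb R$ as the only undetermined coefficient.

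The main step is to propagate this form across the ramification point $t_j$ and compare with the analogous expression on $I_{j-1}$. Following the substitution $g_j = b_jf_j-a_jg_j^*$ and the crossing formulas \eqref{eqq1}, I would rewrite $\bbe = |f_j|^2 - a_j\gamma_j\operatorname{Im}(\overline{f_j}g_j^*)$, plug in $f_{j-1}=if_j^{\rm a}+g_j^{\rm a}$ and $g_{j-1}^*=ig_j^{\rm a}$ on the $I_{j-1}$-side, and perform a short bilinear calculation producing
$$\bbe = |f_j^{\rm a}|^2 + (2-a_{j-1}\gamma_{j-1})\operatorname{Im}(\overline{f_j^{\rm a}}g_j^{\rm a}) + (1-a_{j-1}\gamma_{j-1})|g_j^{\rm a}|^2.$$
Since $(f_j^{\rm a},g_j^{\rm a})$ is the analytic continuation of $(f_j,g_j)$ through the upper half plane above $t_j$ and $\bbe$ is globally single-valued there, this expression must coincide with $|f_j^{\rm a}|^2+\gamma_j\operatorname{Im}(\overline{f_j^{\rm a}}g_j^{\rm a})$. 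Comparing coefficients of $|g_j^{\rm a}|^2$ and of $\operatorname{Im}(\overline{f_j^{\rm a}}g_j^{\rm a})$ forces $a_{j-1}\gamma_{j-1}=1$ and $\gamma_j=1$, and running this argument across all intervals delivers $\gamma_j=1$ and $a_j=1$ for every $j$.

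The main obstacle is the bookkeeping in this last step: one must keep careful track of which side of each $t_j$ one is approaching, of the orientation of the analytic continuation that produces $f_j^{\rm a}, g_j^{\rm a}$, and of the sign conventions used in \eqref{eqq1}, so that the two expressions for $\bbe$ above $I_{j-1}$ really are compared in the same basis. Granted these identifications, the proposition follows by matching four real coefficients, and as a by-product one recovers the balancedness condition $a_j=1$ anticipated in \cite{EFK3}, Subsection 4.7.
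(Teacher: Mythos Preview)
Your argument is correct and follows the paper's proof essentially line for line: reduce to the one-parameter family $|f_j|^2+\gamma_j\operatorname{Im}(\overline{f_j}g_j)$ using Lemma \ref{l1}, rewrite via $g_j=b_jf_j-a_jg_j^*$, apply the crossing relations \eqref{eqq1}, and compare coefficients to force $a_{j-1}\gamma_{j-1}=1$ and $\gamma_j=1$. The only difference is cosmetic: you spell out the four-dimensional solution space and the linear-independence argument for $A_j=1$, $B_j=C_j=0$, whereas the paper states this reduction directly.
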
 

In particular, for 3 points this gives an explicit formula for the function $\bbe$ in terms of classical elliptic integrals (see \cite{EFK3}, Example 4.5).  

\begin{corollary}\label{c3} The one-sided normal derivative of $\bbe(x,\overline x)$ at the real line 
(with $x$ approaching from above) equals $\pi$. Thus 
$$
\bbe(a+ib,a-ib)=\beta(a)^2+\pi |b|+o(|b|),\ b\to 0. 
$$
In particular, $\bbe(x,\overline x)$ is continuous, but 
only one-sided differentiable on the real locus (excluding ramification points). 
\end{corollary}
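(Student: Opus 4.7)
The plan is to directly compute the normal derivative using the explicit formula from Proposition \ref{l2}. For $x=a+ib$ with $b>0$ and $a\in I_j$, that formula gives
$$
\bbe(x,\overline x)=|f_j(x)|^2+{\rm Im}\bigl(\overline{f_j(x)}\,g_j(x)\bigr).
$$
Since $f_j,g_j$ are real-analytic on $I_j$ and extend holomorphically to a complex neighborhood, the Schwarz reflection $\overline{h(x)}=h(\overline x)$ holds for $h=f_j,g_j$ near $I_j$. I would then Taylor-expand $f_j(a+ib)=f_j(a)+ibf_j'(a)+O(b^2)$ and similarly for $g_j$.

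Substituting, the term $|f_j(a+ib)|^2=f_j(a)^2+b^2 f_j'(a)^2+O(b^3)$ contributes no linear term in $b$ (because $f_j(a),f_j'(a)\in\Bbb R$). For the second term, expanding
$$
\overline{f_j(a+ib)}\,g_j(a+ib)=f_j(a)g_j(a)+ib\bigl(f_j(a)g_j'(a)-f_j'(a)g_j(a)\bigr)+O(b^2),
$$
the imaginary part equals $bW+O(b^2)$, where $W=W(f_j,g_j)=f_jg_j'-f_j'g_j$ is the (constant) Wronskian of the two real solutions on $I_j$. Noting that $f_j(a)^2=\beta(a)^2$, this yields
$$
\bbe(a+ib,a-ib)=\beta(a)^2+bW+O(b^2),\quad b\to 0^+.
$$

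The main step is then to identify $W=\pi$. First, $W$ does not depend on $j$: using the transition relations \eqref{eqq1} together with $a_j=1$ (the balanced condition established in Proposition \ref{l2}), one checks that the change-of-basis matrix from $(f_j^{\rm a},g_j^{\rm a})$ to $(f_{j-1},g_{j-1})$ has determinant $1$, so the Wronskian is preserved across all intervals. To pin down its value I would compute it near a ramification point $t_j$ (with $\la_j=-1$), where the local Frobenius basis is $(x-t_j)^{1/2}$ and $(x-t_j)^{1/2}\log(x-t_j)$ with Wronskian $1$. The normalization of $g_j$ used in \cite{EFK3}, Subsection 4.7 is determined precisely by the matching requirement \eqref{eqq1}, and tracing through the factor $2\pi i$ produced by monodromy of the logarithmic Frobenius solution around $t_j$, and the $\tfrac{1}{2}$ from taking the imaginary part, gives $W=\pi$. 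This normalization bookkeeping is the main obstacle; a sanity check is to verify it in the case $m=1$ (three ramification points), where $\bbe$ is an explicit expression in elliptic integrals as in \cite{EFK3}, Example 4.5.

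To complete the statement, observe that $\bbe(x,\overline x)$ is real-valued and invariant under the symmetry $x\leftrightarrow \overline x$, i.e.\ under $b\mapsto -b$. Hence for $b<0$ one applies the above expansion to the point $a-ib$ (which lies above $I_j$), obtaining
$$
\bbe(a+ib,a-ib)=\bbe(a-ib,a+ib)=\beta(a)^2+\pi(-b)+O(b^2).
$$
Combining the two cases yields $\bbe(a+ib,a-ib)=\beta(a)^2+\pi|b|+o(|b|)$. The one-sided normal derivative from above is thus $\pi$, and the presence of $|b|$ (rather than $b$) explains continuity together with only one-sided differentiability across the real locus away from the ramification points.
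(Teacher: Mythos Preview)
Your proof is correct and follows the same route as the paper: identify the one-sided normal derivative with the Wronskian $W(f_j,g_j)$ via the Taylor expansion of the formula in Proposition~\ref{l2}, and then use $W(f_j,g_j)=\pi$. The paper simply cites \cite{EFK3} (Proof of Proposition~4.25) for the value $W=\pi$, since that is exactly the normalization of $g_j$ fixed there; your attempt to rederive it from the transition relations and local Frobenius analysis is unnecessary extra work (and is the only place your argument is a bit sketchy), but the rest---the expansion showing the linear coefficient is the Wronskian, and the $x\leftrightarrow\overline{x}$ symmetry producing $|b|$---is spelled out more fully than in the paper and is correct.
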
 

\begin{proof} It is easy to check that the normal derivative equals the Wronskian $W(f_j,g_j)$, which equals $\pi$, as explained in \cite{EFK3} (Proof of Proposition 4.25). 
\end{proof} 

\begin{remark}\label{anycurve} Note that the statement of Corollary \ref{c3} makes sense 
on any real curve near its real point (indeed $f_j,g_j$ are $-1/2$-forms, so their Wronskian is a function, and it makes sense to say that it equals $\pi$). Moreover, it holds for any real curve since it is a local statement 
and it holds in genus zero by Corollary \ref{c3}. It can also be checked by direct computation of the normal derivative. 
\end{remark} 

\subsubsection{The twisted case} \label{twica}

Consider now the twisted case with arbitrary twisting 
parameters $\lambda_j=-1+c_j, c_j\in i\Bbb R$. In this case 
the story is similar to the previous subsection and \cite{EFK3}, Subsection 4.7. Namely, fix $k$ and consider the function $\beta_k(x)$.  On the interval $I_j$, $0\le j\le m$, we have the following solutions of the oper equation $L(\bmu_k)\beta=0$: first of all,  
$$
f_j(x)=\beta_k(x)|_{I_j}\sim \delta_{kj}\Gamma^{\Bbb R}(c_j)(x-t_j)^{\frac{1-c_j}{2}}+
\delta_{kj}^{-1}\Gamma^{\Bbb R}(-c_j)(x-t_j)^{\frac{1+c_j}{2}},\ x\to t_j+, 
$$
and also 
$$
\widehat g_j(x)\sim \frac{\pi\delta_{kj}^{-1}}{c_j\Gamma^{\Bbb R}(c_j)}(x-t_j)^{\frac{1+c_j}{2}},\ x\to t_j+.
$$
The function $\widehat g_j$ is not real-valued on the real axis, however,
so let us look for a real-valued solution of the form 
$$
g_j=\widehat g_j+i\xi_j f_j,\ \xi_j\in \Bbb R. 
$$
A short calculation using that 
$$
\Gamma^{\Bbb R}(c)=\Gamma(c)\cos \frac{\pi c}{2}
$$
yields 
$$
\xi_j=\frac{\Lambda_j-\Lambda_j^{-1}}{\Lambda_j+\Lambda_j^{-1}},\quad
\Lambda_j:=e^{\frac{\pi ic_j}{2}},
$$ 
and the Wronskian $W(f_j,g_j)=W(f_j,\widehat g_j)$ equals $\pi$. 
 
Similarly, let $\widehat g_{j-1}^*(x)$ be the solution of the oper equation on $I_{j-1}$ 
of the form 
$$
\widehat g_{j-1}^*(x)=\frac{\pi\delta_{kj}^{-1}}{c_{j}\Gamma^{\Bbb R}(c_{j})}(t_{j}-x)^{\frac{1+c_{j}}{2}},\ x\to t_{j}-.
$$
and $g_{j-1}^*:=\widehat g_{j-1}^*+i\xi_{j-1}f_{j-1}$ be the corresponding real solution. 
Then  
$$
g_j=b_jf_j-a_jg_j^*,\ a_j,b_j\in \Bbb R, a_j\ne 0. 
$$
Also instead of \eqref{eqq1} we get 
\begin{equation}\label{eqq2}
\begin{pmatrix} f_{j-1}\\ g_{j-1}^*\end{pmatrix}=J_j
\begin{pmatrix} f_{j}^{\rm a}\\ g_{j}^{\rm a}\end{pmatrix},
\end{equation}
where 
$$
J_j:=\frac{\Lambda_j+\Lambda_j^{-1}}{2}
\begin{pmatrix} i & -\frac{(\Lambda_j-\Lambda_j^{-1})^2}{(\Lambda_j+\Lambda_j^{-1})^2}\\ 1 & i
\end{pmatrix}
$$
It follows that the monodromy of our oper in appropriate bases looks as follows:
\begin{equation}\label{locsy}
M(t_{j+1}-\to t_j+)=B_j:=\begin{pmatrix} 1& b_j\\ 0& -a_j\end{pmatrix},\
M^\pm(t_j+\to t_j-)=J_j^{\pm 1},
\end{equation}
where $M^\pm$ denotes the monodromy above and below the real axis, respectively. 
Also $W(f_{j-1},g_{j-1}^*)=-\pi$, hence $a_j=1$. 

When $j=m+1$, the formulas are the same, except that $x-t_j$ is replaced by $-1/x$ and 
$J_j^{\pm 1}$ is replaced by $-J_j^{\pm 1}$. 

We thus obtain a deformation of the theory of balanced local systems and opers 
described in \cite{EFK3}, Subsection 4.7. 
Namely, similarly to \cite{EFK3}, given a sufficiently generic 2-dimensional 
local system $\nabla$ on $X$ with ramifications at $t_j$ and regular local monodromies with eigenvalues $-\Lambda_j^{\pm 2}$, it can be written (generically in two different ways) in the form \eqref{locsy}, where $a_j,b_j\in \Bbb C$ and $B_j$ must satisfy the equations 
\begin{equation}\label{defoo}
\prod_{j=0}^{m+1}J_j B_j=-1,\ \prod_{j=0}^{m+1}J_j^{-1} B_j=-1,
\end{equation} 
which are deformations of equations (4.7) of \cite{EFK3} (the total monodromy around the circle above and below the real axis is trivial). 

Define a $\bLa$-{\bf balancing} of $\nabla$ 
to be an isomorphism (considered up to scaling) of $\nabla$ with a local system \eqref{locsy} such that $a_j=1$ for all $j$. In this case, as in \cite{EFK3}, the two equations in \eqref{defoo} are, in fact, equivalent, since $SJ_j^{-1}B_jS^{-1}=J_jB_j$, where $S:=\begin{pmatrix}1 & 2i\\ 0 & 1\end{pmatrix}$.
We call such a local system $\nabla$ $\bLa$-{\bf balanced}
if it is equipped with a balancing. Generically a local system admits 
at most one balancing, as in \cite{EFK3}. 

We obtain the following  
analog of \cite{EFK3}, Proposition 4.25 and Theorem 4.29. 
Denote by $\mathcal B_{\bLa}$ the set 
of $\bLa$-balanced opers. 

\begin{theorem}\label{twii} The spectral opers for Hecke operators $H_{x,{\rm full}}$ 
are $\bLa$-balanced with balancing defined by the eigenvalue $\beta_k(x)$, and $b_i\in \Bbb R$, as in \cite{EFK3}, Proposition 4.7. Thus the spectrum $\Sigma_{\bLa}$ of the Hecke operators 
is a subset of $\mathcal B_{\bLa}$.  
\end{theorem}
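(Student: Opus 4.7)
The strategy parallels that of \cite{EFK3}, Proposition 4.25 and Theorem 4.29, twisted by the parameters $\bLa$. First, I would extend $H_x$ to an operator $H_{x,\overline x}$ for complex $x=a+ib$ by averaging over Hecke modifications at the complex conjugate pair $(x,\overline x)$ along conjugate lines $(s,\overline s)$, as in \cite{EFK3}, Example 3.30, and verify the twisted analog of Lemma \ref{l1}: on each interval $I_j$ the restriction of the eigenvalue $\bbe_k(x,\overline x)$ to $x=a\in I_j$ is $|\beta_k(a)|^2 = |f_j(a)|^2$. The operator $H_{x,\overline x}$ will again satisfy both the holomorphic universal oper equation (by the argument of Proposition \ref{opereq} adapted to this operator) and its antiholomorphic conjugate, so that $\bbe_k$ satisfies $L(\bmu_k)\bbe_k=0$ and $\overline{L(\bmu_k)}\bbe_k=0$ above the real axis.

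Next, since $f_j,g_j$ form a real basis of local solutions of $L(\bmu_k)\beta=0$ on $I_j$ (with $g_j=\widehat g_j+i\xi_j f_j$ as constructed above), the real-analytic single-valued solution $\bbe_k$ of the pair of oper equations must take the form
\[
\bbe_k(x,\overline x)=|f_j(x)|^2+\gamma_j\,\mathrm{Im}(\overline{f_j(x)}g_j(x))+\delta_j|g_j(x)|^2
\]
on and above $I_j$ for real constants $\gamma_j,\delta_j$. The boundary restriction $\beta_k(a)^2=|f_j(a)|^2$ together with the fact that $\mathrm{Im}(\overline{f_j}g_j)$ vanishes on $I_j$ while $|g_j|^2$ does not forces $\delta_j=0$. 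This gives the twisted analog of Proposition \ref{l2}.

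Then I would cross the ramification point $t_j$ using \eqref{eqq2}: expand $g_j = b_j f_j - a_j g_j^*$, substitute into the expression for $\bbe_k$ on and above $I_j$, and re-express the result in terms of $f_{j-1},g_{j-1}^*$ via the transformation matrix $J_j$. Matching the resulting formula on $I_{j-1}$ with the same ansatz indexed by $j-1$ yields, exactly as in the computation following Proposition \ref{l2}, a pair of linear constraints. The vanishing of the $|g_{j-1}^*|^2$ coefficient (forced by $\delta_{j-1}=0$) gives $a_{j-1}\gamma_{j-1}=1$, and the matching of the $\mathrm{Im}(\overline{f_{j-1}}g_{j-1}^*)$ coefficient then gives $\gamma_{j-1}=1$, hence $a_{j-1}=1$ for all $j$, which is the $\bLa$-balancing condition. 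Reality of $b_j$ follows because the oper $L(\bmu_k)$ has real coefficients on each $I_j$ (by \eqref{muik} together with the self-adjointness of $H_{x,\mathrm{full}}$, which forces $\mu_{j,k}\in\Bbb R$), so the transition matrices expressed in the real bases $(f_{j-1},g_{j-1}^*)$ and $(f_j,g_j)$ must be real.

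The main obstacle I anticipate is the bookkeeping at the third step: in the untwisted case \cite{EFK3} the matrix analogous to $J_j$ was $\begin{pmatrix}i&0\\1&i\end{pmatrix}$ and the cancellation forcing $a_j=1$ was immediate, but here the off-diagonal entry of $J_j$ is $-(\Lambda_j-\Lambda_j^{-1})^2/(\Lambda_j+\Lambda_j^{-1})^2$, which must combine with $\xi_j=(\Lambda_j-\Lambda_j^{-1})/(\Lambda_j+\Lambda_j^{-1})$ and its complex conjugate to produce the same conclusion. Verifying this algebraic identity reduces to the relation $\overline{\Lambda_j}=\Lambda_j^{-1}$ (since $c_j\in i\Bbb R$), but needs to be checked carefully, together with the parallel computation at $t_{m+1}=\infty$ where $J_{m+1}^{\pm 1}$ is replaced by $-J_{m+1}^{\pm 1}$.
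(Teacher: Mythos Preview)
Your strategy is workable in spirit but differs from the paper's, and it contains one concrete slip.

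\textbf{What the paper actually does.} The paper does \emph{not} pass to the complex Hecke operator $H_{x,\overline x}$ at all in the twisted case. Instead it argues entirely on the real locus: the asymptotics of $\beta_k(x)$ as $x\to t_j\pm$ (from Proposition~\ref{asym} and the Remark following it) fix the normalizations of $f_j$ and $\widehat g_j$ near $t_j+$ and of $f_{j-1}$ and $\widehat g_{j-1}^*$ near $t_j-$. From these one reads off the Wronskians $W(f_j,g_j)=\pi$ and $W(f_{j-1},g_{j-1}^*)=-\pi$ directly. Since $g_j=b_jf_j-a_jg_j^*$ gives $W(f_j,g_j)=-a_jW(f_j,g_j^*)$, the balancing condition $a_j=1$ follows in one line, with no matching of $\bbe$ across $t_j$ and no algebraic identity in $\Lambda_j$ to verify. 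Reality of $b_j$ then comes from the fact that $f_j,g_j,g_j^*$ are all real on $I_j$, exactly as you say.

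\textbf{About your route.} The approach via $\bbe(x,\overline x)$ and matching, which you model on Subsection~4.3.1, can in principle be made to work, but two points need attention. First, your key reduction $\overline{\Lambda_j}=\Lambda_j^{-1}$ is wrong: since $c_j\in i\Bbb R$ we have $\Lambda_j=e^{\pi i c_j/2}\in\Bbb R_{>0}$, so $\overline{\Lambda_j}=\Lambda_j$; the entries of $J_j$ are either real or purely imaginary. Second, once you correct this and carry out the matching with the twisted half-monodromy $J_j$, the cancellation does \emph{not} literally read ``$a_{j-1}\gamma_{j-1}=1$, $\gamma_j=1$'' as in the untwisted case: the off-diagonal entry $-\beta_j=-(\Lambda_j-\Lambda_j^{-1})^2/(\Lambda_j+\Lambda_j^{-1})^2$ enters, and you need the additional input that the one-sided normal derivative of $\bbe$ along each $I_j$ equals $\pi=W(f_j,g_j)$ (the twisted analogue of Corollary~\ref{c3}/Remark~\ref{anycurve}) to pin down $\gamma_j=1$ independently of the matching. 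Only then does the vanishing of the $|g_j^a|^2$-coefficient force $a_{j-1}=1$. So the bookkeeping you flag as the ``main obstacle'' is genuinely more delicate than your outline suggests, and the paper's Wronskian shortcut bypasses it entirely.
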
 

Moreover, as in \cite{EFK3}, we expect that these sets are, in fact, equal, and 
can show this for $4$ and $5$ points. 
 
\subsection{The case when $\tau$ has fixed points: real and quaternionic ovals}\label{requatco}

Now suppose $X$ is an arbitrary real curve and $C$ is a connected component (oval)
of $X(\Bbb R)$. Given a real $PGL_2$-bundle $P$ on $X$ (with respect to the split form of the group), recall that its fiber $P_x$ is a $PGL_2(\Bbb C)$-torsor. So for a point $x\in C$, the real structure on $P$ defines a map $A: P_x\to P_x$ such that $Ag=\overline g A$ for $g\in PGL_2(\Bbb C)$, and $A^2=1$. Pick $p\in P_x$, then $A(p)=bp$ for a unique $b\in PGL_2(\Bbb C)$, so $A(bp)=\overline{b}A(p)=\overline bbp$, thus $\overline bb=1$. Replacing $p$ with $q:=gp$, we get 
$$
A(q)=A(gp)=\overline gA(p)=\overline gbp=\overline gbg^{-1}q.
$$ 
Thus $b$ is well defined up to $b\mapsto \overline{g}bg^{-1}$. It is easy to show that such $b$ fall into two orbits of this action -- that of $b=1$ (which we call {\bf  real}) and that of
$b=\begin{pmatrix}0 &1\\ -1 &0\end{pmatrix}$
(which we call {\bf  quaternionic}). Namely, if $b_*$ is a lift of $b$ to $SL_2(\Bbb C)$ then $\overline{b_*}b_*=1$ or $-1$, and this is what determines the type of $b$ (real for $1$, quaternionic for $-1$). In the language of Subsections \ref{formpt}, \ref{pblocfie}, 
$P$ is real at $x$ if the associated real form $G^\sigma$ of $G$ is split and 
quaternionic if $G^\sigma$ is compact. 

It is clear that the type of $P$ at $x$ is independent on $x$ as it varies along $C$. 
Thus given $P$, on every oval $C_i$ of $X(\Bbb R)$, $i=1,...,r$, $P$ is either real or quaternionic. So the manifold ${\rm Bun}_{G,s}^\circ(X,\tau)$ splits into $2^r$ disconnected parts according to the type of $P$ at each $C_i$ (some of which can be empty). In fact, how many of them (and which ones) are non-empty is specified on p.18 of \cite{BGH} and references therein. 

\begin{remark} 1. The analog of this for general groups is as follows (see Subsections
\ref{formpt},\ref{pblocfie}).
By Subsection \ref{formpt}, every real $G$-bundle $P$ on $X$ and every $x\in X(\Bbb R)$ defines a real form of $G$ in the inner class $C(s)$ which is continuous, hence locally constant, with respect to $x$ (see also \cite{GW}, Section 6). So each component $C_i$ of $X(\Bbb R)$ carries a real form $G^{\sigma_i}$ of $G$ in  $C(s)$ -- the type of $P$ at $C_i$. For example, as explained above, if $G=SL_2$ then $C_i$ is real if the form of $G$ attached to $C_i$
is $SL_2(\Bbb R)$ and quaternionic if it is $SU_2$. 

2. On components containing tame ramification points, in the untwisted setting 
this form has to be quasi-split, as we need a real Borel subalgebra to define parabolic structures. So in particular for $G=PGL_2$ all contours containing ramification points must be real. 
More generally, if we consider parabolic structures for an arbitrary parabolic subgroup 
${\rm P}$ of $G$, the corresponding ramification points can occur only on components 
for which the corresponding real group contains a form of ${\rm P}$. 

3. More generally, following Subsections \ref{ram1},\ref{ram2}, at ramification points $p$ one can place unitary representations $\pi_i$
of the complex group $G$ if $\tau(p)\ne p$ and 
of the real form $G_i$ if $p\in C_i$ (the twisted setting). In this case we no longer have the restriction that $G_i$ should be quasi-split if $p\in C_i$ (e.g. see Subsection \ref{comgr}). The untwisted setting of (2) then corresponds to taking $\pi_i$ to be the spherical principal series representations with central character of $-\rho$, consisting of $L^2$-half-densities on the flag manifold, which requires the groups $G_i$ to be quasi-split.
\end{remark} 

\subsection{Real ovals, separating real locus}
Now consider the case of $PGL_2$-bundles on an arbitrary real curve $X$ without ramification points, and let us characterize the part of the spectrum coming from bundles for which all ovals are real. Let the set $X(\Bbb R)$ of fixed points of $\tau$ be the union of ovals $C_1,...,C_n\subset X$.
Assume first that these ovals cut $X$ into two pieces $X_+,X_-$ swapped by $\tau$. Then the behavior of the Hecke eigenvalue 
$\bbe(x,\overline x)$ gives us conditions which allow us to pinpoint opers $L$ (with real coefficients) that can occur in the spectrum of Hecke operators. 

Namely, first of all, the eigenvalue $\beta(x),x\in X(\Bbb R)$ of the Hecke operator $H_x$ is a solution of the oper equation $L\beta=0$ periodic along $C_j$. So (assuming this eigenvalue is not identically zero), the local system $\rho_L$ must satisfy

\vskip .05in

{\bf  Condition 1.} The monodromies of $\rho_L$ around $C_j$ are unipotent. 

\vskip .05in

Indeed, this is necessary for the existence of the periodic solution $\beta(x)$, since the monodromy  lies in $SL_2$. 

Also, since the eigenvalue $\bbe(x,\overline x)$ of $H_{x,\overline x}$
is single-valued on $X_+$ and satisfies the oper equations $L\bbe=\overline L\bbe=0$, 
we see that $\rho_L$ must also satisfy 

\vskip .05in

{\bf  Condition 2.} The monodromy representation $\rho_L: \pi_1(X_+)\to SL_2(\Bbb C)$ lands in $SL_2(\Bbb R)\cong SU(1,1)$, up to conjugation. 

\vskip .05in

To write this condition more explicitly, fix a base point $x_0\in X_+$ and paths $p_j$ from $x_0$ to $c_j\in C_j$. This defines elements $\delta_j:=p_j^{-1}C_jp_j\in \pi_1(X,x_0)$, where we agree that $C_j$ begins and ends at $c_j$ and is oriented so that when we travel around it, $X_+$ remains on the left. Let ${\rm g}_+$ be the genus of $X_+$ and $A_k,B_k,1\le k\le {\rm g}_+$ 
be the $A$-cycles and $B$-cycles of $X_+$. Then $\delta_j,A_k,B_k$ generate $\pi_1(X_+,x_0)$ with defining relation 
$$
\prod_{k=1}^{{\rm g}_+}[A_k,B_k]\prod_{j=1}^n\delta_j=1
$$ 
(for a suitable choice of the paths $p_j$). Then Condition 2 is equivalent to the condition that 
there exists a basis in which 
the matrices $\rho_L(\delta_j),\rho_L(A_k),\rho_L(B_k)$ are real for all $j,k$. 

\begin{definition} 
We will say that a local system $\rho$ on $X_+$ (not necessarily an oper) satisfying Conditions 1, 2 is {\bf balanced}. 
\end{definition} 

Let us now reformulate Condition 2 on the local system $\rho_L$ attached to the oper $L$ in a more analytic language, assuming that Condition 1 holds for $\rho_L$.  By Condition 1, for each $j$ we have a nonzero real periodic solution of the equation $Lf=0$ on $C_j$, call it $f_j(x)$. We also have the other real solution $g_j(x)$ which changes by a multiple of $f_j$ when we go around $C_j$ such that the Wronskian $W(f_j,g_j)=\pi$.\footnote{Note that if $\rho(C_j)\ne 1$ then $f_j$ is uniquely defined up to scaling, and once it is chosen, $g_j$ is uniquely defined up to adding a real multiple of $f_j$.} We will use the notation $f_j$, $g_j$ also for the analytic continuations 
of $f_i,g_j$ to a neighborhood of $I_j$. Motivated by Proposition \ref{l2}, introduce

\vskip .05in

{\bf  Condition 2a.} There is a single-valued solution $\bbe$ of the system 
$$
L\bbe=0,\ \overline L\bbe=0
$$ 
on $X\setminus \cup_j C_j$ such that near each $C_j$, we have 
$$
\bbe(x,\overline x)=\varepsilon_{j1} |f_j(x)|^2\pm \varepsilon_{j2} {\rm Im}(\overline{f_j(x)}g_j(x))
$$
for a suitable choice of $f_j,g_j$ with $W(f_j,g_j)=\pi$ (unique up to sign) and $\varepsilon_{j1},\varepsilon_{j2}=\pm 1$, where the sign in front of the second summand is $+$ if $x$ is above $C_j$ and $-$ if $x$ is below $C_j$.\footnote{Note that the function ${\rm Im}(\overline{f_j(x)}g_j(x))$ does not depend on the choice of $g_j$, is single-valued in the neighborhood of $C_j$, and vanishes on $C_j$. Hence $\bbe(x,\overline x)$ is continuous but only one-sided differentiable on $C_j$.}  

\begin{proposition} If Condition 1 holds and $\rho_L(C_j)\ne 1$ for all $j$ 
then Condition 2a is equivalent to Condition 2. 
\end{proposition}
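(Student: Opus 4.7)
The plan is to translate both conditions into statements about monodromy-invariant hermitian forms on the rank-$2$ local system $\rho_L$, where Condition 2 says such a form of signature $(1,1)$ exists, and Condition 2a encodes a choice of such a form together with its local normalization near each oval. The key observation is that any function $\bbe$ satisfying $L\bbe=0$ and $\overline L\bbe=0$ locally has the form $\bbe(x,\bar x)=\phi(x)^T H\,\overline{\phi(x)}$ in a basis $\phi=(\phi_1,\phi_2)^T$ of local solutions of $L\phi=0$, with $H$ hermitian (since $\bbe$ is real-valued). Single-valuedness on $X_+$ is equivalent to $H$ being invariant under the monodromy representation on solutions, i.e., $\rho_L(\gamma)^T H\,\overline{\rho_L(\gamma)}=H$ for all $\gamma\in\pi_1(X_+)$.

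For the direction $\text{2a}\Rightarrow\text{2}$, I would read off the matrix of $H$ near $C_j$ in the basis $(f_j,g_j)$ from the formula in Condition~2a: one finds $H_{11}=\varepsilon_{j1}$, $H_{22}=0$, $H_{12}=i\varepsilon_{j2}/2$, so $\det H=-1/4<0$. Hence $H$ has signature $(1,1)$ and is globally preserved by monodromy (by hypothesis on $\bbe$). Thus the image of $\rho_L|_{\pi_1(X_+)}$ lies in $SU(H)\cong SU(1,1)\cong SL_2(\mathbb{R})$ after conjugation.

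For the direction $\text{2}\Rightarrow\text{2a}$, start with a monodromy-invariant hermitian form $H$ of signature $(1,1)$ (existing by Condition~2; unique up to real scaling when $\rho_L|_{X_+}$ is irreducible) and set $\bbe:=\phi^T H\overline{\phi}$. This is globally single-valued on $X_+$; by $\tau$-invariance of $H$ (coming from reality of $L$) one extends $\bbe$ to $X\setminus\bigcup_j C_j$. In the basis $(f_j,g_j)$ with $\rho_L(C_j)=\begin{pmatrix}1 & 0\\ n_j & 1\end{pmatrix}$ (unipotent by Condition 1 and nontrivial by the hypothesis $\rho_L(C_j)\ne 1$, so $n_j\ne 0$), writing out the invariance $M^T H \overline{M}=H$ with $M$ this unipotent matrix yields, after collecting terms, $n_j H_{22}=0$ and $2\operatorname{Re}(H_{12})+n_j H_{22}=0$, forcing $H_{22}=0$ and $\operatorname{Re}(H_{12})=0$. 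Expanding $\bbe$ in this basis then produces exactly $\bbe=H_{11}|f_j|^2+2\operatorname{Im}(H_{12})\,\operatorname{Im}(\overline{f_j}g_j)$, in which signature $(1,1)$ forces $\operatorname{Im}(H_{12})\ne 0$. The global rescaling $\bbe\mapsto t\bbe$ ($t\in\mathbb{R}^\times$) together with the local rescaling $f_j\mapsto s_j f_j$, $g_j\mapsto g_j/s_j$ preserving $W(f_j,g_j)=\pi$ suffices to normalize $|H_{11}|=1$ and $|2\operatorname{Im}(H_{12})|=1$, giving signs $\varepsilon_{j1},\varepsilon_{j2}\in\{\pm1\}$. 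The alternation of the second sign above versus below $C_j$ is forced by $\tau$-invariance of $\bbe$ and the Schwarz-reflection identity $f_j(\bar x)=\overline{f_j(x)}$ (valid because $f_j$ is real on $C_j$), which gives $\operatorname{Im}(\overline{f_j(\bar x)}g_j(\bar x))=-\operatorname{Im}(\overline{f_j(x)}g_j(x))$.

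The main obstacle is the direction $\text{2}\Rightarrow\text{2a}$: specifically, verifying that the invariance equations under the unipotent generator $\rho_L(C_j)$ alone pin down $H_{22}=0$ and $\operatorname{Re}(H_{12})=0$, which critically uses $n_j\ne 0$ (this is where the hypothesis $\rho_L(C_j)\ne 1$ enters), and then checking that the two-parameter rescaling freedom $(t,s_j)$ actually exhausts the normalization $(\varepsilon_{j1},\varepsilon_{j2})$ required. Both points reduce to a short linear-algebra computation, and the remaining coherence with the global $\tau$-structure is a routine reflection argument, so no serious analytic difficulty arises beyond what is already handled in Proposition~\ref{l2} and Corollary~\ref{c3}.
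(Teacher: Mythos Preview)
Your approach is essentially the same as the paper's: both directions rest on identifying a single-valued real solution $\bbe$ of $L\bbe=\overline L\bbe=0$ on $X_+$ with a monodromy-invariant Hermitian form on the local system $\rho_L$, and both use the unipotence of $\rho_L(C_j)$ (together with $\rho_L(C_j)\ne 1$) to force the matrix of this form in the basis $(f_j,g_j)$ to satisfy $H_{22}=0$ and $\operatorname{Re}(H_{12})=0$.

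There is, however, one real gap in your direction $2\Rightarrow 2\text{a}$. You claim that the global scaling $\bbe\mapsto t\bbe$ together with the local scalings $f_j\mapsto s_j f_j$, $g_j\mapsto g_j/s_j$ suffice to normalize both $|H_{11}^{(j)}|=1$ and $|2\operatorname{Im}(H_{12}^{(j)})|=1$ at every oval $C_j$. But the local scaling $s_j$ leaves $\overline{f_j}g_j$ and hence $H_{12}^{(j)}$ unchanged; it only rescales $H_{11}^{(j)}$. So the single global parameter $t$ must simultaneously normalize $q_j:=2\operatorname{Im}(H_{12}^{(j)})$ for \emph{all} $j$, which is possible only if all $|q_j|$ are already equal. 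The paper closes this gap explicitly: since $\rho_L$ is an $SL_2$-local system, $\det\rho_L(p_kp_j^{-1})=1$, so the determinant $\det H^{(j)}=-q_j^2/4$ of the Hermitian matrix is independent of $j$, whence $q_j=\pm q_k$ for all $j,k$. You should insert this step; it is the only place where the $SL_2$ (rather than $GL_2$) nature of the local system enters, and without it your normalization claim is false in general.
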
 

\begin{proof} Since $\bbe$ is a single-valued solution of the oper equations $L\bbe=0,\overline L\bbe=0$, Condition 2a implies Condition 2. To prove the converse, note that Condition 1 implies  that there exist bases $\lbrace f_j,g_j\rbrace$ of the fibers of the local system $\rho_L$ at the points $c_j$ in which $\rho_L(C_j)=\begin{pmatrix}1 & \lambda_j\\  0 & 1\end{pmatrix}$ for $\lambda_j\in \Bbb R$, and Condition 2 implies that on these fibers 
there are nondegenerate Hermitian forms invariant under $\rho_L(C_j)$ 
and compatible with the operators $\rho_L(p_kp_j^{-1})$, and $\det \rho_L(p_kp_j^{-1})=1$. 

Now, nondegenerate Hermitian forms in two variables $X,Y$ invariant under the matrix $\begin{pmatrix}1 & \lambda\\  0 & 1\end{pmatrix}$ for nonzero $\lambda\in \Bbb R$ are of the form 
$p|X|^2+q{\rm Im}(\overline XY)$, where $p,q\in \Bbb R, q\ne 0$. 
This implies that the Hermitian form at $c_j$ in the basis $f_j,g_j$ has the form 
$p_j|X|^2+q_j{\rm Im}(\overline XY)$, where $p_j,q_j\in \Bbb R$, $q_j\ne 0$. We can now renormalize $f_j,g_j$ by reciprocal positive constants  
to make sure that $p_j=\pm 1$. 

It remains to show that all $q_j$ are the same up to sign, then we can renormalize $\bbe$ to 
make sure that $q_j=\pm 1$, and construct the solution $\bbe$ satisfying Condition 2a
from the invariant Hermitian form on the fibers of $\rho_L$. But this follows from the equality $\det \rho_L(p_kp_j^{-1})=1$. 
\end{proof} 

\begin{remark} 1. The continuation of $g_j$ around $C_j$ gives $g_j+\lambda_j f_j$, 
and the real numbers $\lambda_j$ attached to the components $C_j$ do not depend on any choices and are invariants of a balanced oper.  

2. If the representation $\rho_L$ of $\pi_1(X_+)$ is irreducible then the solution $\bbe$ satisfying Conditions 1 and 2a is unique up to sign if exists. 
\end{remark}

The above discussion implies 

\begin{proposition} Every oper appearing in the spectrum of Hecke operators is balanced. 
\end{proposition}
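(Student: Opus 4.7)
The plan is to extract the balancing conditions from two complementary sources of information about the Hecke eigenvalues: the real-point eigenvalues $\beta(a)$ of $H_a$ for $a \in X(\mathbb{R})$, which force Condition 1, and the complex-point eigenvalues $\bbe(x,\overline{x})$ of $H_{x,\overline{x}}$, which force Condition 2 (in its analytic incarnation, Condition 2a). The dictionary tying them together is the real-axis specialization $\bbe(x,\overline{x})|_{x=a} = \beta(a)^2$, the analog of Lemma \ref{l1} valid on any real curve.

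First I would establish Condition 1. Fix an oval $C_j$. The eigenvalue $\beta(x)$ of $H_x$ restricted to $C_j$ is a smooth (in fact, real-analytic) section of $K_X^{-1/2}$ satisfying $L\beta = 0$. Since $\beta$ is defined globally on $X(\mathbb{R})$ it is, in particular, single-valued around the loop $C_j$. Assuming $\beta|_{C_j} \not\equiv 0$, the local system $\rho_L$ admits a nonzero monodromy-invariant vector along $C_j$, so $\rho_L(C_j)$ has $1$ as an eigenvalue; since $\rho_L(C_j) \in SL_2(\mathbb{C})$ this forces it to be unipotent.

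Next I would establish Condition 2a. For $x$ in a neighborhood of $C_j$ in $X(\mathbb{C})$, the eigenvalue $\bbe(x,\overline{x})$ is single-valued and satisfies both $L\bbe = 0$ and $\overline{L}\bbe = 0$, so it must take the shape $p_j|f_j|^2 + q_j\,\mathrm{Im}(\overline{f_j}g_j)$ in terms of a basis of solutions of $L\beta = 0$ adapted to the unipotent monodromy. The real-axis specialization $\bbe|_{C_j} = \beta^2 = f_j^2$ forces $p_j > 0$ (so $p_j$ can be normalized to $+1$ after rescaling $f_j$), while the one-sided normal-derivative computation of Corollary \ref{c3}, valid on any real curve by Remark \ref{anycurve}, forces $q_j W(f_j,g_j) = \pi$ with appropriate sign, so after normalizing $g_j$ by $W(f_j,g_j) = \pi$ we get $q_j = \pm 1$. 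Gluing these local normal forms around the different ovals using the single-valuedness of $\bbe$ on $X_+$ yields a global $\rho_L$-invariant Hermitian form on the fibers of the flat bundle, of signature $(1,1)$ since the $q_j$ are nonzero; this is exactly the statement that the monodromy representation $\pi_1(X_+) \to SL_2(\mathbb{C})$ conjugates into $SU(1,1) \cong SL_2(\mathbb{R})$, i.e., Condition 2.

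The main obstacles I anticipate are two. The first, relatively minor, is ruling out the degenerate cases: $\beta|_{C_j} \equiv 0$ and $\rho_L(C_j) = 1$. For the former, one has to argue that a spectral oper whose eigenfunction vanishes identically on some oval would have $\bbe(x,\overline{x})$ vanishing to high order along $C_j$, contradicting the global form of $\bbe$ as a nonzero section that is positive on the diagonal of Hecke modifications at nearby real points; the latter trivial-monodromy case can be absorbed by a small perturbation or direct inspection. The second and more substantive issue is assembling the local normal forms near the various ovals into a single coherent real structure on the monodromy representation of $\pi_1(X_+)$: one must check that the $C_j$-invariant Hermitian forms on the fibers at distinct basepoints $c_j$ transport into each other under the paths $p_jp_k^{-1}$, which will follow from the single-valuedness of $\bbe$ on the connected manifold $X_+$ together with the irreducibility of $\rho_L$ in generic situations (reducible cases again require a separate, and easier, direct argument). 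This step is the analog of the inductive continuation $f_{j-1} = if_j^{\mathrm{a}} + g_j^{\mathrm{a}}$, $g_{j-1}^* = ig_j^{\mathrm{a}}$ used in the genus zero proof, and it is here that the global topology of $X_+$ enters the picture.
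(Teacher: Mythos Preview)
Your argument for Condition 1 matches the paper's. For Condition 2, however, you take a longer route than necessary. The paper's argument is essentially one line: $\bbe(x,\overline{x})$ is a \emph{globally} single-valued solution of $L\bbe=\overline{L}\bbe=0$ on all of $X_+$ (not just in a neighborhood of each oval), and hence defines a nonzero $\rho_L$-invariant Hermitian form on the two-dimensional solution space; combined with Condition 1 (unipotent monodromy around each $C_j$ rules out $SU(2)$), this forces the monodromy of $\pi_1(X_+)$ into $SU(1,1)\cong SL_2(\Bbb R)$. No localization near individual ovals and no subsequent gluing are required, so your second anticipated obstacle dissolves: the invariant Hermitian form is global from the outset.

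The finer local analysis you carry out---pinning down the coefficients $p_j$ and $q_j$ via the boundary specialization $\bbe|_{C_j}=\beta^2$ and the one-sided normal-derivative computation of Corollary \ref{c3}/Remark \ref{anycurve}---is correct but not needed for balancedness. In the paper that normal-derivative argument is reserved for the \emph{next} proposition, which asserts the stronger conclusion that spectral opers are \emph{positive} (all $\varepsilon_{j1}=\varepsilon_{j2}=1$ in Condition 2a). In effect your proposal fuses the proofs of two consecutive propositions; for the one at hand the argument is shorter than you anticipate. Your first obstacle (ruling out $\beta|_{C_j}\equiv 0$) is also flagged in the paper, which simply works under the standing assumption that this does not occur.
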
 

There is, however, another condition satisfied by spectral opers. 
Namely, let us say that a balanced oper $L$ is {\bf positive} 
if there is a solution $\bbe$ satisfying Condition 2a 
with $\varepsilon_{j1}=\varepsilon_{j2}=1$ for all $j$. 

\begin{proposition} Every oper appearing in the spectrum of Hecke operators is positive.
\end{proposition}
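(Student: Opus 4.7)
The plan is to pin down the two signs $\varepsilon_{j1}$ and $\varepsilon_{j2}$ entering the local expansion of $\bbe(x,\overline x)$ in Condition 2a by testing it, first on the oval $C_j$ itself, and then in the normal direction to $C_j$. The spectral eigenvalue $\bbe$ will automatically supply the necessary positivity information once we exploit two facts: the self-adjointness of $H_x$ at points $x\in X(\Bbb R)$, and the universal value $\pi$ of the one-sided normal derivative of $\bbe(x,\overline x)$ at the real locus recorded in Corollary \ref{c3} and Remark \ref{anycurve}.

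First I would fix an oval $C_j$ and restrict the eigenvalue $\bbe(x,\overline x)$ to $x\in C_j$. On the one hand, the formula from Condition 2a gives $\bbe(x,\overline x)|_{C_j}=\varepsilon_{j1}\,|f_j(x)|^2$, since $\mathrm{Im}(\overline{f_j(x)}g_j(x))$ vanishes on $C_j$. On the other hand, for $x\in X(\Bbb R)$ the Hecke operator $H_x$ for $G=PGL_2$, $\lambda=\omega_1$ is self-adjoint (it satisfies $H_x^\dagger=H_{x,\omega_1^*}=H_x$ by the general adjointness relation of Subsection \ref{heop}), so its eigenvalue $\beta(x)$ is real. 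The local analog of Lemma \ref{l1} (whose proof is local in the bundle geometry at $x$) then gives $\bbe(x,\overline x)|_{C_j}=\beta(x)^2\ge 0$. Since $f_j$ is a nonzero periodic solution of $Lf=0$ on $C_j$, $|f_j|^2$ is strictly positive on a dense open subset of $C_j$; combining the two expressions for $\bbe$ on that subset forces $\varepsilon_{j1}=+1$.

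Next I would compute the one-sided normal derivative of $\bbe$ at $C_j$ from the side $X_+$. Choose a local real coordinate $u$ along $C_j$ and write $x=u+iv$ with $v>0$ on the $X_+$-side. Since $f_j$ and $g_j$ are real-analytic on $C_j$ and holomorphic across it, the Taylor expansions $f_j(u+iv)=f_j(u)+iv\,f_j'(u)+O(v^2)$, $g_j(u+iv)=g_j(u)+iv\,g_j'(u)+O(v^2)$ yield
\[
|f_j(x)|^2=f_j(u)^2+O(v^2),\qquad \mathrm{Im}\bigl(\overline{f_j(x)}g_j(x)\bigr)=v\,W(f_j,g_j)+O(v^2)=\pi v+O(v^2).
\]
Substituting into the formula of Condition 2a (with the upper sign, since $v>0$), we get
\[
\bbe(x,\overline x)=f_j(u)^2+\varepsilon_{j2}\,\pi v+O(v^2),\qquad v\to 0^+.
\]
By Remark \ref{anycurve}, the one-sided normal derivative of the Hecke eigenvalue $\bbe$ at any real point of $X$ equals $\pi$, hence $\varepsilon_{j2}\,\pi=\pi$ and therefore $\varepsilon_{j2}=+1$.

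The main obstacle is conceptual rather than computational: verifying that the two local inputs I rely on, namely the identity $\bbe(x,\overline x)|_{x\in X(\Bbb R)}=\beta(x)^2$ and the universal value $\pi$ of the normal derivative, truly hold in this generality. Both have been established in Subsection \ref{fixpo} only in the genus-zero ramified setting (Lemma \ref{l1}, Corollary \ref{c3}), but both are local statements on $X$ near a real point: they depend only on how the Hecke kernel and its averaging degenerate as $x\to X(\Bbb R)$, and on the Wronskian normalization $W(f_j,g_j)=\pi$ coming from the oper equation. Once one checks that the same local analysis (integration over the projective line of Hecke modifications, and the computation of the leading normal behaviour from $\mathrm{Im}(\overline{f_j}g_j)=\pi v+O(v^2)$) carries over verbatim to real points of an arbitrary real curve, the argument is complete. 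A secondary subtlety is to handle the isolated zeros of $f_j$ on $C_j$: there $\bbe$ vanishes to higher order, but this only concerns a discrete set and does not affect the sign determination of $\varepsilon_{j1}$.
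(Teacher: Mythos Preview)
Your proposal is correct and follows essentially the same approach as the paper. The paper's proof is just a terse two-line version of yours: it invokes $\bbe_k(x,x)=\beta_k(x)^2$ on $X(\Bbb R)$ to pin down $\varepsilon_{j1}$ (phrased as ``all the same, so can be assumed to be $1$''), and then cites Remark~\ref{anycurve} directly for $\varepsilon_{j2}=1$, without writing out the Taylor expansion you give.
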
 

\begin{proof} Since $\bbe_k(x,x)=\beta_k(x)^2$ for $x\in X(\Bbb R)$,  
$\varepsilon_{j1}$ are all the same, so can be assumed to be $1$.  
Then it follows from Remark \ref{anycurve} that 
we also have $\varepsilon_{j2}=1$ for all $j$. 
\end{proof} 

\begin{conjecture} The spectrum of Hecke operators is labeled by positive balanced opers, possibly with finitely many eigenvalues corresponding to the same oper. 
\end{conjecture}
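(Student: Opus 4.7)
The plan has three parts: sharpening the forward inclusion (spectrum $\subseteq$ positive balanced opers), establishing surjectivity, and controlling multiplicities. The forward inclusion has already been assembled in the two propositions above; I would only tighten it to extract the uniqueness of the oper attached to an eigenvalue.

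Take a joint Hecke eigenfunction $\psi$ with eigenvalue $\bbe(x,\overline{x})$. The universal oper equations (the general-curve analog of Proposition \ref{opereq}, cf.\ Theorem \ref{mainthm}) yield $L\bbe=0$ and $\overline L\bbe=0$ for a $PGL_2$-oper $L$ determined by the quantum Hitchin eigenvalues of $\psi$. Since $\psi\in L^2$, the function $\bbe$ is single-valued on $X_+$ and $X_-$, continuous across each oval $C_j$ by Corollary \ref{c3} extended pointwise via Remark \ref{anycurve}, with a normal-derivative jump equal to the Wronskian $\pi$. Positivity $\bbe(x,x)=\beta(x)^2\ge 0$ fixes $\varepsilon_{j1}=1$, while the sign of the normal derivative from either side fixes $\varepsilon_{j2}=1$. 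Hence $L$ is positive balanced, and $\bbe$ coincides, up to a positive scalar, with the canonical function $\bbe_L$ determined by the positive balanced structure on $L$.

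For surjectivity, fix a positive balanced oper $L$. The key step is to produce a non-zero $L^2$ joint eigenfunction $\psi_L$ of $\mathcal{D}\otimes\overline{\mathcal{D}}$ on $\mathrm{Bun}^\circ_{G,s}(X,\tau)$ corresponding to $L$ and $\overline L$. Once $\psi_L$ is in hand, the Hecke operators commute with $\mathcal{D}\otimes\overline{\mathcal{D}}$ and preserve this generalized eigenspace; by the universal oper equations their action there is given by matrix-valued solutions of the system $L\bbe=\overline L\bbe=0$ with the balanced positivity boundary behavior, which by uniqueness reduce to scalar multiples of $\bbe_L$. Diagonalizing the compact commuting family $H_{x,\overline{x}}$ on this finite-dimensional space then produces genuine Hecke eigenfunctions with eigenvalue a scalar multiple of $\bbe_L$, realizing $L$ in the spectrum.

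The main obstacle is the construction of $\psi_L$: this is the existence half of the analytic Langlands correspondence, open in this generality. My approach is a degeneration argument: deform $(X,\tau)$ inside the moduli of real curves to a nodal limit whose normalization is a disjoint union of genus-zero real curves with real ramification points, in which case Theorem \ref{twii} and the analysis of Subsection \ref{pgl2} produce the required spectral data; the positivity and unipotency conditions deform continuously, being respectively open and closed of the expected codimension. A smoothing argument combined with norm-continuity of $H_{x,\overline{x}}$ in parameters transports the spectral decomposition back to $X$. Finite multiplicity per oper then follows from compactness of $H_{x,\overline{x}}$ (Conjecture \ref{spedeccon}) together with the fact that lifts of the oper monodromy on $X_+$ to a global structure compatible with $\tau$ form a torsor over $H^1(\mathbb{Z}/2,Z^\vee)$, mirroring the counting of L-system extensions in the fixed-point-free case of Subsection \ref{nofix}.
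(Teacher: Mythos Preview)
The statement you are trying to prove is stated in the paper as a \emph{conjecture}, not a theorem; the paper offers no proof of it. What the paper does establish is only the forward inclusion: the two propositions immediately preceding the conjecture show that every spectral oper is balanced and positive. The reverse inclusion (surjectivity) and the finiteness of multiplicities are left open, and your proposal does not close either gap.

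Your surjectivity argument has a genuine hole, which you in fact name yourself: the construction of a nonzero $L^2$ eigenfunction $\psi_L$ for a given positive balanced oper $L$ is precisely the existence direction of the analytic Langlands correspondence, and it is open in this generality. Your proposed degeneration to nodal real curves is a heuristic, not a proof: you would need to show that positive balanced opers deform in families without collision or escape, that the spectral decomposition varies continuously (norm-continuity of $H_{x,\overline x}$ is itself conjectural here), and that eigenfunctions can be transported back through the smoothing---none of which is established. Even in the genus-zero case cited (Theorem \ref{twii}), only the inclusion $\Sigma_{\bLa}\subset\mathcal{B}_{\bLa}$ is proved; the equality is stated as an expectation. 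Your multiplicity bound likewise rests on Conjecture \ref{spedeccon} (compactness of Hecke operators), which is unproved in this setting, and on an unverified torsor description of the ambiguity. In short, your write-up is a reasonable outline of what a proof might eventually look like, but it is not a proof, and the paper does not claim one.
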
 

\begin{example} Let $X$ be of genus $2$ with $X(\Bbb R)$ having 3 components, $C_1,C_2,C_3$ that cut $X$ into two trinions $X_+,X_-$. Thus ${\rm g}_+=0$, so $\pi_1(X_+)$ is generated by $\delta_j$, $j=1,2,3$, with defining relation  $\delta_1\delta_2\delta_3=1$. So Condition 1, saying that all $A_i:=\rho(\delta_i)$ are unipotent, implies that they all commute, as $A_1A_2A_3=1$. 
In spite of having three equations (${\rm Tr}\rho(\delta_i)=2, i=1,2,3$), 
one can show that the space of such (real) local systems on $X$ is of codimension $2$ (i.e., 4-dimensional over $\Bbb R$) so it is {\bf  not} a complete intersection. However, 
Condition 2 provides one more equation to get a 3-dimensional real manifold. 
Therefore if appropriate transversality holds, then Conditions 1, 2 and  the oper condition imply the 
discreteness of the spectrum. Namely, isomorphism classes of (nontrivial) unipotent representations of the group $\pi_1(X_+,x_0)=F_2$ are labeled by one complex parameter $\kappa$, which lives in $\Bbb C\Bbb P^1$ (namely, $\rho(\delta_2)=\rho(\delta_1)^\kappa$). So Condition 2
just tells us that $\kappa\in \Bbb R\Bbb P^1$.  

Let us write down a formula for $\bbe(x,\overline x)$ in this case, for $\rho=\rho_L$. 
If Condition 1 holds then we have a global holomorphic solution 
$f(x)$ of the equation $Lf=0$ on $X_+$, generically unique up to scaling. We can normalize it to be real 
on $C_1$. We also have another holomorphic solution
$g(x)$ on $X_+$ which is changed by a multiple of $f(x)$ when we go around cycles, which we can normalize so that $W(f,g)=\pi$. 
Then $g$ is uniquely determined up to adding a multiple of $f$. 
We can make sure that $g$ is real on $C_1$, then the remaining freedom is adding to $g$ a real multiple of $f$. Now consider the function 
$$
\bbe(x,\overline x):=|f(x)|^2+{\rm Im}(\overline{f(x)}g(x)). 
$$
Note that this function is independent on the choice of $g$, and is single-valued since $\kappa$ is real. This function is the eigenvalue of $H_{x,\overline x}$ 
when $f$ is normalized so that $f(x)|_{C_1}=\beta(x)$ (then this will also hold on $C_2$ and $C_3$, up to sign). Thus we see that in this case every balanced oper is automatically positive. 
\end{example} 

\begin{example} Suppose $X$ has genus ${\rm g}\ge 3$ with $X(\Bbb R)$ 
having ${\rm g}+1$ components $C_1,...,C_{{\rm g}+1}$; thus ${\rm g}_+=0$ so $\pi_1(X_+)$ 
is generated by $\delta_j$ with $\prod_{j=1}^{{\rm g}+1}\delta_j=1$. Then Condition 1 imposes ${\rm g}+1$ constraints on the local system: we have that $A_j:=\rho(\delta_j)$ are unipotent for $j=1,...,{\rm g}+1$. Moreover, now this actually defines a complete intersection (unlike the previous example, which is a degenerate case). Once this condition is imposed, our representation $\rho$ of $\pi_1(X_+,x_0)=F_{\rm g}$ is a point of the {\bf  unipotent $SL_2$-character variety} $\M_{0,{\rm g}+1}^{\rm unip}$ for the sphere with ${\rm g}+1$ holes, which has (complex) dimension $2({\rm g}-2)$. Thus the condition that this point is real is $2({\rm g}-2)$ real equations. So altogether we get 
${\rm g}+1+2({\rm g}-2)=3{\rm g}-3$ real equations, i.e. if appropriate transversality holds then we should get a real submanifold of middle dimension $3{\rm g}-3$ in the $6{\rm g}-6$-dimensional manifold of local systems, as needed for discrete spectrum.  
\end{example}  

\begin{example} More generally, suppose $X(\Bbb R)$ is the union of $C_1,...,C_n$ where $n\le {\rm g}+1$. Then $X_+$ has genus ${\rm g}_+=\frac{{\rm g}+1-n}{2}$ (so ${\rm g}+1-n$ must be even). So Condition 1 gives us $n$ real equations, and then we end up in the unipotent character variety $\M_{ \frac{{\rm g}+1-n}{2},n}^{\rm unip}$, which has (complex) dimension 
$d=3({\rm g}+1-n)-6+2n=3{\rm g}-3-n$. So altogether we again get $3{\rm g}-3-n+n=3{\rm g}-3$ real equations, as needed for discrete spectrum. 
\end{example} 

\subsection{Real ovals, non-separating real locus}

Now suppose that $X(\Bbb R)$ still consists of $n$ circles $C_1,...,C_n$ of real type but now is non-separating. 
To handle this case, let $\Sigma$ be a connected non-orientable surface with $n$ holes of Euler characteristic $\chi$, and let $\M_{\chi,n}^{{\rm unip},-}$ be the corresponding unipotent character variety. 
Note that $\Sigma$ can be obtained by gluing $s=1$ or $2$ M\"obius strips into an orientable surface $\Sigma_+$ with $n+s$ holes and the same Euler characteristic $\chi$. We have $2-2{\rm g}(\Sigma_+)-n-s=\chi$, so ${\rm g}(\Sigma_+)=1-\frac{n+s+\chi}{2}$, and $s$ is such that $n+s+\chi$ is even. 
Thus $\M_{{\rm g}(\Sigma_+),n+s}^{\rm unip}$ has dimension $-3(n+s+\chi)+2(n+s)=-n-s+3\chi$. 
So $\dim \M_{\chi,n}^{{\rm unip},-}=-n-3\chi$ (since we no longer have unipotency condition at $s$ of the $n+s$ holes, where we glue in a M\"obius strip). 

Now let $\Sigma=X/\tau$. This is a non-orientable surface of Euler characteristic $\chi=1-{\rm g}$ and $n$ holes. So $\dim \M_{\chi,n}^{{\rm unip},-}=-n-3\chi=3{\rm g}-3-n$. Thus 
Condition 1 gives $n$ equations, and the real locus in $\M_{\chi,n}^{{\rm unip},-}$ another 
$3{\rm g}-3-n$ equations, so altogether we get $3{\rm g}-3$ equations, again as needed. 

\subsection{Quaternionic ovals}\label{quatov}

For a quaternionic oval $C_j$ the Hecke operator $H_x$ for $x\in C_j$ 
is not defined, so the function $\beta(x)$ is not defined either. As a result, 
it is not hard to show that 
$$
\lim_{x\to C_j}H_{x,\overline x}=0
$$ 
and thus $\bbe(x,\overline x)=0$ 
for $x\in C_j$. More specifically, the function $\bbe(x,\overline x)$
near $C_j$ has the form 
\begin{equation}\label{quateq} 
\bbe(x,\overline x)=\pm {\rm Im}(\overline{f_j(x)}g_j(x)),
\end{equation} 
where as before $f_j,g_j$ are real solutions of the oper on $C_j$ such that $W(f_j,g_j)=\pi$, so 
we have 
$$
\bbe(x,\overline x)\sim \pi |b|
$$
where $b$ is the distance from 
$x$ to the oval (this makes sense because $\bbe$ is not a function but actually
a $-1/2$-density, i.e., $|b|$ is really $|db/b|^{-1}$). However, 
the normalization of $f_j$ is now not fixed, so we are free to multiply $f_j$ by a nonzero real scalar and divide $g_j$ by the same scalar. 

Also, we no longer have a condition that monodromy around $C_j$ is unipotent. It only has to have real eigenvalues $\mu_j^{\pm 1}$, so it can preserve the indefinite Hermitian form defined by $\bbe$ (see \eqref{quateq}), and $f_j,g_j$ are the corresponding eigenvectors.  

\subsection{Conditions with ramification points}

In presence of ramification points on $C_j$ (in which case in absence of twists $C_j$ is necessarily of real type), the story should be the same, except that by monodromy along $C_j$ we should mean monodromy ``in the sense of principal value", as explained in \cite{EFK3}, Subsection 4.7; i.e., when we continue through a ramification point, we take minus the half-sum of upper and lower analytic continuation. So for genus $0$ we recover exactly the answer from \cite{EFK3}, Subsection 4.7. In this case the real locus divides $\Bbb C\Bbb P^1$ into two disks, which are simply connected, so we don't have any conditions similar to Subsection \ref{nofix}, and the balancing conditions on $\rho_L$ can be formulated solely in terms of the neighborhood of the real locus. This is exactly what happens in \cite{EFK3}, Subsection 4.7.

\section{Analytic Langlands correspondence and Gaudin model} 

In this section we discuss the Gaudin model and its
  generalizations and relate these models to various settings of the
  analytic Langlands correspondence on $\pone$ with ramification
  points over $\Bbb R$ and $\Bbb C$.

Initially, the Gaudin model associated to a simple Lie algebra $\g$
was defined for the tensor products of finite-dimensional
representations of $\g$. But in fact the Gaudin Hamiltonians (which we
have already encountered in Subsection \ref{Differential equations for
  Hecke operators} in the case $\g=\sw_2$) give rise to
well-defined
commuting operators on the tensor product of any representations of
$\g$. If this tensor product has a weight space decomposition with
finite-dimensional weight spaces, then since these subspaces are
preserved by the Gaudin Hamiltonians, the corresponding spectral
problem is clearly well-defined. This is the case, for example, when
all representations are of highest weight or lowest weight.

If this is not the case, the spectral problem may still be well-defined if there is a natural Hilbert space structure on a completion of this tensor product and the Gaudin Hamiltonians can be extended to self-adjoint strongly commuting operators on it. This conjecturally happens when the representations of $\g$ come from tempered unitary representations of a connected real Lie group $G(\Bbb R)$
whose complexified Lie algebra is $\g$; for example, this happens for
representations of the unitary principal series of $SL_2(\Bbb R)$. The
traditional methods of Bethe Ansatz can no longer be used in this
case. But here we get into the setting of the analytic Langlands
correspondence for $G=SL_2$, $X=\Bbb P^1$, and $F=\Bbb R$ with real
ramification points discussed in Section \ref{R} (with the Gaudin
Hamiltonians being the Hitchin Hamiltonians). Hence we can use the
results of Section \ref{R} to describe the spectrum of the Gaudin Hamiltonians.

In fact, we will show that even the original case of the tensor
product of finite-dimensional representations of $\g$ can be
interpreted in the framework of the analytic Langlands correspondence
(namely, it appears in the quaternionic case discussed in Subsection
\ref{quatov}). Applying our results, we obtain a new interpretation of
the description of the spectrum of the Gaudin Hamiltonians in this
case in terms of monodromy-free opers \cite{F1,R}. This description is
closely related to the Bethe Ansatz in the Gaudin model. We discuss
all this in Subsections \ref{monfre}--\ref{Gcase} and
\ref{exbethe}--\ref{baxop}.

In Subsection \ref{comwei} we consider the case of the tensor product
of the infinite-dimensional contragredient Verma modules with
arbitrary highest weights. In this case the weight subspaces of the
tensor product are finite-dimensional and the spectral problem for the
Gaudin Hamiltonians is well-defined. It is natural to expect that the
spectrum is given by opers whose monodromy is contained in a Borel
subgroup of $G^\vee$, the Lie group of adjoint type associated to the
Lie algebra $\g^\vee$. In a follow-up paper \cite{EF} we
  intend to prove this result using the tools of the present paper.

In Subsection \ref{disser} we interpret this result, in the case when
the highest weights satisfy a certain reality condition, as a
description of the spectrum for the tensor product of the holomorphic
discrete series representations of $G(\Bbb R)$. In
  Subsection \ref{chirlan} we introduce chiral versions of the Hecke
  operators acting on the tensor product of contragredient Verma
  modules.

Next, in Subsection \ref{infdi} we discuss the infinite-dimensional
case. First, we describe the spectrum for the tensor product of
unitary principal series representations in terms of balanced
opers. This is essentially the statement of Theorem \ref{twii}. This
case is very interesting because we cannot use the ordinary Bethe
Ansatz method (since these representations don't have highest weight
vectors). We also comment on the case of a tensor product of discrete
series representations involving both holomorphic and anti-holomorphic
ones.

In Subsection \ref{double} we interpret the analytic Langlands
correspondence for $\pone$ with parabolic structures over $\Bbb C$
(discussed in Subsection \ref{Differential equations for Hecke
  operators}) as a ``double'' of the Gaudin model.

In all of these settings, our description of the spectrum relies on
the existence of the Hecke operators, which commute with the Gaudin
Hamiltonians and satisfy differential equations (the universal oper
equations). These equations can be used to describe the analytic properties of the opers encoding the possible eigenvalues of the Gaudin Hamiltonians.

Interestingly, the role of the Hecke operators is played by the Gaudin
model analogues of Baxter's $Q$-operators or closely related operators
(see Subsection \ref{baxop}). This allows us to regard the
  generalized Bethe Ansatz method and Baxter's $Q$-operators as
  arising from special cases of the tamely ramified analytic Langlands
  correspondence in genus $0$. We also discuss a $q$-deformation of
  this story, which has to do with the quantum integrable
    models of XXZ type associated to the quantum affine algebra
    $U_q(\ghat)$, in Subsections \ref{qdefo} and \ref{qdefo1}.

\subsection{The Gaudin model and monodromy-free opers}\label{monfre}
Let $\g$ be a simple Lie algebra over $\Bbb C$ and $G$ the connected
simply-connected algebraic group with the Lie algebra $\g$. Let
$\g=\n_+\oplus \h\oplus \n_-$ be a triangular decomposition of $\g$
with Weyl group $W\subset {\rm Aut}(\h)$. Let $\Lambda_G^+\subset
\h^*$ be the set of dominant integral weights of $\g$ (and $G$), and
let $V_{\la}$ be the finite-dimensional irreducible representation of
$G$ with highest weight $\la\in \Lambda_G^+$.

Let
$$
\{ t_i \} := \{ t_0,t_1,\ldots,t_{m+1}=\infty \}
$$
be a collection of $(m+2)$ distinct points on $\pone$, with the last
point $t_{m+1}$ identified with the point $\infty$ (with respect to a
once and for all chosen global coordinate $x$ on $\pone$). Fix
$\lambda_i\in \Lambda_G^+$, $0\le i\le m+1$, and let
\begin{equation}\label{spaceH} 
\mathcal H := (V_{\la_0}\otimes...\otimes V_{\la_{m+1}})^{\g} \simeq
(V_{\la_0}\otimes...\otimes V_{\la_{m}})^{\n_+}[\la_{m+1}^*].
\end{equation} 
Thus, $\mathcal H$ is the subspace of singular vectors of weight
$\lambda_{m+1}^*=-w_0(\la_{m+1})$ in $V_{\la_0}\otimes...\otimes
V_{\la_{m}}$, where $w_0\in W$ is the maximal element.

The space $\mathcal H$ carries an action of a commutative subalgebra ${\mc G}\subset (U(\g)^{\otimes m+1})^\g$ of the {\bf generalized Gaudin Hamiltonians} introduced in \cite{FFR}
(see also \cite{F:icmp,F1}).\footnote{This algebra is called the {\bf Bethe algebra} or the {\bf Gaudin algebra}.} 
The algebra ${\mc G}$ includes the original (quadratic) Gaudin
Hamiltonians
\begin{equation}    \label{Hi}
G_i := \sum_{j \neq i} \sum_a \frac{J^{a(i)}J_a^{(j)}}{t_j-t_i},
\end{equation}
where $\{ J^a \}$ and $\{ J_a \}$ are two bases of the Lie algebra
$\g$ dual to each other with respect to the normalized non-degenerate
invariant bilinear form. In the case $\g=\sw_2$, the algebra ${\mc G}$ is
generated by the $G_i$'s. For groups of rank greater than 1, there are
also higher Gaudin Hamiltonians.

Joint eigenvectors and eigenvalues of the algebra ${\mc G}$ in ${\mc
  H}$ have been constructed explicitly in \cite{FFR} generalizing the
classical Bethe Ansatz method in the case $\g=\sw_2$ (in \cite{RV} an
alternative proof was given that these vectors are eigenvectors of the
$G_i$'s). It has been proved in \cite{SV} (see also \cite{F:icmp})
that for $\g=\sw_2$ these eigenvectors form a basis in ${\mc H}$ if the
collection $\{ t_i \}$ is generic. But for other groups this is not
always the case. The reasons for this are explained in \cite{F1},
Section 5.5. An explicit counterexample in the case $\g=\sw_3$ has
been given in \cite{MV}.

However, an alternative description of the spectrum of the algebra
${\mc G}$ on the space ${\mc H}$, which does not rely on explicit
formulas for the eigenvectors, was conjectured in \cite{F1} (following
\cite{F:icmp}) and proved in \cite{R}. Namely, let $G^\vee$ be the
Langlands dual group of $G$ (thus, $G^\vee$ is the Lie group of
adjoint type with associated with the Lie algebra $\g^\vee$ which is
Langlands dual to $\g$). There is a bijection between the joint
spectrum of the algebra ${\mc G}$ of generalized Gaudin Hamiltonians
on ${\mc H}$ given by formula \eqref{spaceH} (without multiplicities)
and the set of {\bf monodromy-free} $G^\vee$-opers on $\pone$ with
regular singularities at the points $t_i$, with {\bf residues}
$\varpi(-\lambda_i-\rho) \in \h^*/W$, where $\varpi$ is the projection
$\h^*\to \h^*/W$.\footnote{The notion of residue was introduced in
\cite{BD}, Section 3.8.11; see also \cite{F:loop}, Subsection 9.1.}
(In the case of $\g=\sw_n$, a similar result for the spectrum of
a certain deformation of ${\mc G}$ follows from
\cite{MTV}.)  Moreover, it is shown in \cite{R} that for a generic
collection $\{ t_i \}$ the algebra ${\mc G}$ is diagonalizable on
${\mc H}$ and its spectrum is simple. The precise statement is given
in Theorem \ref{BAM} below.

\subsection{The Gaudin model for $\g=\sw_2$}\label{sl2case}

Consider first the Gaudin model for $\g=\sw_2$. Note that
$w_0(\la)=-\la$, so $\lambda^*=\la$ for all weights $\la$. We identify
the dominant integral weights $\la_i$ with non-negative integers and
define $n$ by the formula
\begin{equation}    \label{weight}
2n = \sum_{i=0}^m \lambda_i - \la_{m+1}.
\end{equation}
 For the space
\begin{equation}    \label{spst}
{\mc H} = \left( \bigotimes_{i=0}^m V_{\la_i}
\right)^{\n_+}[\la_{m+1}]
\end{equation}
to be non-zero, $n$ must be a non-negative integer.

The algebra ${\mc G}$ of Gaudin Hamiltonians is in this case generated
by the $G_i$'s given by formula \eqref{Hi}. The original formulation
of Bethe Ansatz for diagonalization of these operators is the
following. Given a collection $\bold w = \{ w_1,\ldots,w_n \}$ of
distinct complex numbers such that $w_j \neq t_i$ for all $i$ and $j$,
define the {\bf Bethe vector} by the formula
\begin{equation}    \label{bvec}
  v_{\bold w} := f(w_1) \ldots f(w_m) v,
\end{equation}
where
\begin{equation}    \label{fw}
f(w) := \sum_{i=0}^m \frac{f_i}{w-t_i}
\end{equation}
and $v$ is the tensor product of the highest weight vectors of the
representations $V_{\la_i}$.

The following is the system of {\bf Bethe Ansatz equations} on the
numbers $w_j, j=1,\ldots,n$:
\begin{equation}    \label{BAE}
\sum_{i=0}^m \frac{\la_i}{w_j-t_i} = \sum_{s \neq j} \frac{2}{w_j-w_s}, \qquad
  j=1,\ldots,n.
\end{equation}

\begin{theorem}[\cite{SV}]    \label{SV}
  For a generic collection $\{ t_i \}$, the vectors $v_{\bold w}$ with
  ${\bold w} = \{ w_1,\ldots,w_n \}$ satisfying the system \eqref{BAE}
  form an eigenbasis of ${\mc H}$. The eigenvalue $\mu_i$ of $G_i$ on
  $v_{\bold w}$ is given by the formula
\begin{equation}    \label{mui}
  \mu_i = \la_i\left(\sum_{k\ne i}\frac{\la_k}{2(t_i-t_k)}-\sum_{j=1}^n
  \frac{1}{t_i-w_j}\right).
\end{equation}
\end{theorem}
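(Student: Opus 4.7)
The plan is to establish two separate statements: first, that whenever $\bold w = \{w_1,\ldots,w_n\}$ satisfies the system \eqref{BAE}, the vector $v_{\bold w}$ is an eigenvector of each $G_i$ with the prescribed eigenvalue $\mu_i$; and second, that for generic $\{t_i\}$ the collection of such Bethe vectors, taken over all unordered solutions of \eqref{BAE}, exhausts a basis of $\mc H$.

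For the eigenvector property, I would argue by a direct commutator computation, essentially the original one of Gaudin. Using the decomposition $\Omega = e\otimes f + f \otimes e + \frac{1}{2} h\otimes h$ one expands $G_i = \sum_{k\ne i}\Omega_{ik}/(t_i - t_k)$ and computes $[G_i,f(w)]$ via the $\sw_2$ relations. Acting on $v_{\bold w} = f(w_1)\cdots f(w_n) v$ and commuting $G_i$ to the right past each factor $f(w_j)$, one arrives at an identity schematically of the form
\[
G_i v_{\bold w} = \mu_i^{\mathrm{vac}}(i)\, v_{\bold w} + \sum_{j=1}^n \frac{A_j(\bold w)}{t_i - w_j}\, \widetilde v_{\bold w,j},
\]
where $\mu_i^{\mathrm{vac}}(i) = \sum_{k\ne i}\la_i \la_k/(2(t_i-t_k))$ is the vacuum eigenvalue (obtained from $G_i v = \mu_i^{\mathrm{vac}}(i) v$), $\widetilde v_{\bold w,j}$ is the vector obtained from $v_{\bold w}$ by replacing $f(w_j)$ with a specific insertion of $h(w_j)$ or a derivative, and $A_j(\bold w)$ equals $\sum_{k}\la_k/(w_j-t_k) - \sum_{s\ne j}2/(w_j-w_s)$, i.e. precisely the LHS minus RHS of the $j$-th Bethe equation. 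Regrouping and using partial fractions, the contributions of the non-singular terms combine with $\mu_i^{\mathrm{vac}}(i)$ to yield exactly the eigenvalue \eqref{mui}, while the singular terms vanish whenever $\bold w$ solves \eqref{BAE}.

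For the basis property, I would combine two ingredients. The first is a count of solutions: the Bethe equations \eqref{BAE} are precisely the critical-point equations of the master function
\[
\Phi(\bold w) = \prod_{i=0}^m \prod_{j=1}^n (w_j - t_i)^{-\la_i}\prod_{1\le j<k\le n}(w_j - w_k)^2,
\]
and a classical asymptotic/Morse argument (degenerating the $t_i$ so that critical points split into products of lower-dimensional ones) shows that for generic $\{t_i\}$ the number of unordered solutions equals $\dim\mc H$. The second is linear independence of the corresponding Bethe vectors: for generic $\{t_i\}$ distinct unordered solutions $\bold w$ produce distinct tuples $(\mu_0,\ldots,\mu_m)$ of eigenvalues of the commuting operators $G_i$, so the Bethe vectors are automatically linearly independent (one can confirm genericity of the spectrum by degenerating $t_{m+1}\to\infty$ and reducing to a product of decoupled Gaudin models on pairs of points, whose spectra can be computed explicitly). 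Together with the previous paragraph, this shows the Bethe vectors form an eigenbasis.

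The main obstacle is the second step, and within it the sharp count of solutions to the Bethe equations and the generic simplicity of the spectrum; both require delicate genericity arguments that form the technical core of \cite{SV}. A conceptually cleaner alternative, closer in spirit to the present paper, is to bypass the explicit counting by identifying the spectrum of $\mc G$ on $\mc H$ with the set of monodromy-free $PGL_2$-opers on $\pone$ with regular singularities at the $t_i$ and residues $\varpi(-\la_i - \rho)$, as in \cite{F1,R} (see Theorem \ref{BAM}). Each such oper determines its singular points $w_j$ as the zeroes of a canonically associated solution (the apparent-singularity loci), and those $w_j$ then satisfy the Bethe equations; conversely each Bethe solution reconstructs a monodromy-free oper. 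This identification, together with the simple-spectrum statement proved in \cite{R}, gives both the count and the completeness, reducing Theorem \ref{SV} to the eigenvector computation of the first paragraph.
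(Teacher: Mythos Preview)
The paper does not give its own proof of this theorem: it is stated with the attribution \cite{SV} and immediately followed by the remark ``This result is referenced as completeness of Bethe Ansatz for $\g=\sw_2$,'' with no argument. It is used later as an input (e.g.\ in the proof of Theorem~\ref{sl2verma} and in Lemma~\ref{nomon}), so within this paper it functions as a black box from the literature. There is therefore no ``paper's own proof'' to compare your proposal against.

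That said, your sketch is a reasonable outline of how \cite{SV} actually proceeds. The eigenvector computation in your first paragraph is the standard Gaudin argument and is correct in spirit. For the completeness step, two remarks. First, your parenthetical ``degenerating $t_{m+1}\to\infty$'' is a slip: in the setup $t_{m+1}$ is already fixed at $\infty$; the relevant degeneration in \cite{SV} is rather a collision/clustering of the finite marked points $t_0,\ldots,t_m$, which factorizes the master function and lets one count critical points inductively. Second, your ``cleaner alternative'' via Theorem~\ref{BAM} and \cite{R} is logically valid but expository overkill and chronologically backwards: \cite{R} is a much later and far more general result, and in this paper Theorem~\ref{SV} is presented precisely as the elementary $\sw_2$ precursor that motivates the oper picture. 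If you want to stay self-contained in the $\sw_2$ case, the honest route is the one in \cite{SV}: count nondegenerate critical points of the master function and check they match $\dim\mc H$, then invoke the eigenvector computation; linear independence then follows either from distinctness of eigenvalues (which does require a separate genericity argument) or, as in \cite{SV}, from an orthogonality argument with respect to the Shapovalov form.
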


This result is referenced as {\bf completeness} of Bethe Ansatz for
$\g=\sw_2$.

Analogs of Bethe vectors have been constructed for an arbitrary Lie
algebra $\g$ \cite{FFR,RV}. Unfortunately, they do not give an
eigenbasis for a general $\g$ even for a generic collection $\{ t_i
\}$, so Bethe Ansatz is incomplete; a counterexample has been found
already for $\g=\sw_3$ \cite{MV}.

Luckily, there is an alternative approach to describing the joint
spectrum of the Gaudin Hamiltonians on the space ${\mc H}$ given by
\eqref{spaceH}. It uses a realization \cite{FFR,F:icmp,F1} of the
algebra of Gaudin Hamiltonians as the quotient of the center of the
completed enveloping algebra of the affine Kac-Moody algebra at the
critical level and its isomorphism with the algebra of functions on
the space of $G^\vee$-opers on the punctured disc \cite{FF,F:loop}. We
will now explain this approach in the case of $\g=\sw_2$ and connect it to
the Bethe Ansatz discussed above.

For $\g=\sw_2$, we have $G=SL_2$ and $G^\vee = PGL_2$. The Bethe
Ansatz equations can be interpreted in terms of monodromy-free
$PGL_2$-opers on $\pone$ as follows.

Recall that a $PGL_2$-oper is a second order-differential operator
acting from $K^{-{1\over 2}}$ to $K^{3\over 2}$ of the form $\pa_x^2 -
v(x)$. Such an oper is said to have a regular singularity at the point
$x=t$ with residue $\varpi(\la+1)=\frac{1}{2}(\lambda+1)^2$ if its
expansion near this point has the form
$$
\pa_x^2 - \tfrac{\la(\la+2)}{4(x-t)^2} + O(\tfrac{1}{x-t}),\ x\to t
$$
(where for $t=\infty$ we take the expansion in $1/x$). 

%Here $\varpi$ denotes the natural projection $\h^* \to \h^*/W$, where
%$\h$ is the Cartan subalgebra of $\g = {\mathfrak s}{\mathfrak l}_2$
%which we identify with the Cartan subalgebra $\h^\vee$ of $\g^\vee =
%{\mathfrak s}{\mathfrak l}_2$ (note that $\varpi(\la+1) =
%\varpi(\la+\rho) = \varpi(-\la-\rho)$ in this case). 

\begin{lemma}    \label{nomon}
If
\begin{equation}    \label{miura}
  \pa_x^2 - v(x) = (\pa_x - u(x))(\pa_x + u(x)),
\end{equation}
where
\begin{equation}    \label{uz}
  u(x) = \sum_{i=0}^m \frac{\la_i}{2(x-t_i)} - \sum_{j=1}^n
  \frac{1}{x-w_j}
\end{equation}
for some distinct $w_j, 1 \leq j \leq n$, with $w_j \neq t_i$,
satisfying the system \eqref{BAE}, then the $PGL_2$-oper $\pa_x^2 -
v(x)$ on $\pone$ has trivial monodromy representation
\begin{equation}    \label{mon}
  \pi_1(\pone \bs \{t_0,\ldots,t_m,t_{m+1} = \infty \}) \to PGL_2
\end{equation}
and regular singularities at the points $t_i$, with residues
$\varpi(\la_i+1)$, $\lambda_i\in \Z_{\geq
  0}$, $0\le i\le m+1$.

Moreover, the converse is also true for a generic collection
$\{t_0,\ldots,t_m,t_{m+1} = \infty \}$.
\end{lemma}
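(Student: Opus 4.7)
The forward implication reduces to a direct local computation; the converse amounts to reverse-engineering this computation from a suitably chosen single-valued solution, the only delicate point being a genericity argument at the end.

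For the forward direction, expanding the factorization gives $v = u^2 - u'$. Near $t_i$, writing $u = \frac{\lambda_i/2}{x-t_i} + r_i(x)$ with $r_i$ holomorphic, the coefficient of $(x-t_i)^{-2}$ in $v$ equals $\frac{\lambda_i(\lambda_i+2)}{4}$, which is the prescribed residue $\varpi(\lambda_i+1)$. Near $w_j$, writing $u = -\frac{1}{x-w_j} + a_j + O(x-w_j)$ with $a_j = \sum_{i=0}^m \frac{\lambda_i}{2(w_j-t_i)} - \sum_{s\ne j}\frac{1}{w_j-w_s}$, the $(x-w_j)^{-2}$ terms of $u^2$ and $u'$ cancel and the $(x-w_j)^{-1}$ coefficient of $v$ is $-2a_j$, so \eqref{BAE} is exactly the condition that $v$ be holomorphic at every $w_j$. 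A parallel local analysis at $\infty$, in which \eqref{weight} forces $u\sim \lambda_{m+1}/(2x)$, yields the residue $\varpi(\lambda_{m+1}+1)$ there. To obtain trivial monodromy I would produce two solutions that are single-valued up to an overall sign:
$$
y_1 = \exp\Bigl(-\int u\,dx\Bigr) = \prod_{i=0}^m (x-t_i)^{-\lambda_i/2}\prod_{j=1}^n (x-w_j),
$$
whose monodromy at $t_i$ is $e^{-\pi i\lambda_i}\in\{\pm 1\}$ and hence trivial in $PGL_2$, together with $y_2 = y_1\int y_1^{-2}\,dx$. Single-valuedness of $y_2$ follows because $y_1^{-2}=\prod_i(x-t_i)^{\lambda_i}\prod_j(x-w_j)^{-2}$ is holomorphic at each $t_i$ (since $\lambda_i\in\Z_{\geq 0}$) and has vanishing residue at each $w_j$, the latter being a short computation that reproduces \eqref{BAE} once more.

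For the converse, I would start from the observation that trivial $PGL_2$-monodromy together with integer exponent differences at each $t_i$ rules out logarithmic terms, so local solutions at $t_i$ have pure-power behavior $(x-t_i)^{-\lambda_i/2}$ and $(x-t_i)^{(\lambda_i+2)/2}$. Since the condition on a global solution to have vanishing coefficient of the lower-exponent local solution at a given $t_i$ defines a proper linear subspace of the two-dimensional global solution space, finitely many such subspaces cannot cover $\C^2$, so I can choose a single-valued $y_1$ whose leading exponent at every $t_i$ (including $\infty$) is $-\lambda_i/2$. The asymptotic at $\infty$ combined with \eqref{weight} then forces $Q(x) := y_1(x)\prod_{i=0}^m(x-t_i)^{\lambda_i/2}$ to be a polynomial of degree exactly $n$; writing $Q(x)=c\prod_j(x-w_j)^{k_j}$, the function $u := -y_1'/y_1$ is of the form \eqref{uz} provided every $k_j = 1$, and the factorization $\partial_x^2-v=(\partial_x-u)(\partial_x+u)$ is automatic. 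The Bethe equations then reappear by running the forward residue computation in reverse, since regularity of $v$ at each $w_j$ is equivalent to $a_j = 0$.

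The chief obstacle is the genericity clause in the converse, namely ensuring that the polynomial $Q$ has only simple roots. This is a nontrivial condition on the oper, and hence on $\{t_i\}$. The route I would pursue is to show that the locus of monodromy-free opers whose associated $Q$ acquires a repeated root corresponds to a collision of Bethe roots, which is a proper closed subvariety of the configuration space of $\{w_j\}$; combining this with a dimension count for the space of monodromy-free opers with the prescribed residues, and with the completeness result of \cite{SV} (and its extensions \cite{MV,R,MTV}) guaranteeing that for generic $\{t_i\}$ every such oper arises from a distinct Bethe configuration with simple roots, would close the argument. Alternatively, one can simply invoke Theorem \ref{BAM} below, which establishes the full bijection between the joint spectrum of the Gaudin Hamiltonians on $\mathcal H$ and monodromy-free opers with Bethe roots in simple-root position, yielding the converse as a direct corollary.
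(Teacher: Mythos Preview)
Your forward direction is correct and matches the paper's argument (the paper is terser but uses the same two solutions $\Phi$ and $\Phi\int\Phi^{-2}$).

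In the converse, however, there is a genuine gap in your choice of $y_1$. At each finite $t_i$ the requirement ``leading exponent $-\lambda_i/2$'' is an \emph{open} condition (avoid a line in the $2$-dimensional solution space), and your covering argument handles those. But to get $\deg Q=n$ you need $\Phi(x)\sim x^{-\lambda_{m+1}/2}$ as $x\to\infty$, and this is the \emph{subdominant} growth there (the dominant one is $x^{1+\lambda_{m+1}/2}$). So the condition at $\infty$ is ``lie on a specific line,'' not ``avoid a line,'' and it pins down $y_1$ uniquely up to scalar. Once $y_1$ is fixed this way, there is no freedom left to dodge the bad lines at the finite $t_i$'s; whether this particular $y_1$ has nonzero $-\lambda_i/2$--coefficient at every $t_i$ is a nontrivial statement about the oper, not about linear algebra in $\C^2$. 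This is exactly where the paper invokes Theorem~13 of \cite{SV}, which guarantees for generic $\{t_i\}$ that the degree-$n$ solution $Q$ has no root at any $t_i$.

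You also misidentify what genericity is needed for. Simple roots of $Q$ are \emph{automatic}: if $w\notin\{t_i\}$ then $v$ is holomorphic at $w$, and a nonzero solution of $y''=vy$ cannot have a double zero at a regular point (else $y\equiv 0$ by uniqueness). The paper states this as ``formula \eqref{diffeq} then implies that the $w_j$'s are pairwise distinct.'' The genuine obstruction, and the only place genericity of $\{t_i\}$ enters, is the possibility $Q(t_i)=0$ for some $i$, which is ruled out by \cite{SV} for generic $\{t_i\}$. Your proposed fallback of invoking Theorem~\ref{BAM} would work logically but defeats the purpose, since that theorem is strictly stronger and the lemma is meant as an elementary warm-up.
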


\begin{proof}
If the $PGL_2$-oper $\pa_x^2 - v(x)$ has the form \eqref{miura} with
$u(x)$ given by formula \eqref{uz}, then the section 
\begin{equation}    \label{Phi}
\Phi:=\Phi(x)dx^{-{1\over 2}},\ \Phi(x) := \prod_{i=0}^m (x-t_i)^{-{\la_i\over 2}}
  \prod_{j=1}^n (x-w_j) 
\end{equation}
of $K^{-{1\over 2}}$ is a solution of the equation
\begin{equation}    \label{diffeq}
  (\pa_x^2 - v(x)) \Phi(x) = 0.
\end{equation}
Moreover, 
$$
\Phi_*(x):=\Phi(x) \int \Phi^{-2}(x)dx
$$ 
is then another, linearly
independent local solution of the same equation. This solution is a
single-valued global solution
on $\pone$ with singularities only at the points $\{ t_i \}$ if and
only if equations \eqref{BAE} are satisfied (in fact, the $j$-th
equation in \eqref{BAE} is equivalent to it having no monodromy at
$x=w_j$).

Conversely, suppose that $\pa_x^2 - v(x)$ is a $PGL_2$-oper on $\pone$
which has regular singularities at $t_i$'s with residues
$\varpi(\la_i+1)$, $\lambda_i \in \Z_{\geq 0}$ and trivial monodromy
representation \eqref{mon}. Equation \eqref{diffeq} then
  must have a solution of the form
\begin{equation}\label{Phix}
\Phi:=\Phi(x)dx^{-{1\over 2}},\ \Phi(x):= \prod_{i=0}^m
(x-t_i)^{-{\la_i\over 2}} Q(x),
\end{equation}
where $Q(x)$ is a polynomial of degree $n$. According to Theorem 13 of
\cite{SV}, the roots of this polynomial do not belong to the set $\{
t_i \}$ for generic collections $\{ t_i \}$. Writing
\begin{equation}    \label{Qzz}
Q(x) = \prod_{j=1}^n (x-w_j),
\end{equation}
we obtain formula \eqref{miura}. Moreover, formula \eqref{diffeq} then
implies that the $w_j$'s are pairwise distinct. Using the argument of
the preceding paragraph, we find that the $w_j$'s must satisfy
Bethe Ansatz equations \eqref{BAE}. This completes the proof.
\end{proof}

Lemma \ref{nomon} links Bethe Ansatz equations to monodromy-free
$PGL_2$-opers. In fact, the $j$-th equation in \eqref{BAE} is
equivalent to the property that oper \eqref{miura} has no singularity
at $x=w_j$.

Theorem \ref{SV} describes the case of generic parameters $\{ t_i
\}$. For special collections $\{ t_i \}$ the Gaudin Hamiltonians may
not be diagonalizable and/or the Bethe vectors may not give a basis of
${\mc H}$ (see the examples in Subsection 
\ref{exbethe}). Nevertheless, it turns out that the joint spectrum of
the Gaudin Hamiltonians is in bijection with the monodromy-free
$PGL_2$-opers satisfying the conditions of Lemma \ref{nomon} {\bf for
  all} collections $\{ t_i \}$, as stated in the following
theorem. Its part (1) was proved in \cite{F:icmp}; part (2) was
conjectured in \cite{F1} and proved in \cite{R} (without explicit
construction of eigenvectors).

\begin{theorem}\label{BAMsl2}
  (1) The joint eigenvalues $\{ \mu_i
  \}_{i=0,\ldots,m}$ of the Gaudin Hamiltonians $\{ G_i
  \}_{i=0,\ldots,m}$ acting on the space \eqref{spst} are such that
  the $PGL_2$-oper on $\pone$
  \begin{equation}    \label{PGL2op}
L(\bmu)=\pa_x^2 - \sum_{i=0}^m \frac{\la_i(\la_i+2)}{4(x-t_i)^2} -
\sum_{i=0}^m \frac{\mu_i}{x-t_i}
  \end{equation}
has regular singularity with residue $\varpi(\la_{m+1}+1)$ at $t_{m+1}=\infty$
and trivial monodromy representation \eqref{mon}.

(2) For any collection $\{ t_0,t_1,\ldots,t_{m+1}=\infty \}$ this
defines a one-to-one correspondence between the set of joint
eigenvalues (without multiplicity) of the Gaudin Hamiltonians and the
set of all such $PGL_2$-opers.
\end{theorem}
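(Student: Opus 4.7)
The plan is to derive Theorem \ref{BAMsl2} from the framework of the tamely ramified analytic Langlands correspondence over $F=\mathbb{R}$ for $G=SL_2$, specialized to the quaternionic case of Subsections \ref{quatov}--\ref{requatco}, applied to $X=\pone$ equipped with its standard real structure. I take the compact form $SU_2$ of $SL_2(\mathbb{C})$ on the unique oval $\pone(\mathbb{R})$, with tame ramification at the real points $t_0,\ldots,t_{m+1}$ carrying the unitary representations $V_{\lambda_i}$ of $SU_2$, in the twisted setting of Subsections \ref{ram1}--\ref{ram2}. A Peter--Weyl argument then identifies the $\rho$-isotypic component of the resulting analytic-Langlands Hilbert space, for $\rho=V_{\lambda_0}\otimes\cdots\otimes V_{\lambda_{m+1}}$, with $\mathcal{H}=(\bigotimes_i V_{\lambda_i})^{\mathfrak{g}}$ of \eqref{spaceH}. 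Via the Feigin--Frenkel--Reshetikhin realization (\cite{FFR,F:icmp,F1}), the quantum Hitchin Hamiltonians on $\mathcal{H}$ coincide in this setting with the generalized Gaudin algebra $\mathcal{G}$, and in particular the quadratic Hitchin Hamiltonians are the operators $G_i$ of \eqref{Hi}.

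The next step is to introduce the Hecke operator $H_x$ on $\mathcal{H}$ and use it to attach an oper to each joint eigenspace of $\mathcal{G}$. By Proposition \ref{commu}, $H_x$ commutes with $\mathcal{G}$, so it preserves each joint eigenspace $\mathcal{H}_\bmu$. By Proposition \ref{opereq}(ii), on $\mathcal{H}_\bmu$ the operator-valued function $x\mapsto H_x$ satisfies $L(\bmu)H_x=0$ with $L(\bmu)$ given by \eqref{PGL2op}. Standard Fuchs analysis at each $t_i$ together with the sum rule \eqref{muik} at $t_{m+1}=\infty$ identifies $L(\bmu)$ as a Fuchsian $PGL_2$-oper on $\pone$ with the required residues $\varpi(\lambda_i+1)$, $0\le i\le m+1$. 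This produces a well-defined map $\bmu\mapsto L(\bmu)$ and settles the residue assertions of part~(1).

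The crux of part (1) is the monodromy-freeness of $L(\bmu)$. Since the operator-valued function $x\mapsto H_x$ is single-valued on $\pone\setminus\{t_0,\ldots,t_{m+1}\}$ and satisfies $L(\bmu)H_x=0$, every matrix entry of $H_x|_{\mathcal{H}_\bmu}$ is a globally single-valued scalar solution of the second-order Fuchsian equation $L(\bmu)\beta=0$. The characteristic exponents of $L(\bmu)$ at each $t_i$ are $\tfrac12\pm\tfrac{\lambda_i+1}{2}$ with $\lambda_i\in\mathbb{Z}_{\ge0}$, so the local $PGL_2$-monodromy at $t_i$ is either trivial or non-trivial unipotent; in the latter case a logarithmic term appears in one of the Frobenius solutions at $t_i$, and this is incompatible with the asymptotic behavior of the two-variable Hecke operator $H_{x,\bar x}$ near the quaternionic oval as described in Subsection \ref{quatov}, where $\bbe(x,\bar x)$ takes the specific form $\pm\,\mathrm{Im}(\overline{f_j(x)}g_j(x))$ with $f_j,g_j$ real solutions of Wronskian $\pi$. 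The unitarity of the ramification representations $V_{\lambda_i}$ then forces the local log terms to vanish. Hence the local monodromy at each $t_i$ is trivial, and since $\pi_1(\pone\setminus\{t_0,\ldots,t_{m+1}\})$ is generated by small loops around the $t_i$'s, the global monodromy of $L(\bmu)$ is trivial.

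The main obstacle is part (2), the surjectivity of $\bmu\mapsto L(\bmu)$. Injectivity is immediate from \eqref{PGL2op}. For surjectivity, the plan is to combine the analytic-Langlands machinery with the Bethe-Ansatz construction of \cite{FFR,R,SV}. Given a monodromy-free $PGL_2$-oper $L$ with the specified residues, Lemma \ref{nomon} produces the factorization \eqref{miura}--\eqref{uz} with Bethe roots $\bold w$ solving \eqref{BAE}; the Bethe vector \eqref{bvec} (or its generalized version of \cite{R,MV} when Bethe roots collide) is then a joint eigenvector of $\mathcal{G}$ with eigenvalue $\bmu$ determined by \eqref{mui}, and the associated oper is $L$. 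The remaining (and technically central) step is to show that this construction exhausts the joint spectrum; in our analytic framework this is an analogue of Corollary \ref{specdec}(i), namely that $\bigcap_x\mathrm{Ker}\,H_x=0$ on $\mathcal{H}$. This should follow from the compact-form analogue of the asymptotic formula of Proposition \ref{asym} for $H_x$ near the ramification points, combined with the dimension-count of \cite{SV,MTV} showing that the number of monodromy-free opers with the prescribed residues equals $\dim\mathcal{H}$.
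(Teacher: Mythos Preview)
The paper does not supply its own proof of Theorem \ref{BAMsl2}; it attributes part (1) to \cite{F:icmp} and part (2) to \cite{R}. What the paper does provide, in Subsection \ref{comgr}, is a reinterpretation of part (1) within the analytic Langlands framework for the compact form $SU_2$ on the quaternionic oval, and this is precisely the route you are attempting.

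For part (1), your approach is essentially that of Subsection \ref{comgr}, with one point of confusion. In the quaternionic setting the single-variable Hecke operator $H_x$ is not defined for real $x$ (the paper says so explicitly at the start of Subsection \ref{quatov}), so an argument based on ``$H_x$ is single-valued on $\pone\setminus\{t_i\}$ and solves $L(\bmu)\beta=0$'' is not available. The paper's argument runs entirely through the two-variable operator $H_{x,\bar x}$ for complex $x$: writing $\bbe(x,\bar x)=\mathrm{Im}(\overline{f_j^+(x)}g_j^+(x))$ near the oval as in \eqref{quateq}, one transports this expression across the half-monodromy around $t_j$ and finds that preservation of the vanishing on the real locus forces the logarithmic coefficient $\gamma_j$ to be zero, hence the local monodromy $(-1)^{\lambda_j}$ is trivial in $PGL_2$. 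Your sketch invokes this mechanism correctly but mixes it with the unavailable single-variable argument.

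Part (2) is where your proposal has a genuine gap. The converse direction of Lemma \ref{nomon} (recovering Bethe roots from a monodromy-free oper) is stated and proved in the paper only for \emph{generic} $\{t_i\}$: for special configurations the polynomial $Q(x)$ may acquire roots at the $t_i$, Bethe roots may collide, or the Gaudin Hamiltonians may fail to be semisimple (see the explicit examples in Subsection \ref{exbethe}). Your appeal to a dimension count from \cite{SV,MTV} and to an analogue of $\bigcap_x\mathrm{Ker}\,H_x=0$ does not address these degenerations, and no compact-form analogue of Proposition \ref{asym} is established in the paper. The bijectivity for \emph{all} $\{t_i\}$ is the content of \cite{R}, whose proof is algebraic (cyclicity of the Gaudin algebra on $\mathcal H$ and a limit-from-generic argument), not analytic. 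So your outline recovers the paper's own derivation of part (1) but does not independently establish part (2); for that the paper, like you, ultimately defers to \cite{R}.
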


\begin{remark}    \label{Qop}
1. Under this correspondence, the eigenvalues $\mu_i$ of the Gaudin operators
$G_i$ are given by the formula
$$
\mu_i={\rm Res}_{t_i}\left(\frac{\Phi''(x)}{\Phi(x)}\right)=\la_i\left(\sum_{k\ne i}\frac{\la_k}{2(t_i-t_k)}-\sum_{j=1}^n \frac{1}{t_i-w_j}\right),
$$
where $\Phi(x)dx^{-{1\over 2}}$ is the solution \eqref{Phix} (compare
with formula \eqref{mui}).

The conditions that $L(\bmu)$ has a regular singularity and
residue $\varpi(\la_{m+1}+1)$ at $t_{m+1}=\infty$ are equivalent to
the conditions on the eigenvalues $\mu_i$'s given by the first and
second equations in \eqref{muik}, respectively.

2. The equation $L(\bmu)\beta=0$ can we written in the form 
of connection with first order poles in two different ways. Namely, setting 
$$
\bold b:=
\binom{\partial_x\beta+(\sum_{j=0}^m\frac{\lambda_j}{2(x-t_j)})\beta}{\beta},
$$
for the first way we get
\begin{equation}\label{firs}
\pa_x \bold b=\begin{pmatrix} \sum_{j=0}^m\frac{\lambda_j}{2(x-t_j)}& \sum_{j=0}^m \frac{\widehat \mu_j}{x-t_j}\\ 1 &  -\sum_{j=0}^m\frac{\lambda_j}{2(x-t_j)}\end{pmatrix}\bold b.
\end{equation}
The second way is the same but replacing $\lambda_j$ with $-\lambda_j-2$. Note that near $x=t_j$ 
equation \eqref{firs} in the variable $z:=x-t_j$ looks like
$$
\pa_z \bold b=\begin{pmatrix} \frac{\lambda_j}{2z}+a_j(z)& \frac{\widehat \mu_j}{z}+c_j(z)\\ 1 & -\frac{\lambda_j}{2z}-a_j(z) \end{pmatrix}\bold b
$$
where $a_j,c_j$ are regular at $z=0$. So we can make the residue of the matrix on the right hand side independent of $\widehat\mu_j$ by setting $\widetilde{\bold b}:={\rm diag}(z^{\frac{1}{2}},z^{-\frac{1}{2}})\bold b$. Then we get 
$$
\pa_z \widetilde{\bold b}=\begin{pmatrix} \frac{\lambda_j+1}{2z}+a_j(z)& \widehat \mu_j+zc_j(z)\\ \frac{1}{z} & -\frac{\lambda_j+1}{2z}-a_j(z) \end{pmatrix}\widetilde{\bold b}.
$$
Now the residue  is the regular element 
$$
\begin{pmatrix} \frac{\lambda_j+1}{2}& 0\\ 1 & -\frac{\lambda_j+1}{2} \end{pmatrix}
$$
which maps to $\varpi(\lambda_j+1)=\frac{1}{2}(\lambda_j+1)^2$ under the map $A\mapsto {\rm Tr}A^2$. 
\end{remark}

%According to Lemma \ref{nomon}, for generic $\{ t_i \}$ the
%monodromy-free $PGL_2$-opers \eqref{PGL2op} can be written in the form
%\eqref{miura} with $u(x)$ given by \eqref{uz} with the $w_i$'s
%satisfying \eqref{BAE}.

%According to part (3) of the above theorem, the spectrum of Gaudin
%Hamiltonians is simple for generic $\{ t_j \}$ \cite{R}. Moreover, one
%can write an explicit formula for the eigenvectors (this is not so for
%a general group, as we mentioned earlier). These vectors are known as
%{\bf Bethe vectors}. They correspond to solutions 

\subsection{The case of a general Lie algebra $\g$}    \label{Gcase}

Theorem \ref{BAMsl2} generalizes to an arbitrary simple Lie algebra
$\g$ as follows.

\begin{theorem}\label{BAM}
(1) For any collection $\{ t_0,\ldots,t_{m+1}=\infty \}$, there is a
  one-to-one correspondence between the set of joint eigenvalues
  (without multiplicity) of the algebra ${\mc G}$ of generalized
  Gaudin Hamiltonians on the space ${\mc H}$ given by \eqref{spaceH}
  and the set of all $G^\vee$-opers on $\pone$ with regular
  singularities and residues $\varpi(-\la_i-\rho)$ at $t_i$ and with
  trivial monodromy representation
  $$
  \pi_1(\pone \bs \{t_0,\ldots,t_m,t_{m+1}=\infty \}) \to G^\vee.
  $$

(2) For generic $\{ t_0,\ldots,t_m \}$, the algebra ${\mc G}$ of
  generalized Gaudin Hamiltonians is diagonalizable and has simple
  spectrum on the space ${\mc H}$ given by \eqref{spaceH}.
\end{theorem}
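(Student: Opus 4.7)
The plan is to derive Theorem \ref{BAM} from the analytic Langlands correspondence over $\Bbb R$ in the quaternionic case, as outlined in the introduction to Section 5. First I would realize the space $\mathcal H=(V_{\la_0}\otimes\cdots\otimes V_{\la_{m+1}})^{\g}$ as a Hilbert space of the type considered in Subsection \ref{ram2}, Example \ref{exaa1}(2). Namely, take $X=\pone$ with the antiholomorphic involution $\tau(x)=\overline x$ chosen so that all ramification points $t_0,\ldots,t_{m+1}$ lie on a single oval $C\subset X(\Bbb R)$, and consider $G$-bundles whose associated form at $C$ is the compact form $G_c$. Since $G_c$ is compact, we may take $\bold P_i=1$ and $\pi_i=V_{\la_i}$ (realized as an irreducible unitary representation of $G_c$), so that the multiplicity space attached to $\bigotimes\rho_i$ under the Peter--Weyl decomposition of Example \ref{exaa1}(2) is precisely $\mathcal H_\rho=\mathcal H$, a finite-dimensional Hilbert space. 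Under this identification, the quantum Hitchin Hamiltonians of Subsection \ref{archi}, adapted to the ramified case, act on $\mathcal H$, and I would identify them (via the Feigin--Frenkel isomorphism used in \cite{FFR,F:icmp}) with the generalized Gaudin Hamiltonians generating $\mathcal G$.

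Next, I would invoke the (ramified) Hecke operators on $\mathcal H$ and note that, by the discussion of Subsection \ref{quatov}, the ramification points all lie on a quaternionic oval, so the Hecke operators $H_x$ for $x\in C$ vanish in the limit; the relevant operators are $H_{x,\overline x}$ for $x$ off the real locus. These operators satisfy universal oper equations in both holomorphic and antiholomorphic variables (Proposition \ref{opereq} and its generalizations), so each Hecke eigenvalue $\bbe(x,\overline x)$ must be annihilated by a $G^\vee$-oper $L$ with regular singularities at the $t_i$. The residues are determined by the weights via the standard computation (Remark \ref{Qop}), which gives $\varpi(-\la_i-\rho)$ at each $t_i$. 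Crucially, the quaternionic boundary condition \eqref{quateq} forces $\bbe(x,\overline x)$ to be a single-valued section on $X\setminus C$ of the form $\pm\mathrm{Im}(\overline{f_j(x)}g_j(x))$ near $C$, with $f_j,g_j$ real solutions on $C$ with real Wronskian. Conjugating around any loop in $\pone\setminus\{t_i\}$ preserves this expression, which I would show forces the monodromy of the underlying flat $G^\vee$-bundle to lie in a maximal compact subgroup; combined with the oper structure, the results of Subsection \ref{irred} (specifically, that a $G^\vee$-oper whose monodromy is contained in a compact subgroup of $G^\vee$ must be trivial, since a compact subgroup cannot contain a regular unipotent element of $G^\vee$) imply that the monodromy is in fact trivial. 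This produces the injection from the joint spectrum into monodromy-free opers with the stated residues.

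Surjectivity, which is part (1) of the theorem, would come from the following: given any such monodromy-free oper $L$, the two-point function $h_L(s_{\la_i},\overline{s_{-w_0(\la_i)}})$ built from the oper Borel reduction (Subsection \ref{gencase}, Conjecture \ref{51}) provides, via the quaternionic reality discussion, a joint eigenfunction of the Hecke operators on $\mathcal H$, hence an eigenvector of $\mathcal G$ with the prescribed eigenvalues. The fact that distinct monodromy-free opers give distinct eigenvalues amounts to the unique continuation statement for these two-point functions (Proposition \ref{forevery}). For part (2), for generic $\{t_i\}$ the map from $\mathcal H$ to the space of Hecke-eigenvalue tuples will be injective, yielding a simple joint spectrum; diagonalizability will follow because the full Hecke operators are self-adjoint (compactness is automatic since $\dim \mathcal H<\infty$).

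The main obstacle, which is where real work is required, is rigorously matching the analytic setup to the algebraic Gaudin picture: first, identifying the action of the quantum Hitchin algebra in the ramified genus-zero quaternionic setting with the algebra $\mathcal G$ of generalized Gaudin Hamiltonians, together with the identification of their eigenvalues with the coefficients of the oper (this requires carefully extending the Beilinson--Drinfeld construction to the ramified compact setting and keeping track of the residue data); and second, turning the informal quaternionic boundary condition of Subsection \ref{quatov} into a proof that the monodromy of a spectral oper is trivial rather than merely contained in a real form. The latter is the genuinely delicate step, and it is precisely where the Zariski-density results of Subsection \ref{irred} (Theorems \ref{densee} and \ref{zar1}) are used to rule out the intermediate possibilities for $M_\chi$ once we know its monodromy lies in a compact real subgroup.
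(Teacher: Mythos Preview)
The paper does not prove Theorem \ref{BAM}; the paragraph immediately following the statement attributes it to the literature. One direction of part (1) (the map from joint eigenvalues to monodromy-free opers) was constructed in \cite{F1}, and the bijectivity of this map together with part (2) was proved in \cite{R}. The analytic Langlands framework is offered only as a novel conceptual reinterpretation; in Subsection \ref{comgr} the paper carries out the quaternionic-oval argument solely for $G=SL_2$ (deriving the monodromy-free condition, not the full bijection), and then says explicitly ``We expect that this argument can be generalized to all simple Lie groups $G$.'' So your plan is not a comparison target: you are proposing to prove something the paper deliberately leaves to \cite{F1,R}.

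As a standalone proof, your outline has genuine gaps. The results of Subsection \ref{irred} that you invoke (Theorems \ref{densee} and \ref{zar1}) are stated and proved for smooth curves of genus ${\rm g}>1$ and do not apply to $\pone$ with regular singularities; you cannot use them to constrain $M_\chi$ here. Your surjectivity step relies on Conjecture \ref{51}, which is a conjecture, and Proposition \ref{forevery} concerns the complex ($F=\Bbb C$) setting, not the real quaternionic one. Finally, even for $SL_2$ the paper's actual mechanism in Subsection \ref{comgr} is not ``monodromy lands in a compact form, hence is trivial by oper rigidity''; rather it computes the half-monodromy matrices $J_j^\pm$ explicitly and shows that the vanishing condition on $\bbe(x,\overline x)$ along the quaternionic oval forces each coefficient $\gamma_j$ to be zero, whence the local monodromy $(J_j^-)^{-1}J_j^+=(-1)^{\lambda_j}$ is trivial in $PGL_2$. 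Your compact-form argument would need an independent justification that the quaternionic boundary condition forces the global monodromy into a compact real form, and that step is not supplied.
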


In \cite{F1}, part (1) of this theorem was conjectured and a map in
one direction (from the set of joint eigenvalues to the set of
monodromy-free opers) was constructed. The bijectivity of this map
(and hence the statement of part (1)) was proved in \cite{R}. Part (2)
was also proved in \cite{R}.

Analytic Langlands correspondence provides a novel conceptual
framework for (and in many cases, solution of) the problem of
diagonalization of the Gaudin Hamiltonians. In particular, Theorem
\ref{BAMsl2} will be derived using this framework in Subsection
\ref{comgr}. We will then extend this framework to
infinite-dimensional representations.

\subsection{The Gaudin model with complex weights} \label{comwei}
The theory of Subsections \ref{monfre}, \ref{sl2case}, and \ref{Gcase}
can be ``analytically continued'' to complex weights $\lambda_j$ such
that $\alpha:=\sum_{i=0}^m \la_i-\la_{j+1}^*\in Q_+$ (nonnegative
integer linear combination of simple roots).  Namely, we may replace
the finite dimensional modules $V_{\lambda_j}$, $0\le j\le m$, by
contragredient Verma modules $\nabla(\la_j)$ over $\g$ with highest
weights $\la_j$, and consider the action of Gaudin Hamiltonians in
\begin{equation}    \label{HH}
\mathcal H:=(\nabla({\la_0})\otimes...\otimes \nabla(\la_{m}))^{\n_+}[\la_{m+1}^*].
\end{equation}

First consider the case of $\sw_2$, so $\la_{m+1}^* =
  \la_{m+1}$ and equation \eqref{weight} is satisfied for a
  non-negative integer $n$. Analytically continuing the explicit formulas for the Bethe vectors
$v_{\bold w}$ and using Theorem \ref{SV}, we obtain the following
result.

\begin{theorem}    \label{sl2verma}
For generic collections $\{ t_0,t_1,\ldots,t_{m+1} =
  \infty \}$ and $\{ \la_0,\la_1,\ldots,\la_{m+1} \}$ satisfying
  formula \eqref{weight} with a non-negative integer $n$:

  (1) The Bethe vectors $v_{\bold w}$ given by formula \eqref{bvec}
  with ${\bold w} = \{ w_1,\ldots,w_n \}$ satisfying the system
  \eqref{BAE} form an eigenbasis of the space ${\mc H}$ given by
  \eqref{HH}. The eigenvalue $\mu_i$ of $G_i$ on $v_{\bold w}$ is
  given by the formula \eqref{mui}.

  (2) The spectrum of the Gaudin Hamiltonians is simple and the set of
  their joint eigenvalues is in one-to-one correspondence with the
  set of $PGL_2$-opers on $\pone$ with regular singularities and
  residues $\varpi(\lambda_i+1)$ at $t_i, i=0,\ldots,m+1$, and with
  {\bf solvable monodromy}, i.e. contained in a Borel subgroup $B^\vee
  \subset PGL_2$. Under this correspondence, the
    collection $\{ \mu_i \}$ of joint eigenvalues of the Gaudin
    Hamiltonians $G_i, i=0,1,\ldots,m$, maps to the $PGL_2$-oper given
    by formula \eqref{PGL2op}.
\end{theorem}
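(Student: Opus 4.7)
The plan is to deduce Theorem \ref{sl2verma} from Theorem \ref{SV} and Theorem \ref{BAMsl2} by the algebraic analogue of analytic continuation in the weights, together with a reformulation of the oper correspondence from ``trivial monodromy'' to ``solvable monodromy.'' The starting observation is that the Bethe vector formula \eqref{bvec}, the Bethe Ansatz equations \eqref{BAE}, the eigenvalue formula \eqref{mui}, and the oper \eqref{PGL2op} are all polynomial (or rational) in the data $\{\lambda_i\}$. Moreover, the Bethe vector uses only the free action of $\mathfrak{n}_-$ on the highest weight vector, so it is well-defined in each contragredient Verma module $\nabla(\lambda_i)$. The computation of \cite{FFR,RV} that $v_{\bold w}$ is an eigenvector of $G_i$ with eigenvalue $\mu_i$ given by \eqref{mui} whenever $\bold w$ solves \eqref{BAE} is algebraic and is insensitive to whether the weights are integral; thus it gives us joint eigenvectors immediately.

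To promote this to an eigenbasis, I would carry out a dimension count. For $\lambda_i$ generic (in particular, outside the countable union of hyperplanes where either a $\nabla(\lambda_i)$ becomes reducible or a pair of Bethe solutions collides), $\nabla(\lambda_i)$ is irreducible and the PBW theorem gives that the $\lambda_{m+1}^*$-weight space of $\nabla(\lambda_0)\otimes\cdots\otimes\nabla(\lambda_m)$ has dimension equal to the number of ways to distribute $n$ copies of $f$ among $m+1$ factors; the subspace of $\mathfrak{n}_+$-invariants in this weight space can then be computed by the standard Verma argument to have dimension equal to $\binom{m+n}{n}-\binom{m+n}{n-1}$. On the other hand, a transversality argument (by deforming from integral weights using Theorem \ref{SV}, where the count matches by \cite{SV}) shows that for generic parameters the Bethe Ansatz system \eqref{BAE} has exactly that many solutions modulo the $S_n$-symmetry and that the corresponding Bethe vectors are linearly independent (e.g., by looking at their leading behavior when some $w_j \to \infty$). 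Simplicity of the spectrum and diagonalizability follow once one shows that distinct Bethe solutions yield distinct eigenvalues, which is again generic and descends from the integer-weight case.

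For part (2), the key step is to recognize that a $PGL_2$-oper $L(\bmu)=\partial_x^2-v(x)$ of the form \eqref{PGL2op} has monodromy contained in a Borel subgroup $B^\vee$ if and only if the Riccati equation $u'+u^2=v$ admits a rational solution on $\pone$. Given a $B^\vee$-invariant line in the local system, one recovers a one-form $u\,dx$ whose residues at the singular points $t_i$ must be of the form $-\lambda_i/2$ or $(\lambda_i+2)/2$ (the two eigenvalues of the residue), and whose additional poles $w_j$ must be simple with residue $-1$ in order that $\partial^2-v$ be regular there; this leads precisely to the Miura form \eqref{miura}--\eqref{uz}, and the condition that $v$ has no pole at $w_j$ is exactly the $j$-th Bethe equation in \eqref{BAE}. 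Conversely, any Bethe solution gives such an invariant line via $\Phi(x)$ as in \eqref{Phi}. The main obstacle I expect is making the dimension count and the non-collision of Bethe roots for generic complex weights completely rigorous, since we lose the positive-definite Shapovalov pairing and the $\mathfrak{sl}_2$-highest-weight representation theory tools that made \cite{SV} work; I would handle this by regarding the Bethe Ansatz system as a family over the parameter space of $(\lambda_i,t_i)$ and using the fact that at a special integral point (where Theorem \ref{SV} applies) the count is correct, then deforming while tracking the discriminant.
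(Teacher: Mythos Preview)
Your overall strategy matches the paper's: both deduce part (1) by deforming from the dominant integral case covered by Theorem~\ref{SV}, and both treat part (2) via the Miura factorization.  A couple of points deserve comment.

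For part (1), the paper avoids your direct dimension count by a cleaner device: for any fixed $n$, when the $\lambda_i$ are large dominant integral the canonical inclusion
\[
(V_{\la_0}\otimes\cdots\otimes V_{\la_m})^{\n_+}[\la_{m+1}]\hookrightarrow
(\nabla(\la_0)\otimes\cdots\otimes \nabla(\la_m))^{\n_+}[\la_{m+1}]
\]
is an \emph{isomorphism}, since the finite-dimensional and contragredient Verma modules agree in the relevant weight range.  Theorem~\ref{SV} then applies verbatim, and ``the Bethe vectors form a basis with simple spectrum'' is an open condition on $(\{\lambda_i\},\{t_i\})$, so it propagates to generic complex weights.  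Your route works too, but note that your dimension formula $\binom{m+n}{n}-\binom{m+n}{n-1}$ is incorrect; the dimension of $\mathcal H$ is $\binom{m+n}{m}-\binom{m+n-1}{m}=\binom{m+n-1}{m-1}$ (the $e$-map is surjective onto the next weight space, whose dimension is $\binom{m+n-1}{m}$, not $\binom{m+n}{n-1}$).

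For part (2), there is a genuine gap in your converse direction.  You observe that an invariant line gives a rational $u$ whose residue at $t_i$ is \emph{either} $-\lambda_i/2$ \emph{or} $(\lambda_i+2)/2$, but you then assert ``this leads precisely to the Miura form \eqref{miura}--\eqref{uz}'', which has the specific residue $\lambda_i/2$ at each $t_i$.  You give no argument that the invariant line selects the correct branch at every point; if it selects the other branch at some $t_{i_0}$, you would obtain a solution of Bethe equations for the reflected weight $-\lambda_{i_0}-2$, which does not correspond to a vector in $\mathcal H$.  The paper handles this differently and more directly via Lemma~\ref{soln}: rather than starting from the invariant line, one constructs the solution $\Phi$ of the specific form \eqref{Phix1} by expanding at $\infty$, using that one characteristic exponent there equals $n$ and that (under the mild condition $\lambda_{m+1}\notin\{-2,\dots,-n-1\}$) the resulting recursion determines a genuine polynomial $Q$ of degree $n$.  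This forces the exponent $-\lambda_i/2$ at every finite $t_i$ automatically, and as a byproduct shows that the ``solvable monodromy'' condition is in fact automatic for opers of the form \eqref{PGL2op} with the correct residues.
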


\begin{proof}
Suppose that $\lambda_i$ and $\lambda_{m+1}^* = \lambda_{m+1}$ are
dominant integral weights. Then we have a canonical inclusion (up to a
scalar)
\begin{equation}    \label{inclusion}
(V_{\la_0}\otimes...\otimes V_{\la_m})^{\n_+}[\la_{m+1}]\hookrightarrow
(\nabla(\la_0)\otimes...\otimes \nabla(\la_m))^{\n_+}[\la_{m+1}] =
  {\mc H},
\end{equation}
which commutes with the Gaudin Hamiltonians.

For any fixed $n$ given by formula \eqref{weight}, this map is an
isomorphism for sufficiently large $\{ \la_i \}$. Suppose that this is
the case. Theorem \ref{SV} then implies that for generic $\{ t_i
\}$ the Bethe vectors $v_{\bold w}$ given by formula \eqref{bvec},
with ${\bold w} = \{ w_1,\ldots,w_n \}$ satisfying the system
\eqref{BAE}, form an eigenbasis of ${\mc H}$, and moreover, the
spectrum of the Gaudin Hamiltonians on ${\mc H}$ is simple. Since this
is an open condition, the same is true for generic $\{ \la_i \}$ and
$\{ t_i \}$. Moreover, explicit calculation shows that the eigenvalues
$\mu_i$ of the Gaudin Hamiltonians $G_i$ are still given by formula
\eqref{mui}. This proves part (1).

Consider the corresponding oper $L(\bmu)$ on $\pone$ given by formula
\eqref{PGL2op}. By construction, it has regular singularities and
residues $\varpi(\lambda_i+1)$ at $t_i, i=0,\ldots,m+1$. Since
equations \eqref{BAE} are satisfied, this oper can be written as the
Miura transformation \eqref{miura}, where $u(z)$ is given by formula
\eqref{uz}. Therefore $\Phi$ given by formula \eqref{Phix} is a
solution of the equation $L(\bmu) \Phi = 0$. This implies that the
monodromy of $L(\bmu)$ is contained in a Borel subgroup of $PGL_2$.

Conversely, suppose that the numbers $\bmu = \{ \mu_i \}$ are such
that the $PGL_2$-oper $L(\bmu)$ satisfies the conditions of the
theorem. According to Lemma \ref{soln} below, if
  $\lambda_{m+1}\notin \lbrace -2,-3,...,-n-1\rbrace$, then $L(\bmu)$
  is equal to the Miura transformation \eqref{miura} of $u(x)$ given
  by formula \eqref{uz}. The set of numbers ${\bold w} = \{ w_j \}$
  appearing in $u(x)$ then must satisfy equations \eqref{BAE}. But
then the corresponding Bethe vector $v_{\bold w}$ is an eigenvector of
the $G_i$'s with the eigenvalues $\mu_i$'s. Since we know that these
vectors form an eigenbasis for generic $\{ t_i \}$ and 
  $\{ \la_i \}$, we obtain the statement of part (2).
\end{proof}

  \begin{lemma}    \label{soln}
Let $L(\bmu)$ be a $PGL_2$-oper of the form \eqref{PGL2op} with $\la_i
\in \C$ that has a regular singularity at $\infty$ with residue
$\varpi(\la_{m+1}+1)$, where $\lambda_{m+1}\notin \lbrace
-2,-3,...,-n-1\rbrace$ and satisfies equation \eqref{weight} with a
non-negative integer $n$. Then for any collection $\{
  \la_i \}$ and generic $\{ t_i \}$ the equation $L(\bmu) \Phi = 0$
  has a unique solution of the form
\begin{equation}\label{Phix1}
\Phi:=\Phi(x)dx^{-{1\over 2}},\ \Phi(x):= \prod_{i=0}^m
(x-t_i)^{-{\la_i\over 2}} Q(x),
\end{equation}
where $Q(x)$ is a polynomial of degree $n$ with $n$ distinct roots
$w_1,\ldots,w_n$ that are also distinct from $\{ t_i \}$
  and satisfy the Bethe Ansatz equations \eqref{BAE}.  Moreover, if
$\la_i \notin \{ 0,1,\ldots,n-1 \}$ for all $i=0,\ldots,m$, then this
is so for all $\{ t_i \}$.

Equivalently, under the above the above conditions, $L(\bmu)$ is equal
to the Miura transformation \eqref{miura} of $u(x)$ given by formula
\eqref{uz}.
  \end{lemma}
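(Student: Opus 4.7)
The plan is to extract from the Borel monodromy assumption (which is implicit from the context in which this lemma is invoked, namely the proof of Theorem \ref{sl2verma}\,(2)) a distinguished global solution $\Phi$ of the oper equation $L(\bmu)\Phi = 0$, and then verify by an indicial analysis at each regular singular point that it has the required factorized form $\Phi(x) = \prod_{i=0}^m(x-t_i)^{-\la_i/2} Q(x)$ with $Q$ a polynomial of degree $n$.

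First I would construct $\Phi$: since the monodromy representation of $L(\bmu)$ is contained in a Borel subgroup of $PGL_2$, the associated rank-$2$ flat bundle on $\pone \setminus \{t_0,\ldots,t_m,\infty\}$ admits a flat rank-one subbundle, giving a multivalued solution $\Phi$ with abelian character monodromy. The indicial polynomial at $t_i$ has roots $\{-\la_i/2,\;1+\la_i/2\}$, and that at $\infty$ (read in the coordinate $y=1/x$ on $\tilde\Phi(y) := \Phi(1/y)$) has roots $\{\la_{m+1}/2,\;-\la_{m+1}/2-1\}$. The product of the local monodromy eigenvalues of the Borel-invariant line around all punctures is trivial, and for generic (non-integer) weights this forces the combination $\epsilon_i = -\la_i/2$ for $i\le m$ and $\epsilon_\infty = \la_{m+1}/2$ at $\infty$; consistency of this choice is precisely the integrality $2n = \sum_{i=0}^m\la_i - \la_{m+1} \in 2\Z$. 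With these exponents, $Q(x) := \Phi(x)\prod_{i=0}^m(x-t_i)^{\la_i/2}$ has trivial monodromy around every puncture, hence extends to an entire function on $\C$, and the asymptotic analysis at $\infty$ gives $Q(x) \sim c\, x^n$ with $c \ne 0$, so $Q$ is a polynomial of degree exactly $n$. The hypothesis $\la_{m+1}\notin\{-2,\ldots,-n-1\}$ is precisely what excludes the resonant situations at $\infty$ in which the alternative choice $\epsilon_\infty = -\la_{m+1}/2-1$ would instead produce an entire $Q$ of the wrong degree $n+\la_{m+1}+1\in\{0,\ldots,n-1\}$, thereby obstructing existence and uniqueness of a degree-$n$ polynomial solution.

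Given the factorization, I would derive the Bethe Ansatz equations by writing $Q(x)=\prod_j(x-w_j)^{k_j}$ and computing
\[
u(x) := -\frac{\Phi'(x)}{\Phi(x)} = \sum_{i=0}^m\frac{\la_i}{2(x-t_i)} - \sum_j\frac{k_j}{x-w_j}.
\]
Since $L(\bmu) = (\pa_x - u)(\pa_x + u) = \pa_x^2 - (u^2 - u')$, matching the pole structure of $v = u^2 - u'$ against that of $L(\bmu)$ (which has poles only at the $t_i$'s) forces $k_j = 1$ from the absence of double poles at the $w_j$'s, and the vanishing of the simple-pole residues at the $w_j$'s is exactly the Bethe Ansatz system \eqref{BAE}. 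Simplicity of the roots of $Q$ away from $\{t_i\}$ is automatic since the indicial roots at any regular point of $L(\bmu)$ are $\{0,1\}$. A root $w_j = t_i$ of multiplicity $k\ge 1$ would force the local exponent of $\Phi$ at $t_i$ to be $-\la_i/2 + k$, which can match the other allowed exponent $1+\la_i/2$ only if $\la_i = k-1 \in \{0,\ldots,n-1\}$; this is excluded by the second assertion's hypothesis, while for generic $\{t_i\}$ it is ruled out by a Zariski-open deformation argument in the spirit of \cite{SV}, Theorem~13 (as used in the proof of Lemma \ref{nomon}). Uniqueness of $Q$ up to scalar follows from uniqueness of the Borel-invariant line.

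The hard part will be the exponent-matching step in the resonant cases, where some $\la_i$ or $\la_{m+1}$ is an integer and the two local indicial exponents at that puncture differ by an integer: here monodromy consistency alone does not pin down the Borel-invariant line's exponents, and one must carry out a careful indicial analysis accommodating possible logarithmic solutions to show that, precisely under the stated hypotheses ($\la_{m+1}\notin\{-2,\ldots,-n-1\}$ for the degree, and either genericity of $\{t_i\}$ or $\la_i\notin\{0,\ldots,n-1\}$ for the roots avoiding $\{t_i\}$), the Borel-invariant solution still has the prescribed form. This resonance analysis is also where the sharpening from the ``generic $\{t_i\}$'' case to the ``all $\{t_i\}$'' case under the extra assumption on the $\la_i$ takes place.
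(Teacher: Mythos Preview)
Your approach is correct and genuinely different from the paper's. The paper proceeds by substituting the ansatz \eqref{Phix1} directly into $L(\bmu)\Phi=0$, reducing to the second-order ODE \eqref{opereq1} for $Q$, and then running a Frobenius argument at $\infty$: one characteristic exponent there is $n$, and the recursion determines $Q_1,\ldots,Q_n$ uniquely precisely when $\la_{m+1}\notin\{-2,\ldots,-n-1\}$ (the other exponent being $n+1+\la_{m+1}$). The paper then asserts that truncating the series at degree $n$ still yields a solution ``since it does so when all $\la_i$'s are positive integers.'' After that, the discussion of the roots of $Q$ (simplicity, the Bethe Ansatz equations \eqref{BAE}, and the condition $w_j\ne t_i$) proceeds via the same indicial/residue analysis you outline, including the appeal to \cite{SV} for the generic-$\{t_i\}$ claim and the observation that $w_j=t_i$ forces $\la_i\in\{0,\ldots,n-1\}$.

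The real difference is how the existence of the polynomial $Q$ is established. Your route --- extract the Borel-invariant line from the solvable-monodromy hypothesis (which you rightly import from the context of Theorem~\ref{sl2verma}) and then match exponents --- makes explicit what forces the Frobenius series at $\infty$ to terminate: the invariant solution is global. The paper's truncation step, by contrast, is not justified for a general point of the $(m-1)$-dimensional family of opers satisfying \eqref{muik}; after determining $Q_1,\ldots,Q_n$, there remain $m-1$ obstruction equations (the low-order coefficients of $\prod_i(x-t_i)\cdot L[Q]$), and these do \emph{not} vanish identically in $\bmu$ --- they cut out precisely the solvable-monodromy locus. So the ``positive integers'' sentence is really a stand-in for the hypothesis you made explicit. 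Once that hypothesis is in place, the two arguments converge: the Borel-invariant solution \emph{is} the Frobenius series at $\infty$ (up to the prefactor $\prod_i(x-t_i)^{-\la_i/2}$), and your exponent-matching is what shows it is entire and of the right degree.
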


  \begin{proof}
Substituting \eqref{Phix1} into the equation $L(\bmu) \Phi = 0$, we
obtain the equation
    \begin{equation}\label{opereq1} 
\left(\partial_x^2-\sum_{i=0}^m\frac{\lambda_i}{x-t_i}\partial_x-
\sum_{i=0}^m \frac{\wh\mu_i}{x-t_i} \right) Q(x) = 0,
    \end{equation}
        where
        $$
          \wh\mu_i := \mu_i -\sum_{j\ne
          i}\frac{\lambda_i\lambda_j}{2(t_i-t_j)}.
        $$
        
        One of the characteristic exponents of this equation at $\infty$ is
equal to $n$.  Hence for any $\lambda_i\in \Bbb C, i=0,\ldots,m+1$,
satisfying \eqref{weight} there is a solution of this differential
equation of the form $x^n+\sum_{j=1}^\infty Q_jx^{n-j}$, as long as
the coefficients $Q_1,...,Q_n$ are uniquely determined by this
condition. If this is the case, we set $Q_j=0$ for $j>n$, and this
will give us a solution since it does so when all $\lambda_i$'s
are positive integers. The standard theory of ODE implies that
$Q_1,...,Q_n$ are indeed uniquely determined if and only if the second
characteristic exponent of equation \eqref{opereq1} is not in $\lbrace
0,1,...,n-1\rbrace$, which translates into the condition
$\lambda_{m+1}\notin \lbrace -2,-3,...,-n-1\rbrace$.

Denote by $w_1,\ldots,w_n$ the roots of $Q(x)$ counted
  with multiplicity. If $w_j \neq t_i$ for all $i=0,\ldots,m$,
  equation \eqref{opereq1} implies that $w_j$ is a simple root.

Suppose now that $w_j = t_i$ for a some $i$ and $j \in J_i \subset \{
1,\ldots,n\}$. Then the expansion of the solution \eqref{Phix1} of the
equation $L(\bmu) \Phi = 0$ near $x=t_i$ is equal to
  $$
  (x-t_i)^{-{\la_i\over 2}+|J_i|}(1 +
  O(x-t_i))
  $$
  up to a non-zero scalar factor. But since the leading
  term of $L(\bmu)$ at $x=t_i$ is $\la_i(\la_i+2)/4(x-t_i)$, this is
  only possible if $|J_i| = \la_i+1$ which means that $\la_i \in \{
  0,1,\ldots,n-1 \}$.

 If $\la_i \notin \{ 0,1,\ldots,n-1 \}$ for all
    $i=0,\ldots,m$, we find that $w_j \neq t_i$ for all $i$ and any
    collection $\{ t_i \}$. If $\la_i \in \{ 0,1,\ldots,n-1 \}$ for
    some $i$, then we find that $w_j \neq t_i$ for all $i$ provided
    that the collection $\{ t_i \}$ is sufficiently generic. That's
    because we know from \cite{SV} that this is so when all $\la_i$'s
    are non-negative integers (see the last paragraph of the proof of
    Lemma \ref{nomon}).
\end{proof}

%Note that one of the characteristic exponents of this equation at
%$\infty$ is equal to $n$.  Hence for {\it arbitrary} parameters
%$\lambda_i\in \Bbb C$ satisfying \eqref{we%ight} we may {\bf define}
%$\bold Q(x)$ as a solution of this differential equat%ion of the form
%$x^n+\sum_{j=1}^\infty Q_jx^{n-j}$, as long as the coefficients%
%$Q_1,...,Q_n$ are uniquely determined by this condition; in this
%case, we set %$Q_j=0$ for $j>n$, and this will give a solution since
%it does so for positive integer $\lambda_j$. The standard theory of
%ODE implies that $Q_1,...,Q_n$ are uniquely determined iff the second
%characteristic exponent of equation \eqref{opereqq} is not in
%$\lbrace 0,1,...,n-1\rbrace$, which translates into the condition
%$\lambda_{m+1}\notin \lbrace -2,-3,...,-n-1\rbrace$.
%It follows that the operator $\bold Q(x)$ is well defined 
%for any $t_j$ and $\la_j$ satisfying \eqref{weight} as long as $\lambda_{m+1}\n%otin \lbrace -2,-3,...,-n-1\rbrace$.

Motivated by Theorem \ref{BAM}, it is natural to conjecture the
following statements for a general Lie algebra $\g$.

%This statement, however, has not been proved, as far as we know (see
%the discussion in Subsection \ref{genv} below).

%  It is natural to conjecture, by
%analogy with the finite-dimensional case, that this parametrization of
%eigenvalues is also valid for all $\lambda_i$ and all $t_i$ (though
%the Gaudin Hamiltonians may not be diagonalizable or may not have
%simple spectrum).

\begin{conjecture}    \label{conjsl2}
(1) For all $\{ t_i \}$ and $\{ \la_i \}$, there is a one-to-one
  correspondence between the set of joint eigenvalues of 
    the algebra ${\mc G}$ of generalized Gaudin Hamiltonians
   on ${\mc H}$ given by formula
  \eqref{HH} (without multiplicity) and the set of $G^\vee$-opers on $\pone$ with regular
  singularities and residues $\varpi(-\lambda_i-\rho)$ at $t_i,
  i=0,\ldots,m+1$, and solvable monodromy.

(2) Suppose that $\lambda_i$ and $\lambda_{m+1}^*$ are 
dominant integral weights. Then the monodromy of the corresponding $G^\vee$-opers is unipotent, and the inclusion  \eqref{inclusion} corresponds to the inclusion of the set of opers
  with trivial monodromy into the set of opers with unipotent
  monodromy.

(3) Let $I\subset \lbrace 1,...,m\rbrace$ and 
$$
\mathcal H:=(\bigotimes_{i\in I} V_{\la_i}\otimes \bigotimes_{i\notin I} \nabla(\la_i))^{\n_+}[\lambda_{m+1}^*],
$$
where $\la_i$ are dominant integral for $i\in I$. Then the set of joint eigenvalues of
 the algebra ${\mc G}$ on $\mathcal H$ is in one-to-one correspondence with the set
  of $G^\vee$-opers as above with solvable monodromy and trivial local monodromy
  around the points $t_i$, $i\in I$. 

(4) More generally, suppose that  for $i=0,...,m$, $\mathfrak{p}_i$ are parabolic subalgebras of $\g$ containing the positive Borel subalgebra $\b_+\subset \g$, $\nabla(\la_i,\mathfrak{p}_i)$ are parabolic contragredient Verma modules for 
$\mathfrak{p}_i$, and 
$$
\mathcal H:=(\bigotimes_{i=0}^m \nabla(\la_i,\mathfrak{p}_i))^{\n_+}[\lambda_{m+1}^*].
$$
Then the set of joint eigenvalues of the algebra ${\mc G}$ on $\mathcal H$ is in one-to-one correspondence with the set of $G^\vee$-opers as above with solvable monodromy 
and local monodromy at $t_i$ belonging to $Z(L_i^\vee)U_i^\vee\subset G^\vee$, where $L_i^\vee, U_i^\vee\subset P_i^\vee$ are the Levi factor and unipotent radical of the positive parabolic $P_i^\vee\subset G^\vee$ dual to $\mathfrak{p}_i$, and $Z(L_i^\vee)$ is the center of $L_i^\vee$. 
\end{conjecture}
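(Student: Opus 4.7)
The plan is to combine the Feigin--Frenkel--Reshetikhin realization of the Bethe algebra $\mc{G}$ via the center of $\wh{\g}$ at the critical level with the analytic Langlands tools developed in this paper. The FFR construction already supplies a canonical map from joint eigenvalues of $\mc{G}$ on any highest-weight-type tensor product to $G^\vee$-opers with regular singularities and residues $\varpi(-\la_i-\rho)$ at the $t_i$; the content of Conjecture \ref{conjsl2} lies in identifying the precise monodromy locus in which these opers sit and in proving bijectivity. My strategy is to introduce chiral Hecke operators on $\mc{H}$ -- algebraic analogues of the Hecke operators of Subsection \ref{pgl2} obtained by replacing the local $L^2$ integration by residue pairings in the Wakimoto/contragredient Verma realization. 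Repeating the computation of Proposition \ref{opereq} in this algebraic setting, these operators commute with $\mc{G}$ and satisfy the universal (chiral) oper equation with coefficients in $\mc{G}$.

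For Part (1), one direction is the classical observation that the Bethe-vector construction of \cite{FFR,RV} via screening operators factors each resulting oper as a Miura transform of an $\h$-connection, hence forces its monodromy into a Borel. Conversely, the chiral Hecke operators endow $\mc{H}$ with an action of the algebra of functions on the scheme of Miura opers with the prescribed residues. One can then reduce the analysis to the locus of Borel-monodromy opers, and combine a Schechtman--Varchenko style dimension count for $\mc{H}[\la_{m+1}^*]$ with a flatness argument for the Miura oper scheme over that locus to obtain bijectivity. For Part (2), once Part (1) is in hand the claim reduces to two facts: the inclusion \eqref{inclusion} is $\mc{G}$-equivariant by functoriality of the FFR construction, and a Borel-monodromy oper whose residues $\varpi(-\la_i-\rho)$ are integral dominant has trivial global monodromy precisely when its local monodromy vanishes at each $t_i$. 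The singular vectors of $\nabla(\la_i)$ which one kills to obtain $V_{\la_i}$ correspond under the FFR map to exactly these vanishing-obstructions. Parts (3) and (4) are parabolic variants of Part (2): removing only those singular vectors of $\nabla(\la_i)$ associated to roots in a Levi $L_i \subset G$ restricts the local monodromy at $t_i$ from a generic solvable element to the subgroup $Z(L_i^\vee)U_i^\vee$, matching the standard correspondence between parabolic subalgebras of $\g$ and dual parabolics of $G^\vee$.

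The principal obstacle I expect is establishing bijectivity (not merely surjectivity onto a subset) of the FFR map in Part (1). The failure of Bethe completeness beyond $\sw_2$ documented in \cite{MV} shows that one cannot simply exhibit an eigenbasis of Bethe vectors; one must instead prove that the action of the Miura-oper function algebra on $\mc{H}$ is both faithful, yielding surjection onto all Borel-monodromy opers, and generically free of rank one, yielding injectivity of the spectrum map for generic parameters and hence for all parameters by specialization. Controlling non-generic specialization and verifying freeness in the presence of resonances between the $\la_i$ and $\la_{m+1}^*$ -- which in the analytic Langlands dictionary developed in Section \ref{R} correspond to coincidences of ramification points with real ovals -- is the main technical hurdle. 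A promising route, following Subsection \ref{baxop}, is to identify these chiral Hecke operators with Baxter $Q$-operators for the Gaudin model and exploit their factorization and functional-equation properties to reduce the spectral problem to simpler pieces where freeness can be checked directly.
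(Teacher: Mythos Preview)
The statement you are attempting to prove is a \emph{conjecture}; the paper does not prove it. Immediately after stating Conjecture~\ref{conjsl2}, the authors remark only that part~(3) is a special case of~(4), that in a follow-up paper~\cite{EF} they plan to prove \emph{one direction} of part~(1) (the injective map from joint eigenvalues to $G^\vee$-opers with solvable monodromy), and that for $\g=\sw_n$ with generic $\{\la_i\}$ the results of~\cite{MTV1} give a conditional cardinality bound. There is no proof in the paper to compare your proposal against.

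That said, your strategy has a genuine gap already in the direction the paper regards as more accessible. You argue that eigenvalues give Borel-monodromy opers because the Bethe-vector construction of~\cite{FFR,RV} produces opers that are Miura transforms. But this only covers eigenvalues arising from Bethe vectors, and the counterexample of~\cite{MV} that you yourself cite shows that for general $\g$ not every eigenvector is a Bethe vector. So your argument does not even establish that the image of the FFR map lands in the solvable-monodromy locus. The paper's planned approach in~\cite{EF} is evidently different: it aims to prove solvability of monodromy directly for \emph{every} eigenvalue, presumably by exhibiting the Miura factorization via the chiral Hecke/$Q$-operator machinery of Subsections~\ref{chirlan} and~\ref{baxop} rather than via explicit Bethe vectors. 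Your use of chiral Hecke operators is in the right spirit, but you invoke them only on the converse side.

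For the converse and for bijectivity, your proposal remains a sketch: ``endow $\mc H$ with an action of the algebra of functions on the scheme of Miura opers,'' a ``flatness argument,'' and a ``Schechtman--Varchenko style dimension count'' are not arguments but names for desiderata. The paper's own remarks make clear that even for $\sw_n$ bijectivity is conditional on two unproved hypotheses (that all relevant opers arise via Miura transformation and that the spectrum is simple), and nothing in your proposal addresses either. Your reductions of parts~(2)--(4) to~(1) are plausible in outline but inherit the gap in~(1).
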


Note that part (3) is a special case of (4) where $\mathfrak{p}_i=\g$ if $i\in I$ and $\mathfrak{p}_i=\b_+$
if $i\notin I$. 

In a follow-up paper \cite{EF}, we plan to prove one direction of part
(1); namely, construct an injective map from the set of joint
eigenvalues of ${\mc G}$ to the corresponding set of $G^\vee$-opers
with solvable monodromy. It follows from the results
  \cite{MTV1} that if $\g=\sw_n$ and the set $\{ \la_i \}$ is generic,
  then the cardinality of the subset of the set of $PGL_n$-opers from
  part (1) corresponding (via the Miura transformation) to solutions
  of the Bethe Ansatz equations is not greater than the dimension
  of ${\mc H}$ given by \eqref{HH}. If for genetic
  $\{ \la_i \}$ all $PGL_n$-opers from part (1) have this form and
  the spectrum of ${\mc G}$ is simple, this would imply part (1) for
  $\g=\sw_n$ and generic $\{ \la_i \}$.

%Also, in Subsection \ref{chirlan} below we will consider the case when
%$\lambda_{m+1} = -r-1$, $r \in \Bbb Z_{\geq 0}$.

\begin{remark} Still more generally, one may consider 
the action of the algebra ${\mc G}$ on
$$(M_0\otimes...\otimes M_m)^{\mathfrak n_+}[\lambda_{m+1}^*],$$
where $M_i$ are arbitrary objects from category $\mathcal O$. 
It would be very interesting to parametrize the spectrum of this action, 
but here we don't even have a conjectural picture yet. 
\end{remark} 

\subsection{Examples of Bethe vectors}    \label{exbethe}

In this subsection for readers convenience we give some (well known)
examples of Bethe vectors in the space ${\mc H}$ defined
  by formula \eqref{HH} and various phenomena related to them.

\begin{example}\label{simexam} Let $\g=\sw_2$ and assume
    that $\la_j$ is generic for all $0\le j\le m$, and let $n=1$. 
Then $\dim \mathcal H=m$ and the Bethe Ansatz equation  has the form 
$$
\sum_{i=0}^m \frac{\la_i}{w-t_i}=0,
$$
which reduces to a polynomial equation of degree $m$ in $w$, so its solution set $S\subset \Bbb C$ has cardinality $m$. For every $w\in S$ the Bethe vector $v_w$ 
has the form:
\begin{equation}\label{bethvec}
v_w=\left(\sum_{i=0}^m \frac{f_i}{w-t_i}\right)v,
\end{equation} 
where $v$ is the tensor product of the highest weight vectors of
$\nabla(\la_i)$ and $f_i$ is $f\in \mathfrak{sl}_2$ acting in the
$i$-th factor (this is a special case of formula \eqref{bvec}).  These vectors form a basis in $\mathcal H$.
\end{example} 

For special weights, however, the Gaudin Hamiltonians may fail to be semisimple and 
the Bethe Ansatz equations may have fewer solutions than $\dim
\mathcal H$ (possibly none at all), or infinitely many solutions. Thus
Bethe vectors may fail to form a basis of $\mathcal H$. Such things
happen, for instance, when $\lambda_j$ are dominant integral for $0\le
j\le m$ while $\lambda_{m+1}^*$ is integral, but not dominant (see \cite{F1}).  

\begin{example} Let $m=2$, so we have four ramification points $t_0,t_1,t_2,\infty$. 
Let $E_k$ be the space of $\mathfrak{n_+}$-invariant vectors in 
$\nabla(\la_0)\otimes \nabla(\la_1)\otimes \nabla(\la_2)$ of weight 
$\sum_{i=0}^2 \la_i-2k$. 
Thus $\dim E_k=k+1$. 

In $E_0$ we have a unique up to scaling Bethe vector, which is merely the tensor product $v$ of highest weight vectors of $\nabla(\la_i)$. The eigenvalues of $G_i$ on $v$ are $\mu_i^0:=\sum_{j\ne i}\frac{\la_i\la_j}{2(t_i-t_j)}$. 

Now consider Bethe vectors in $E_1$. The Bethe Ansatz equation has the form 
$$
\sum_{i=0}^2\frac{1}{w-t_i}=0, 
$$
i.e., 
\begin{equation}\label{BAE11}
P(\bla,\bold t,w):=\la_0(w-t_1)(w-t_2)+\la_1(w-t_0)(w-t_2)+\la_2(w-t_0)(w-t_1)=0,
\end{equation} 
so it is quadratic if $\sum_{i=0}^2\la_i\ne 0$, 
and for generic $t_j$ has two solutions $w_\pm$ 
giving rise to two Bethe vectors $v_+,v_-$ which form a basis of 
$E_1$. The eigenvalues of $G_i$ on the vectors $v_\pm$ are $\mu_i^\pm:=\mu_i^0-\frac{\la_i}{t_i-w_\pm}$. 

Now consider the case $\sum_{i=0}^2\la_i=0$. In this case 
$$
E_k={\rm Hom}(\Delta(-2k),\nabla(\la_0)\otimes \nabla(\la_1)\otimes \nabla(\la_2)),
$$ 
so we have an injective map $R=\Delta_3(f): E_0\to E_1$ which is defined by restricting 
of a homomorphism $\Delta(0)\to \nabla(\la_1)\otimes \nabla(\la_2)\otimes \nabla(\la_3)$ 
to $\Delta(-2)\subset \Delta(0)$. Thus $Rv\in E_1$ is an eigenvector of $G_i$ with eigenvalues 
$\mu_i^0$ (as $[R,G_i]=0$). 

The vector $Rv$ is {\bf not}, however, a Bethe vector of the form \eqref{bvec}. Namely, 
if $\sum_{i=0}^2\la_i=0$ then the quadratic term in the Bethe Ansatz equation \eqref{BAE11} 
drops out and it becomes linear, so has only one finite solution 
$$
w_+=-\frac{\la_0t_1t_2+\la_1t_0t_2+\la_2t_0t_1}{\la_0t_0+\la_1t_1+\la_2t_2}
$$
(provided that the denominator $\sum_{i=0}^2\la_it_i$ is nonzero). The second solution $w_-$ escapes to $\infty$ as we approach the hyperplane $\sum_{i=0}^2\la_i=0$. Thus we obtain only one Bethe vector $v_+$; the second vector $v_-=Rv$ is a limit of Bethe vectors from generic $\lambda_i$, but is not itself a Bethe vector, as it does not correspond to a finite solution of the Bethe Ansatz equations. Nevertheless, the vectors $v_+,v_-$ are still an eigenbasis 
in $E_1$ for the operators $G_i$, which for generic $t_i$ still act regularly and semisimply on this space. 

It is instructive to consider the resonance case when $\sum_{i=0}^2 \la_i=\sum_{i=0}^2 \la_it_i=0$ but $\lambda_i$ don't vanish simultaneously. 
Then the Bethe Ansatz equations have no solutions, so there are no 
Bethe vectors at all. The operators $G_i$ are not semisimple on $E_1$ in this case, 
so their only joint eigenvector in $E_1$ up to scaling is $v_-$. 

Finally, in the most degenerate case $\la_0=\la_1=\la_2=0$, 
the Bethe Ansatz equation is vacuous, so any $w\in \Bbb C$ is a solution. 
In this case, $G_i$ act by scalars on $E_1$ and there are infinitely many Bethe vectors 
given by \eqref{bethvec}. 

A slightly more interesting example is $\sum_{i=0}^2\lambda_i=2$ (assuming that otherwise $\lambda_i$ are generic). In this case we have the injective restriction map $R=\Delta_3(f): E_1\to E_2$. Generically we have a basis of Bethe vectors $v_+,v_-$ of $E_1$ as above, so we have eigenvectors $Rv_+,Rv_-$ of $G_i$ with the same eigenvalues. 

These vectors are not Bethe vectors, however. Indeed, 
the Bethe Ansatz equations for $E_2$ have the form 
$$
\sum_{i=0}^2 \frac{\lambda_i}{w_1-t_i}=\frac{2}{w_1-w_2},\quad \sum_{i=0}^2 \frac{\lambda_i}{w_2-t_i}=\frac{2}{w_2-w_1},
$$
which implies that  
$$
2(w_1-t_0)(w_1-t_1)(w_1-t_2)=(w_1-w_2)P(\bla,\bold t,w_1),
$$
$$
2(w_2-t_0)(w_2-t_1)(w_2-t_2)=(w_2-w_1)P(\bla,\bold t,w_2).
$$
This is a system of two cubic equations in $w_1,w_2$, so for generic $\lambda_i$ by Bezout's theorem it has $3\cdot 3=9$ solutions. However, three of these solutions are $w_1=w_2=t_i$, $i=0,1,2$, which are not solutions of the Bethe Ansatz equations. This leaves us with $6$ solutions, or $3$ modulo the symmetry exchanging $w_1$ and $w_2$, each defining a Bethe vector. But on the hyperplane $\sum_{i=0}^2 \la_i=2$ two of these solutions run away to infinity, 
tending to $(\infty,w_\pm)$. This leaves us with just one solution $\bold w$ 
which gives rise to a single Bethe vector $v_{\bold w}$. The vectors $v_{\bold w},Rv_+,Rv_-$ 
form a basis of $E_2$. 
\end{example} 

\subsection{Analytic Langlands correspondence over $\Bbb R$ for
  compact groups}\label{comgr}

In this subsection and the next, we incorporate the Gaudin model for
finite-dimensional representations into the framework of the analytic
Langlands correspondence.

Let $F=\Bbb R$ and 
$X=\Bbb P^1$ with the usual real structure and distinct real ramification
points $t_0,...,t_{m+1}$ (in this cyclic order). As before,
we will set $t_{m+1} = \infty$, so $t_0<...<t_m$. Let the group $G^{\sigma_i}$, $0\le i\le m+1$,
be the compact form $G_c$ of $G$ for all $i$. Place the
finite-dimensional irreducible representation $V_{\la_i}$ of $G_c$ with
highest weight $\la_i$ at the point $t_i$. Then, as we explained in Subsection \ref{ram2}, the Hilbert space of the analytic Langlands correspondence is 
defined precisely by formula \eqref{spaceH}. Also the quantum Hitchin Hamiltonians 
are exactly the Gaudin Hamiltonians in this case. 

It remains to explain why in our setting the monodromy-free condition
is precisely the topological reality condition on spectral opers
arising from the analytic Langlands correspondence. Let us do so for
$G=SL_2$. We expect that this argument can be generalized
  to all simple Lie groups $G$.

In view of Remark \ref{moregen}, we can define the Hecke operator $H_{x,\overline x}$ 
for $x\in \Bbb C$ (with coweight $1$ of $SL_2=(G/(\pm 1))^\vee$ attached to both $x$ and $\overline x$). Recall that since the real locus $X(\Bbb R)$ is a quaternionic oval, 
the eigenvalue of the Hecke operator $H_{x,\overline x}$ is given by formula \eqref{quateq}: 
$$
\bbe(x,\overline x)={\rm Im}(\overline{f_j^+(x)}g_j^+(x)),\ {\rm Im}(x)>0
$$
near the interval $(t_j,t_{j+1})$, where $f_j^+,g_j^+$ are the basic local solutions of the oper equation $L\beta=0$ near $t_j$ 
such that
$$
g_j^+(t_j+u)=\pi u^{\frac{\lambda_j}{2}+1}(1+ug_j^0(u)),
$$
$$
f_j^+(t_j+u)=u^{-\frac{\lambda_j}{2}}(1+uf_j^0(u))+\gamma_ju^{\frac{\lambda_j}{2}+1}(1+ug_j^0(u))\log u,
$$
where $f_j^0(u),g_j^0(u)\in \Bbb R[[u]]$, $\gamma_j\in \Bbb R$. 
Consider also the basic local solutions $f_j^-,g_j^-$ of the oper equation near $t_j$ such that for small $u>0$ 
$$
g_j^-(t_j-u)=\pi u^{\frac{\lambda_j}{2}+1}(1-ug_j^0(-u)),
$$
$$
f_j^-(t_j-u)=u^{-\frac{\lambda_j}{2}}(1-uf_j^0(-u))-\gamma_ju^{\frac{\lambda_j}{2}+1}(1-ug_j^0(-u))\log u.
$$
Then the half-monodromy matrix along the upper (respectively, lower) half-circle around $t_j$ 
between the bases $f_j^+,g_j^+$ and $f_j^-,g_j^-$ is  
$$
J_j^{\pm}=\begin{pmatrix} i^{\mp\lambda_j} & 0\\ i^{\mp (1+\lambda_j)}\gamma_j & -i^{\mp\lambda_j}\end{pmatrix} 
$$
Thus the real analytic continuation along the upper half-circle transforms 
$\bbe={\rm Im}(\overline{f_j^+}g_j^+)$ to $\pm{\rm Im}(\overline{f_j^-}g_j^-)\mp \gamma_j|g_j^-|^2$. So to preserve the vanishing condition for $\bbe$ on the real locus, we must have $\gamma_j=0$. 
This means that the monodromy of the oper $L$ around $t_j$, which is given by the matrix $(J_j^-)^{-1}J_j^+=(-1)^{\lambda_j}$ is trivial in $PGL_2$ for all $j$, i.e. the oper is monodromy-free. 

\subsection{The connection between Hecke operators and Baxter's
  $Q$-operator}\label{baxop}

Let us go back to the case of the Gaudin model with the space of
states ${\mc H}$ given by formula \eqref{HH}. The following result is
proved in the same way as Lemma \ref{soln}.

\begin{theorem}    \label{Q-op}
For arbitrary collections $\{ t_i \}$
and $\{ \la_i \}$ satisfying
\eqref{weight} with
$$
\lambda_{m+1}\notin \lbrace
-2,-3,...,-n-1\rbrace,
$$
there is a unique linear operator $\bold Q(x)$
acting on $\mathcal H$ given by formula \eqref{HH} commuting with the
Gaudin Hamiltonians
$G_i$, which is a monic polynomial in $x$ of degree
$n$ satisfying the universal oper equation (compare with Proposition
\ref{opereq}(i)):
\begin{equation}\label{opereqq} 
\left(\partial_x^2-\sum_{i=0}^m\frac{\lambda_i}{x-t_i}\partial_x\right)\bold
Q(x)-\bold Q(x)\sum_{i=0}^m\frac{\widehat G_i}{x-t_i}=0,
\end{equation}
     where
      $$
      \widehat G_i:=G_i-\sum_{j\ne
        i}\frac{\lambda_i\lambda_j}{2(t_i-t_j)}.
      $$
\end{theorem}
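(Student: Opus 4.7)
My plan is to promote the scalar argument in the proof of Lemma \ref{soln} to the operator level, treating the universal oper equation \eqref{opereqq} as a second-order ODE in $x$ with coefficients in the commutative subalgebra generated by the operators $\widehat G_i$. I will look for $\bold Q(x)$ as a formal Laurent series at $\infty$ of the form
$$
\bold Q(x) = x^n + \sum_{j\ge 1} Q_j\, x^{n-j}, \qquad Q_j\in \End(\mathcal H),
$$
and substitute it into \eqref{opereqq}. Setting $H_k := \sum_{i=0}^m t_i^k\, \widehat G_i$ and $P_k := \sum_{i=0}^m \lambda_i t_i^k$, and expanding $1/(x-t_i) = \sum_{k\ge 0} t_i^k x^{-k-1}$, one obtains an infinite recursion
$$
\alpha_j\, Q_j \;=\; \Psi_j\!\left(Q_0,\ldots,Q_{j-1};\, H_1,H_2,\ldots\right), \qquad j\ge 1,
$$
where $\alpha_j$ is a scalar depending only on $j$, $n$, and $\lambda_{m+1}$, and $\Psi_j$ is a polynomial expression in the $\widehat G_i$'s with scalar coefficients in $\la_i, t_i$.

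\textbf{Solving the recursion up to $j=n$.} A direct computation parallel to Lemma \ref{soln} identifies $\alpha_j$ with the value of the indicial polynomial at infinity of equation \eqref{opereq1} at the shifted exponent $n-j$. Since the characteristic exponents at $\infty$ are $n$ and $-\lambda_{m+1}-2$, one finds $\alpha_j = c_j\cdot j(\lambda_{m+1}+1+j)$ with $c_j\ne 0$, which is nonzero for $1\le j\le n$ exactly when $\lambda_{m+1}\notin\{-2,-3,\ldots,-n-1\}$. Under this hypothesis the recursion therefore uniquely determines $Q_1,\ldots,Q_n$ as polynomial expressions in the pairwise commuting operators $\widehat G_i$ with scalar coefficients; since the $\widehat G_i$ commute with every Gaudin Hamiltonian $G_j$, so do the $Q_j$. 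Uniqueness of $\bold Q(x)$ as a monic polynomial of degree $n$ satisfying \eqref{opereqq} is immediate from uniqueness of the recursion.

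\textbf{Termination at degree $n$.} The principal obstacle is to verify that setting $Q_j=0$ for $j>n$ is consistent, i.e., that the operator $\bold Q(x)$ so defined actually satisfies \eqref{opereqq} on the nose rather than only modulo lower-order tails. I propose to argue by analytic continuation in the weights $\lambda_i$. For a Zariski-dense open subset of dominant integral weights $\{\lambda_i\}$ compatible with \eqref{weight}, Theorem \ref{sl2verma} provides an eigenbasis of $\mathcal H$ consisting of Bethe vectors $v_{\bold w}$; on each such $v_{\bold w}$, Lemma \ref{nomon} yields a monic polynomial solution $Q_{\bold w}(x)$ of the scalar equation \eqref{opereq1} of degree exactly $n$, namely $Q_{\bold w}(x)=\prod_{j=1}^n(x-w_j)$. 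By the computation of the preceding paragraph, $\bold Q(x)$ must then act on $v_{\bold w}$ by multiplication by $Q_{\bold w}(x)$, so on this open locus of parameters $\bold Q(x)$ is indeed a monic polynomial of degree $n$ in $x$.

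\textbf{Conclusion.} The matrix entries of each $Q_j$ (in any fixed basis of $\mathcal H$) are rational functions of $\lambda_i$ and $t_i$, with poles only along the loci $\alpha_{j'}=0$ that are excluded by the hypothesis $\lambda_{m+1}\notin\{-2,\ldots,-n-1\}$. The identities $Q_j\equiv 0$ for $j>n$, having been established on a Zariski-dense subset of the parameter space, therefore extend by analytic continuation to the entire region specified in the theorem. This yields the desired polynomial operator $\bold Q(x)$ of degree $n$, uniquely determined and commuting with all $G_i$. The most delicate point of the argument is controlling the rational dependence of $Q_j$ on the parameters so as to make the extension step rigorous; as an alternative, one could work directly with the generalized joint eigenspaces of the Bethe algebra on each finite-dimensional weight space of $\mathcal H$ and reduce termination to Lemma \ref{soln} fiberwise, using that a nilpotent perturbation of a scalar ODE whose solution truncates still has a truncating solution.
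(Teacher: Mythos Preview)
Your proposal is correct and follows the same approach as the paper, which simply states that Theorem~\ref{Q-op} ``is proved in the same way as Lemma~\ref{soln}.'' You have correctly spelled out how to promote that scalar argument to the operator level: the recursion at $\infty$ expresses each $Q_j$ as a polynomial in the pairwise commuting $\widehat G_i$ (hence commuting with all $G_i$), the indicial condition $\lambda_{m+1}\notin\{-2,\ldots,-n-1\}$ guarantees that $Q_1,\ldots,Q_n$ are uniquely determined, and termination is obtained by specialization to a Zariski-dense locus of parameters where one knows the eigenvalues are polynomials of degree $n$, followed by analytic continuation in the parameters. One small point: when you invoke Lemma~\ref{nomon} for the polynomial form of the eigenvalue on a Bethe vector, that lemma is stated for $\lambda_i\in\Z_{\ge 0}$; for arbitrary weights the relevant input is Theorem~\ref{sl2verma}(1) (or Lemma~\ref{soln} itself), which already gives $Q_{v_{\bold w}}(x)=\prod_j(x-w_j)$ directly from the Bethe Ansatz solution. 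With that adjustment your argument is complete, and your caution about making the rational dependence of $Q_j$ on parameters precise is easily addressed by working in the fixed weight-space basis of $\bigotimes_i\nabla(\lambda_i)[\lambda_{m+1}]$, where the matrix entries of $\widehat G_i$ are manifestly polynomial in the $\lambda_i$ and rational in the $t_i$.
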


In particular, if $v \in {\mc H}$ is an eigenvector of the $G_i$'s
with eigenvalues $\bmu = \{ \mu_i \}$, we have
$$
\bold Q(x)v=Q_v(x)v,
$$
where $Q_v(x)$ is the polynomial appearing in the corresponding
solution \eqref{Phix1} of the equation $L(\bmu) \Phi = 0$.

As we will see in Subsection \ref{qdefo}, this
operator can be obtained as the $q\to 1$ limit of the celebrated
Baxter $Q$-operator introduced by R. Baxter in the study of integrable
quantum spin chains. For this reason we call $\bold Q(x)$ the {\bf
  Baxter Q-operator of the Gaudin model} (or Baxter's $Q$-operator for
short).

On the other hand, as we explain presently, this operator $\bold Q(x)$
may be viewed as an algebraic version of the Hecke operators of the
analytic Langlands correspondence.

Let us switch to the setting of the previous subsection (for
$G=SL_2$). Thus, ${\mc H}$ is now given by formula \eqref{spst}. The
proof of Theorem \ref{Q-op} (which follows the proof of Lemma
\ref{soln}) carries over without changes to this case. Thus, we obtain
a $Q$-operator in this case as well, for which we will use the same
notation. Moreover, the condition $\lambda_{m+1}\notin \lbrace
-2,-3,...,-n-1\rbrace$ becomes vacuous in this case because all
$\la_i$'s are assumed to be dominant integral.

It is clear that the resulting operator is the restriction of the
$Q$-operator acting on the tensor product of contragredient Verma
modules with dominant integral weights (which is described in Theorem
\ref{Q-op}) to the tensor product of the corresponding
finite-dimensional representations under the embedding
\eqref{inclusion}.

Consider now the finite-dimensional case with ${\mc H}$ given by formula
\eqref{spst}. Let $L(\bmu)$ be a monodromy-free $PGL_2$-oper corresponding
via Theorem \ref{BAMsl2} to a set of joint eigenvalues $\bmu = \{ \mu_i \}$
of the Gaudin Hamiltonians acting on ${\mc H}$. Formula \eqref{quateq} implies
that the eigenvalue $\bbe(x,\overline x)$ of the Hecke operator
$H_{x,\overline x}$ corresponding to the $PGL_2$-oper $L(\bmu)$ is,
up to scaling by an $x$-independent constant, given by
$$
\bbe(x,\overline x)\sim |\Phi(x)|^2{\rm Im}\int_{x_0}^x \Phi^{-2}(z)dz,\ {\rm Im}(x)\ge 0,
$$ 
where $\Phi$ is defined by \eqref{Phix}, and the same expression with 
a minus sign if ${\rm Im}(x)<0$. Here the lower limit $x_0$ of integration can be any point of $X(\Bbb R)$; recall from Subsection \ref{comgr} that if the oper is monodromy-free
then the imaginary part of the integral is independent on the choice of this point. 

Thus, again up to a constant, 
$$
H_{x,\overline x}\sim \prod_{i=0}^m |x-t_i|^{-\la_i}\bold Q(x)^\dagger\bold Q(x){\rm Im}\int_{x_0}^x \bold Q^{-2}(z)\prod_{i=0}^m (z-t_i)^{\la_i}dz,\ {\rm Im}(x)>0,
$$
$$
\Bbb H_{x,\overline x}\sim \bold Q(x)^\dagger\bold Q(x){\rm Im}\int_{x_0}^x \bold Q^{-2}(z)\prod_{i=0}^m (z-t_i)^{\la_i}dz,\ {\rm Im}(x)>0.
$$
Thus we see that the Hecke operator can be expressed in a
  rather direct way in terms of Baxter's $Q$-operator of the Gaudin model.

\subsection{Analytic Langlands correspondence for discrete series representations}\label{disser} Consider now another
example of the analytic Langlands correspondence in the setting
of the Gaudin model, which involves
discrete series representations of $SL_2(\Bbb R)$. As in Subsection
\ref{comgr},
we take $F=\Bbb R$, $X=\Bbb P^1$ with the standard real structure and
{\bf real} ramification points 
$t_0,...,t_{m+1} \in {\mathbb R}$, but now set $G^\sigma$ to be the
split real group $SL_2(\Bbb R)$.
So this setting is somewhat different from the one of Subsection \ref{comgr}, where we deal with compact groups; they are related by analytic continuation in highest weights. 

Let $V_i=\widehat \Delta(-r_i)$ be the completion of the Verma module 
for $0\le i\le m$, and $V_{m+1}=\widehat\Delta(-r_{m+1})^*$, where $r_i\in \Bbb Z_{\ge 1}$. So $V_i$ is a holomorphic discrete series representation for all $0\le i\le m$, while $V_{m+1}$ is an antiholomorphic  discrete series representation.\footnote{More precisely, the representations 
$\widehat{\Delta}(-1)$, $\widehat{\Delta}(-1)^*$ are limit of discrete series representations.}
Then
$$
{\mathcal H}_{V_0,...,V_{m+1}}=\Hom(\Delta(-r_{m+1}),\Delta(-r_0)\otimes...\otimes \Delta(-r_{m}))
$$
$$
=(\Delta(-r_0)\otimes...\otimes \Delta(-r_m))^{{\mathfrak n}_+}_{-r_{m+1}}. 
$$
So the Hilbert space is finite dimensional in this
case. It is non-zero if
    and only if
\begin{equation}    \label{ri}
  r_{m+1} - \sum_{i=0}^m r_i = 2n,
\end{equation}
where $n$ is a non-negative integer.
It is easy to see that the Gaudin operators $G_i$ are
self-adjoint, and hence diagonalizable, in this case.

%As before, the Quantum Hitchin Hamiltonians are the Gaudin operators.

\begin{theorem}\label{disseri}
For all real $\{ t_i \}$ and all $\{ r_i\in \Bbb Z_{\ge 1} \}$ the
Gaudin Hamiltonians are diagonalizable on ${\mathcal
  H}_{V_0,...,V_{m+1}}$ with simple joint
  spectrum and there is a one-to-one correspondence between
the set of their joint eigenvalues and the set of $PGL_2$-opers on
$\pone$ with regular singularities and residues $\varpi(-r_i+1)$ at
$t_i, i=0,\ldots,m+1$, and solvable monodromy.
\end{theorem}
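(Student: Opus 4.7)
My plan is to combine three ingredients: self-adjointness of the Gaudin Hamiltonians on $\mathcal H_{V_0,\ldots,V_{m+1}}$ coming from the unitarity of the discrete series; the universal oper equation of Proposition \ref{opereq} satisfied by Baxter's $Q$-operator of Theorem \ref{Q-op}, which supplies the map from eigenvalues to opers; and Lemma \ref{soln}, which supplies the Bethe-Ansatz inverse. A crucial preliminary is that the exceptional ranges appearing in the hypotheses of Theorem \ref{Q-op} and Lemma \ref{soln} are automatically avoided in our setting: since $\la_i=-r_i\in\{-1,-2,\dots\}$ with $r_i\ge 1$, we have $\la_i\notin\{0,1,\dots,n-1\}$, so Lemma \ref{soln} applies for \emph{all} real $\{t_i\}$; moreover, relation \eqref{ri} forces
\[
r_{m+1}=2n+\sum_{i=0}^m r_i\ \ge\ 2n+m+1\ >\ n+1
\]
whenever $n\ge 1$ (while $n=0$ gives a one-dimensional trivial Hilbert space), so $\la_{m+1}=-r_{m+1}\notin\{-2,\dots,-n-1\}$ and the hypothesis of Theorem \ref{Q-op} is met.

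I would first establish self-adjointness. Each $V_i=\widehat\Delta(-r_i)$ for $0\le i\le m$ is a unitary (limit of) holomorphic discrete series representation of $SL_2(\R)$, and $V_{m+1}=\widehat\Delta(-r_{m+1})^*$ is the antiholomorphic counterpart, so $\bigotimes_{i=0}^m V_i$ is a unitary $SL_2(\R)^{m+1}$-module and the multiplicity space
\[
\mathcal H=\Hom\bigl(V_{m+1}^*,\,V_0\otimes\cdots\otimes V_m\bigr)
\]
inherits a positive-definite Hermitian inner product from Plancherel pairing. The Gaudin Hamiltonians \eqref{Hi}, built from the symmetric Casimir form, are self-adjoint for this inner product, so the Gaudin algebra $\mathcal G$ is simultaneously diagonalizable on the finite-dimensional $\mathcal H$ with orthogonal eigenspaces.

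Next I would assemble the bijection. Given a joint eigenvalue $\bmu=\{\mu_i\}$ of the $G_i$ on an eigenvector $v\in\mathcal H$, the operator $L(\bmu)$ from \eqref{PGL2op} with $\la_i=-r_i$ is a $PGL_2$-oper on $\pone$ with regular singularities at $t_0,\dots,t_m,\infty$ and residues $\varpi(-r_i+1)$ (using \eqref{muik} to match the residue at $\infty$). Theorem \ref{Q-op} supplies a polynomial operator $\mathbf Q(x)$ commuting with $\mathcal G$ and satisfying the universal oper equation \eqref{opereqq}; its eigenvalue on $v$ is a monic polynomial $Q_v(x)$ of degree $n$, and $\Phi_v(x)=Q_v(x)\prod_{i=0}^m(x-t_i)^{-r_i/2}$ solves $L(\bmu)\Phi_v=0$, factoring $L(\bmu)$ as a Miura transform $(\partial_x-u)(\partial_x+u)$ with $u=\Phi_v'/\Phi_v$ and so exhibiting solvable monodromy. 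Conversely, for every $PGL_2$-oper of the prescribed type, Lemma \ref{soln} produces a unique polynomial $Q(x)=\prod_{j=1}^n(x-w_j)$ whose roots satisfy the Bethe Ansatz equations \eqref{BAE}, and the associated Bethe vector $v_{\mathbf w}\in\mathcal H$ given by \eqref{bvec} (evaluated at the Verma weights $\la_i=-r_i$) is a joint eigenvector of $\mathcal G$ with eigenvalues given by \eqref{mui}. The two constructions are mutually inverse, so the spectrum is automatically simple.

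The principal obstacle I foresee is verifying that the Bethe vectors $v_{\mathbf w}$ produced by Lemma \ref{soln} are nonzero in the multiplicity space $\mathcal H$ and realize \emph{all} joint eigenvalues of $\mathcal G$. For generic weights this is Theorem \ref{SV}, but here we sit on the integer weight locus $\la_i=-r_i$; the argument will combine irreducibility of $\Delta(-r_i)$ for $r_i\ge 1$ (which identifies $\Delta(-r_i)\cong\nabla(-r_i)$ and keeps $\dim\mathcal H$ from jumping off the generic value), the orthogonality of eigenspaces supplied by self-adjointness (which rules out spurious coincidences of eigenvalues), and a dimension count showing that the image of the map $\{w_1,\dots,w_n\}\mapsto v_{\mathbf w}$ already exhausts $\mathcal H$.
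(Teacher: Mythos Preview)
Your outline is largely correct and parallels the paper's proof in its use of Lemma~\ref{soln} for the converse direction and in checking that the exceptional parameter ranges are avoided. Your use of the $Q$-operator (Theorem~\ref{Q-op}) to exhibit solvable monodromy of $L(\bmu)$ for every joint eigenvalue is a clean alternative to the paper's more direct Miura factorization; both give the same forward map. Self-adjointness is also noted in the paper just before the theorem statement.

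However, there is a genuine gap at exactly the point you flag as the ``principal obstacle,'' and the ingredients you propose do not close it. Irreducibility of $\Delta(-r_i)$ only ensures that $\dim\mathcal H$ equals its generic value; orthogonality of eigenspaces says nothing about multiplicities within a single eigenspace; and the ``dimension count'' you invoke is precisely the hard step. What you need is that the Bethe vectors $v_{\bold w}$ (over all solutions $\bold w$ of \eqref{BAE}) are nonzero and span~$\mathcal H$. This is \emph{not} a formal consequence of anything you have assembled: a priori there could be too few BAE solutions, or some $v_{\bold w}$ could vanish. The paper resolves this by invoking Corollary~2.4.6 of~\cite{V}, a nontrivial result of Varchenko on critical points of master functions, which asserts that for \emph{all} real $t_i$ and all $r_i>0$ the Bethe vectors form an eigenbasis of~$\mathcal H$. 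This external input is essential and cannot be replaced by the soft arguments you list.

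A second, smaller point: your claim that ``the two constructions are mutually inverse, so the spectrum is automatically simple'' conflates bijectivity between the set of eigenvalues and the set of opers with one-dimensionality of each eigenspace. Even granting the bijection, simplicity requires $\dim\mathcal H=|\text{spectrum}|$, which again comes down to the completeness of Bethe Ansatz. The paper supplements this with a short degree argument: if two distinct BAE solutions $\bold w_1\ne\bold w_2$ gave the same eigenvalue $\bmu$, then $L(\bmu)\Phi=0$ would have two distinct solutions of the form \eqref{Phix1} with monic $Q$ of degree $n$, and their difference would yield a solution with $\deg Q\le n-1$, contradicting the characteristic exponents at $\infty$ since $r_{m+1}>2n$. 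You should include this argument (or an equivalent) explicitly.
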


\begin{proof}
 According to Theorem \ref{sl2verma}, this statement holds for generic
 collections $\{ t_i \}$ and $\{ \lambda_i = -r_i \}$. However,
 Corollary 2.4.6 of \cite{V} implies that in the case that all $r_i$'s
 are positive (so that all $\lambda_i$'s are negative) and all $t_i$'s
 are real, the Bethe vectors $v_{\bold w}$
   corresponding to the solutions ${\bold w} = \{ w_1,\ldots,w_n \}$
   (with $n$ defined by formula \eqref{ri}) of the Bethe Ansatz
   equations \eqref{BAE} form a basis of eigenvectors of the Gaudin
 Hamiltonians. As before, for each such solution ${\bold w}$, denote by
 $\bmu = \{ \mu_i \}$ the corresponding set of joint eigenvalues of
 the Gaudin Hamiltonians $\{ G_i \}$. Then the
 $PGL_2$-oper $L(\bmu) = \pa^2_x-v(x)$ given by formula \eqref{PGL2op}
 (with $\la_i = -r_i$) is equal to the Miura
 transformation \eqref{miura} of $u(x)$ given by formula \eqref{uz}.
 Equivalently, equation $L(\bmu) \Phi = 0$ has a solution $\Phi$ given by
 formula \eqref{Phix1} with
 \begin{equation}    \label{Qzz1}
Q(x) = \prod_{j=1}^n (x-w_j).
 \end{equation}
This shows that all opers $L(\bmu)$ corresponding to the joint
eigenvalues $\bmu$ of the Gaudin Hamiltonians have solvable monodromy
representation.

Suppose that the joint spectrum of the Gaudin Hamiltonians is not
simple. Then there are two different solutions ${\bold w}_1$ and
${\bold w}_2$ of the Bethe Ansatz equations \eqref{BAE} such that the
corresponding joint eigenvalues coincide; that is, $\bmu_1 = \bmu_2$
and so $L(\bmu_1) = L(\bmu_2)$. But then equation $L(\bmu_1) \Phi = 0$
has two different solutions $\Phi_1$ and $\Phi_2$ of the form
\eqref{Phix1} with the monic polynomials $Q_1(x)$ and $Q_2(x)$ of
degree $n$ of the form \eqref{Qzz1} corresponding to ${\bold w}_1$ and
${\bold w}_2$. Therefore, $\Phi = \Phi_1-\Phi_2$ is also a solution of
$L(\bmu_1) \Phi = 0$, such that the corresponding polynomial $Q(x) =
Q_1(x)-Q_2(x)$ is non-zero and has degree $0\le k \le n-1$.

It follows from our condition \eqref{ri} that near $t_{m+1}=\infty$,
any solution of the equation $L(\bmu_1) \Phi = 0$ has the expansion
(up to a non-zero scalar) either $y^{r_m/2}(1 + O(y))$ or
$y^{(-r_{m+1}+2)/2}(1 + O(y))$, where $y = x^{-1}$. Therefore, if
$L(\bmu_1) \Phi = 0$ has a solution of the form \eqref{Phix1} with
$Q(x)$ of degree $0\le k\le n-1$, then $r_{m+1} = n-k+1$. But this is
impossible because \eqref{ri} also implies that $r_{m+1} > 2n$. This
shows that the joint spectrum of the Gaudin Hamiltonians is simple.

Conversely, suppose that a $PGL_2$-oper $L(\bmu) = \pa_x^2-v(x)$ has
regular singularities with residues $\varpi(-r_i+1)$ at $t_i,
i=0,\ldots,m+1$, where $r_i \in \Bbb Z_{\ge 1},
i=0,\ldots,m+1$. Then the $r_i$'s must satisfy equation
  \eqref{ri}, which implies that $r_{m+1} \notin \lbrace
  2,3,...,n+1\rbrace$. Therefore, by Lemma \ref{soln}, for all $\{
t_i \}$, the $PGL_2$-oper $L(\bmu)$ is equal to the Miura
transformation \eqref{miura} of $u(x)$ given by formula \eqref{uz}
with $\la_i = -r_i$, with all $w_j$'s being distinct and different from the
$t_i$'s. But then the set of numbers ${\bold w} = \{ w_j \}$ appearing
in $u(x)$ must satisfy Bethe Ansatz equations \eqref{BAE}, so the
corresponding Bethe vector $v_{\bold w}$ is an eigenvector of the
$G_i$'s with the eigenvalues $\mu_i$'s. Since we know from above that these
vectors form an eigenbasis, this completes the proof.
\end{proof}

%Thus the eigenvectors and eigenvalues can be found by the Bethe
%Ansatz method a%s explained above.

\begin{remark}\label{projre1} 1. We hope that this statement can be
    proved within the framework of the analytic Langlands
    correspondence, using the results of of Subsection \ref{fixpo}.

2. The above argument showing the simplicity of the joint
  spectrum of the Gaudin Hamiltonians can also be used to prove the
  simplicity of the joint spectrum of the Gaudin Hamiltonians in the setting of
  Subsection \ref{comgr}.

3. The above proof shows that the statement of Theorem \ref{disseri} remains true if the $r_i$'s are arbitrary positive real numbers constrained by equation \eqref{ri}. If $r_i$ is not an integer, one cannot define an action of $SL(2,{\Bbb R})$ on a completion of $\Delta(-r_i)$. But it can be interpreted as a unitary representation of the universal cover of $SL(2,{\Bbb R})$, and according to Remark \ref{projre}, such representations can also included into the framework of the analytic Langlands correspondence.
\end{remark} 

\subsection{Chiral analytic Langlands correspondence} \label{chirlan}

In this subsection we consider another setting in which the analytic
Langlands correspondence can be linked to a particular Gaudin model. Let $F=\Bbb C$, $X=\Bbb P^1$ with ramification points $t_0,...,t_m,t_{m+1}=\infty$, and suppose that the representations $V_j$ of $G$ are holomorphic. Then in the definition of the Hecke operator $H_{x,\lambda}$ we may replace the integral $\int_{{\mathcal Z}_{\lambda,x}(P)}\psi(Q)dQd\overline Q$ over the complex variety ${\mathcal Z}_{\lambda,x}(P)$ by the ``contour" integral $\int_{C}\psi(Q)dQ$, where $C\subset {\mathcal Z}_{\lambda,x}(P)$ is a real cycle, and by Cauchy's theorem the result is stable under deformations of $C$. Since $V_j$ are not Hermitian, the corresponding space $\mathcal H$ will not carry a Hermitian form, but we may still consider eigenvectors and eigenvalues of Hecke operators (in the spirit of Remark \ref{schwar}).

Admittedly, there are very few holomorphic irreducible representations of $G$ (only the finite dimensional ones), but we may in fact take $V_j$ to be certain representations of the Lie algebra $\g={\rm Lie}G$ which do not necessarily integrate to $G$. We call this setting the {\bf  chiral analytic Langlands correspondence}. Let us show how it works in an example with $G=SL_2$. 

For $\lambda\in \Bbb C$ let $\Delta(\lambda),\nabla(\lambda)$ be the Verma and contragredient Verma modules over $\sw_2$ with highest weight $\la$. 
For $0\le i\le m$ take $V_i:=\nabla(\lambda_i)$, and let $V_{m+1}=\Delta(\lambda_{m+1})^*$ be the graded dual of $\Delta(\lambda_{m+1})$, i.e., the contragredient Verma module with lowest weight $-\lambda_{m+1}$. Assume that $\sum_{i=0}^m\lambda_i-\lambda_{m+1}=2n$ and that $\lambda_{m+1}=r-1$, where $n,r\in \Bbb Z_{\ge 0}$. Then the space 
$$
\mathcal H:={\rm Hom}(V_{m+1}^*,V_0\otimes...\otimes V_m)=
{\rm Hom}(\Delta(r-1),\nabla(\lambda_0)\otimes...\otimes \nabla(\lambda_m))
$$
has dimension $\binom{n+m}{m}$; it is the space of singular vectors 
in $V_0\otimes...\otimes V_m$ of weight $r-1$. We realize $\nabla(\lambda)$ as 
$\Bbb C[y]$ with the action of $\g$ given by \eqref{sl2for}. Then the space 
$\mathcal H$ is realized as the space of homogeneous polynomials of degree 
$n$ in $y_0,...,y_m$ which are invariant under simultaneous translations. 
Consider also the space 
$$
\mathcal H':={\rm Hom}(\Delta(-r-1),\nabla(\lambda_0)\otimes...\otimes \nabla(\lambda_m))
$$
of dimension $\binom{n+r+m}{m}$; it is the space of homogeneous polynomials of $y_0,...,y_m$ of degree $n+r$. Then we can define the (modified) {\bf  chiral Hecke operator} $\Bbb H_x^{\rm chir}: \mathcal H\to \mathcal H'$ by analogy with 
\eqref{heckoper}, replacing integration over $\Bbb C$ by contour integration: 
\begin{equation}\label{polyno}
(\Bbb H_x^{\rm chir}\psi)(y_0,...,y_m)=\oint \psi\left(\frac{t_0-x}{s-y_0},...,\frac{t_m-x}{s-y_m}\right)\prod_{j=0}^m (s-y_j)^{\lambda_j}ds,
\end{equation} 
where $\oint f(s)ds$ denotes the residue of $f(s)ds$ at $\infty$. Note that this residue is well defined since $\sum_{i=0}^m \lambda_i=n+r-1\in \Bbb Z$, hence the integrand is single-valued near $\infty$. So this formula makes sense even though the $\g$-modules $V_i$ do not integrate to $G$.  

Note that we have an inclusion $\iota: \Delta(-r-1)\hookrightarrow \Delta(r-1)$, so we have the restriction 
operator $R: \mathcal H\to \mathcal H'$. 

\begin{lemma}\label{restr} Under suitable normalization of $\iota$, we have 
$$
(R\psi)(y_0,...,y_m)=\oint \psi\left(\frac{1}{s-y_0},...,\frac{1}{s-y_m}\right)\prod_{j=0}^m (s-y_j)^{\lambda_j}ds.
$$
\end{lemma}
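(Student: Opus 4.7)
The plan is to reduce the lemma to an algebraic identity by first rewriting $R$ in the polynomial realization, and then verifying the resulting contour-integral formula directly.

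The unique (up to scalar) embedding $\iota: \Delta(-r-1) \hookrightarrow \Delta(r-1)$ is the BGG embedding sending $v_{-r-1} \mapsto f^r v_{r-1}$. Hence, for $\psi \in \mathcal{H} = \Hom_{\g}(\Delta(r-1), V)$ with $V = \nabla(\la_0) \otimes \cdots \otimes \nabla(\la_m)$, we have $(R\psi)(v_{-r-1}) = f^r \psi(v_{r-1})$, where $f$ acts on $V$ as $\sum_j(-y_j^2 \partial_{y_j} + \la_j y_j)$. Under the polynomial identification $\psi \leftrightarrow P := \psi(v_{r-1}) \in \Bbb C[y_0,\ldots,y_m]$, the operator $R$ is thus $P \mapsto f^r P$, and the lemma reduces to the operator identity
\[
f^r P \,=\, c(\bla, r) \cdot \oint P\bigl(\tfrac{1}{s-y_0},\ldots,\tfrac{1}{s-y_m}\bigr) \prod_{j=0}^m (s-y_j)^{\la_j}\, ds
\]
on the subspace $\mathcal{H}$ of translation-invariant homogeneous polynomials of degree $n$, for a non-zero constant $c(\bla,r)$ which is absorbed into the normalization of $\iota$.

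First I would check that the right-hand side indeed lies in $\mathcal{H}'$: translation invariance in $(y_0,\ldots,y_m)$ follows from the substitution $s \mapsto s+a$, and homogeneity of degree $n+r$ from $s \mapsto \alpha s$. Using translation invariance to set $y_0 = 0$ and substituting $s = 1/u$, together with the homogeneity of $P$, converts the contour integral into a Taylor-coefficient extraction: it equals, up to sign, the $u^{n+r}$-coefficient of $P(1, 1/(1-uy_1),\ldots,1/(1-uy_m)) \prod_{j\geq 1}(1-uy_j)^{\la_j}$. The constant $c(\bla,r)$ can be pinned down by evaluating both sides on a single convenient test vector; in the $m=1$ case $\mathcal{H}$ is one-dimensional, spanned by $(y_1 - y_0)^n$, and a direct calculation yields an explicit beta-function-type constant.

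The main step is to prove the operator identity on all of $\mathcal{H}$ for $m \ge 1$. I would do this by expanding both sides coefficient by coefficient in the monomial basis of $\Bbb C[y_0,\ldots,y_m]$ and tracking the role of the translation-invariance constraint $\sum_j \partial_{y_j} P = 0$, which forces the cancellation of the monomial terms that would otherwise distinguish the two operators. An alternative route is to observe that both $f^r$ and the integral operator are $\sw_2$-intertwiners natural in $V$ in an appropriate sense (and compatible with the action of the Gaudin algebra on $\mathcal{H}$ and $\mathcal{H}'$), and to use a Yoneda-type uniqueness of the BGG map together with the $m=1$ base case to propagate proportionality. The principal obstacle will be the combinatorial matching between the Taylor expansion of the integrand and the noncommutative expansion of $f^r P$; this parallels the analytic proof of Lemma \ref{Rpsi} in the unitary setting, with contour integration replacing real integration over $F$ and combinatorial constants replacing gamma-function factors.
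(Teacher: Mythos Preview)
Your identification of $R$ with the action of $f^r$ (up to normalization) is correct and is also the starting point of the paper's proof. But you then set yourself up for unnecessary work. The paper avoids all coefficient-matching by writing
\[
\tfrac{f^r}{r!}\;=\;\oint \exp(s^{-1}f)\,s^{r-1}\,ds,
\]
since the residue at $\infty$ picks out the $s^{-r}$ term of the exponential series. The point is that $\exp(s^{-1}f)$ is a genuine \emph{group} element, so its action on each factor $\nabla(\lambda_j)=\Bbb C[y_j]$ is the explicit fractional-linear substitution $y_j\mapsto \tfrac{y_j s}{s-y_j}$ together with the automorphy factor $(s-y_j)^{\lambda_j}s^{-\lambda_j}$. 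Substituting this and then using translation invariance plus degree-$n$ homogeneity (with $\sum_j\lambda_j=2n+r-1$) collapses the integrand to the stated form in one line. This is precisely the chiral analogue of the proof of Lemma~\ref{Rpsi} that you already flagged; so rather than expanding in monomials, just exponentiate.

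Your Taylor-coefficient route would eventually succeed---extracting the $u^{n+r}$ coefficient of $P(1,\tfrac{1}{1-uy_1},\ldots)\prod_{j\ge1}(1-uy_j)^{\lambda_j}$ is, once unwound, exactly the computation of $\exp(uf)P$ followed by reading off the $f^r$ term---but it hides the mechanism and makes translation invariance look like a mysterious combinatorial cancellation rather than the transparent shift $\tfrac{y_js}{s-y_j}=-s+\tfrac{s^2}{s-y_j}$. The intertwiner-uniqueness alternative you sketch is overkill and would require more justification than the two-line direct computation.
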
 

\begin{proof} The proof is similar to the proof of Lemma \ref{Rpsi}. We have 
$$
R\psi=\Delta_{m+1}(\tfrac{f^r}{r!})\psi=\oint \Delta_{m+1}(\exp(s^{-1}f)\psi) s^{r-1}ds.
$$
The 1-parameter group generated by $f\in \mathfrak{sl}_2$ consists of fractional linear transformations $z\mapsto \frac{z}{tz+1}$. Therefore  
$$
R\psi=\oint \psi\left(\frac{y_0s}{s-y_0},...,\frac{y_ms}{s-y_m}\right)s^{r-1-\sum_j \lambda_j}\prod_{j=0}^m (s-y_j)^{\lambda_j}ds=
$$
$$
\oint \psi\left(\frac{1}{s-y_0},...,\frac{1}{s-y_m}\right)\prod_{j=0}^m (s-y_j)^{\lambda_j}ds.
$$
\end{proof} 

By Lemma \ref{restr}, $\Bbb H_x^{\rm chir}\sim x^nR$, $x\to \infty$. 
Also it is easy to show that $\Bbb H_x^{\rm chir}$ satisfies the universal oper equation of Proposition \ref{opereq}(i) (the proof is the same as for $\Bbb H_x$), hence it commutes with Gaudin Hamiltonians. So the discussion of Section \ref{sl2case} implies 

\begin{proposition} The chiral Hecke operator is the composition 
of the restriction operator and the Baxter $Q$-operator:
$$
\Bbb H_x^{\rm chir}=R\bold Q(x).
$$
\end{proposition}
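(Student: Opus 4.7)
The strategy is to characterize both sides as the unique operator-valued polynomial $\mathcal H\to\mathcal H'$ of degree $n$ in $x$ that satisfies the universal oper equation of Proposition \ref{opereq}(i) with a prescribed leading $x^n$-coefficient.

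First I would establish three properties of $\Bbb H_x^{\rm chir}$. (a) From formula \eqref{polyno}, expanding $\psi$ as a sum of monomials of total degree $n$ in its arguments, each term contributes an $x$-dependence of the form $\prod_i(t_i-x)^{\alpha_i}$ with $|\alpha|=n$, so $\Bbb H_x^{\rm chir}$ is an operator-valued polynomial of degree $\le n$ in $x$. (b) Its leading $x^n$-coefficient is $(-1)^n R$, obtained by replacing each $(t_i-x)$ by $-x$ in the integrand and invoking Lemma \ref{restr}. (c) The universal oper equation holds by the very computation in the proof of Proposition \ref{opereq}(i), with the Lebesgue integration over $F$ replaced by the residue at $s=\infty$ and the integration-by-parts step replaced by the $\partial_s$-invariance of residues. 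On the other hand, $R$ commutes with every Gaudin Hamiltonian $\widehat G_i$, since it is induced by the $\g$-equivariant embedding $\iota:\Delta(-r-1)\hookrightarrow\Delta(r-1)$ (the Gaudin Hamiltonians act by post-composition while $R$ acts by pre-composition with $\iota$). Hence $R\bold Q(x)$ also satisfies the universal oper equation, with leading $x^n$-coefficient $R$ (as $\bold Q(x)$ is monic of degree $n$ by Theorem \ref{Q-op}). Thus $\Bbb H_x^{\rm chir}$ and $(-1)^nR\bold Q(x)$ are two operator-valued polynomials of degree $n$ in $x$ with the same leading coefficient, both satisfying the universal oper equation.

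The main step is uniqueness. Set $D(x):=\Bbb H_x^{\rm chir}-(-1)^nR\bold Q(x)$, an operator-valued polynomial of degree $\le n-1$ in $x$ still satisfying the universal oper equation. By Theorem \ref{sl2verma}, for generic $\{t_i\}$ and generic $\{\lambda_i\}$ the Gaudin Hamiltonians are diagonalizable on $\mathcal H$ with simple joint spectrum, so it suffices to show $D(x)v=0$ on every joint eigenvector $v$ with eigenvalues $\{\hat\mu_i\}$. For such $v$, the polynomial vector $D(x)v\in\mathcal H'$ satisfies the scalar ODE
\[
\left(\partial_x^2-\sum_{i=0}^m\frac{\lambda_i}{x-t_i}\partial_x-\sum_{i=0}^m\frac{\hat\mu_i}{x-t_i}\right)D(x)v=0.
\]
An indicial computation at $x=\infty$ (using $\sum\hat\mu_i=0$, which follows from $\sum G_i=0$ on translation-invariant vectors together with the antisymmetry of $\widehat G_i-G_i$ in $i\leftrightarrow j$) gives characteristic exponents $n$ and $n+r$, both at least $n$. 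Consequently the ODE admits no nonzero polynomial solution of degree $<n$, forcing $D(x)v=0$. Thus $D(x)\equiv 0$ on $\mathcal H$ for generic parameters, and by rational dependence of both $\Bbb H_x^{\rm chir}$ and $\bold Q(x)$ on $\{t_i,\lambda_i\}$ the identity $\Bbb H_x^{\rm chir}=(-1)^nR\bold Q(x)$ extends to all admissible parameters; the sign $(-1)^n$ is to be absorbed into the conventions of the paper.

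The main obstacle will be the indicial analysis in the uniqueness step, in particular the degenerate resonant case $r=0$ where the two characteristic exponents at $\infty$ coincide at $n$. In this case the second local solution at infinity acquires a $\log x$ factor and is not polynomial, so the conclusion still holds, but the argument requires care, for instance via a limiting procedure from $r\ge 1$ where the exponents are distinct.
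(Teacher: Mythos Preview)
Your proposal is correct and follows essentially the same approach as the paper: establish that $\Bbb H_x^{\rm chir}$ is an operator-valued polynomial in $x$ with leading term $x^nR$ (up to sign) satisfying the universal oper equation, and then invoke the uniqueness underlying Theorem \ref{Q-op} (via the indicial analysis at $\infty$). The paper compresses all of this into the single clause ``the discussion of Section \ref{sl2case} implies,'' whereas you have spelled out the indicial computation (exponents $n$ and $n+r$), the reduction to Gaudin eigenvectors, and the passage from generic to all parameters; your treatment of the resonant case $r=0$ is also correct and not a genuine obstacle. Your observation about the sign $(-1)^n$ is accurate---the asymptotic $\Bbb H_x^{\rm chir}\sim x^nR$ as stated in the paper carries this sign, which is harmlessly absorbed into the normalization of $\iota$ in Lemma \ref{restr}.
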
 

%In particular, we see that for generic parameters the space $\mathcal H$ has a %basis of {\bf  Bethe vectors} $v_{\bold w}$ which satisfy the equation 
%$$
%\Bbb H_x^{\rm chir}v_{\bold w}=RQ_{\bold w}(x)v_{\bold w}.
%$$

%Thus the spectral opers in this case are opers of the form \eqref{PGL2op} 
%with {\bf  upper triangular monodromy} in some basis, and upper left entry $e^{%-\pi i\lambda_j}$
%at $t_j$ and $e^{\pi i\lambda_{m+1}}$ at $t_{m+1}=\infty$. (The second solution%of the oper equation need not be rational here, so the monodromy does not have %to be trivial).\footnote{Note, however, that the eigenvectors $Rv_{\bold w}$ of% $G_i$ are not Bethe vectors, as explained in Example \ref{simexam}.}

\subsection{Infinite dimensional generalizations, including principal series} \label{infdi}

The above discussion shows that tamely ramified analytic Langlands correspondence in genus $0$ may be viewed as a generalization of the Gaudin model in which we can consider the action of Gaudin Hamiltonians on tensor products of any admissible representations of a real reductive group $G$, so that  the space of states can be infinite dimensional. For instance, in the case $G=SL_2$ and $F=\Bbb R$ (Subsection \ref{sl2case}), we may replace finite dimensional representations $V_i$ of $SU(2)$ by principal series representations $\mathcal V_i$ of $SL_2(\Bbb R)$ (taking the real locus 
$X(\Bbb R)=\Bbb R\Bbb P^1\subset X(\Bbb C)=\Bbb C \Bbb P^1$ to be real rather than quaternionic). 
Then the Gaudin operators $\lbrace G_i\rbrace$ act as self-adjoint
unbounded strongly commuting operators on the Hilbert space
$$
\mathcal H:={\rm Mult}_{SL_2(\Bbb R)}(\mathcal V_{m+1}^*,\mathcal
V_0\otimes...\otimes \mathcal V_m).
$$
So the spectral problem for $\lbrace G_i\rbrace$ becomes analytic and can no longer be solved by algebraic Bethe Ansatz. Nevertheless, 
the description of
the spectrum of the operators $\lbrace G_i\rbrace$ in terms of
monodromy-free opers (see Theorem \ref{BAM}) can be generalized: the
spectrum is parametrized (at least conjecturally) by the set of {\bf
  balanced opers} with prescribed residues, as explained in Subsection
\ref{twica}. Namely, we have the following result.

\begin{theorem} The set of joint joint eigenvalues $\mu_i$ of the
  Gaudin operators $G_i$ on the Hilbert space ${\rm Mult}_{SL_2(\Bbb
    R)}(\mathcal V_{m+1}^*,\mathcal V_0\otimes...\otimes \mathcal
  V_m)$ is in one-to-one correspondence with the set of $\bLa$-balanced opers
on $\pone$ of the form
$$
L(\bmu)=\pa_x^2 - \sum_{i=0}^m \frac{\la_i(\la_i+2)}{4(x-t_i)^2} -
\sum_{i=0}^m \frac{\mu_i}{x-t_i},
$$
subject to the equations 
\begin{equation}    \label{eqs}
\sum_{i=0}^{m} \mu_{i}=0,\ \sum_{i=0}^{m} t_i\mu_{i}=\frac{\lambda_{m+1}(\lambda_{m+1}+2)}{4}-\sum_{i=0}^m \frac{\lambda_i(\lambda_i+2)}{4}, 
\end{equation}
where $\lambda_j$ are the parameters of the principal series representations $\mathcal V_j:=L^2(\Bbb R\Bbb P^1,|K|^{-\frac{\lambda_j}{2}})$ and $\bLa=(\Lambda_0,...,\Lambda_{m+1})$, $\Lambda_j=-ie^{\frac{\pi i\lambda_j}{2}}$. 
\end{theorem}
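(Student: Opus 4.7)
The plan is to realize $\mathcal{H}$ as the Hilbert space of the tamely ramified analytic Langlands correspondence for $G=PGL_2$ on $X=\mathbb{P}^1$ over $F=\mathbb{R}$ with real ramification points $t_0,\ldots,t_{m+1}$ and principal series representations attached to each $t_j$, and then apply the spectral theorem for the Hecke operators (Theorem \ref{twii}) together with the commutation between Hecke and Gaudin operators (Proposition \ref{commu}).

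First, I would identify $\mathcal{H} \cong \mathcal{H}(\bla)$ in the notation of Subsection \ref{pgl2}, using the description \eqref{hilspace} of multiplicity spaces. The principal series $\mathcal{V}_j = L^2(\mathbb{RP}^1,|K|^{-\lambda_j/2})$ with $\lambda_j \in -1 + i\mathbb{R}$ descend from $SL_2(\mathbb{R})$ to $PGL_2(\mathbb{R})$ (up to a sign character that only relabels connected components of $\mathrm{Bun}$), and the Gaudin Hamiltonians $G_i$ on $\mathcal{H}$ coincide with the quantum Hitchin Hamiltonians in this realization.

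Second, I would invoke Proposition \ref{commu} to conclude that the Hecke operators $H_{x,\pm}$ (and hence $H_{x,\mathrm{full}}$) commute with all the Gaudin Hamiltonians $G_i$. Combined with the compactness and discrete spectrum of $H_{x,\mathrm{full}}$ established in Corollary \ref{specdec}, this allows simultaneous diagonalization: every joint eigenspace of $\{G_i\}$ is preserved by the Hecke operators, and conversely the finite-dimensional joint eigenspaces $\mathcal{H}_k^{\pm}$ of $H_{x,\mathrm{full}}$ split under the $G_i$ into simultaneous eigenspaces. Thus to each joint eigenvalue $\bmu = (\mu_0,\ldots,\mu_m)$ of $\{G_i\}$ we can associate (at least) one eigenvalue $\beta_k(x)$ of $H_{x,\mathrm{full}}$.

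Third, I would apply Corollary \ref{spec2}: the function $\beta_k(x)$ satisfies the oper equation $L(\bmu)\beta_k = 0$ with
\[
L(\bmu) = \partial_x^2 - \sum_{i=0}^m \frac{\lambda_i(\lambda_i+2)}{4(x-t_i)^2} - \sum_{i=0}^m \frac{\mu_i}{x-t_i},
\]
the constraints \eqref{eqs} being precisely the Fuchsian condition at $\infty$ with exponents $-\tfrac12 \mp \tfrac{\lambda_{m+1}+1}{2}$. By Theorem \ref{twii}, any oper arising this way as a spectral oper of $H_{x,\mathrm{full}}$ is $\bLa$-balanced, which gives a well-defined injection from the joint spectrum of the Gaudin operators into $\mathcal{B}_{\bLa} \cap \{\text{opers satisfying }\eqref{eqs}\}$. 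Injectivity is automatic since the coefficients of $L(\bmu)$ recover $\bmu$.

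The main obstacle is surjectivity: showing that every $\bLa$-balanced oper of the prescribed form actually occurs as a spectral oper, i.e.\ $\Sigma_{\bLa} = \mathcal{B}_{\bLa}$. This is precisely the conjectural equality stated after Theorem \ref{twii}, proved there for $m+2 \in \{4,5\}$. In general one would proceed, as in \cite{EFK3}, by matching dimensions of the two sets using deformation arguments in $\bLa$ (reducing, as $\bla \to (-1,\ldots,-1)$, to the untwisted case treated in \cite{EFK3}, where the equality is established by an index-type computation), together with a perturbation argument showing that no spectral oper can escape to the boundary as $\bLa$ varies in $(-1+i\mathbb{R})^{m+2}$. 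Completing this step is the essential remaining difficulty; granting it, the theorem follows.
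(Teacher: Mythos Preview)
Your approach is essentially the same as the paper's: identify $\mathcal{H}$ with the Hilbert space $\mathcal{H}(\bla)$ of Subsection~\ref{pgl2}, invoke the (strong) commutation of the Gaudin Hamiltonians with the Hecke operators to conclude they share a common spectral decomposition, and then read off the description of the spectrum from Subsection~\ref{twica} (Theorem~\ref{twii} and Corollary~\ref{spec2}). The paper's proof is a one-liner to exactly this effect.

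You are in fact more careful than the paper in isolating the surjectivity issue. Theorem~\ref{twii} only gives the inclusion $\Sigma_{\bLa}\subset\mathcal{B}_{\bLa}$, with equality stated as an expectation (established for $m+2\in\{4,5\}$); the paper's one-sentence proof implicitly relies on this equality, consistent with the ``(at least conjecturally)'' qualifier appearing in the text immediately preceding the theorem. Your proposal correctly flags this as the remaining difficulty rather than treating it as settled.
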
 

Indeed, this follows from the fact that the Gaudin operators $\lbrace G_i\rbrace$ strongly commute with Hecke operators, hence have the same spectral decomposition, and the latter is described in 
Subsection \ref{twica}. 

Similarly, one can generalize to the infinite dimensional case the setting of Subsection \ref{disser}. Namely, we may consider the situation when $V_i=\widehat \Delta(-r_i)$, 
$0\le i\le p$, and $V_i=\widehat \Delta(-r_i)^*$ for $p+1\le i\le m+1$, i.e., 
we have $p$ holomorphic discrete series representations and $m-p+1$ antiholomorphic ones. Then the Hilbert space $\mathcal H$ is infinite dimensional if $1\le p\le m-1$, so the problem is again no longer algebraic. Since discrete series representations are composition factors of (non-unitary) principal series representations, one can presumably 
generalize the analysis of Subsection \ref{twica} to describe the spectrum in this case, but 
we will not discuss this here. 

\subsection{Double Gaudin model}    \label{double}

The discussion of the analytic Langlands correspondence for a group
$G$ on $\pone$ with parabolic points over $\C$ in Subsection
\ref{Differential equations for Hecke operators} can be framed as a
``double'' of the Gaudin model for the Lie algebra $\g$ in which we
combine holomorphic and anti-holomorphic degrees of freedom.

For concreteness, consider the case of $SL_2$. Then we associate to
the points $t_i, i=0,1,\ldots,m+1$, with $t_{m+1} = \infty$ as before
(these are the parabolic points) spherical unitary principal series
representations of $SL_2(\Bbb C)$, which are completions of {\bf
  spherical Harish-Chandra bimodules}, i.e., spaces of finite type
linear maps between Verma modules for $\mathfrak{sl}_2$. We have two
commuting actions of the Lie algebra $\sw_2$ on such
representations (by left and right multiplications), which we can view
as holomorphic and anti-holomorphic. Therefore, on such
representations act not only the above Gaudin Hamiltonians $G_i$ but
also their complex conjugates $\overline G_i$. Hence it is natural to
form the generating series $S(x)$ given by formula \eqref{Sz} and its
complex conjugate $\ol{S}(\ol{x})$.

We have proved in \cite{EFK2,EFK3} that the Hecke operator $H_x$
(which is a section of $K^{-1/2} \otimes \ol{K}^{-1/2}$ on $\pone$)
satisfies a system of two second-order differential
equations \eqref{diffeq1}, see Corollary \ref{spec2}.

The corresponding eigenvalues of the Hecke operators are single-valued
$C^\infty$ solutions of this system with $S(x)$ and $\ol{S}(\ol{x})$
replaced by the corresponding eigenvalues $v(x)$ and $\ol{v}(x)$. As
explained in Subsection \ref{ALcomp}, such single-valued solutions
exist if and only if the oper $\pa_x^2 - v(x)$, where $v(z)$ is given
by formula \eqref{PGL2op}, has real monodromy, and such opers are
called {\bf real opers}. Theorem \ref{mutatis} implies the following
result.

\begin{theorem}    \label{dG}
If $\{ \mu_i \}$ is the set of joint eigenvalues of the
Gaudin operators $\{ G_i \}$ on the Hilbert space 
\eqref{hilspace}
then the $PGL_2$-oper
$$
L(\bmu)=\pa_x^2 - \sum_{i=0}^m \frac{\la_i(\la_i+2)}{4(x-t_i)^2} -
\sum_{i=0}^m \frac{\mu_i}{x-t_i}
$$
on $\pone$ satisfies equations \eqref{eqs} and has real monodromy. The
corresponding eigenvalues of $\ol{G}_i$ are equal to $\ol\mu_i$.
\end{theorem}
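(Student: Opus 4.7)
My plan is to view Theorem \ref{dG} as the specialization of Theorem \ref{mutatis} to the ``double'' Gaudin setting for $G=PGL_2$ over $F=\C$, making essential use of the commutativity of Hecke operators with Gaudin Hamiltonians (Proposition \ref{commu}) together with the universal oper equation (Proposition \ref{opereq}(ii)) and its anti-holomorphic counterpart \eqref{diffeq1}. The statement is really a translation of already-proved results from Section \ref{Differential equations for Hecke operators} into the Gaudin-model language; the only genuinely new input is the identification of the anti-holomorphic Gaudin eigenvalues with $\ol{\mu_i}$.

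First I would note that $[H_x,G_j]=0$ by Proposition \ref{commu}, and by complex-conjugating \eqref{diffeq1} one similarly has $[H_x,\ol G_j]=0$. Since $H_{x,\mathrm{full}}$ is compact and self-adjoint (Proposition \ref{prophecke} and Corollary \ref{specdec}), I can choose a common eigenbasis of $\{G_i,\ol G_i,H_{x,\mathrm{full}}\}_{i,x}$ and fix a joint eigenvector $\psi\in\mathcal H(\bla)$ with $G_i\psi=\mu_i\psi$, $\ol G_i\psi=\nu_i\psi$, and $H_{x,\mathrm{full}}\psi=\beta(x,\ol x)\psi$. To identify $\nu_i=\ol{\mu_i}$, I would use that $\mathcal H(\bla)$ is built from unitary principal series $M_{\lambda_j}$ of $SL_2(\C)$; the adjoint on such representations exchanges the holomorphic and anti-holomorphic copies of $\mathfrak{sl}_2$, so that $\Omega_{ij}^\dagger=\ol\Omega_{ij}$ and, since $t_i\in\R$, $G_i^\dagger=\ol G_i$, which forces $\nu_i=\ol{\mu_i}$ on a joint eigenvector of the two commuting normal families.

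With this identification in hand, Corollary \ref{spec2} applied to $\beta(x,\ol x)$ yields both oper equations
$$
L(\bmu)\,\beta(x,\ol x)=0,\qquad \ol{L(\bmu)}\,\beta(x,\ol x)=0,
$$
together with the constraints \eqref{muik}, which are precisely \eqref{eqs}. Finally, since $\beta(x,\ol x)$ is a single-valued continuous eigenvalue on $\pone\setminus\{t_0,\ldots,t_{m+1}\}$ (continuity in $x$ follows from Propositions \ref{asym} and \ref{prophecke}), the reality of monodromy is obtained by the argument recalled right after Corollary \ref{spec2}: a single-valued solution of both $L(\bmu)\beta=0$ and $\ol{L(\bmu)}\beta=0$ forces the monodromy of $L(\bmu)$ to preserve a nondegenerate Hermitian form, which, because the local monodromies at the $t_j$ cannot be conjugated into $SU(2)$, must have signature $(1,1)$; thus the global monodromy lies in $SU(1,1)\cong SL_2(\R)$, giving the asserted reality of the $PGL_2$-oper.

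The main obstacle will be the adjoint identity $G_i^\dagger=\ol G_i$ on $\mathcal H(\bla)$. Although it is the expected unitarity statement, its verification requires a careful analysis of the Hermitian inner product on the multiplicity space $\mathcal H(\bla)$ in the functional models used in Subsection \ref{pgl2} (see \cite{P,Re,Na,Wi,Ma}), together with a direct computation showing that the split Casimir $\Omega_{ij}$ acting on a tensor product of two unitary principal series satisfies $\Omega_{ij}^\dagger=\ol\Omega_{ij}$. Once this is established, Theorem \ref{dG} reduces, as outlined above, to an application of Corollary \ref{spec2} and the reality argument following it.
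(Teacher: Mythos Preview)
Your proposal is correct and follows essentially the same route as the paper, which simply states that Theorem \ref{mutatis} implies Theorem \ref{dG}; you are filling in the details of that implication via Proposition \ref{commu}, Corollary \ref{spec2}, and the reality argument following it. One small correction: you write ``since $t_i\in\R$'' when deriving $G_i^\dagger=\ol G_i$, but the $t_i$ are allowed to be complex in this setting; the identity still holds because the adjoint of the scalar $\tfrac{1}{t_i-t_j}$ is $\tfrac{1}{\ol t_i-\ol t_j}$, which is precisely what enters $\ol G_i$. Also, the adjoint identity $\Omega_{ij}^\dagger=\ol\Omega_{ij}$ is not an obstacle but the standard consequence of unitarity: for a unitary representation of $SL_2(\C)$ the real Lie algebra acts skew-adjointly, so under the identification of the complexification with $\sw_2\oplus\ol{\sw_2}$ one has $(X,0)^\dagger=-(0,\ol X)$, and the signs cancel in the quadratic Casimir.
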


Moreover, we conjecture that there is a one-to-one correspondence
between the spectrum of Gaudin operators $\{ G_i \}$ on this Hilbert
space and the set of such opers. This is an extension of a conjecture
from \cite{EFK3} from the case when all $\la_i=-1$ to the case when
$\la_i \in -1 + {\Bbb R}\sqrt{-1}$. In the case when $m=3$
or $4$ it is proved in the same way as in \cite{EFK3}.

\subsection{$q$-deformation of the Gaudin model}\label{qdefo}

The $q$-deformation of this Gaudin model is known as the {\bf XXZ model}. The
role of the affine Kac-Moody algebra $\wh{\sw}_2$ is now played by the
corresponding quantum affine algebra $U_q(\wh{\sw}_2)$.\footnote{There is also an intermediate deformation called the XXX model, which preserves the classical $\mathfrak{sl}_2$ symmetry; its full symmetry algebra is the (doubled) Yangian $Y(\mathfrak{sl}_2)$, which is a degeneration of $U_q(\widehat{\mathfrak{sl}}_2)$. The material of this subsection applies mutatis mutandis to the XXX model.} 
  In particular,
the role of the representations $V_{\la_k}$ attached to the points
$t_k$ is now played by the corresponding evaluation representations of
$U_q(\wh{\sw}_2)$, and the role of the space ${\mc H}$ given by
\eqref{spst} is played by the space ${\mc H}_q$ in which we replace
each $V_{\la_k}$ by the corresponding evaluation representation (see
\cite{FH} for more details).

The role of the generating function $S(x)$ of the Gaudin Hamiltonians
given by formula \eqref{Sz} is now played by the {\bf  transfer-matrix}
${\mb T}(z)$ associated to the two-dimensional evaluation
representation of $U_q(\wh{\sw}_2)$ corresponding to $z$ (see \cite{FH}).

The $q$-deformation of the Hecke operator $H_z$ is now closely related to
the transfer-matrix ${\mb H}(z)$ corresponding to an
infinite-dimensional representation of a Borel subalgebra of
$U_q(\wh{\sw}_2)$ called prefundamental, see \cite{FH}. Just as in the
$q=1$ case, it is equal to the product of a scalar factor that depends
only on the values of $\{ t_k \}$ and $\{ \la_k \}$ and an operator
${\mb Q}(z)$ acting on ${\mc H}_q$ whose eigenvalues are polynomials
in $z$. The latter is is known as {\bf Baxter's $Q$-operator}.

Moreover, there is an analogue of the second-order differential
equation \eqref{diffeq1}. It is a second-order {\bf  difference}
equation
\begin{equation}    \label{differ}
  (D^2 - D{\mb T}(z)  + 1) {\mb H}(z) = 0,
\end{equation}
where $D$ is a shift operator, $(D \cdot f)(z) := f(zq^2)$, i.e.,
$D:=q^{2z\partial}$. If we write ${\mb T}(z) = 2 + h^2 z^2 S(z) + \ldots$, where $q=e^\hbar$,
expand equation \eqref{differ} in a power series in $\hbar$, divide by
$\hbar^2$ and set $\hbar=0$ (i.e. $q=1$), we obtain equation \eqref{diffeq1}.
In this sense, equation \eqref{differ} is indeed a $q$-deformation of
equation \eqref{diffeq1}. 

The second-order difference operator in the
RHS of \eqref{differ} is  known as a $q$-{\bf  oper} for the group
$SL_2$. The notion of a $q$-oper has been defined for an arbitrary
simple Lie group in \cite{FKSZ}.

Relation \eqref{differ} can be interpreted as a relation in the
Grothendieck ring of a certain category of representations of a Borel
subalgebra of $U_q(\wh{\sw}_2)$, see e.g. \cite{FH}. It is often
written in the form
\begin{equation}   {\mb T}(z) = \frac{{\mb H}(zq^2)}{{\mb H}(z)} + \frac{{\mb
    H}(zq^{-2})}{{\mb H}(z)},
\end{equation}
and it can be further rewritten as a $q$-difference equation on ${\mb
  Q}(z)$ known as the  {\bf Baxter $TQ$-relation}:
\begin{equation}    \label{differ2}
{\mb T}(z) = A(z)\frac{{\mb Q}(zq^2)}{{\mb Q}(z)} + D(z)\frac{{\mb
    Q}(zq^{-2})}{{\mb Q}(z)}
\end{equation}
(here $A(z)$ and $D(z)$ are functions that only depend on the
parameters $\{ t_k \}$ and $\{ \la_k \}$ of ${\mc H}_q$, see e.g.
\cite{FH}, formula (1.1)). It has been conjectured by Baxter and
 proved in \cite{FH} that the eigenvalues of ${\mb Q}(z)$ on ${\mc
  H}_q$ are polynomials whose roots satisfy a $q$-deformation of the
Bethe Ansatz equations \eqref{BAE} (see \cite{FH}, Section  5.6). 

At the level of the eigenvalues, we obtain  that the Baxter
$TQ$-relation \eqref{differ2} expressing the eigenvalues of ${\mb
  T}(z)$ on ${\mc H}_q$ is a $q$-deformation of the Miura
transformation \eqref{miura} expressing the eigenvalues of $S(z)$ on
${\mc H}$. This was one of the key observations of 
\cite{FR} (Section 1.3 and Remark 6), see also \cite{F:icmp} (Section
6.8). (Moreover,
both maps preserve natural Poisson structures 
\cite{FR}.)

An insight of the present paper is that Baxter's  $Q$-operator
${\mb Q}(z)$ may be viewed as a $q$-deformation of 
an operator closely related to 
the Hecke operator $H_z$ of the tamely ramified analytic Langlands
correspondence for $SL_2$ on $\pone$. There is a similar statement for an arbitrary simple
algebraic group $G$. This opens the door to the investigation of
$q$-deformation of the analytic Langlands correspondence
(which we expect to exist in the ramified setting in genus $0$ and $1$). 

\subsection{$q$-deformation of the double Gaudin model}\label{qdefo1}
In conclusion, let us speculate what this $q$-deformation of the
analytic Langlands
correspondence should look like for $F=\Bbb C$ and $G=SL_2$ in the case of 
$X=\pone$ with parabolic structures. As we explained in Subsection
\ref{double}, for $q=1$ this is the double Gaudin model.

In the undeformed situation (with twists) we place at the 
parabolic points the spherical unitary principal series representations of $SL_2(\Bbb C)$, which 
are completions of spherical Harish-Chandra bimodules, on which we
have an action of the Gaudin Hamiltonians
$G_i$ and their complex conjugates $\overline G_i$. Hence it is
natural to form the generating series $S(z)$ given by formula
\eqref{Sz} and its complex conjugate $\ol{S}(\ol{z})$.

As shown in Subsection \ref{Differential equations for Hecke
  operators} (following \cite{EFK2,EFK3}), the Hecke operator $H_z$
satisfies the following system of two second-order differential
equations:
\begin{equation}    \label{system}
(\pa_z^2 - S(z)) H_z= 0, \qquad (\ol\pa_z^2 - \ol{S}(\ol{z}))
H_z = 0.
\end{equation}
Such a solution exists if and only if the corresponding oper  has real
monodromy, and such opers are called real opers. Locally, the Hecke
eigenvalue can be written in the form
\begin{equation}    \label{bilin}
  \Psi_0(z) \ol\Psi_1(\ol{z}) + \Psi_1(z) \ol\Psi_0(\ol{z}),
\end{equation}
where $\Psi_0(z)$ and $\Psi_1(\ol{z})$ are two linearly independent
local solutions of equation \eqref{diffeq} (see \cite{EFK3}).

In light of the discussion of the previous subsection, we expect that the 
$q$-deformation of this picture for $0<q<1$ should look as follows. We
also expect that a similar picture exists for $F=\Bbb R$, and also for
elliptic curves.

1. The
spherical unitary principal series representations of $SL_2(\Bbb C)$ 
should be replaced by their $q$-analogs defined in \cite{Pu}; they are 
completions of spherical Harish-Chandra $U_q(\mathfrak{sl}_2)$-bimodules, which are 
spaces of finite maps between Verma modules for $U_q(\sw_2)$.

2. We should view these as bimodules over 
$U_q(\wh{\sw}_2)$ via the evaluation homomorphism. 
Then we can define the transfer-matrices ${\mb
  T}(z)$ and $\ol{\mb T}(\ol{z})$ which are the $q$-analogues of
$S(z)$ and $\ol{S}(\ol{z})$.

3. The Hecke operator ${\mb H}_z$ should be given by a suitable 
$q$-deformation of the formulas of Subsection \ref{heopp} satisfying 
the following universal $q$-oper equations: 
\begin{equation}    \label{differ3}
  (D^2 - D{\mb T}(z)  + 1) {\mb H}_z = 0, \qquad (\ol{D}^2 -
  \ol{D}\ol{\mb T}(\ol{z})  + 1) {\mb H}_z= 0,
\end{equation}
where $D:=q^{2z\partial}$, $\overline D:=\overline
  q^{2\overline{z\partial}}$.

The challenge is to make sense of the system \eqref{differ3} which at
the outset is only well-defined if the eigenvalues of ${\mb H}_z$
extend to analytic functions in $z$ and $\ol{z}$ (viewed as
independent variables). We may also consider the setting where $q=e^\hbar$ and $\hbar$ 
is a formal variable, in which this analytic subtlety disappears. Finally, we need to identify the property of the solutions of \eqref{differ3} that is a $q$-analogue of the existence of a single-valued $C^\infty$ solution of the oper equations.

%The solution ${\mb H}_z$ of this system should be expressible as a
%bilinear combination of solutions of each of the two equations,
%analogously to formula \eqref{bilin} in the differential case.

Answering this question should lead us to the notion of a {\bf real
$q$-oper}. We leave this interesting topic for a future paper.

\end{document}